\newtheorem{theorem}{Theorem}[section]
\newtheorem{lemma}[theorem]{Lemma}
\newtheorem{cor}[theorem]{Corollary}
\newtheorem{sublem}[theorem]{Sublemma}
\newtheorem{proposition}[theorem]{Proposition}
\theoremstyle{definition}
\newtheorem{definition}[theorem]{Definition}
\theoremstyle{remark}
\newtheorem{remark}[theorem]{Remark}
\numberwithin{equation}{section}
\newcommand{\cA}{\mathcal{A}}
\newcommand\cB{{\mathcal B}}
\newcommand{\BB}{\mathcal{B}}
\newcommand{\cC}{\mathcal{C}}
\newcommand{\cE}{\mathcal{E}}
\newcommand{\cF}{\mathcal{F}}
\newcommand{\cG}{\mathcal{G}}
\newcommand{\cI}{\mathcal{I}}
\newcommand{\cK}{\mathcal{K}}
\newcommand{\cL}{\mathcal{L}}
\newcommand{\LL}{\mathcal{L}}
\newcommand{\cM}{\mathcal{M}}
\newcommand{\cN}{\mathcal{N}}
\newcommand{\cP}{\mathcal{P}}
\newcommand{\hP}{\mathring{\cP}}
\newcommand{\cQ}{\mathcal{Q}}
\newcommand\cR{{\mathcal R}}
\newcommand{\cS}{\mathcal{S}}
\newcommand{\cW}{\mathcal{W}}
\newcommand{\UU}{\mathcal{U}}
\newcommand{\bH}{\mathbb{H}}
\newcommand\bN{{\mathbb N}}
\newcommand\integer{{\mathbb Z}}
\newcommand{\bx}{\bar{x}}
\newcommand{\bpsi}{\overline{\psi}}
\newcommand{\bvf}{\overline{\vf}}
\newcommand{\bTheta}{\overline{\Theta}}
\newcommand{\bGamma}{\overline{\Gamma}}
\newcommand{\bd}{\bar{d}}
\newcommand{\tpsi}{\widetilde{\psi}}
\newcommand{\hW}{\widehat{\cW}}
\newcommand{\hLambda}{\hat{\Lambda}}
\newcommand{\Sh}{{I_u}}
\newcommand{\Lo}{{L_u}}
\newcommand{\Los}{{L_s}}
\newcommand{\tmax}{\tau_{\max}}
\newcommand{\tmin}{\tau_{\min}}
\newcommand{\Fix}{{\mbox{Fix}\, }}
\newcommand{\hspan}{h_{\scriptsize{\mbox{span}}}}
\newcommand{\hsep}{h_{\scriptsize{\mbox{sep}}}}
\newcommand{\htop}{h_{\scriptsize{\mbox{top}}}}
\newcommand{\musrb}{\mu_{\tiny{\mbox{SRB}}}}
\newcommand{\hatmusrb}{\hat \mu_{\tiny{\mbox{SRB}}}}
\newcommand{\Det}{{\mbox{Det}\, }}
\newcommand{\vf}{\varphi}
\newcommand{\ve}{\varepsilon}
\newcommand{\inter}{\mbox{int}}
\newcommand{\diam}{\mbox{diam}}
\begin{document}

\title[Measure of Maximal Entropy for Sinai Billiard Maps]{On the Measure of Maximal Entropy for Finite Horizon Sinai Billiard Maps}

\author{Viviane Baladi}
\address{CNRS, Institut de Math\'ematiques de Jussieu (IMJ-PRG), 
Sorbonne Universit\'e,
4, Place Jussieu, 75005 Paris, France}
\curraddr
{Laboratoire de Probabilit\'es, Statistique et Mod\'elisation (LPSM),  
CNRS, Sorbonne Universit\'e, Universit\'e de Paris,
4, Place Jussieu, 75005 Paris, France}
\email{baladi@lpsm.paris}

\author{Mark F. Demers}
\address{Department of Mathematics, Fairfield University, Fairfield CT 06824, USA}
\email{mdemers@fairfield.edu}

\thanks{
Part of this work was carried  out during visits of  MD to ENS Ulm/IMJ-PRG Paris in 2016 and 
to IMJ-PRG in 2017 and 2018,  during a visit of VB to Fairfield University in 2018,
and during the 2018 workshops New Developments in Open Dynamical Systems and their Applications in BIRS Banff, and Thermodynamic Formalism in Dynamical Systems in ICMS Edinburgh.  We are grateful to F.~Ledrappier, C. Matheus,  Y. Lima,  S. Luzzatto, P.-A. Guih\'eneuf, G. Forni, B. Fayad, S. Cantat, R. Dujardin, J.~Buzzi, P. B\'alint, and J. De Simoi for useful comments, to V. Bergelson for encouraging us to
establish the Bernoulli property, and to V.~Climenhaga for insightful comments which
spurred us on to obtain uniqueness. We thank the anonymous referees for many constructive suggestions. MD was partly supported by NSF grants DMS 1362420
and DMS 1800321. VB's research is supported
by the European Research Council (ERC) under the European Union's Horizon 2020 research and innovation programme (grant agreement No 787304).}

\begin{abstract}
The   Sinai billiard map $T$
on the two-torus, i.e., the periodic Lorentz gas, is a discontinuous map.
Assuming finite horizon, we propose a definition $h_*$ for the topological entropy of  $T$.
We prove that $h_*$  is not smaller than the value given by the variational principle,
and that it is equal to the definitions of Bowen using spanning or separating sets.
Under a mild condition of sparse recurrence to the singularities, we get more:
First, using a transfer operator acting on a space of anisotropic
distributions, we   construct an invariant probability measure $\mu_*$ of maximal entropy for $T$ (i.e.,
$h_{\mu_*}(T)=h_*$),  we show that $\mu_*$ has full support and is
Bernoulli,   and we prove that $\mu_*$ is the unique measure
of maximal entropy,
and that it is different from the
smooth invariant measure except if all non grazing periodic orbits  have
multiplier equal to $h_*$. 
Second, $h_*$ is equal to the Bowen--Pesin--Pitskel topological entropy of
the restriction of $T$ to a non-compact  domain of continuity. Last, applying
results of Lima and Matheus, as upgraded by Buzzi, the map $T$
has at least $C e^{nh_*}$ periodic points of period $n$ for all $n \in \mathbb{N}$.
\end{abstract}

\subjclass[2010]{37D50 (Primary) 37C30; 37B40; 37A25; 46E35; 47B38 (Secondary)}

\date{Received by the editors August 25, 2018,  and, in revised form, August 19, 2019.  \\
\phantom{ElecDate}Electronically published  January 6, 2020. 
\url{https://doi.org/10.1090/jams/939} First published in Journal of the American Mathematical Society 
in 2020, published by American Mathematical 
Society. \\
License or copyright restrictions may apply to redistribution;  \url{https://www.ams.org/journal-terms-of-use}
}

\maketitle


\section{Introduction}
\label{sec:intro}

\subsection{Bowen--Margulis Measures and Measures of Maximal Entropy}

Half a century ago\footnote{See \cite{Mar} for the full  english text.}, Margulis  \cite{Mar0} 
proved in his dissertation the following analogue of the prime number theorem for the 
closed geodesics $\Gamma$ of a compact
manifold of strictly negative (not necessarily constant) curvature: Let 
$h>0$ be the topological entropy of the geodesic flow; then,
\begin{equation}\label{BM}
\# \{ \Gamma \mbox{ such that }  |\Gamma| \le L \}\sim_{L \to \infty} 
\frac{e^{hL}}{hL} \,  .
\end{equation} 
(I.e. $\lim_{L \to \infty}(hL e^{-hL} \# \{ \Gamma \mbox{ such that }  |\Gamma| \le L \})=1$.)
The main ingredient in the proof is an
 invariant probability measure for the flow,   the Margulis (or Bowen--Margulis \cite{Bow}) measure  $\mu_{\mbox{\tiny{top}}}$.
 This measure --- which coincides with volume in constant curvature,
 but not in general --- is mixing (thus ergodic), and it can
 be written as a local product of its stable and unstable
 conditionals, where  these conditional measures 
 scale by $e^ {\pm ht}$ under the action of the  flow.
 These properties were essential to establish \eqref{BM}.
The measure $\mu_{\mbox{\tiny{top}}}$ enjoys other remarkable properties, such as equidistribution of closed geodesics.
Finally, the measure $\mu_{\mbox{\tiny{top}}}$ is  the unique measure of
maximal entropy of the flow, that is, the unique  invariant measure with Kolmogorov entropy
equal to the topological entropy of the flow.

These results were  extended
to more general smooth uniformly hyperbolic flows and diffeomorphisms, using the thermodynamic
formalism of Bowen, Ruelle, and Sinai. In particular Parry--Pollicott \cite{PaPo83}
obtained a different proof of  \eqref{BM} using  a dynamical zeta function.
Later, based on  Dolgopyat's \cite{Do}  groundbreaking thesis
(proving exponential mixing for the measure and giving a pole-free vertical
strip  for a zeta function),  exponential
error terms were obtained \cite{PS} for the counting asymptotics \eqref{BM} in the case
of surfaces or $1/4$-pinched manifolds. 
Using \cite{Do, PS}, Stoyanov \cite{St} obtained exponential error terms
for the closed orbits of a class of open planar convex billiards, which are smooth hyperbolic flows on 
their nonwandering set, a compact
(fractal) invariant set. We refer to  Sharp's survey  in \cite{Mar} for 
more counting results in uniformly hyperbolic dynamics.
We just mention here that, for some Axiom A flows with slower (non-exponential)
mixing rates,
 it is possible \cite{PS2} to 
 get (weaker) error terms, of the form
  $\frac{e^{hL}}{hL}(1+O(L^{-\delta}))$, for the asymptotics \eqref{BM}, by 
 exploiting relevant operator bounds from
 \cite{Do2} (corresponding
 to   a resonance
 free domain for the transfer operator). This may be relevant
 for the Sinai billiards considered in the present work, as we do not expect
 them to mix exponentially fast for the measure of maximal entropy
 without additional assumptions.

Entropy is a fundamental invariant in dynamics and the study of measures
of maximal entropy is a topic in its own right \cite{Ka}. Let us just mention here the discrete-time
 analogue of the counting theorem \eqref{BM} which has been established in several
 situations (see also \cite{Ka0} for more general results): 
Let $h>0$ be the topological entropy of  uniformly hyperbolic (Axiom A)
diffeomorphism $T$, set $\Fix T^{m}=\{ x \, : \, T^m(x)=x\}$; then Bowen showed \cite{Bo00} 
that $\lim_{m \to \infty} \frac{1}{m}
\log \# \Fix T^m =  h$. In fact \cite{Bow3}, there is a constant
$C>0$ so that
\begin{equation}\label{PNM}
C   e^{  h m} \le  \# \Fix T^m \le C^{-1}  e^{  h m}\, ,
\qquad \forall m\ge 1 \, .
\end{equation}

Uniqueness of the measure of maximal
entropy has been extended to some geodesic
flows in non-positive curvature (i.e. weakening the hyperbolicity
requirement). The breakthrough
result of   Knieper \cite{Kn} for compact rank $1$ manifolds has been recently given a new  dynamical
proof \cite{BCFT} (using  Bowen's ideas as revisited by Climenhaga and Thompson). This
is currently a very active topic, see e.g. \cite{CliWar}.

\medskip

The present paper is devoted to the study of the measure of maximal
entropy in a situation where uniform hyperbolicity holds, but the dynamics
is not smooth: The singular set $\cS_{\pm 1}$,
i.e. those points where the map $T$
(or the flow $\Phi$) or its inverse are not $C^1$, is not empty.  In this
setting, the following integrability condition 
is crucial:
 \begin{equation}\label{adapt}
 \int |\log d(x, \cS_{\pm 1})| \, d\mu_{\mbox{\tiny{top}}}<\infty\, . 
 \end{equation}
 Following Lima--Matheus \cite{LM}, we shall say that a measure 
 $\mu$
 satisfying
 the above integrability condition for a map $T$ is $T$-adapted.
 
 Condition \eqref{adapt} is prevalent in the rich literature
about measures of maximal entropy for meromorphic maps of a compact K\"ahler manifold
(see the survey \cite{Fr}, and e.g. \cite{DG3} and references therein)
such as
birational mappings.  In this work, we are concerned with a different
class of dynamics with singularities: the dispersing billiards introduced by Sinai \cite{Sin70} on the
two-torus.
A Sinai billiard on the torus is the periodic case  of the planar Lorentz gas (1905) model
for the motion of a single dilute electron in a metal. The scatterers (corresponding
to the atoms of the metal) are assumed
to be strictly convex, but they are not necessarily perfect discs.
Such billiards have become foundational models in mathematical physics.
 
The Sinai billiard flow is continuous, but\footnote{In contrast,  
open billiards in the plane which satisfy a non-eclipsing condition do not have any
singularities on their nonwandering set, so that they fit in the Axiom~A category
\cite{St}.} not differentiable: the ``grazing'' orbits (those which are tangent to  a scatterer)
lead to singularities. 
Nevertheless, existence of a measure of maximal entropy for the billiard flow is
granted,  thanks to hyperbolicity. The topological entropy has been  studied for the billiard flow \cite{BFK}. 
However, uniqueness of the measure of maximal entropy,
as well as mixing and 
the adapted condition \eqref{adapt} 
are not known.  Since the transfer operator techniques we use
are simpler to implement  in the discrete-time case,
we study in this paper the Sinai billiard map, which is  the return map 
of the single point particle to the
scatterers.

Sinai billiard maps preserve a smooth invariant measure $\musrb$ which has been studied
extensively:  With respect to $\musrb$, the billiard is uniformly
hyperbolic, ergodic, K-mixing and Bernoulli \cite{Sin70, gallavotti, SC87, ChH}.
The measure  $\musrb$ is $T$-adapted \cite{katok strelcyn}.
Moreover, this measure enjoys exponential decay of correlations \cite{Y98}
and a host of other limit theorems (see e.g. \cite[Chapter 7]{chernov book} or \cite{demzhang11}).
The billiard has many periodic orbits and thus many other ergodic invariant
measures $\mu$, but there are very few results regarding other invariant measures and they
apply only to perturbations of $\musrb$ \cite{CWZ, dzr}.  Since the billiard map
 is discontinuous, the standard results \cite{walters} 
guaranteeing that the supremum of Kolmogorov entropy is attained and coincides with the
topological entropy do not hold.
It is natural to ask whether a measure of maximal
entropy 
exists, and, in the affirmative, whether it is unique, ergodic, and mixing.

Another natural goal is to establish \eqref{PNM}.
Chernov asked (see \cite[Problems 5 and 6]{Gu}) 
whether a slightly weaker property than \eqref{PNM}, namely
$$\lim_{m \to \infty} \frac{1}{m}
\log \# \Fix T^m  =  h_{\mbox{\tiny{top}}}\, ,
$$ holds.
(Chernov \cite{chernov} showed that $\liminf_{m \to \infty} \frac{1}{m}
\log \# \Fix T^m \ge  h_{\musrb}$. For a related  class of billiards,
 Stoyanov \cite{St0} found  finite constants $C$ and $H$ so that
$\# \Fix T^m \le  C e^{H m}$ for all $m \ge 1$.)

A detailed knowledge of the measure of maximal entropy, and the
techniques developed to obtain this information, could  potentially allow us not only to 
establish \eqref{PNM} 
for the billiard
map, but  also eventually to prove a prime number asymptotic of the form \eqref{BM}
for the billiard flow.  Although lifting a measure of maximal entropy for the map
should not directly give a measure of maximal entropy for the flow, we believe
that the techniques
and results of the present paper will be instrumental in understanding
the measure of maximal entropy of the billiard flow.

\medskip
We list our results in Section~\ref{ssec:map}. In a nutshell,
for all finite horizon planar Sinai billiards $T$
satisfying a (mild) condition of ``sparse recurrence'' to the singular set, we construct
a measure of maximal entropy, we show that it is unique, mixing (even Bernoulli),
that it has full support, and that it is $T$-adapted. Our results combined with
those of Lima--Matheus \cite{LM} and a very recent preprint of Buzzi \cite{Bu} give
$C>0$ such that the lower bound in \eqref{PNM} holds.

Finally, we mention that
our technique  for constructing and studying the invariant measure,
which uses transfer operators but avoids coding, is 
reminiscent both of the construction of Margulis \cite{Mar} and
  the techniques of ``laminar currents'' introduced by Dujardin  for birational mappings 
 \cite{Du} (see also \cite{DG3}).


\subsection{Summary of Main Results}
\label{ssec:map}

A Sinai  billiard table $Q$  on the two-torus $\mathbb{T}^2$
is a set  $
 Q=\mathbb{T}^2 \setminus B
 $,
 with   $B=\cup_{i=1}^D B_i$
for some finite number $D\ge 1$ of 
pairwise disjoint closed domains $B_i$ with $C^3$ boundaries 
having strictly positive curvature (in particular, the domains are strictly convex).
The sets $B_i$ are called  scatterers;
see Figure~\ref{fig:tables} for some common examples. The
billiard flow  is 
the motion of a point particle traveling in $Q$ at unit speed 
 and
undergoing elastic (i.e., specular) reflections at the boundary of
the scatterers.   (By definition, at a tangential --- also called grazing --- collision, the
reflection does not change the direction of the particle.)
This is also called a periodic Lorentz gas. 
As mentioned above,
a key feature is that, 
although  the billiard flow is continuous if one identifies
outgoing and incoming angles,  the tangential collisions give
rise to  singularities  in the derivative \cite{chernov book}.

We shall 
be concerned with the associated billiard map $T$, defined to be
the first collision map on the boundary of $Q$.
Grazing
collisions cause discontinuities in the billiard map $T:M \to M$.
We assume, 
as in \cite{Y98}, that the billiard table $Q$ has {\it finite horizon} in the  sense that  the
billiard flow on  $Q$ does not have any trajectories making only tangential collisions.


\medskip

The first step is to find a suitable notion of topological entropy $h_*$ for the discontinuous
map $T$. 

Let $M'\subset M$ be the ($T$-invariant but not
compact)  set of points whose future and past orbits are never grazing.
By definition, $T$ is continuous on $M'$.
 The (Bowen--Pesin--Pitskel) topological entropy $\htop(F|_Z)$ can be defined for 
 a  map $F$ on an 
 non-compact set of continuity $Z$
 (see e.g. \cite{Bow1} and \cite[\S 11 and App. II]{Pesin}).
Chernov \cite{chernov}  studied the topological entropy
for a class of billiard maps including those of the present paper.
 In particular, he gave
  \cite[Thm~2.2]{chernov} a countable symbolic dynamics description
 of  two $T$-invariant  subsets of $M'$ of full Lebesgue measure
 in $M'$, 
 expressing their topological entropy in terms of those of the
 associated Markov chains.  The entropies found there are both bounded above
 by $\htop(T|_{M'})$, although Chernov does not prove their equality.
  
   These existing results are not convenient
   for our purposes, however,  since we have no control
   a priori on the measure of $M\setminus M'$. This is why we introduce  (Definition~\ref{def}) an ad hoc
  definition $h_*$ of the topological entropy for the billiard map 
  $T$ on the compact
  set $M$.

Our first main result (Theorem~\ref{thm:h_*}) says that
the topological entropies of $T$ defined by spanning sets and separating sets coincide with
the topological entropy $h_*$, 
that 
$h_*$ can also be obtained by using the refinements of partitions
of $M$ into maximal connected components on which $T$ and $T^{-1}$ are continuous,  and that 
$
h_*\ge \sup \{ h_\mu(T) : \mbox{$\mu$ is a $T$-invariant Borel probability measure on $M$} \}
$.

\medskip

To state our other main results,
we need to quantify
the recurrence to the singular set: Fix an angle $\vf_0$ close to $\pi/2$ and $n_0 \in \mathbb{N}$. We say that a collision is $\vf_0$-grazing if its 
angle with the
normal is larger than $\vf_0$ in absolute value.
Let $s_0 \in (0,1]$ be the smallest number such that 
\begin{align}
\label{defs0}
\mbox{any orbit of
length $n_0$ has at most $s_0n_0$ collisions which are $\vf_0$-grazing.}
\end{align}
Our sparse recurrence condition is 
\begin{equation}
\label{star}
\mbox{there exist $n_0$ and $\vf_0$ such that } h_*> s_0 \log 2 \, .
\end{equation}
(Due to the finite horizon condition,
we can choose $\vf_0$ and $n_0$ such that $s_0 < 1$. We refer to \S \ref{condstar} for further discussion of the condition.)

Assuming \eqref{star}, our second main result (Theorem~\ref{existu}) 
is that $T$ admits a unique invariant Borel probability measure $\mu_*$  
 of maximal entropy $h_*=h_{\mu_*}(T)$.
In addition,
$\mu_*(O)>0$ for any open set and
$\mu_*$ is\footnote{Recall that Bernoulli implies K-mixing, which implies strong
mixing, which implies ergodic. In practice, we first show K-mixing and then bootstrap
to Bernoulli.} Bernoulli.
Finally, the absolutely continuous invariant measure $\musrb$  
may coincide with $\mu_*$ {\it only}
if all non grazing
periodic orbits have the same Lyapunov exponent, equal to $h_*$. (No dispersing billiards 
which satisfy this condition are known. See also Remark~\ref{BIH}.)

Our third result is (Theorem~\ref{PP}) that $h_*$ coincides with the Bowen--Pesin--Pitskel entropy $\htop(T|_{M'})$
(still assuming \eqref{star}).

\smallskip
Next, Theorem~ \ref{nbhdthm} contains a key 
technical\footnote{This estimate implies that
almost every point approaches the singularity
sets more slowly than any exponential rate \eqref{rateeta}, see e.g. \cite{LM} for
an application of such rates of approach.} 
estimate 
on the measures of neighbourhoods of
singularity sets, \eqref{nbhd},  used  to prove Theorems~\ref{existu} and~\ref{PP} under the assumption \eqref{star}. 
Theorem~ \ref{nbhdthm}
also states  that
 $\mu_*$ has no atoms, that it
gives zero mass to any stable or unstable manifold and any singularity set,
that $\mu_*$ is $T$-adapted (in the sense 
of \eqref{adapt}),
 and that $\mu_*$-almost every $x \in M$ has stable and unstable
manifolds of positive length.

Finally, we obtain a lower bound  $\# \Fix T^{m} \ge C e^{h_* m}$
on the cardinality of the set of periodic orbits (Corollary~\ref{periodic}
and the comments thereafter)
whenever \eqref{star} holds.

\subsection{The Transfer Operator --- Organisation of the Paper}
\label{sec:transfer}

Our tool to construct the measure of maximal entropy is a transfer
operator $\cL=\cL_{\mbox{\tiny{top}}}$ with $\cL f = \frac{f \circ T^{-1}}{J^sT \circ T^{-1}}$
analogous to the transfer operator $\cL_{\mbox{\tiny{SRB}}} f= (f/ |\Det DT|) \circ T^{-1}$ which has proved very successful
\cite{demzhang11}  to study the
measure $\musrb$. An important difference
is that our transfer operator, $\cL f$,  is weighted by an unbounded\footnote{The
naive idea to introduce a bounded cutoff in the weight does not seem to work.} function ($1/J^sT$, where 
the stable
Jacobian $J^sT$ may tend to zero near  grazing
orbits). Using ``exact'' stable leaves instead of admissible 
approximate stable leaves will allow us to get rid of the Jacobian
after a leafwise change of variables  ---  the same change of variables in \cite{demzhang11} 
for the transfer operator $\cL_{\mbox{\tiny{SRB}}}$ associated with $\musrb$ left them
with  $J^sT$, allowing countable sums over homogeneity
layers to control distortion, and thus working with a Banach space giving
a spectral gap and exponential mixing.
In the present work, we relinquish  the homogeneity layers to avoid
unbounded sums (see e.g. the logarithm needed to obtain
the growth Lemma~\ref{lem:growth})  and obtain a bounded operator, with spectral
radius $e^{h_*}$. The price to pay is that we do not
have the distortion control
needed  for H\"older type moduli of continuity in the Banach norms of our
weak and strong spaces $\cB\subset \cB_w$.
The weaker modulus of continuity
than in 
\cite{demzhang11} does not yield a spectral gap. We thus do not
claim exponential mixing properties for the measure of maximal entropy $\mu_*$
constructed (in the spirit of the work of Gou\"ezel--Liverani \cite{GL2} for Axiom~A diffeomorphisms)
by combining right and left maximal eigenvectors 
$\cL \nu=e^{h_*}\nu$ and $\cL^* \tilde \nu=e^{h_*} \tilde \nu$ of the transfer operator.

\medskip

The paper is organised as follows: In Section~\ref{Full}, we give formal statements
of our main results. Section~\ref{sec:proof 1.4} contains the proof of Theorem~\ref{thm:h_*}
about equivalent formulations of $h_*$.  In Section~\ref{transfert},
we define our Banach spaces $\BB$ and $\BB_w$ of anisotropic
distributions, and we state the ``Lasota--Yorke'' type estimates
on our transfer operator $\LL$. 
Section~\ref{sec:growth} contains key combinatorial
growth lemmas,  controlling the growth in complexity of the iterates of a stable curve.
It also contains the definition of  Cantor rectangles
(Section~\ref{supera}.)
We next prove  the ``Lasota--Yorke''  Proposition~\ref{prop:ly}, the 
compact embedding of $\cB$ in $\cB_w$, and
show that the spectral radius of $\cL$ is equal
to $e^{h_*}$ in Section~\ref{sec:proof prop}.
The invariant probability measure $\mu_*$ is constructed 
 in Section~\ref{atlast0} by combining a right and left eigenvector ($\nu$ and $\tilde \nu$) of $\cL$.
Section~\ref{atlast0}  contains the proof of Theorem~\ref{nbhdthm} 
about the measure
 of singular sets.
 Section~\ref{notmixing} contains a key result
of absolute continuity of the unstable foliation with respect to $\mu_*$
as well as the proof that $\mu_*$ has full support, exploiting  $\nu$-almost everywhere positive length of unstable manifolds from Section~\ref{regular}. 
We establish upper and lower bounds on the $\mu_*$-measure of dynamical
Bowen balls in Section~\ref{BoBa}, deducing
from them a necessary condition for $\musrb$ and $\mu_*$ to
coincide.  Using the absolute continuity from Section~\ref{notmixing}, we show in Section~\ref{mixing}
that $\mu_*$ is K-mixing. In this section
we also use the upper bounds on Bowen balls
to see that $\mu_*$ is a measure of maximal entropy  and prove the Bowen--Pesin--Pitskel Theorem~\ref{PP}.
We 
deduce the Bernoulli property from K-mixing and
hyperbolicity in Section~\ref{sec:bernoulli}, adapting\footnote{As pointed out to us by Y. Lima, we could
 instead apply \cite[Thm~ 3.1]{Sarr} to the lift of $\mu_*$ to the symbolic space constructed in \cite{LM}.} 
\cite{ChH}. Finally, we show uniqueness in Section~\ref{sec:un}.

Our Hopf-argument proof of K-mixing requires showing absolute continuity of the
unstable foliation for $\mu_*$, a new result of independent interest,
which is the content of Corollary~ \ref{cor:abs cont}.
The ``fragmentation'' lemmas from Section~\ref{sec:growth}, needed to
get the lower bound on the spectral radius of the transfer operator, are also new.
They imply, in particular, that the length $|T^{-n}W|$ of every local stable manifold $W$
grows at the same exponential rate $e^{n h_*}$ 
(Corollary~\ref{cor:stable growth}).

\smallskip

We conclude this introduction
with two remarks on the finite horizon condition.

\begin{remark} [Finite Horizon and Collision Time $\tau$]
For $x \in M$, let $\tau(x)$ denote the distance from $x$ to $T(x)$.  
If $\tau$ is unbounded, i.e., if there is a collision-free
trajectory for the flow, then there must be a flow trajectory
making only tangential collisions.
The reverse implication, however, is not true. 
Our\footnote{We shall need the slightly stronger version e.g. in Lemmas~\ref{lem:hsep} and~\ref{lem:hspan}.} finite horizon assumption therefore
implies  that $\tau$ is bounded on $M$. Assuming only that
$\tau$ is bounded  is sometimes also 
called finite horizon  \cite{chernov book}. (If the scatterers $B_i$
are viewed as open, then tangential collisions simply do not occur and
the two definitions of finite horizon are reconciled.)
\end{remark}

\begin{remark}
[Billiard with Infinite Horizon]\label{BIH}
Chernov \cite[\S3.4]{chernov} proved that the topological entropy of the Sinai billiard map 
$T$ restricted to the non compact set $M'$ is infinite
if the horizon is {\it not} finite, and together with Troubetskoy \cite{CT} constructed invariant measures  with infinite metric entropy
for this map.
Since the entropy of the smooth measure  $\musrb$ is
finite,
the measure $\musrb$
does not maximise entropy for infinite horizon billiards. Chernov conjectured
\cite[Remark 3.3]{chernov} that this property holds for more general billiards, in particular for
Sinai billiards with finite horizon.
\end{remark}

\section{Full Statement of Main Results} 
\label{Full}

In this section, we formulate  definitions of topological entropy for the billiard map that
we shall prove are equivalent before stating formally all main results of this paper.

\subsection{Definitions of Topological Entropy  $h_*$ of $T$ on $M$}
\label{sec:def}

We first introduce notation:
Adopting the standard coordinates $x = (r,\vf)$, for $T$,  where $r$ denotes arclength along
$\partial B_i$ and $\vf$ is the angle the post-collision trajectory makes with the normal to 
$\partial B_i$, the phase space of the map is  the compact
metric space $M$ given by the disjoint union of cylinders,
$$M := \partial Q \times \left [-\frac{\pi}{2}, \frac{\pi}{2} \right ] 
= \bigcup_{i=1}^D \partial B_i \times \left [-\frac{\pi}{2}, \frac{\pi}{2} \right ]\, .
$$
We denote each connected component of  $M$ by $M_i = \partial B_i \times [-\frac{\pi}{2}, \frac{\pi}{2}]$.
In the coordinates $(r, \vf)$, the billiard map $T:M \to M$ preserves
\cite[\S2.12]{chernov book} the smooth invariant measure\footnote{All measures
in this work are finite Borel measures.} defined by
$\musrb = (2  |\partial Q|)^{-1} \cos \vf \, drd\vf$.

We  discuss next the discontinuity set of $T$:
Letting $\cS_0 =\{ (r, \vf) \in M : \vf = \pm \pi/2 \}$ denote the set of 
tangential collisions, then for each nonzero $n \in \mathbb{N}$,
the set 
$$
\cS_{\pm n} = \cup_{i=0}^n T^{\mp i}\cS_0
$$ 
is the singularity set for $T^{\pm n}$.
In this notation, 
the $T$-invariant (non compact) set $M'$ of
continuity of $T$  is 
$
M'= M \setminus \cup_{n \in \integer} \cS_n 
$.

For $k, n \ge 0$, let $\cM_{-k}^n$ denote the partition of $M \setminus (\cS_{-k} \cup \cS_n)$ into its maximal connected components.  Note that all elements of $\cM_{-k}^n$ are open sets.
The cardinality of the sets $\cM_0^n$ will play a key role in the estimates
on the transfer operator in Section~\ref{transfert}.
We formulate the following definition with the idea that the growth rate of elements in $\cM_{-k}^n$
should define the topological entropy of $T$, by analogy with the definition  using 
a generating open cover (for continuous maps on compact spaces).

\begin{definition}
\label{def}
$
h_* = h_*(T) := \limsup_{n \to \infty} \frac 1n \log \# \cM_0^n 
$.
\end{definition}

The fact that the limsup defining $h_*$ is a limit, as well as several equivalent 
characterizations involving the cardinality of related dynamical partitions or a variational
principle, are proved in
Theorem~\ref{thm:h_*} (see Lemma~\ref{prop:equiv}). 

\begin{remark}[$h_*(T) = h_*(T^{-1})$]\label{2.2}
If $A \in \cM_0^n$, then $T^nA \in \cM_{-n}^0$ since $T^n\cS_n = \cS_{-n}$.  Thus
$\# \cM_0^n = \# \cM_{-n}^0$, and so $h_*(T) = h_*(T^{-1})$.
\end{remark}

It will be convenient to express $h_*$ in terms of the 
rate of growth
of the cardinality of the refinements of a fixed partition, i.e., $\bigvee_0^n T^{-i}\cP$, for some
fixed $\cP$.  Although $\cM_{0}^n$ is not immediately of this form, we will show 
that in fact $h_*$ can be expressed in this fashion, obtaining along the way subadditivity
of $\log \# \cM^n_0$. For this, we introduce two sequences of partitions.
Let $\cP$ denote the partition of $M$ into maximal connected sets on which
$T$ and $T^{-1}$ are continuous.  
Define $\cP_{-k}^n = \bigvee_{i=-k}^n T^{-i} \cP$. Then, $n \mapsto \log \# \cP^{n}_{-k}$
is subadditive for any fixed $k$, in particular the  limit
$
\lim_{n \to \infty} \frac{1}{n}  \log \# \cP^{n}_{0}$ exists.

The interior of each element of $\cP$ 
corresponds to precisely one element of $\cM_{-1}^1$;
however, its refinements $\cP_{-k}^n$ may also contain
some isolated points if three or more scatterers have a common tangential trajectory.
Figure 1 displays two such examples (the pictures are local: we have not represented
all discs needed to ensure finite horizon).

\begin{figure}
\begin{center}
\resizebox{!}{.4 in}{\includegraphics{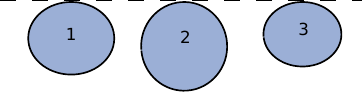}}
\hspace{.2 in}
\resizebox{!}{.8 in}{\includegraphics{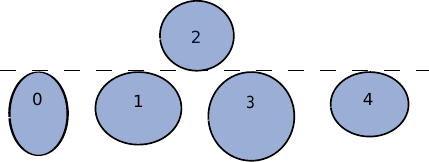}}
\begin{picture}(140,2)
\put(-47,-4){\small (a)}
\put(152, -4){\small (b)}
\end{picture}
\parbox{.9 \textwidth}{\caption{\small (a) The billiard trajectory corresponding to the dotted line has symbolic itinerary 123, but
is an isolated point in $\cP_0^1$.  Any open set with symbolic itinerary 12 cannot land on scatterer 3 (unless it first wraps around the torus).  (b) The billiard trajectory corresponding to the dotted line and having
symbolic trajectory 1234 is not isolated since it belongs to the boundary of an open set with the
same symbolic sequence; however, the addition of scatterer 0 on the common tangency forces the
point with symbolic trajectory 01234 to be isolated.}}
\end{center}
\vspace{-8 pt}
\end{figure}

\smallskip

Let now $\hP_{-k}^n$ denote the collection of
interiors of elements of $\cP_{-k}^n$. Then
$\cP_{-k}^n$ forms a finite partition of $M$, while $\hP_{-k}^n$ forms a partition of 
$M \setminus (\cS_{-k-1} \cup \cS_{n+1})$ into open, connected sets.
(We will show in Lemma~\ref{prop:equiv} that $\hP_{-k}^{n} = \cM_{-k-1}^{n+1}$.)

Finally,  we recall the classical Bowen \cite{walters} definitions of topological entropy for continuous maps using  
$\ve$-separated and $\ve$-spanning sets.  Define the dynamical distance 
\begin{equation}
\label{eq:d_n}
d_n(x,y) := \max_{0 \le i \le n} d(T^ix, T^iy)\, ,
\end{equation}
where $d(x,y)$ is the Euclidean metric on each $M_i$, and $d(x,y) = 10 D \cdot \max_i \diam (M_i)$ if $x$ and $y$ belong
to different $M_i$ (this definition ensures we get a compact set), where $D$ is the number of scatterers.

As usual, 
given $\ve >0$, $n \in \bN$, we call $E$ an {\it $(n,\ve)$-separated set} if
for all $x, y \in E$ such that $x \neq y$, we have $d_n(x,y) > \ve$.
We call $F$ an {\it $(n, \ve)$-spanning set} if for all $x \in M$, there
exists $y \in F$ such that $d_n(x,y) \le \ve$. 

Let $r_n(\ve)$ denote the maximal cardinality of any $(n,\ve)$-separated set, and let $s_n(\ve)$
denote the minimal cardinality of any $(n, \ve)$-spanning set.  We recall two related
quantities:
$$\hsep
= \lim_{\ve \to 0} \limsup_{n \to \infty} \frac 1n \log r_n(\ve)\, , \qquad
\hspan 
= \lim_{\ve \to 0} \limsup_{n \to \infty} \frac 1n \log s_n(\ve)\, .
$$

Although $\lim_{n \to \infty} \frac 1 n \log \# \cP^n_0$,  $\hsep$, and $\hspan$ are typically used for continuous maps, our first main result
is  that
these naively defined quantities for the discontinuous billiard map $T$ all
agree with $h_*$, and
they give an upper bound for the Kolmogorov entropy:

\begin{theorem}[Topological Entropy of the Billiard]
\label{thm:h_*}
The limsup in Definition~\ref{def} is a limit, and in fact the sequence
$\log \# \cM^n_0$ is subadditive. In addition, we have:
\begin{enumerate}
\item\label{cP} $h_*=\lim_{n \to \infty} \frac 1 n \log \# \cP^n_0$;
\item\label{hP} the  sequence $\frac 1 n \log \# \hP^n_0$ also converges to $h_*$ as $n\to \infty$;
  \item\label{seppspann} $h_* = \hsep$ and $h_* = \hspan$;
  \item\label{varprinc1} $h_* \ge \sup \{ h_\mu(T) : \mbox{$\mu$ is a $T$-invariant Borel probability measure on $M$} \}$. 
  \end{enumerate}
  \end{theorem}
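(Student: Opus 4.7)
The plan is to tackle the four assertions in sequence, with subadditivity underlying all of them.

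I begin by establishing subadditivity of $\log\#\cP_0^n$ and $\log\#\cM_0^n$. The identity $\cP_0^{n+m}=\cP_0^n\vee T^{-(n+1)}\cP_0^{m-1}$ together with non-expansion of cardinality under $T$-preimage gives $\#\cP_0^{n+m}\le \#\cP_0^n\cdot\#\cP_0^m$ (using that the sequence is non-decreasing). The analogous estimate for $\cM_0^n$ uses $T^{n+1}(\cS_{n+m}\setminus\cS_n)=\cS_{m-1}$: any component of $M\setminus\cS_{n+m}$ lies in a component of $M\setminus\cS_n$ whose $T^{n+1}$-image lies in a component of $M\setminus\cS_{m-1}$. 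Subadditivity together with Fekete's lemma makes the limsup in Definition~\ref{def} a limit.

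The heart of (1) and (2) is the identity $\hP_{-k}^n=\cM_{-k-1}^{n+1}$, which follows from the observation that the interiors of atoms of $\cP$ are exactly the connected components of $M\setminus(\cS_{-1}\cup\cS_1)$, so iterating replaces $\cS_{\pm 1}$ by $\cS_{-k-1}\cup\cS_{n+1}$. Since $T^{-k}$ is a homeomorphism from $M\setminus\cS_{-k}$ onto $M\setminus\cS_k$ sending $\cS_{-k}\cup\cS_n$ onto $\cS_{n+k}$, this yields $\#\cM_{-k}^n=\#\cM_0^{n+k}$, and in particular $\#\hP_0^n=\#\cM_0^{n+2}$, proving (2). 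For (1), $\cP_0^n\setminus\hP_0^n$ consists of isolated points lying at transverse crossings $T^{-i}\cS_0\cap T^{-j}\cS_0$ with $0\le i<j\le n$; since each such point lies on the boundary of at least four atoms of $\hP_0^n$, a local pigeonhole bounds the number of isolated points by a constant times $\#\hP_0^n$, giving both sequences the same exponential growth rate.

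For (3), the standard chain $s_n(\ve)\le r_n(\ve)\le s_n(\ve/2)$ yields $\hsep=\hspan$, so it suffices to prove $\hsep=h_*$. For $h_*\le\hspan$: the finite-horizon complexity bound of Chernov implies that every dynamical ball $B_n(x,\ve)$ meets at most polynomially-many atoms of $\cM_0^n$, so a minimal $(n,\ve)$-spanning set covers all $\#\cM_0^n$ atoms and $\#\cM_0^n\le \mathrm{poly}(n)\cdot s_n(\ve)$. For $\hsep\le h_*$, the harder direction, I fix $\ve>0$ and a finite partition $\cR_\ve$ of $M$ of diameter $<\ve$; any $(n,\ve)$-separated set has at most one point per atom of $(\cR_\ve\vee\cP)_0^n$, and a careful combinatorial argument using the bounded local geometry inherited from finite horizon bounds the number of such atoms inside each $\cM_0^n$-atom by a polynomial factor in $n$.

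Finally, for (4), the concavity inequality $H_\mu(\cP_0^n)\le \log\#\cP_0^n$ combined with (1) gives $h_\mu(T,\cP)\le h_*$ for every $T$-invariant Borel probability $\mu$. Observing that $\cP_{-\infty}^\infty$ separates points under hyperbolicity for any $\mu$ assigning zero mass to the countable singularity set makes $\cP$ a two-sided generator, so $h_\mu(T)=h_\mu(T,\cP)\le h_*$; measures charging a singular orbit are supported on periodic sets and have zero entropy. Alternatively, part (3) together with Katok's spanning-set inequality $h_\mu(T)\le\hspan$ (valid for piecewise smooth hyperbolic maps by Katok--Strelcyn) yields the same conclusion.

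\emph{Main obstacle.} The principal technical hurdle is the upper bound $\hsep\le h_*$ in (3), where one must control the refinement of a small-diameter partition within each $\cM_0^n$-atom without incurring an exponential correction; this requires the finite-horizon complexity structure of the billiard in an essential way. The local complexity bound used for $h_*\le\hspan$ similarly depends crucially on finite horizon.
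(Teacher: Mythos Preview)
Your overall structure is sound, and parts (2) and (4) essentially match the paper. However, your approach to part (3) is more complicated than necessary, and there is a gap in your argument for (1).

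\textbf{Part (1): the pigeonhole argument is incomplete.} Saying that each isolated point of $\cP_0^n$ lies on the boundary of at least four atoms of $\hP_0^n$ does not by itself bound the number of isolated points by a multiple of $\#\hP_0^n$: you would also need that each atom has only boundedly many isolated points on its boundary, which is not obvious. The paper instead proves the sharper bound $\#\cP_{-k}^n\le\#\hP_{-k}^n+C(n+k+1)$ by observing that the isolated points created at each forward iterate lie in the fixed finite set $\cS_0\cap T^{-1}\cS_0$ (and similarly for backward iterates), so at most $C$ new isolated points appear per step.

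\textbf{Part (3): the paper's route is much simpler and sidesteps your ``main obstacle.''} For $\hsep\le h_*$, rather than refining by an auxiliary small-diameter partition $\cR_\ve$ and then controlling complexity inside each $\cM_0^n$-atom, the paper uses \emph{two-sided} refinements: choose $k_\ve$ so large that $\diam(\cM_{-k_\ve-1}^{n+1})<\ve$ for all $n\ge k_\ve$ (this is immediate from uniform hyperbolicity). Then any $(n,\ve)$-separated set has at most one point per element of $\cP_{-k_\ve}^{k_\ve+n}$, giving $r_n(\ve)\le\#\cP_{-k_\ve}^{k_\ve+n}$ directly, with no combinatorial correction needed. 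For the reverse inequality $h_*\le\hsep$, instead of invoking polynomial complexity of Bowen balls, the paper proves a clean separation lemma: there exists $\ve_0>0$ (depending only on the structure of $\cS_1$, and here the strong finite-horizon assumption is used) such that points in distinct elements of $\cM_0^n$ satisfy $d_n(x,y)\ge\ve_0$. Picking one point per $\cM_0^n$-element then gives an $(n,\ve)$-separated set of cardinality $\#\cM_0^n$ for every $\ve\le\ve_0$. The same two ideas handle $\hspan$ without passing through the inequality $s_n(\ve)\le r_n(\ve)\le s_n(\ve/2)$.
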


The above theorem will follow from
Lemmas~\ref{prop:equiv}, ~\ref{lem:hsep}, ~\ref{lem:hspan}, and ~\ref{lem:geq}.

  (We shall obtain in Lemma~\ref{lem:super} a superadditive property for $\log \# \cM^n_0$.)

\subsection{The Measure $\mu_*$ of Maximal Entropy}
  
  Our next main result,  existence and the Bernoulli property of a unique
  measure of maximal entropy, 
  will be proved in Section~\ref{mmu},   
  using the transfer operator
  $\cL$  studied in Section~\ref{transfert}.
  
  \begin{theorem}[Measure of Maximal Entropy for the Billiard]\label{existu}
   If $h_*>s_0\log 2$ then $$h_*=\max \{ h_\mu(T) : \mbox{$\mu$ is a $T$-invariant Borel probability measure on $M$} \}\, .$$
  Moreover,
  there exists a unique  $T$-invariant Borel probability measure $\mu_*$
  such that $h_* = h_{\mu_*}(T)$. In addition, $\mu_*$ is
  Bernoulli  and  $\mu_*(O)>0$ for all open sets $O$.
  Finally, if there exists a non grazing periodic point $x$ of period
  $p$ such that $\frac{1}{p} \log |\det (DT^{-p}|_{E^s}(x))|\ne h_*$
then $\mu_* \neq \musrb$.  
\end{theorem}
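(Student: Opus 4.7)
The plan is to construct $\mu_*$ as the pairing of a right and a left eigenvector of the transfer operator $\cL$ at its spectral radius $e^{h_*}$, and then establish its ergodic and variational properties in turn. First I would exploit the Lasota--Yorke estimates on $\cL$ acting on the anisotropic spaces $\cB \subset \cB_w$ together with compactness of the embedding to run a Hennion/Ionescu-Tulcea--Marinescu argument: the essential spectral radius of $\cL$ on $\cB$ is strictly less than $e^{h_*}$, so the peripheral spectrum is finite-dimensional and yields a (nonzero, nonnegative in an appropriate sense) right eigenvector $\nu$ with $\cL\nu = e^{h_*}\nu$. By applying the analogous construction to a dual operator associated with unstable curves, I obtain a left eigenvector $\tilde\nu$, and define $\mu_*$ via the Gou\"ezel--Liverani style pairing $f \mapsto \tilde\nu(f\,\nu)$, normalized to a probability measure. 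It is at the level of the Lasota--Yorke contraction on the quotient $\cB/\cB_w$ that the sparse recurrence hypothesis $h_* > s_0\log 2$ enters, via the fragmentation/growth lemmas of Section~\ref{sec:growth}.

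Having $\cL\nu = e^{h_*}\nu$ and $\cL^*\tilde\nu = e^{h_*}\tilde\nu$, verification that $\mu_*$ is a $T$-invariant Borel probability measure on $M$ reduces to pushing a continuous test function through $\cL$ and cancelling the two eigenvalue factors. For the entropy identification $h_{\mu_*}(T)=h_*$, the upper bound is part~\eqref{varprinc1} of Theorem~\ref{thm:h_*}, which also delivers the equality $h_*=\max h_\mu(T)$ as soon as the maximum is attained. For the lower bound I would use the upper and lower bounds on $\mu_*$-masses of Bowen balls of the form $e^{-nh_*}$ up to subexponential factors, announced for Section~\ref{BoBa}, combined with a Brin--Katok style formula.

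For full support I would use the Cantor rectangle machinery of Section~\ref{supera}: any open set $O$ contains a Cantor rectangle charged by both $\nu$ and $\tilde\nu$, using $\nu$-almost everywhere positivity of unstable manifold length (Section~\ref{regular}). K-mixing would be proved by a Hopf argument as in \cite{ChH}, with the new input being absolute continuity of the unstable holonomy with respect to the conditionals of $\mu_*$ on unstable manifolds (Corollary~\ref{cor:abs cont}); Bernoullicity is then bootstrapped from K-mixing and hyperbolicity. Uniqueness follows because any other measure of maximal entropy must, by the Bowen ball estimates and the geometric structure of Cantor rectangles, be absolutely continuous with respect to the pairing $\tilde\nu \otimes \nu$ on rectangles, forcing coincidence with $\mu_*$.

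Finally, for $\mu_* \neq \musrb$ under the periodic orbit hypothesis, I would compare the local scaling of the two measures on Bowen balls centred at a non-grazing periodic point $x$ of period $p$. The Bowen ball bounds give $\mu_*(B_n(x,\ve)) \asymp e^{-nh_*}$ up to subexponential factors, while the smooth measure gives $\musrb(B_n(x,\ve))$ of order $|\det DT^{-p}|_{E^s}(x)|^{n/p}$ via distortion along the orbit; if these two exponential rates differ, the two measures cannot agree in a neighbourhood of $x$. The main obstacle I expect is the construction and spectral analysis of $\cL$ on $\cB$: because $\cL$ is weighted by the unbounded factor $1/J^sT$ and the weak modulus of continuity on $\cB$ precludes a true spectral gap, one must balance the fragmentation arguments against the loss of distortion control, and the proof of absolute continuity of the unstable foliation with respect to the singular measure $\mu_*$ (rather than the smooth measure $\musrb$) is the delicate new ingredient that allows the Hopf argument to close.
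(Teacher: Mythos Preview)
Your high-level plan aligns with the paper in several respects (pairing right and left eigenvectors, Brin--Katok for $h_{\mu_*}=h_*$, Hopf argument plus absolute continuity for K-mixing, bootstrapping to Bernoulli, Bowen-ball rate comparison for $\mu_*\neq\musrb$), but there is a genuine gap at the very first step.

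\textbf{There is no spectral gap, so Hennion/ITM does not apply.} You propose to run a Hennion/Ionescu-Tulcea--Marinescu argument to deduce that the essential spectral radius of $\cL$ on $\cB$ is strictly below $e^{h_*}$ and extract a peripheral eigenvector. But the estimates in Proposition~\ref{prop:ly} are \emph{not} Lasota--Yorke inequalities in the required form: the unstable norm bound \eqref{eq:unstable ly} has the shape $\|\cL^n f\|_u \le C e^{nh_*}(\|f\|_u + n^\varpi \|f\|_s)$, and even under $h_*>s_0\log 2$ one only gets \eqref{eq:unstable lyb}, i.e.\ uniform boundedness of $e^{-nh_*}\|\cL^n\|_\cB$, with no contraction on any quotient. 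The paper is explicit about this (see the paragraph following Remark~\ref{4.8}). Consequently the eigenvectors are \emph{not} obtained spectrally. Instead, Proposition~\ref{prop:exist} constructs $\nu$ as a $\cB_w$-limit of Ces\`aro averages $\nu_n=\frac{1}{n}\sum_{k=0}^{n-1}e^{-kh_*}\cL^k 1$, using only that $\|\nu_n\|_\cB$ is uniformly bounded (this is exactly where $h_*>s_0\log 2$ enters, via \eqref{spub}) together with compactness of $\cB\hookrightarrow\cB_w$. The dual eigenvector $\tilde\nu$ is built the same way from $(\cL^*)^k d\musrb$; it is the functional-analytic dual on $(\cB_w)^*$, not a separate operator on unstable curves as you suggest. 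The nontriviality $\tilde\nu(\nu)>0$ and the bound $\|\nu\|_u<\infty$ require separate arguments (see \eqref{7.6a}--\eqref{eq:tilde} and \eqref{eq:norm bound}); they do not come for free from spectral theory.

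\textbf{Your uniqueness sketch is also off.} Absolute continuity of a competing measure with respect to ``$\tilde\nu\otimes\nu$ on rectangles'' is not what the paper proves, and it is not clear how Bowen-ball bounds alone would force it. The paper's Proposition~\ref{prop:unique} adapts Bowen's argument directly: assuming $\mu\perp\mu_*$, one compares $H_\mu(\cQ_{2n})$ against $2nh_*$ by splitting $\cQ_{2n}$ into ``good'' elements $G_{2n}$ (those with a long-diameter ancestor in both forward and backward time, for which Lemma~\ref{lem:lower bound} gives $\mu_*(\bar E)\ge C_{\delta_2}e^{-jh_*}$) and a ``bad'' set $B_{2n}$ of subexponential relative size (Lemma~\ref{lem:bad}). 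A convexity estimate on $-\sum\mu(Q)\log\mu(Q)$ then forces $h_\mu(T)<h_*$. The fragmentation lemmas of Section~\ref{sec:growth} are what make this counting work; Bowen-ball asymptotics by themselves are not enough.
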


The above theorem  follows from Propositions~\ref{lastitem}, \ref{prop:not eq}, and \ref{Bern}, 
Corollary~\ref{cor:max}, and Proposition~\ref{prop:unique}.
(J. De Simoi has told us
   that
   \cite[\S 4.4]{DKL}  the (possibly empty) set
   of planar billiard tables satisfying a non-eclipsing condition (i.e., open billiards) for which  $\frac{1}{p} \log |\det (DT^{-p}|_{E^s}(x))|= h_*$
      for all $p$ and all non-grazing $p$-periodic points $x$ has infinite
   codimension.)
   
The existence of $\mu_*$ with $h_{\mu_*}(T) =h_*$, together with item \eqref{cP} of Theorem~\ref{thm:h_*}
expressing $h_*$ as a limit involving the refinements
of a single partition, will allow us to interpret
$h_*$ as the Bowen--Pesin--Pitskel topological  entropy of $T|_{M'}$ in Section~\ref{mixing}:
  
   \begin{theorem}[$h_*$ and Bowen--Pesin--Pitskel Entropy]\label{PP}  If $h_*>s_0\log 2$ then $h_* = \htop(T|_{M'})$. 
   \end{theorem}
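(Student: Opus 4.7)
The plan is to prove $\htop(T|_{M'})=h_*$ by two inequalities, combining the detailed measure-theoretic information on $\mu_*$ provided by Theorems~\ref{existu} and~\ref{nbhdthm} with the spanning-set characterisation of $h_*$ in Theorem~\ref{thm:h_*}. Recall that $M'=M\setminus\bigcup_{n\in\integer}\cS_n$ is $T$-invariant and that $T|_{M'}$ is continuous, so the Bowen--Pesin--Pitskel formalism of \cite[\S11 and App.~II]{Pesin} applies directly: $\htop(T|_{M'})$ is the critical exponent $\alpha$ at which the Carathéodory sum over covers of $M'$ by Bowen balls $B(x_i,n_i,\ve)$ with $n_i\ge N$ transitions from $\infty$ to $0$ as $N\to\infty$, followed by the limit $\ve\to 0$.

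For the lower bound $\htop(T|_{M'})\ge h_*$, I would use that Theorem~\ref{nbhdthm} asserts that $\mu_*$ gives zero mass to every singularity set $\cS_n$, so since $M\setminus M'=\bigcup_{n\in\integer}\cS_n$ is a countable union, $\mu_*(M')=1$. The Pesin--Pitskel variational-type inequality then gives $\htop(T|_{M'})\ge h_{\mu_*}(T)$, and combined with $h_{\mu_*}(T)=h_*$ from Theorem~\ref{existu}, this yields the lower bound.

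For the upper bound $\htop(T|_{M'})\le h_*$, fix $\delta,\ve>0$. By item \eqref{seppspann} of Theorem~\ref{thm:h_*}, for all $n$ large enough there is an $(n,\ve)$-spanning set $F_n\subset M$ with $|F_n|\le e^{n(h_*+\delta)}$. For each $x\in F_n$ whose Bowen ball $B(x,n,\ve)$ meets $M'$, pick $y_x\in B(x,n,\ve)\cap M'$; then $B(x,n,\ve)\subset B(y_x,n,2\ve)$, so we obtain a cover of $M'$ by at most $e^{n(h_*+\delta)}$ Bowen balls of radius $2\ve$ centred in $M'$. For $\alpha>h_*+\delta$, applying this cover with $n=N$ shows that the Carathéodory sum at level $N$ is bounded by $e^{N(h_*+\delta-\alpha)}\to 0$, so $\htop(T|_{M'},2\ve)\le h_*+\delta$; letting $\delta\to 0$ and $\ve\to 0$ finishes the proof.

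The only real obstacle is to ensure that the Pesin--Pitskel variational inequality of \cite[Thm~11.6]{Pesin} transfers to the $T$-invariant Borel subset $M'\subset M$ even though the ambient map $T$ is only continuous on $M'$ rather than on all of $M$; this should however follow from a direct inspection of the argument, which only exploits the continuity of $T|_{M'}$ and the fact that $\mu_*$ is a Borel probability measure with $\mu_*(M')=1$. The hypothesis $h_*>s_0\log 2$ enters only through its role in producing $\mu_*$ together with the measure-theoretic regularity summarised above.
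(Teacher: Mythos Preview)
Your proposal is correct. The lower bound matches the paper's argument exactly: both use $\mu_*(M')=1$ (from Theorem~\ref{nbhdthm}) together with the Pesin--Pitskel variational inequality to get $\htop(T|_{M'})\ge h_{\mu_*}(T)=h_*$, and both flag the same caveat that the variational inequality must be checked to hold when continuity of $T$ is only available on $M'$ rather than on all of $M$ (the paper handles this by citing the remarks in \cite{PesinP}).

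For the upper bound, you take a genuinely different but equally valid route. The paper works in the open-cover formalism: it bounds $\htop(T|_{M'})$ by the capacity entropy $Ch_{M'}(T)$ via \cite[(11.12)]{Pesin}, then controls the latter using the open covers of $M'$ furnished by the partitions $\hP_{-k}^k$ and the identity $\lim_n\frac1n\log\#\hP_{-k}^{k+n}=h_*$ from Lemma~\ref{prop:equiv}. You instead use the metric (Bowen-ball) formulation and feed in the spanning-set characterisation $h_*=\hspan$ from Theorem~\ref{thm:h_*}\eqref{seppspann}, re-centering each spanning ball at a point of $M'$ by the triangle inequality. Your approach is a bit more direct and avoids the detour through capacity entropy; the paper's approach stays closer to the partition combinatorics already developed in Section~\ref{sec:proof 1.4}. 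Both are essentially the same observation---that $M'$ is covered by roughly $e^{nh_*}$ sets of small $d_n$-diameter---packaged through different formalisms.
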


\subsection{A Key Estimate on Neighbourhood of Singularities}

We call a smooth curve in $M$ a {\it stable curve} if its tangent vector at each point lies in the stable cone,
and define an {\it unstable curve} similarly.
As mentioned in Section~\ref{sec:intro}, the sets $\cS_n$ are the singularity sets for $T^n$,
$n \in \mathbb{Z}\setminus\{0\}$.  The set $\cS_n \setminus \cS_0$ comprises \cite{chernov book}  a finite union of stable
curves for $n > 0$ and a finite union of unstable curves for $n < 0$.
For any $\epsilon >0$ and any set $A \subset M$, 
we denote by $\cN_\epsilon(A)=\{ x \in M \mid d(x,A) < \epsilon\}$ the 
$\epsilon$-neighbourhood of $A$.

\smallskip
The following key  result gives information on the
measure of neighbourhoods of the singularity sets (it is used in the proofs of
Theorem~\ref{existu} and, indirectly, Theorem~\ref{PP}).

\begin{theorem}[Measure of Neighbourhoods of Singularity Sets]\label{nbhdthm}
Assume that $h_* > s_0 \log 2$ 
and let $\mu_*$ be the ergodic measure of maximal entropy constructed
in  \eqref{defm}.  The measure $\mu_*$ has no atoms, and
for any local stable or unstable manifold $W$ we have $\mu_*(W)=0$. 
In addition $\mu_*(\cS_n)=0$
for any $n \in \mathbb{Z}$.

More precisely,  for any $\gamma>0$
so that $2^{s_0\gamma}<e^{h_*}$ and $n \in \mathbb{Z}$, there exist $C$ and $\hat C_n <\infty$
such that for all $\ve > 0$ and
any smooth curve $S$ uniformly transverse to the stable cone, 
\begin{equation}
\label{nbhd}
\mu_*(\cN_\epsilon(S)) < \frac{C}{|\log \epsilon|^{\gamma}} \, ,\quad
\mu_*(\cN_\epsilon(\cS_n)) < \frac{\hat C_n}{|\log \epsilon|^{\gamma} } \, .
\end{equation}
Since $h_*> s_0\log 2$ we may take $\gamma > 1$, and we have
$$\int | \log d(x, \cS_{\pm 1})| \, d\mu_* < \infty\, ,
$$
(i.e., $\mu_*$ is $T$-adapted \cite{LM}), and $\mu_*$-almost every $x \in M$ has stable and unstable
manifolds of positive length.
\end{theorem}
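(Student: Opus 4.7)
My strategy is to prove the quantitative bound \eqref{nbhd} as the key technical estimate, then deduce all other assertions from it. Since $\cS_n \subset \bigcup_{k=0}^{|n|} T^{\mp k}\cS_0$ and a $T^{\mp 1}$-pullback stretches an $\epsilon$-tube only by a bounded (iterate-dependent) factor in the direction normal to the tube, the $T$-invariance of $\mu_*$ reduces the bound for $\cN_\epsilon(\cS_n)$ to the bound for $\cN_\epsilon(\cS_0)$ up to the constant $\hat C_n$. For a general smooth curve $S$ uniformly transverse to the stable cone, the same transfer-operator iteration applies: the backward iterates $T^{-n}S$ asymptotically align with the stable direction under $DT^{-1}$, so the argument for $\cS_0$ below extends to $S$ with a constant depending only on the transversality angle.

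The heart of the proof is the bound on $\mu_*(\cN_\epsilon(\cS_0))$. Fix $n = n(\epsilon) \sim |\log \epsilon|/(s_0 \log 2)$ and use invariance:
\[
\mu_*(\cN_\epsilon(\cS_0)) \;=\; \mu_*(T^{-n}\cN_\epsilon(\cS_0)) \;=\; \sum_{A \in \cM_0^n}\, \mu_*\bigl(T^{-n}\cN_\epsilon(\cS_0)\cap A\bigr)\, .
\]
On each piece $A$, the preimage is a tubular neighbourhood of the stable curve $T^{-n}\cS_0 \cap A$, and the weight $(J^sT^n)^{-1}$ appearing in iterates of $\LL$ is bounded by $2^{ns_0}$ on orbits satisfying the sparse recurrence \eqref{defs0}, while the combinatorial complexity $\#\cM_0^n$ grows at rate $e^{nh_*}$ by Theorem~\ref{thm:h_*}. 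Pairing indicator-like test functions against $\nu$ and $\tilde\nu$ via the logarithmic modulus of continuity built into the weak Banach norm on $\BB_w$, and invoking the fragmentation lemma of Section~\ref{sec:growth} to control how many pieces carry excessive grazing expansion, I expect to arrive at a bound $C\cdot 2^{-n s_0 \gamma} \le C'|\log\epsilon|^{-\gamma}$ for any $\gamma$ with $2^{s_0\gamma}<e^{h_*}$. The principal obstacle is precisely this balancing: the weight factor $2^{n s_0}$ and the combinatorial factor $e^{n h_*}$ must be overcome by the logarithmic regularity of the eigenvectors combined with the sparse recurrence, which is where the surplus $h_* - s_0\log 2 > 0$ from \eqref{star} is consumed.

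The qualitative conclusions then follow from \eqref{nbhd}. Taking $\epsilon\to 0$ yields $\mu_*(\cS_n)=0$ for every $n\in\integer$ and $\mu_*(W)=0$ for every smooth curve $W$ uniformly transverse to $\cC^s$; in particular $\mu_*(W^u)=0$ for every local unstable manifold. Applying the argument to $T^{-1}$---whose topological entropy is also $h_*$ by Remark~\ref{2.2}, whose sparse recurrence condition is symmetric, and for which $\mu_*$ remains the measure of maximal entropy---gives $\mu_*(W^s)=0$. Absence of atoms is immediate: every $p\in M$ lies on a short smooth curve $S_p$ transverse to $\cC^s$, so $\mu_*(\{p\})\le\mu_*(\cN_\epsilon(S_p))\to 0$. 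The $T$-adaptedness follows from the layer-cake identity
\[
\int |\log d(x,\cS_{\pm 1})|\, d\mu_* \;=\; \int_0^\infty \mu_*\bigl(\cN_{e^{-t}}(\cS_{\pm 1})\bigr)\, dt \;\le\; \int_0^\infty \hat C_1 \, t^{-\gamma}\, dt\, ,
\]
finite because $\gamma>1$ is admissible under \eqref{star}. Finally, positivity $\mu_*$-a.e.\ of local stable and unstable manifolds follows from a Borel--Cantelli argument on the events $\{d(T^{\mp n}x, \cS_0)<e^{-cn}\}$ (for $W^u$ and $W^s$ respectively): by invariance and \eqref{nbhd}, each such event has $\mu_*$-measure at most $C/n^\gamma$, which is summable since $\gamma>1$, and the standard hyperbolic bound $|W^{u/s}_{\mathrm{loc}}(x)|\gtrsim \inf_n e^{-cn}d(T^{\mp n}x,\cS_0)$ then yields the claim.
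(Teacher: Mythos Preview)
Your deductions \emph{after} the core bound \eqref{nbhd} are largely on track (modulo one slip: your layer-cake integral $\int_0^\infty t^{-\gamma}\,dt$ diverges at $t=0$; you must use $\mu_*\le 1$ for small $t$). But the heart of the argument---the proof of \eqref{nbhd} itself---misses the key idea, and your sketch does not constitute a proof.

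The paper's approach is far more direct than your transfer-operator iteration. A curve $S$ uniformly transverse to the stable cone (such as $\cS_0$ or any $\cS_{-k}$) intersects every $W\in\cW^s$ in a bounded number of points, so $W\cap\cN_\ve(S)$ is a union of at most $A_S$ stable subcurves each of length $\le C\ve$. The strong stable norm $\|\cdot\|_s$ carries the weight $|\log|W||^\gamma$, and testing on a curve of length $\le C\ve$ yields a factor $|\log\ve|^{-\gamma}$ \emph{directly}. Hence for $f\in\cB$ one has $|1_{\cN_\ve(S)}f|_w\le C_S\|f\|_s|\log\ve|^{-\gamma}$ in one line. Applying this to the approximants $\nu_n$ (which satisfy $\|\nu_n\|_\cB\le\bar C$ uniformly), passing to the limit in $\cB_w$, and then using $\tilde\nu\in(\cB_w)^*$ to get $\mu_*(\cN_\ve(S))=\tilde\nu(1_{\cN_\ve(S)}\nu)\le\tilde C|1_{\cN_\ve(S)}\nu|_w$ completes the proof for such $S$. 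No iteration, no sparse recurrence balancing, no complexity counting is used at this stage.

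You have misplaced where the constraint $2^{s_0\gamma}<e^{h_*}$ enters. It is \emph{not} consumed in the bound \eqref{nbhd} via a $2^{ns_0}$-vs-$e^{nh_*}$ balance as you suggest; rather, it is the condition \eqref{eq:gamma} on $\gamma$ needed in the growth lemma and the ``Lasota--Yorke'' estimates (Proposition~\ref{prop:ly}) so that $\|\cL^n\|_\cB$ is bounded by $Ce^{nh_*}$ and hence $\sup_n\|\nu_n\|_\cB<\infty$. Once that uniform bound is in place, the logarithmic modulus in $\|\cdot\|_s$ does all the work. Your proposed pull-back $T^{-n}\cN_\ve(\cS_0)$ moves in the wrong direction geometrically: $T^{-n}\cS_0\subset\cS_n$ consists of \emph{stable} curves, so its neighborhood need not cut stable curves into short pieces, and your claimed Jacobian bound $2^{ns_0}$ is neither stated correctly nor justified. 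For $\cS_k$ with $k>0$ the paper does use invariance, but in the opposite direction: $T^k\cS_k=\cS_{-k}$ together with $T(\cN_\ve(\cS_1))\subset\cN_{C\ve^{1/2}}(\cS_{-1})$ reduces to the transverse case already handled.

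Finally, for $\mu_*(W^s)=0$ the paper does not invoke time reversal on the construction of $\mu_*$; it argues that $\mu_*(W)=a>0$ and invariance force $\mu_*(T^nW)=a$ for all $n$, and since $|T^nW|\to 0$ a subsequence shrinks to a point, producing an atom---contradicting what was just proved.
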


Theorem~\ref{nbhdthm}
follows from Lemma~\ref{lem:approach} and Corollary~\ref{integral}.

This theorem is especially of interest for $\gamma >1$,
since in this case it implies that $\mu_*$-almost every point does not approach the singularity
sets faster than some exponential, see \eqref{rateeta}.  In addition, it allows us to
give a lower bound on the number of periodic
orbits: For $m\ge 1$, let $\Fix T^m$ denote the 
set $\{x \in M\mid T^m(x)=x\}$. 
By \cite{BSC} and \cite[Cor 2.4]{chernov}, there exist $h_C \ge h_{\musrb}(T) >0$
and $C>0$  with  $\# \Fix T^m \ge C e^{h_C m}$ for all $m$. Our result is that 
(possibly up to a period $p$)
we can take $h_C=h_*$ if $h_* > s_0 \log 2$:

\begin{cor}[Counting Periodic Orbits]\label{periodic}
If $h_* > s_0 \log 2$ then  there exist $C>0$ and $p \ge 1$
such that $\# \Fix T^{p m} \ge C e^{h_* p m}$ for all $m\ge 1$.
\end{cor}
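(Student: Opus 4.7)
The strategy is to feed the measure of maximal entropy $\mu_*$ constructed above into the Lima--Matheus symbolic coding machinery, as upgraded by Buzzi, and then count periodic orbits through the coding. The three ingredients needed from within the paper are: $T$ is a finite-horizon Sinai billiard, so it falls under the class of non-uniformly hyperbolic maps with controlled singularities treated in \cite{LM}; there exists an invariant Borel probability measure $\mu_*$ of positive entropy $h_{\mu_*}(T)=h_*$ (Theorem~\ref{existu}); and $\mu_*$ is $T$-adapted, that is $\int |\log d(x,\cS_{\pm 1})|\,d\mu_* <\infty$, which is precisely the content of Theorem~\ref{nbhdthm} once the hypothesis $h_*>s_0\log 2$ is used to take $\gamma>1$ in the bound~\eqref{nbhd}.

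With these ingredients in hand, the first step is to apply the main theorem of \cite{LM} to obtain a countable topological Markov shift $(\Sigma,\sigma)$ and a H\"older, finite-to-one factor map $\pi:\Sigma\to M$ such that $\pi\circ \sigma=T\circ \pi$, every $\sigma$-invariant probability measure of sufficiently large entropy lifts to $\Sigma$, and in particular $\mu_*$ admits a lift $\hat\mu_*$ with $h_{\hat\mu_*}(\sigma)=h_*$. Because $\pi$ is finite-to-one, the upper bound on its multiplicity is uniform on compact subsystems, which is all that will be needed for the descent of periodic points to $M$.

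The second step is Buzzi's refinement \cite{Bu} of the symbolic picture: the Gurevich entropy of a topologically transitive component $\Sigma'\subset\Sigma$ carrying $\hat\mu_*$ equals $h_*$, and the lower Gurevich bound yields
\[
\#\{x\in \Sigma' : \sigma^{pm}x=x\}\;\ge\; e^{h_* p m}\quad\text{for all } m\ge 1,
\]
where $p\ge 1$ is the period of $\Sigma'$ (the gcd of return loop-lengths at a fixed state). Projecting periodic orbits by $\pi$ and using that distinct periodic orbits in $\Sigma'$ project to periodic orbits in $M$ counted with uniformly bounded multiplicity gives a constant $C>0$ such that $\#\Fix T^{pm}\ge C e^{h_* p m}$ for every $m\ge 1$, as required.

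The principal obstacle is not conceptual but verificational: one must check that the abstract axioms of \cite{LM} (piecewise smooth, bounded distortion on homogeneity layers, control of the $\ve$-neighbourhoods of $\cS_{\pm n}$, absolutely continuous invariant stable/unstable holonomies with respect to the measure, etc.) are all realised by our $(T,\mu_*)$. The billiard-specific facts needed for this verification are classical \cite{chernov book}, with the single new ingredient being the absolute continuity of the unstable foliation with respect to $\mu_*$ provided by Corollary~\ref{cor:abs cont} and the $T$-adaptedness supplied by Theorem~\ref{nbhdthm}; everything else is a direct translation and the counting bound then follows from the Gurevich--Buzzi lower bound with no further billiard input. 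The appearance of the period $p$ reflects only the possible non-aperiodicity of the relevant irreducible component of $\Sigma$ and cannot in general be removed by this method.
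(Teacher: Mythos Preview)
Your approach is essentially the paper's: verify that $\mu_*$ is $T$-adapted with positive entropy (hence positive Lyapunov exponent), invoke the Lima--Matheus coding, and count periodic points symbolically via Gurevi\v{c}. The paper cites \cite[Corollary~1.2]{LM} directly for the counting step (which already yields the period $p$), reserving Buzzi \cite{Bu} only for the later remark that $p$ can be removed; your routing through Buzzi inside the proof is harmless but unnecessary for the statement as given.

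Where your write-up goes astray is in the description of the \cite{LM} hypotheses. You list ``bounded distortion on homogeneity layers, control of the $\ve$-neighbourhoods of $\cS_{\pm n}$, absolutely continuous invariant stable/unstable holonomies with respect to the measure'' among the axioms to be checked. None of these are (A1)--(A6) of \cite{LM}. Conditions (A1)--(A4) concern only the smoothness of the exponential map on the ambient manifold and are trivial here since $M$ is a finite union of cylinders and $\cS_{\pm 1}$ is a finite union of curves; (A5)--(A6) concern the blow-up rate of $\|DT\|$ and $\|D^2T\|$ near $\cS_1$, which are standard billiard estimates (take $a=2$, $\beta=1/4$, any $b>1$ in the notation of \cite{LM}). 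In particular, absolute continuity of the unstable foliation with respect to $\mu_*$ (Corollary~\ref{cor:abs cont}) plays no role in this corollary---the only measure-theoretic input \cite{LM} requires is $T$-adaptedness and a positive Lyapunov exponent. Dropping the spurious hypotheses, your argument matches the paper's.
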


\begin{proof}
The corollary follows from the work of Lima--Matheus \cite{LM}, which in turn relies on 
work of Gurevi\v{c} \cite{Gu1, Gu2} (see the proof of \cite[Thm~1.1]{Sar}).
We recall briefly the setup of \cite[Theorem~1.3]{LM}:  Under assumptions (A1)-(A6),
the authors construct for any $T$-adapted measure $\mu$ with positive Lyapunov exponent,
a countable Markov partition 
 that allows them to code a full $\mu$-measure set of points.
Once this partition has been constructed, \cite[Corollary~1.2]{LM} implies the above lower
bound on periodic orbits for $T$ with rate given by $h_\mu(T)$.

\cite[Theorem~1.3]{LM} applies to our measure of maximal entropy
$\mu_*$ since it is $T$-adapted with positive Lyapunov exponent.
In addition, conditions (A1)-(A4) of \cite{LM} are requirements on the smoothness of the
exponential map on the manifold, which are trivially satisfied in our setting since 
$M$ is a finite union of cylinders
and $\cS_{\pm 1}$ is a finite union of curves.
Finally, conditions (A5) and (A6) are requirements on the rate at which $\| DT \|$ and $\| D^2T \|$
grow as one approaches $\cS_1$.  These are standard estimates for billiards
and in the notation of \cite{LM}, if we choose $a=2$, then conditions (A5) and (A6) hold, choosing 
there  $\beta=1/4$ and any $b>1$.
\end{proof}

After the first version of our paper was submitted,
J.~Buzzi \cite[v2]{Bu} obtained results allowing one to
bootstrap from  Corollary~\ref{periodic} by exploiting the fact that $T$ is topologically mixing, to show that
if $h_* > s_0 \log 2$ then  there exists $C>0$ so that $\# \Fix T^{ m} \ge C e^{h_*  m}$ for all $m\ge 1$
\cite[Theorem 1.5]{Bu} .

\subsection{On Condition    \eqref{star} of Sparse Recurrence to Singularities}
\label{condstar}

We are not aware of any dispersing billiard on the torus for which the bound $h_* > s_0 \log 2$ from \eqref{star}
fails.    Let us start by mentioning that if there are no triple tangencies on
the table --- a generic condition --- then $s_0 \le 2/3$.
To discuss this  condition further, 
our starting point is
 claim \eqref{varprinc1} of Theorem~\ref{thm:h_*}, which
implies by the Pesin entropy formula \cite{katok strelcyn},
\begin{equation}
\label{eq:ent}
h_* \ge h_{\musrb}(T) = \int \log J^uT \, d\musrb  \, .
\end{equation}
Thus it suffices to check  $\chi_{\musrb}^+ > s_0 \log 2$ in order
to verify \eqref{star}, where 
 $\chi^+_{\musrb} = \int \log J^uT \, d\musrb$ is the positive Lyapunov exponent of $\musrb$.

\begin{figure}[h]
\begin{tikzpicture}[x=8mm,y=8mm]

 \filldraw[fill=black!20!white, draw=black] (4.7,.7) circle (.6);
\filldraw[fill=black!20!white, draw=black] (6.1,.7) circle (.6);
\filldraw[fill=black!20!white, draw=black] (7.5,.7) circle (.6);
\filldraw[fill=black!20!white, draw=black] (4,1.9) circle (.6);
\filldraw[fill=black!20!white, draw=black] (5.4,1.9) circle (.6);
 \filldraw[fill=black!20!white, draw=black] (6.8,1.9) circle (.6);
  \filldraw[fill=black!20!white, draw=black] (8.2,1.9) circle (.6); 
\filldraw[fill=black!20!white, draw=black] (4.7,3.1) circle (.6);
\filldraw[fill=black!20!white, draw=black] (6.1,3.1) circle (.6);
\filldraw[fill=black!20!white, draw=black] (7.5,3.1) circle (.6);

\draw[dashed] (4.7,3.1) -- (5.4,1.9) -- (6.8,1.9) -- (6.1,3.1) -- cycle;

\draw[decoration={brace,mirror,raise=2pt},decorate]
  (5.4,1.9) -- node[below=1.6pt] {$d$} (6.8,1.9);

\node at (6.5,-.7){\small$(a)$};

\filldraw[fill=black!20!white, draw=black]  (13.6,0) arc (0:90:1.6);
\filldraw[fill=black!20!white, draw=black!20!white]  (13.6,0) -- (12,1.6) -- (12,0) -- cycle;
\filldraw[fill=black!20!white, draw=black]  (16,1.6) arc (90:180:1.6);
\filldraw[fill=black!20!white, draw=black!20!white]  (16,1.6) -- (14.4,0) -- (16,0) -- cycle;

\filldraw[fill=black!20!white, draw=black]  (14.4,4) arc (180:270:1.6);
\filldraw[fill=black!20!white, draw=black!20!white]  (14.4,4) -- (16,2.4) -- (16,4) -- cycle;

\filldraw[fill=black!20!white, draw=black]  (12,2.4) arc (270:360:1.6);
\filldraw[fill=black!20!white, draw=black!20!white]  (12,2.4) -- (13.6,4) -- (12,4) -- cycle;

\draw[dashed] (12,0) rectangle (16,4) ;

\filldraw[fill=black!20!white, draw=black]  (14,2) circle (.9);
\draw (14,2) -- (14.9,2);
\node at (14.4,1.8){\small $\rho$};

\draw (16,4) -- (14.87,2.87);
\node at (15.5, 3) {\small$R$};

\node at (14,-.7){\small$(b)$};

 \end{tikzpicture}
\caption{(a) The Sinai billiard on a triangular lattice studied in \cite{Gaspard} with angle $\pi/3$, scatterer of radius 1, and distance
$d$ between the centers of adjacent scatterers.  (b) The Sinai billiard on a square lattice with scatterers of radii $\rho < R$ studied in \cite{Garrido}. The boundary of a single cell is indicated by dashed lines in both tables.}
\label{fig:tables}
\end{figure}
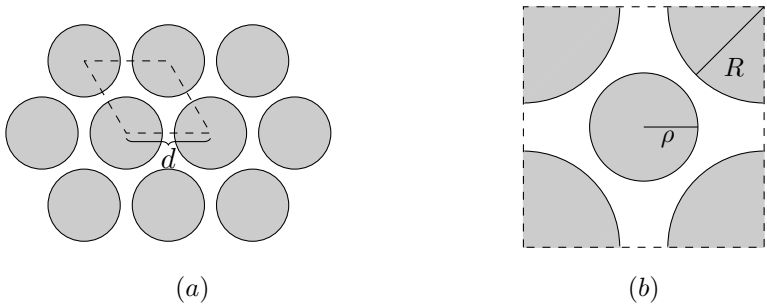

Firstly, we mention two numerical case studies from the literature:

Baras and Gaspard \cite{Gaspard} studied the Sinai billiard 
corresponding to the periodic Lorentz gas with disks of radius $1$ centered in a triangular lattice (Figure~\ref{fig:tables}(a)).  The distance $d$
between points on the lattice is varied from $d=2$ (when the scatterers touch) to $d = 4/\sqrt{3}$ (when the horizon
becomes infinite).  All computed values of the Lyapunov exponent\footnote{The reported values in
\cite{Gaspard} are for the billiard flow.  These can be converted to Lyapunov exponents for the map via the
well-known formula $\chi^+_{map} = \bar \tau \chi^+_{flow}$, where $\bar \tau$ is the average free flight time.   For this
billiard table, $\bar \tau = \frac{d^2\sqrt{3}}{4} - \frac{\pi}{2}$, using \cite[eq. (2.32)]{chernov book}.}
are greater than $\frac{2}{3} \log 2$
\cite[Table 1]{Gaspard}.   (Notably $\chi^+_{\musrb}$ does not decay as 
the minimum free flight-time $\tau_{\min}$ tends to  zero.) For these billiard tables, since every segment with a double tangency is followed by \emph{two} non-tangential
collisions, one can choose $\vf_0$ and $n_0$ so that \eqref{defs0} is satisfied
with $s_0 = 1/2$.  Thus \eqref{star} holds for all computed values in this family of tables.

Garrido \cite{Garrido} studied the Sinai billiard corresponding to the periodic Lorentz gas
with two scatterers of
 radii $\rho < R$  on the unit square lattice 
 (Figure~\ref{fig:tables}(b)).  Setting $R = 0.4$, \cite[Figure~6]{Garrido} computed $\chi_{\musrb}^+$ numerically
for about 20 values of $\rho$ ranging from $\rho=0.1$ (when the scatterers touch) to $\rho = \frac{\sqrt{2}}{2} - 0.4$ (when
the horizon becomes infinite).  All computed values of $\chi_{\musrb}^+$ are greater than $0.8 > \log 2$ so that \eqref{star} holds for all such tables.
(For these tables as well, one can in fact choose $s_0 = 1/2$.)

Secondly,
for the family of tables studied by Garrido, we obtain an open set of pairs of parameters $(\rho, R)$ satisfying \eqref{star} as follows.  
To ensure finite horizon and disjoint scatterers, the constraints are 
$$ \frac 12 < \rho + R < \frac{\sqrt{2}}{2}, \quad \rho < R < \frac 12, \quad \mbox{ and } \quad R > \frac{\sqrt{2}}{4} \, .
$$
Since $\musrb$ is a probability measure, denoting by
$\cK_{\min}>0$ the minimum curvature and using a well-known  \cite[eqs. (4.10) and (4.15)]{chernov book}
bound for the
unstable hyperbolicity exponent (see also \cite[Remark~3.47]{chernov book}) for the relation to entropy),
we  have,
\[
\chi^+_{\musrb} \ge \log (1 + 2 \tau_{\min} \cK_{\min})  \, .
\] 
We find that this is greater than $(1/2) \log 2$ whenever  $\tau_{\min} \cK_{\min} > \frac{\sqrt{2}-1}{2}$.  
If $R > 1 - \frac{\sqrt{2}}{2} + \rho$, then $\tau_{\min} = 1-2R$, and $\cK_{\min} = R^{-1}$,
 so that $\tau_{\min}\cK_{\min} =  R^{-1} -2$.  Thus if $R < \frac{2}{3+\sqrt{2}}$, then \eqref{star} holds.  
On the other hand if $R < 1 - \frac{\sqrt{2}}{2} + \rho$, then $\tau_{\min} = \frac{\sqrt{2}}{2} - R - \rho$ so that
$\tau_{\min}\cK_{\min} = \frac{\sqrt{2}}{2R} - 1 - \frac{\rho}{R}$.  Thus \eqref{star} holds whenever 
$R< \frac{\sqrt{2} - 2 \rho}{1 + \sqrt{2}}$.
 The union of these two sets is defined by
the inequalities 
$$
\frac{\sqrt{2}}{4} < R < \frac{2}{3+\sqrt{2}}, \quad R < \frac{\sqrt{2}-2 \rho}{1 + \sqrt{2}}, \quad \mbox{ and } \quad \rho + R > \frac 12 \, . 
$$
We remark that this region intersects the line $R + \sqrt{2}\rho = \frac{\sqrt{2}}{2}$.  
This line corresponds to the set of tables which admit a period 8 orbit making 4 grazing
collisions around the disk of radius $\rho$ and 4 collisions at angle $\pi/4$ with the disk of radius $R$.  For these tables, 
$s_0 = 1/2$, and we see that \eqref{star} admits tables with grazing periodic orbits.

\smallskip
Thirdly, it seems true
 that if there are no periodic orbits making at least one grazing collision then, for any $\epsilon >0$, the constants $n_0$ and $\varphi_0$ can be chosen to ensure $s_0<\epsilon$. This has led 
 P.-A.~Guih\'eneuf to conjecture that there exists a natural topology\footnote{For a fixed number of scatterers, a candidate 
 is given  by the distance defined in \cite[\S2.2, \S3.4,  Remark~2.9(b)]{demzhang13}.} on the set of billiard tables so that,  for any $\epsilon >0$, the
 set of tables for which $s_0<\epsilon$ is generic (that is, open and dense).
 This would immediately imply that our condition \eqref{star} is generically satisfied.
 
 \smallskip 
 
Finally, we mention that Diller, Dujardin, and Guedj
 \cite[Example 4.6]{DG2} construct 
 a birational map $F$ having a measure of maximal entropy
 which is mixing but not $F$-adapted, by showing that $F$ violates the Bedford--Diller \cite{BeDi} 
 recurrence condition. The Bedford--Diller condition does not
 have a natural analogue in our setting since double tangencies always
 occur.  One could interpret our sparse recurrence condition $h_* >  s_0 \log 2$
 as its replacement. It would be interesting to find billiards
 for which  $h_* \le   s_0 \log 2$ and which admit a
 non $T$-adapted measure of maximal entropy.
 

\section{Proof of Theorem~\ref{thm:h_*} (Equivalent Formulations of $h_*$)}
\label{sec:proof 1.4}

In this section, we shall prove Theorem~\ref{thm:h_*} through 
Lemmas~\ref{prop:equiv}, ~\ref{lem:hsep}, ~\ref{lem:hspan}, and ~\ref{lem:geq}.

We first recall some facts about the uniform hyperbolicity of $T$ 
to  introduce notation which will be used throughout.
It is well known \cite{chernov book}  that $T$ is uniformly hyperbolic in the following sense:  
First, the cones
$C^u = \{ (dr, d\vf) \in \mathbb{R}^2 : \cK_{\min} \le d\vf/dr \le \cK_{\max} + 1/\tau_{\min} \}$
and $C^s = \{ (dr, d\vf) \in \mathbb{R}^2 : - \cK_{\min} \ge d\vf/dr \ge -\cK_{\max} - 1/\tau_{\min} \}$,
are strictly invariant under $DT$ and $DT^{-1}$, respectively, whenever these derivatives exist.  Here, $\cK_{\max}$ 
represent the maximum  curvature of the scatterer boundaries and $\tau_{\max}<\infty$
is the largest free flight time between collisions.
Second, recalling that $\cK_{\min}>0$, $\tau_{\min} > 0$ denote the minimum curvature and the minimum free flight time, and
setting
$$\Lambda := 1 + 2\cK_{\min} \tau_{\min}
\, ,
$$ 
there exists $C_1>0$ such that for all $n \ge 0$,
\begin{equation}
\label{eq:hyp}
\| DT^n(x) v \| \ge C_1 \Lambda^n \| v\| \, , \, \forall v \in C^u\, , \quad \| DT^{-n}(x) v \| \ge C_1 \Lambda^n \|v \| \, , \, \forall v \in C^s\, , 
\end{equation}
for all $x$ for which $DT^n(x)$, or respectively $DT^{-n}(x)$, is defined,
so that $\Lambda$ is a lower bound\footnote{\label{lastf}Therefore, 
$
h_{\musrb}(T) = \int \log J^uT \, d\musrb >   \log \Lambda
$
and the bound $\log (1 + 2\cK_{\min} \tau_{\min}) > s_0 \log 2$
implies \eqref{star}, as in Section~\ref{condstar}.}
on the hyperbolicity constant of the map $T$.

\subsection{Preliminaries}

The following lemma provides important information regarding the structure of the partitions
$\cP_{-k}^n$, which we will use to make an explicit connection between $\cM_{-k}^n$ and $\hP_{-k}^n$ in Lemma~\ref{prop:equiv}.

\begin{lemma}
\label{lem:conn}
The elements of $\cP_{-k}^n$ are connected
sets for all $k\ge 0$ and $n \ge 0$.
\end{lemma}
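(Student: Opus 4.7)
The plan is to argue by induction on $N = k + n$. The base case $N = 0$ gives $\cP_{0}^{0} = \cP$, whose elements are connected by definition. For the inductive step, by the symmetric roles of $T$ and $T^{-1}$ noted in Remark~\ref{2.2}, it suffices to show that refining $\cP_{-k}^{n}$ to $\cP_{-k}^{n+1} = \cP_{-k}^{n} \vee T^{-(n+1)}\cP$ preserves connectedness of elements.

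Let $A \in \cP_{-k}^{n}$, which is connected by the inductive hypothesis. Since $A \cap (\cS_{-k-1} \cup \cS_{n+1}) = \emptyset$, the iterates $T^{i}$ are continuous on $A$ for all $-k-1 \le i \le n+1$, and in particular $T^{n+1}$ restricts to a homeomorphism of $A$ onto the connected open set $T^{n+1}(A)$. A typical element of the refinement has the form $A \cap T^{-(n+1)}P$ for some $P \in \cP$, and is homeomorphic via $T^{n+1}$ to $T^{n+1}(A) \cap P$; so it is enough to see the latter is connected whenever non-empty. Equivalently, one must check that the genuinely new cut curves inside $A$, namely the finitely many smooth stable-cone arcs in $A \cap (\cS_{n+2} \setminus \cS_{n+1})$, split $A$ into connected pieces.

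The key geometric input is the \emph{continuation property} of billiard singularities (see, e.g., \cite[Chap.~4]{chernov book}): each maximal smooth branch of $\cS_{n+2}$ continues until it reaches a point of $\cS_{n+1} \cup \partial M$. Intersected with the open set $A$, such a branch therefore traces a simple arc whose endpoints lie on $\partial A \subset \cS_{n+1} \cup \cS_{-k-1}$, unless its termination falls at a triple-tangency point in the interior of $A$; in that exceptional case, the triple-tangency point becomes an isolated singleton element of $\cP_{-k}^{n+1}$, trivially connected, as depicted in Figure~1. To conclude that each boundary-to-boundary arc separates $A$ into connected pieces, I would exploit simple connectivity: under finite horizon, the curves in $T^{\pm 1}\cS_{0}$ cut each cylinder $M_{i}$ into simply connected curvilinear quadrilaterals, so $\cP$-elements are simply connected, and a simple arc from boundary to boundary of a simply connected planar domain separates it into two simply connected components. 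Simple connectivity (hence connectedness) is therefore preserved by the inductive step.

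The main obstacle is verifying the continuation property in this precise form, with its triple-tangency bookkeeping producing isolated singleton elements of $\cP_{-k}^{n+1}$ rather than arcs that terminate at interior endpoints, together with confirming that the finite horizon hypothesis indeed forces simple connectivity of the $\cP$-elements at the base step; both points are standard for Sinai billiards but require careful cross-reference to \cite{chernov book}.
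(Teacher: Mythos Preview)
Your inductive strategy is sound, but a few statements need tightening, and the route differs from the paper's. First, a correction: elements of $\cP_{-k}^n$ are not open and need not avoid $\cS_{-k-1}\cup\cS_{n+1}$, since $\cP_{-k}^n$ partitions all of $M$ (boundaries included). The conclusion that $T^{n+1}$ is a homeomorphism on $A$ is nevertheless correct, for a different reason: writing $A=\bigcap_{i=-k}^n T^{-i}P_i$ with $P_i\in\cP$, one has $T^jA\subset P_j$ for each $0\le j\le n$, and $T$ is continuous on $P_j$ by definition of $\cP$, so $T^{n+1}|_A$ is continuous (and likewise for the inverse).

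The paper takes a different path. It uses the identity $\cP_{-k}^{n+1}=\cP_{-k}^n\vee T^{-1}\cP_{-k}^n$ and shows directly that $T^{-1}A_1\cap A_2$ is connected for $A_1,A_2\in\cP_{-k}^n$. The key structural input is a \emph{termination} property: if a smooth curve $S_j\subset T^{-j}\cS_0$ meets $S_i\subset T^{-i}\cS_0$ with $i<j$, then $S_j$ must end on $S_i$ (push forward by $T^i$: then $T^iS_i\subset\cS_0$, while $T^iS_j$ is still a stable curve). Together with the continuation of singularities, this rules out two stable (respectively two unstable) boundary arcs of $T^{-1}A_1$ and $A_2$ crossing more than once, and hence rules out disconnection of the intersection. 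No simple-connectivity invariant is carried through the induction.

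Your approach trades that termination argument for an extra inductive invariant (simple connectivity), combined with continuation and a Jordan-arc separation step. This is legitimate, but the gaps you flag are real: simple connectivity of the base-case $\cP$-cells under finite horizon, and the precise handling of triple-tangency endpoints, both require care (for instance, you also need that the finitely many new arcs in $A\cap(\cS_{n+2}\setminus\cS_{n+1})$ do not cross one another transversally, which again comes down to the termination property). The paper's route sidesteps the simple-connectivity bookkeeping by working directly with how singularity curves of different orders terminate on one another.
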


\begin{proof}
The statement is true by definition for $\cP = \cP_0^0$.  We will prove the general statement by induction
on $k$ and $n$ using the fact that $\cP_{-k}^{n+1} = \cP_{-k}^n \bigvee T^{-1} \cP_{-k}^n$, and
$\cP_{-k-1}^n = \cP_{-k}^n \bigvee T \cP_{-k}^n$.

Fix $k, n \ge 0$, and assume the elements of $\cP_{-k}^n$ are connected sets.
Let $A_1, A_2 \in \cP_{-k}^n$.  If $T^{-1}A_1 \cap A_2$ is empty or is an isolated point, then it is
connected.  So suppose $T^{-1}A_1 \cap A_2$ has nonempty interior.  

Clearly, $T^{-1}A_1$ is connected since $T^{-1}$ is continuous on elements of $\cP_{-k}^n$ for all
$k, n \ge 0$.  Notice that the boundary of $A_1$ is comprised of finitely many smooth
stable and unstable curves in $\cS_{-k} \cup \cS_n$,
as well as possibly  a subset of
$\cS_0$ (\cite[Prop~4.45 and Exercise~4.46]{chernov book}, see also \cite[Fig~4.17]{chernov book}).
We shall refer to these as the \emph{stable} and \emph{unstable parts of the boundary} of $A_1$.
Similar facts apply to the boundaries of $A_2$ and $TA_1$.
 
We consider whether a stable part of the boundary of $T^{-1}A_1$ can cross a stable part of
the boundary of $A_2$, and create two or more connected components of
$T^{-1}A_1 \cap A_2$.  Call these two boundary components $\gamma_1$ and $\gamma_2$
and notice that such an occurrence would force $\gamma_1$ and $\gamma_2$ to intersect in
at least two points.

We claim the following fact:  If a stable curve $S_i \subset T^{-i}\cS_0$ intersects $S_j \subset T^{-j}\cS_0$
for $i<j$, then $S_j$ must terminate on $S_i$.  This is because $T^iS_i \subset \cS_0$, while
$T^iS_j \subset T^{i-j}\cS_0$ is still a stable curve, terminating on $\cS_0$.  A similar property holds for unstable
surves in $\cS_{-i}$. and $\cS_{-j}$.

The claim implies that $\gamma_1$ and $\gamma_2$ both belong to $T^{-j}\cS_0$ 
for some $1 \le j \le n$.  But
when such curves intersect, again, one must terminate on the other (crossing would violate
injectivity of $T^{-1}$).

A similar argument precludes the possibility that unstable parts of the boundary cross one another multiple times.  
It follows that the only intersections allowed are stable/unstable boundaries of $T^{-1}A_1$ terminating on corresponding 
stable/unstable boundaries of $A_2$, or transverse intersections between stable components of $\partial (T^{-1}A_1)$ 
and unstable components of $\partial A_2$, and vice versa.  This last type of intersection cannot produce multiple connected
components due to the {\em continuation of singularities}, which states that every stable curve in
$\cS_{-n}\setminus \cS_0$ is part of a monotonic and piecewise smooth decreasing curve which terminates on $\cS_0$ (see \cite[Prop~~4.47]{chernov book}).  A similar fact holds for 
unstable curves in $\cS_n \setminus \cS_0$.
This implies that $T^{-1} A_1 \cap A_2$ is a connected set, and since $A_1$ and $A_2$ were arbitrary, that
$\cP_{-k}^{n+1}$ is comprised entirely of connected sets.

Similarly, considering $TA_1 \cap A_2$ proves that all elements of $\cP_{-k-1}^n$ are connected.
\end{proof}

From the proof of Lemma~\ref{lem:conn}, we can see that, aside from isolated points, elements of $\cP_{-k}^n$ consist
of connected cells which are roughly ``convex'' and have boundaries comprised of stable and
unstable curves.

\begin{lemma}
\label{lem:card}
There exists $C>0$, depending on the table $Q$, such that for any 
$k, n \in \mathbb{N}$, $\# \hP_{-k}^n \le \# \cP_{-k}^n \le \# \hP_{-k}^n + C(n+k+1)$.
\end{lemma}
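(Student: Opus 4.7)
The plan is to handle the two inequalities separately, focusing effort on bounding the ``extras'' in the second.

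The first inequality $\#\hP^n_{-k}\le\#\cP^n_{-k}$ is immediate from the definition: each element of $\hP^n_{-k}$ is, by construction, the interior of a unique element of $\cP^n_{-k}$ (distinct partition elements have disjoint, hence distinct, interiors), while elements of $\cP^n_{-k}$ with empty interior simply drop out of $\hP^n_{-k}$.

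For the second inequality, I would identify $\#\cP^n_{-k}-\#\hP^n_{-k}$ as the number of elements of $\cP^n_{-k}$ with empty interior.  By Lemma~\ref{lem:conn} each such element is a connected subset of $\cS_{-k-1}\cup\cS_{n+1}$ (a finite union of smooth stable and unstable curves) with empty interior in $M$.  The partition convention for $\cP$ absorbs every smooth singular arc into one of its two adjacent $2$-dimensional cells --- formally via the continuation-of-singularities argument already used in Lemma~\ref{lem:conn} --- so any $1$-dimensional piece of singular curve is part of a $2$-dimensional element of $\cP^n_{-k}$ and does not contribute to the count.  Thus every element with empty interior reduces to an isolated point sitting at a vertex of the singular curve network. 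Verifying this ``no $1$-dimensional extras'' step is one of the two main obstacles.

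Next, I would pin down which vertices become isolated.  At a transverse crossing of exactly two curves of $\cS_{-k-1}\cup\cS_{n+1}$ (typically one stable, one unstable), the vertex is incorporated into one of the four adjacent $2$-cells of $\cP^n_{-k}$ by the same convention and so is not an isolated element.  An isolated point therefore requires at least three distinct singular curves to meet at a single vertex $x$, which by the definition of $\cS_{\pm m}$ means $T^{j}x\in\cS_0$ for at least three distinct $j\in\{-k-1,\ldots,n+1\}$.  Such an $x$ lies on a \emph{rigid multi-tangent configuration}: a billiard trajectory grazing three or more scatterers along a single straight segment, as in Figure~1.

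Finally, I would count.  Each rigid multi-tangent configuration is over-determined: imposing three tangencies on the $2$-dimensional phase space $M$ generically yields an empty set, and for the fixed table $Q$ the finite horizon assumption (free flight bounded by $\tau_{\max}$) combined with the finiteness of the scatterer set gives a finite collection $\mathcal{R}=\mathcal{R}(Q)$ of such configurations modulo time translation; set $C_0:=\#\mathcal{R}$.  A configuration of time-span $g$ (the number of iterations between its first and last tangency) contributes at most $\max(0,n+k+3-g)\le n+k+3$ isolated points to $\cP^n_{-k}$ through its placements in the window $[-k-1,n+1]$.  Summing over $\mathcal{R}$ yields
\[
 \#\cP^n_{-k}-\#\hP^n_{-k}\le C_0(n+k+3)\le 3C_0(n+k+1),
\]
so the lemma follows with $C=3C_0$.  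Besides the ``no $1$-dimensional extras'' step, the other main obstacle is the finiteness of $\mathcal{R}$ --- subtle on the torus because of wrap-around of common tangent lines --- handled via the $\tau_{\max}$ bound which restricts the relevant scatterer translates to a bounded region.
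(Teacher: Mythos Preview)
Your overall strategy—reduce the difference $\#\cP_{-k}^n - \#\hP_{-k}^n$ to counting isolated points and then bound those—matches the paper's. The gap is in your counting step.

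You correctly argue that an isolated point $x$ of $\cP_{-k}^n$ must satisfy $T^jx\in\cS_0$ for at least three distinct $j\in\{-k-1,\ldots,n+1\}$. You then identify such $x$ with a ``rigid multi-tangent configuration: a billiard trajectory grazing three or more scatterers along a single straight segment.'' These are not the same thing. The three tangency times need not be consecutive; between them the orbit may make ordinary non-tangential reflections, so the tangencies need not lie on one straight flight segment. Your finiteness argument for $\mathcal{R}$—finite horizon bounding $\tau_{\max}$, hence finitely many relevant scatterer translates—only controls the number of \emph{common tangent lines}, i.e., configurations with consecutive tangencies. For general triple tangencies at times $0<g_1<g_2$, the gaps $g_1,g_2$ are unbounded, and there is no evident reason the resulting set $\mathcal{R}$ should be finite; for instance, a table admitting a periodic orbit with a grazing collision would produce infinitely many such triples $(x,p,kp)$. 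With $\#\mathcal{R}$ possibly infinite, your sum $\sum_{\mathcal{R}}(n+k+3-g)$ gives no bound.

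The paper sidesteps this entirely by counting incrementally rather than globally. At each refinement step (passing from $\cP_{-k}^{j-1}$ to $\cP_{-k}^j$, or the analogous backward step), the isolated points created at that step correspond to points in the fixed finite set $\cS_0\cap T^{\mp 1}\cS_0$, whose cardinality is bounded explicitly by $2|\partial Q|\tau_{\max}/(\tau_{\min}r_0)$ via transversality of $\cS_0$ with $T^{\mp 1}\cS_0$. Since there are $n+k+1$ steps, this yields $C(n+k+1)$ directly, without ever enumerating the full collection of multi-tangent configurations or needing it to be finite.
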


\begin{proof}
It is clear from the definition of $\hP_{-k}^n$ and $\cP_{-k}^n$ that 
\[
\# \cP_{-k}^n = \# \hP_{-k}^n + \# \{ \mbox{isolated points} \} \, ,
\]
where the isolated points in $\cP_k^n$ can be created by multiple tangencies aligning in a
particular manner, as described above (see Figure~1).    Thus the first inequality is trivial.

The set of isolated points created at each forward iterate is contained in $\cS_0 \cap T^{-1} \cS_0$,
while the set of isolated points created at each backward iterate is contained in
$\cS_0 \cap T \cS_0$.  We proceed to estimate the cardinality of these sets.

Let $r_0$ be sufficiently small such that for any segment $S \subset \cS_0$ of length $r_0$, 
the image
$TS$ comprises at most $\tmax/\tmin$ connected curves on which $T^{-1}$ is smooth
\cite[Sect. 5.10]{chernov book}.  
For each $i$, the number of points in $\partial B_i \cap \cS_0 \cap T^{-1} \cS_0$
is thus bounded by $2 |\partial B_i| \tmax/(\tmin r_0)$, where the factor $2$ comes from
the top and bottom boundary of the cylinder.  Summing over $i$, we have
$\# (\cS_0 \cap T^{-1} \cS_0) \le 2 |\partial Q| \tmax/(\tmin r_0)$.
Due to reversibility, a similar estimate holds for
$\# (\cS_0 \cap T \cS_0)$.  Since this bound holds at each iterate, the second inequality holds with
$C = \frac{2 |\partial Q| \tmax}{\tmin r_0}$.
\end{proof}

\subsection{Formulations of $h_*$ Involving $\cP$ and $\hP$}

The following lemma gives claims \eqref{cP} and \eqref{hP} of Theorem~\ref{thm:h_*}:
\begin{lemma}
\label{prop:equiv}
The following holds for every $k\ge 0$.
We have $\hP_{-k}^{n} = \cM_{-k-1}^{n+1}$ for every  $n \ge 0$.
Moreover, the following limits exist and are equal to $h_*$:
\[
h_* = \lim_{n \to \infty} \frac 1n \log \# \cM_{-k}^n = \lim_{n \to \infty} \frac 1n \log \# \hP_{-k}^n = \lim_{n \to \infty} \frac 1n \log \# \cP_{-k}^n \, .
\]
Finally,  the sequence $n\mapsto \log \# \cM^n_{-k}$ is subadditive.
\end{lemma}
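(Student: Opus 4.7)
The plan is to establish $\hP_{-k}^n = \cM_{-k-1}^{n+1}$ first, then use a cardinality bijection induced by $T^k$ to reduce all counts to those of $\cM_0^n$ and $\cP_0^n$, then combine the subadditivity of $\log\#\cP_0^n$ with the polynomial comparison of Lemma~\ref{lem:card} to obtain existence and equality of the three limits with $h_*$, and finally prove subadditivity of $\log\#\cM_{-k}^n$ via a topological injectivity argument analogous to Lemma~\ref{lem:conn}.

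For the identity $\hP_{-k}^n = \cM_{-k-1}^{n+1}$, Lemma~\ref{lem:conn} gives that each element of $\cP_{-k}^n$ is connected, and its topological boundary is contained in $\bigcup_{i=-k}^n T^{-i}(\cS_{-1}\cup\cS_1) = \cS_{-k-1}\cup\cS_{n+1}$ (direct computation from $T^{-i}\cS_{\pm 1}=T^{-i-1}\cS_0\cup T^{-i\pm 1}\cS_0$), so its interior is an open connected subset of $M\setminus(\cS_{-k-1}\cup\cS_{n+1})$. To obtain maximality, I would note that if $U$ is a connected component of $M\setminus(\cS_{-k-1}\cup\cS_{n+1})$ and some $A\in\hP_{-k}^n$ meets $U$, then $A\subset U$ by connectedness of $U$, and $A$ is clopen in $U$ (its boundary in $M$ lies in $\cS_{-k-1}\cup\cS_{n+1}$, disjoint from $U$), forcing $A=U$.

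Next, from $\cS_j=\bigcup_{i=0}^j T^{-i}\cS_0$ a direct computation gives $T^k\cS_{n+k}=\cS_{-k}\cup\cS_n$, so $T^k$ restricts to a homeomorphism from $M\setminus\cS_{n+k}$ onto $M\setminus(\cS_{-k}\cup\cS_n)$, inducing $\#\cM_{-k}^n=\#\cM_0^{n+k}$ and $\#\cP_{-k}^n=\#\cP_0^{n+k}$. The subadditivity of $n\mapsto\log\#\cP_{-k}^n$ noted just before Lemma~\ref{lem:conn} yields the existence of $L:=\lim_n n^{-1}\log\#\cP_0^n$, and by the bijection this equals $\lim_n n^{-1}\log\#\cP_{-k}^n$ for every $k\ge 0$. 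Lemma~\ref{lem:card} gives $|\#\hP_{-k}^n-\#\cP_{-k}^n|\le C(n+k+1)$, a polynomial correction which washes out after taking $\log/n$, so $\lim_n n^{-1}\log\#\hP_{-k}^n=L$ as well. Combining with the first step gives $\lim_n n^{-1}\log\#\cM_{-k}^n=L$; specializing to $k=0$ and comparing with Definition~\ref{def} shows that the limsup defining $h_*$ is in fact a limit, equal to $L$.

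Finally, for subadditivity of $n\mapsto\log\#\cM_{-k}^n$, the bijection above together with the monotonicity $\#\cM_0^m\le\#\cM_0^{m+k}$ reduces the claim to $\#\cM_0^{n+m}\le\#\cM_0^n\cdot\#\cM_0^m$. The plan is to define the map $\Psi:\cM_0^{n+m}\to\cM_0^n\times\cM_0^m$ by $\Psi(A)=(A_1,A_2)$ with $A\subset A_1\in\cM_0^n$ and $T^nA\subset A_2\in\cM_0^m$, and to prove that $\Psi$ is injective; injectivity is equivalent to showing that $T^nA_1\cap A_2$ is connected whenever it is nonempty. This connectedness will be the main obstacle, and I expect to handle it by an extension of the argument of Lemma~\ref{lem:conn}: $\partial(T^nA_1)$ is a finite union of unstable curves in $\cS_{-n}$ together with possibly subarcs of $\cS_0$, while $\partial A_2$ is a finite union of stable curves in $\cS_m$ together with possibly subarcs of $\cS_0$, so that the uniform transversality between stable and unstable cones, combined with the continuation-of-singularities property already exploited in Lemma~\ref{lem:conn}, will preclude the multiple crossings that could disconnect the intersection.
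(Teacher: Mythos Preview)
Your proposal is correct and largely parallels the paper's proof, but with two differences worth noting.

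For the equality and $k$-independence of the limits, you invoke the bijection $\#\cM_{-k}^n=\#\cM_0^{n+k}$ (and similarly for $\cP$) coming from $T^k\cS_{n+k}=\cS_{-k}\cup\cS_n$. The paper instead sandwiches $\#\cP_0^n\le\#\cP_{-k}^n\le\#\cP_{-k}^0\cdot\#\cP_0^n$ and appeals to subadditivity of $\log\#\cP_{-k}^n$. Your bijection is more direct (and in fact the paper uses exactly this device later, e.g.\ in Remark~\ref{2.2} and the proof of Lemma~\ref{lem:super}).

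For subadditivity, your route is genuinely different. You reduce to showing that $T^nA_1\cap A_2$ is connected for $A_1\in\cM_0^n$, $A_2\in\cM_0^m$, and propose to extend the geometric argument of Lemma~\ref{lem:conn}. This works, and is in fact easier than Lemma~\ref{lem:conn} itself: $\partial(T^nA_1)\subset\cS_{-n}$ consists only of unstable curves (and pieces of $\cS_0$), while $\partial A_2\subset\cS_m$ consists only of stable curves (and pieces of $\cS_0$), so the stable--stable and unstable--unstable termination cases do not arise and only the transverse stable--unstable case, handled by continuation of singularities, remains. The paper instead bypasses this geometric step entirely: it works with $\hP$, writes an element of $\hP_1^{n+m}$ as $\bigcap_{j=1}^{n+m}\inter(T^{-j}A_j)$, and splits the intersection algebraically using $\inter(T^{-n-j}A_{n+j})\subset T^{-n}\inter(T^{-j}A_{n+j})$, obtaining $\#\hP_1^{n+m}\le\#\hP_1^n\cdot\#\hP_1^m$ with no connectedness argument needed. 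The paper's route is slicker; yours is more geometric and self-contained, but requires you to actually carry out the crossing analysis you flag as the main obstacle.
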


\begin{proof}
First notice that by Lemma~\ref{lem:conn}, the elements of $\hP_{-k}^n$ are open, connected sets
whose boundaries are curves in $\cS_{-k-1} \cup \cS_{n+1}$.  Since the elements of 
$\cM_{-k-1}^{n+1}$ are the maximal open, connected sets with this property, it must be that
$\hP_{-k}^n$ is a refinement of $\cM_{-k-1}^{n+1}$.
Now suppose that the union of  $O_1, O_2 \in \hP_{-k}^n$ is contained in a single element 
$A \in \cM_{-k-1}^{n+1}$.  This is impossible since $\partial O_1, \partial O_2 \subset \cS_{-k-1} \cup \cS_{n+1}$, and at least part of these boundaries must lie inside $A$, contradicting the
definition of $A$.  So in fact, $\hP_{-k}^n = \cM_{-k-1}^{n+1}$.

We next show that the
limit in terms of $\# \cP_{-k}^n$ exists and is independent of $k$. It will follow that the
limits in terms of $\# \cM_{-k}^n$ and $\# \hP_{-k}^n$ exist and coincide using
the relation $\hP_{-k}^n = \cM_{-k-1}^{n+1}$ and Lemma~\ref{lem:card}.

Note that $\# \cP_{-j}^n \le \# \cP_{-k}^n$ whenever $0 \le j \le k$.
For fixed $k$,  we have
$
\# \cP_{-k}^{n+m} \le \# \cP_{-k}^n \cdot \# \Big(\bigvee_{i=1}^m T^{-n-i} \cP \Big) \, ,
$
and since $\# (\bigvee_{i=1}^m T^{-n-i} \cP) = \# (\bigvee_{i=1}^m T^{-i} \cP)$ because $T$ is invertible,
it follows that $\# \cP_{-k}^{n+m} \le \# \cP_{-k}^n \cdot \# \cP_{-k}^m$.
Thus $\log \# \cP_{-k}^n$ is subadditive as a function of $n$, and
the limit in $n$ converges for each $k$.  Applying this to $k=0$ implies that
the limit defining $h_*$ in Definition~\ref{def} exists.

Similar considerations show that
$\# \cP_{-k}^n \le \# \cP_{-k}^0 \cdot \# \cP_0^n$, and so 
\[
h_* = \lim_{n \to \infty} \frac 1n \log \# \cP_0^n \le \lim_{n \to \infty} \frac 1n \log \# \cP_{-k}^n \le 
\lim_{n \to \infty} \frac 1n (\log \# \cP_{-k}^0 + \log \# \cP_0^n) = h_* \, ,
\] 
so that the limit exists and  is independent of $k$.

For the final claim, we shall see that $\log \# \hP^n_{-k}$ is subadditive for essentially the same
reason as $\log \# \cP^n_{-k}$:
Take an (nonempty) element $P$ of $\hP^{n+m}_1$.  It is the interior of an
intersection of elements of the form
$T^{-j}A_j$ for some $A_j$ in $\cP$, for $j = 1$ to $n+m$.
This is equal to the intersection of the interiors of $T^{-j}A_j$.
But, since $P$ is nonempty, none of the $T^{-j}A_j$ can have empty interior and
so none of the $A_j$ can have empty interior.  Thus the interiors of $A_j$ are
in $\hP$ as well.
Now, splitting the intersection of the first $n$ sets from the last $m$, we see
that the intersection of the first $n$ sets form an element of $\hP^n_1$.  For
the last $m$ sets, we can factor out $T^{-n}$ at the price of making the set a
bit bigger:
$$\mbox{int}\, (T^{-n-j}(A_{-n-j})) \subseteq T^{-n}(\mbox{int}\,(T^{-j}(A_{-n-j})))\, ,
$$
where $\inter(\cdot)$ denotes the interior of a set.
Doing this for $j = 1$ to $m$, we see that this intersection is contained
in $T^{-n}$ of an element of $\hP^m_1$.
It follows that 
$\# \hP^{n+m}_1 \le \# \hP^n_1 \cdot \# \hP^m_1$,
so taking logs, the sequence is subadditive.  And then so is the sequence
with $\cM_0^n$ in place of $\hP_1^{n-1}$.
\end{proof}

\subsection{Comparing $h_*$ with the Bowen Definitions}

We set $\diam^s(\cM_{-k}^n)$ equal to the maximum length of a stable curve in any element of
$\cM_{-k}^n$.  Similarly, $\diam^u(\cM_{-k}^n)$ denotes the maximum length of an unstable curve
in any element of $\cM_{-k}^n$ while $\diam(\cM_{-k}^n)$ denotes the maximum diameter of
any element of $\cM_{-k}^n$.

The following lemma gives the first claim of \eqref{seppspann} in Theorem~\ref{thm:h_*}:

\begin{lemma}
\label{lem:hsep}
$h_* = \hsep$.
\end{lemma}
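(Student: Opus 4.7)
The strategy is to compare the natural partition $\cM_{-k}^{n+k}$ with the dynamical distance $d_n$, using the uniform hyperbolicity \eqref{eq:hyp} of $T$ to produce matching upper and lower bounds. The proof breaks into two steps.

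For the direction $\hsep \le h_*$, I would show that for every $\ve>0$ there exists $k=k(\ve) \in \mathbb{N}$ such that the $d_n$-diameter of every element of $\cM_{-k}^{n+k}$ is at most $\ve$. Fix $A\in \cM_{-k}^{n+k}$. For each $0\le i\le n$, the image $T^i A$ is contained in a single element of $\cM_{-(k+i)}^{n+k-i}$, which has ambient diameter $O(1)$. Applying \eqref{eq:hyp} forward through $T^{n+k-i}$ bounds the unstable width of $T^i A$ by $O(\Lambda^{-(n+k-i)})$, and applying \eqref{eq:hyp} backward through $T^{-(k+i)}$ bounds the stable width by $O(\Lambda^{-(k+i)})$. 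Since $\min(k+i,n+k-i)\ge k$ for $0\le i\le n$, we get $\diam(T^i A)\le C\Lambda^{-k}$, and hence the $d_n$-diameter of $A$ is at most $C\Lambda^{-k}$. Choosing $k$ so that $C\Lambda^{-k}\le \ve$, any $(n,\ve)$-separated set has at most one point per cell, so $r_n(\ve)\le \#\cM_{-k}^{n+k}=\#\cM_0^{n+2k}$, where the last equality follows from the bijection induced by $T^k$ (the natural extension of Remark~\ref{2.2}). Dividing by $n$, letting $n\to\infty$ and then $\ve\to 0$ yields $\hsep\le h_*$.

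For the direction $h_*\le \hsep$, let $E_n$ be a maximal $(n,\ve)$-separated set, so that $|E_n|=r_n(\ve)$; by maximality $E_n$ is also $(n,\ve)$-spanning, hence $M=\bigcup_{y\in E_n}B_n(y,\ve)$. The same hyperbolicity estimate shows that each Bowen ball $B_n(y,\ve)$ has stable width at most $\ve$ and unstable width at most $\ve\Lambda^{-n}$. Comparing with the stable width $O(1)$ and unstable width $O(\Lambda^{-n})$ of cells of $\cM_0^n$ (the $k=0$ case of the analysis above), the ratio is $O(\ve)$ in each transverse direction. Consequently, for $\ve$ sufficiently small, a single Bowen ball $B_n(y,\ve)$ can meet at most a bounded number $K$ of cells of $\cM_0^n$, uniformly in $n$. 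Thus $\#\cM_0^n\le K\cdot r_n(\ve)$; dividing by $n$ and letting $n\to\infty$ and then $\ve\to 0$ gives $h_*\le \hsep$.

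The main obstacle lies in the cell-counting step of the second direction. Cells of $\cM_0^n$ are not true rectangles: their boundaries are pieced together from stable curves in $\cS_n$ and the horizontal curves of $\cS_0$, and at vertices where several singularity curves meet the local combinatorics is delicate, a priori allowing many cells to cluster. Here the continuation-of-singularities property exploited in the proof of Lemma~\ref{lem:conn} (every stable curve in $\cS_n\setminus \cS_0$ extends to a monotone, piecewise smooth curve terminating on $\cS_0$) is the key structural input ensuring that only $O(1)$ cells can meet at a common vertex, and that the bound $K$ really is independent of $n$. Everywhere else the work is done by the dichotomy between contraction along $C^s$ and expansion along $C^u$ provided by the factor $\Lambda>1$ in \eqref{eq:hyp}.
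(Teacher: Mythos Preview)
Your first direction ($\hsep \le h_*$) is correct and essentially the same as the paper's argument.

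Your second direction has a genuine gap. The width comparison does not work: the bounds you quote on the stable and unstable diameters of cells of $\cM_0^n$ are \emph{upper} bounds only, with no matching lower bound, and cells near singularities can be arbitrarily thin in either direction. The continuation-of-singularities property does not give an $n$-independent bound on the number of cells meeting at a vertex; on the contrary, the complexity estimate \eqref{eq:complex} explicitly permits up to $Kn$ curves of $\cS_n$ to meet at a single point, so a small static neighbourhood of such a point already meets $O(n)$ cells of $\cM_0^n$. Your argument treats the Bowen ball as if it were merely a small rectangle, but that geometric information alone cannot control how many cells it touches.

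The paper's argument for $h_* \le \hsep$ avoids size comparisons entirely. It proves directly that there exists $\ve_0>0$, depending only on the table, such that any two points in distinct elements of $\cM_0^n$ satisfy $d_n(x,y) \ge \ve_0$ (so in your language $K=1$ for every $\ve<\ve_0$). The mechanism is dynamical: if $x,y$ share the same scatterer itinerary but lie in different cells, then for some $j$ the points $T^{j-1}x$ and $T^{j-1}y$ sit on opposite sides of a curve in $\cS_1\setminus\cS_0$. The finite horizon assumption---in its stronger form, ruling out flow trajectories with only tangential collisions---then guarantees that the two one-sided $\ve_0$-neighbourhoods of any such curve are mapped by $T$ at least $\ve_0$ apart. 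Hence $\max\{d(T^{j-1}x,T^{j-1}y),\,d(T^jx,T^jy)\}\ge\ve_0$. Selecting one point per cell of $\cM_0^n$ then yields an $(n,\ve)$-separated set of cardinality $\#\cM_0^n$ for every $\ve\le\ve_0$.
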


\begin{proof}
Fix $\ve > 0$.  Let $\Lambda = 1 + 2\cK_{\min} \tau_{\min}$ denote the lower bound on the hyperbolicity
constant for $T$ as in \eqref{eq:hyp}.  Choose $k_\ve$ large enough that $\diam^s(\cM_{-k_\ve-1}^0) \le C_1^{-1}\Lambda^{-k_\ve} < c_1 \ve$, for
some $c_1 > 0$ to be chosen below.  It follows that 
$$\diam^u(\cM_{-k_\ve-1}^{n+1}) \le C_1^{-1}\Lambda^{-n} < c_1 \ve$$
 for each $n \ge k_\ve$.  Using the uniform transversality of stable and unstable cones, we may choose $c_1>0$ such that $\diam(\cM_{-k_\ve-1}^{n+1}) < \ve$ for all $n \ge k_\ve$.

Now for $n \ge k_\ve$, let $E$ be an $(n,\ve)$-separated set.  Given
$x, y \in E$, we will show that $x$ and $y$ cannot belong to the same set 
$A \in \hP_{-k_\ve}^{k_\ve + n}$.

Since $x,y \in E$, there exists $j \in [0,n]$ such that $d(T^j(x), T^j(y)) > \ve$.  If $x \in A \in \hP_{-k_\ve}^{k_\ve +n}$, then $x \in \cap_{i=-k_\ve}^{k_\ve+n} \inter(T^{-i}P_i)$ for some choice of $P_i \in \cP$.  Then
\begin{equation}
\label{eq:include}
T^jx \in \cap_{i=-k_\ve-j}^{k_\ve + n-j} T^{-i}P_{i+j} \subset \cap_{-k_\ve}^{k_\ve} T^{-i}P_{i+j} \in \cP_{-k_\ve}^{k_\ve}\,  .
\end{equation}
Note that the element of $\cP_{-k_\ve}^{k_\ve}$ to which $T^j (x)$ belongs must have nonempty interior
since $T^{-i}P_i$ has non-empty interior for each $i \in [-k_\ve, k_\ve +n]$.
If $y \in A$, then $T^jy$ would belong to the same element of $\cP_{-k_\ve}^{k_\ve}$, which
is impossible since $\diam(\hP_{-k_\ve}^{k_\ve}) < \ve$ and taking the closure of such sets
does not change the diameter.  

Thus $x, y \in E$ implies that $x$ and $y$ cannot belong to the same
element of $\cP_{-k_\ve}^{k_\ve + n}$ with nonempty interior.  On the other hand, if $x$ belongs
to an element of $\cP_{-k_\ve}^{k_\ve+n}$ with empty interior, then indeed the element containing
$x$ is an isolated point, and $y$ cannot belong to the same element.
Thus $\# E \le \# \cP_{-k_\ve}^{k_\ve+n}$.

Since this bound holds for every $(n, \ve)$-separated set, we have
$r_n(\ve) \le \# \cP_{-k_\ve}^{k_\ve+n}$.  Thus,
\[
\lim_{n\to \infty} \frac 1n \log r_n(\ve) \le \lim_{n \to \infty} \frac 1n \log \# \cP_{-k_\ve}^{k_\ve+n}
=  h_* \,  .
\]
Since this bound holds for every $\ve > 0$, we conclude $\hsep \le h_*$.

To prove the reverse inequality, we claim that there exists $\ve_0>0$, 
independent of $n \ge 1$ and depending only on the
table $Q$, such that 
\begin{equation}
\label{eq:epsilon 0}
\mbox{ if $x,y$ lie in different elements of $\cM_0^n$, then $d_n(x,y) \ge \ve_0$.}
\end{equation}
To each point $x$  in an element of $\cM_0^n$, we can associate an itinerary
$(i_0, i_1, \ldots i_n)$ such that $T^{i_j}(x) \in M_{i_j}$.  If $x,y$ have different itineraries,
then for some $0 \le j \le n$, the points $T^j(x)$ and $T^j(y)$ lie in different components $M_i$, and so
by definition \eqref{eq:d_n} we have, $d_n(x,y) = 10 D \cdot \max_i \diam(M_i)$.  

Now suppose $x, y$ lie in different elements of $\cM_0^n$, but have the 
same itinerary.  By definition of $\cM_0^n$, the elements containing $x$ and $y$ are separated
by curves in $\cS_n$.  Let $j$ be the minimum index of such a curve.  Then $T^{j-1}(x)$ and
$T^{j-1}(y)$ lie on different sides of a curve in $\cS_1 \setminus \cS_0$.  
Due to the finite horizon condition (our slightly stronger version is needed here), there
exists $\ve_0 > 0$, depending only on the structure of $\cS_1$, such that the two one-sided
$\ve_0$-neighbourhoods of each curve in $\cS_1 \setminus \cS_0$ are mapped at least $\ve_0$ apart.
Thus either $d(T^{j-1}(x), T^{j-1}(y)) \ge \ve_0$ or $d(T^j(x), T^j(y)) \ge \ve_0$. 

With the claim proved, fix $n \in \mathbb{N}$ and $\ve \le \ve_0$, and define $E$ to be a set comprising exactly one point
from each element of $\cM_0^n$.  Then by the claim, $E$ is $(n, \ve)$-separated, so that 
$\# \cM_0^n \le r_n(\ve)$ for each $\ve \le \ve_0$.  Taking $n \to \infty$ and $\ve \to 0$
yields $h_* \le \hsep$.
\end{proof}

The following lemma gives the second claim of \eqref{seppspann} in Theorem~\ref{thm:h_*}:

\begin{lemma}
\label{lem:hspan}
$h_* = \hspan$.
\end{lemma}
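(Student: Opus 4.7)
The plan is to prove two inequalities, $\hspan \le \hsep$ and $h_* \le \hspan$, and then invoke Lemma~\ref{lem:hsep} to conclude. Both steps are short because the heavy lifting was done in Lemma~\ref{lem:hsep}; in particular, the finite-horizon dynamical separation property \eqref{eq:epsilon 0} is the key input that survives the lack of global continuity.

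For the inequality $\hspan \le \hsep$, I would use the classical argument that works for any metric space and any map (continuous or not): if $E$ is an $(n,\ve)$-separated set of maximal cardinality $r_n(\ve)$, then for every $x \in M$ there exists $y \in E$ with $d_n(x,y) \le \ve$ (otherwise $E \cup \{x\}$ would be $(n,\ve)$-separated, contradicting maximality). This makes $E$ an $(n,\ve)$-spanning set, so $s_n(\ve) \le r_n(\ve)$. Taking $\limsup$ in $n$ and then $\ve \to 0$, together with Lemma~\ref{lem:hsep}, yields $\hspan \le \hsep = h_*$. Note that $d_n$ is well-defined on the whole of $M$ thanks to the convention $d(x,y) = 10 D \cdot \max_i \diam(M_i)$ on distinct cylinders $M_i$, so no continuity is needed.

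For the reverse inequality $h_* \le \hspan$, I would exploit the dynamical separation property \eqref{eq:epsilon 0} established in the proof of Lemma~\ref{lem:hsep}: there exists $\ve_0 > 0$, depending only on $Q$, such that whenever $x, y$ lie in different elements of $\cM_0^n$, we have $d_n(x,y) \ge \ve_0$. Fix any $\ve < \ve_0 / 2$ and let $F$ be any $(n,\ve)$-spanning set. For each $A \in \cM_0^n$, choose a representative $x_A \in A$ and then (using the spanning property) a point $\phi(A) \in F$ with $d_n(x_A, \phi(A)) \le \ve$. If $\phi(A_1) = \phi(A_2)$ for $A_1 \ne A_2$ in $\cM_0^n$, the triangle inequality gives $d_n(x_{A_1}, x_{A_2}) \le 2\ve < \ve_0$, contradicting \eqref{eq:epsilon 0}. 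Hence $\phi$ is injective and $\# \cM_0^n \le \# F$, so $\# \cM_0^n \le s_n(\ve)$. Taking $\frac{1}{n}\log$ and passing to the limit yields $h_* \le \limsup_n \frac{1}{n}\log s_n(\ve)$ for every $\ve < \ve_0/2$, and then $\ve \to 0$ gives $h_* \le \hspan$.

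There is no real obstacle in this argument: the potential difficulty — the fact that $T$ is not globally continuous, so that the standard comparison $r_n(\ve) \le s_n(\ve/2)$ is not automatic — is bypassed entirely by routing the lower bound through the combinatorial quantity $\# \cM_0^n$ rather than through $r_n(\ve)$. The only conceptual point to verify carefully is that choosing a representative $x_A$ in each cell $A \in \cM_0^n$ (including the possible isolated points) is harmless, since \eqref{eq:epsilon 0} applies to all pairs in distinct elements of $\cM_0^n$ regardless of whether those elements are open sets or isolated points. Combining both inequalities with Lemma~\ref{lem:hsep} finishes the proof.
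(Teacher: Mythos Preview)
Your proof is correct. The lower bound $h_* \le \hspan$ is essentially the paper's argument: the paper uses a pigeonhole/contradiction phrasing (if $\#F < \#\cM_0^n$ then some cell of $\cM_0^n$ is missed, contradicting spanning via \eqref{eq:epsilon 0}) with $\ve < \ve_0$, while you phrase it as injectivity of a map with $\ve < \ve_0/2$; these are the same idea.

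For the upper bound $\hspan \le h_*$ your route differs from the paper's. The paper constructs an explicit $(n,\ve)$-spanning set by choosing one point in each element of $\cP_{-k_\ve}^{k_\ve+n}$ and checking directly (via the diameter bound $\diam(\cM_{-k_\ve-1}^{n+1}) < \ve$) that this set spans, yielding $s_n(\ve) \le \#\cP_{-k_\ve}^{k_\ve+n}$. You instead invoke the completely general inequality $s_n(\ve) \le r_n(\ve)$ (a maximal separated set is spanning) and then Lemma~\ref{lem:hsep}. Your approach is shorter and avoids repeating the construction already done for $\hsep$; the paper's approach has the minor advantage of giving a self-contained bound $s_n(\ve) \le \#\cP_{-k_\ve}^{k_\ve+n}$ independent of the separated-set machinery. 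One small remark: elements of $\cM_0^n$ are by definition open connected components of $M \setminus \cS_n$, so there are no isolated points there (those occur only in $\cP_{-k}^n$), and your caution about isolated points is unnecessary but harmless.
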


\begin{proof}
Fix $\ve > 0$ and choose $k_\ve$ as in the proof of Lemma~\ref{lem:hsep} so that 
$$\diam(\cM_{-k_\ve-1}^{n+1}) < \ve$$ for all $n \ge k_\ve$.  
Choose one point $x$ in each element of $\cP_{-k_\ve}^{k_\ve+n}$, and let $F$ denote the
collection of these points.  We will show that $F$ is an $(n,\ve)$-spanning set for $T$.

Let $y \in M$ and let $B_y$ be the element of $\cP_{-k_\ve}^{k_\ve+n}$ containing $y$.
If $B_y$ is an isolated point, then $y \in F$ and there is nothing to prove.  Otherwise,
let $x_y = F \cap B_y$.  For each $j \in [0,n]$, using the analogous calculation as in \eqref{eq:include},
we must have $T^j(y) , T^j (x_y) \in B_j \in \cP_{-k_\ve}^{k_\ve}$.  Since $\diam(\cP_{-k_\ve}^{k_\ve}) < \ve$,
this implies $d(T^j(y) , T^j(x_y)) < \ve$ for all $j \in [0,n]$.  Thus $F$ is an $(n,\ve)$-spanning set.
We have,
\[
\lim_{n \to \infty} \frac 1n \log s_n(\ve) \le \lim_{n\to \infty} \frac 1n \log \# \cP_{-k_\ve}^{k_\ve+n}
= h_* \, .
\]
Since this is true for each $\ve>0$, it follows that $\hspan \le h_*$.

To prove the reverse inequality, recall $\ve_0$ from the proof of Lemma~\ref{lem:hsep}.
For $\ve < \ve_0$ and $n \in \mathbb{N}$, let $F$ be an $(n,\ve)$-spanning set.  We claim
$\# F \ge \# \cM_0^n$.  Suppose not.  Then there exists $A \in \cM_0^n$
which contains no elements of $F$.  Let $y \in A$ and let $x \in F$.  By the claim in the
proof of Lemma~\ref{lem:hsep}, $d_n(x,y) \ge \ve_0$ since $x$ and $y$ lie in different
elements of $\cM_0^n$.  Since this holds for all $x \in F$,
it contradicts the fact that $F$ is an $(n,\ve)$-spanning set.

Since this is true for each $(n,\ve)$-spanning set for $\ve < \ve_0$, we conclude that
$s_n(\ve) \ge \# \cM_0^n$, and taking appropriate limits, $\hspan \ge h_*$.
\end{proof}

\subsection{Easy Direction of the Variational Principle for $h_*$}

Recall that given a $T$-invariant probability measure $\mu$ and a finite
measurable partition $\mathcal{A}$ of $M$, the entropy of $\mathcal{A}$ with respect to $\mu$ is
defined by $H_\mu(\mathcal{A}) = - \sum_{A \in \mathcal{A}} \mu(A) \log \mu(A)$, and the
entropy of $T$ with respect to $\mathcal{A}$ is
$
h_\mu(T, \mathcal{A}) = \lim_{n \to \infty} \frac 1n H_\mu\left(\bigvee_{i=0}^{n-1} T^{-i} \mathcal{A}\right) 
$.

The following lemma gives the bound
\eqref{varprinc1} in Theorem~\ref{thm:h_*}:

\begin{lemma}
\label{lem:geq}
$h_* \ge \sup \{ h_\mu(T) : \mbox{$\mu$ is a $T$-invariant Borel probability measure} \}$.
\end{lemma}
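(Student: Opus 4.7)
The plan is to show, for every $T$-invariant Borel probability measure $\mu$ and every finite measurable partition $\mathcal{A}$ of $M$, that $h_\mu(T,\mathcal{A})\le h_*$; taking suprema over $\mathcal{A}$ and then over $\mu$ will yield the lemma.

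The central case is $\mathcal{A}=\cP$. Since $\bigvee_{i=0}^{n-1}T^{-i}\cP=\cP_0^{n-1}$, the elementary inequality $H_\mu(\mathcal{C})\le\log\#\mathcal{C}$ applied to $\mathcal{C}=\cP_0^{n-1}$, combined with claim~\eqref{cP} of Theorem~\ref{thm:h_*} (proven in Lemma~\ref{prop:equiv}), yields
\[
h_\mu(T,\cP)\;=\;\lim_{n\to\infty}\frac{1}{n}H_\mu(\cP_0^{n-1})\;\le\;\lim_{n\to\infty}\frac{1}{n}\log\#\cP_0^{n-1}\;=\;h_*.
\]

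To extend this bound to an arbitrary $\mathcal{A}$, I would combine the Abramov--Rokhlin relative entropy inequality with the standard identity $h_\mu(T,\cP_{-k}^k)=h_\mu(T,\cP)$ to obtain, for every $k\ge 0$,
\[
h_\mu(T,\mathcal{A})\;\le\;h_\mu(T,\mathcal{A}\vee\cP_{-k}^k)\;\le\;h_\mu(T,\cP)+H_\mu(\mathcal{A}\mid\cP_{-k}^k).
\]
It then suffices to show that the conditional entropy $H_\mu(\mathcal{A}\mid\cP_{-k}^k)$ tends to $0$ as $k\to\infty$. By the uniform hyperbolicity bound \eqref{eq:hyp}, every element of $\cP_{-k}^k$ has stable and unstable boundary curves of length $O(\Lambda^{-k})$; together with the uniform transversality of the stable and unstable cones (and Lemma~\ref{lem:conn}, which ensures these elements are connected), this forces $\diam(\cP_{-k}^k)=O(\Lambda^{-k})\to 0$. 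Hence $\{\cP_{-k}^k\}_{k\ge 0}$ refines to generate the Borel $\sigma$-algebra modulo $\mu$-null sets, and the martingale convergence theorem for conditional entropies gives $H_\mu(\mathcal{A}\mid\cP_{-k}^k)\to 0$.

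The main obstacle is ensuring the diameter-to-zero estimate and the ensuing generator property are valid for \emph{every} $T$-invariant probability measure $\mu$, including those potentially concentrated on the singularity strata $\bigcup_n\cS_n$ or on periodic orbits. Atomic measures supported on periodic orbits satisfy $h_\mu(T)=0$, so the inequality is trivial in that case; for non-atomic $\mu$, the singular set is a countable union of smooth curves and so does not interfere with the conclusion that $\diam(\cP_{-k}^k)\to 0$ on a full $\mu$-measure subset, after which the martingale convergence step proceeds unchanged.
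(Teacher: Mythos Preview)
Your argument is correct, but it is more elaborate than necessary. The paper's proof dispatches the entire lemma in one line by invoking the Kolmogorov--Sinai generator theorem: since $\bigvee_{i=-\infty}^{\infty} T^{-i}\cP$ separates points of $M$ (which is exactly the consequence of your diameter-to-zero observation for $\cP_{-k}^k$), one has $h_\mu(T)=h_\mu(T,\cP)$ for \emph{every} invariant $\mu$, and your displayed inequality $h_\mu(T,\cP)\le h_*$ then finishes the proof immediately. Your second and third paragraphs, with the Rokhlin inequality and the martingale convergence argument for $H_\mu(\mathcal{A}\mid\cP_{-k}^k)\to 0$, are effectively a reproof of the generator theorem in this special case; they are valid but redundant once the generator property is established.

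Your ``main obstacle'' paragraph is also unnecessary. The statement $\diam(\cP_{-k}^k)\to 0$ is a purely topological fact (elements of $\hP_{-k}^k=\cM_{-k-1}^{k+1}$ have small diameter by the hyperbolicity estimate you cite, and the remaining elements of $\cP_{-k}^k$ are isolated points), so the Borel $\sigma$-algebra is generated regardless of which invariant measure $\mu$ one considers. There is no need to treat atomic measures, measures on singular strata, or any other case separately.
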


\begin{proof}
Let $\mu$ be a $T$-invariant probability measure on $M$.  We note that
$\cP$ is a generator for $T$ since $\bigvee_{i=-\infty}^\infty T^{-i} \cP$ separates points in $M$. 
Thus $h_{\mu}(T) = h_{\mu}(T, \cP)$ (see for example \cite[Thm~4.17]{walters}).
Then,
\[
\begin{split}
h_\mu(T, \cP) & = \lim_{n \to \infty} \frac 1n H_\mu\left(\bigvee_{i=0}^{n-1} T^{-i}\cP\right) 
= \lim_{n \to \infty} \frac 1n H_\mu(\cP_0^{n-1}) 
\le \lim_{n \to \infty} \frac 1n \log (\# \cP_0^{n-1}) = h_* \, .
\end{split}
\]
Thus $h_\mu(T) \le h_*$ for every $T$-invariant
probability measure $\mu$. 
\end{proof}

 \section{The Banach Spaces $\BB$ and $\BB_w$ and the Transfer Operator $\LL$}\label{transfert}
   The
measure of maximal entropy for the billiard map
$T$ will be constructed out of  left and right eigenvectors
of  a transfer operator $\cL$ associated with the billiard map and acting on
suitable spaces $\cB$ and $\cB_w$ of anisotropic distributions. In this section we define these
objects, state and prove the main bound, Proposition~\ref{prop:ly}, on the transfer operator, and deduce from it Theorem~\ref{sprad}, showing that
the spectral radius of $\cL$ on $\cB$ is $e^{h_*}$. 

Recalling that the stable Jacobian of $T$ satisfies $J^sT \approx \cos \vf$ \cite[eq.~(4.20)]{chernov book}, 
the relevant transfer operator is defined on measurable functions $f$ by 
\begin{equation}\label{opp}
\cL f = \frac{f \circ T^{-1}}{J^sT \circ T^{-1}}\, .
\end{equation}

In order to define the Banach spaces of distributions on which the operator $\cL$ will act,
we need preliminary notations:
 Let
$\cW^s$ denote the set of all nontrivial connected subsets $W$
of stable manifolds for $T$ so that $W$ has  length at most $\delta_0 >0$,
where $\delta_0< 1$ will be chosen after \eqref{eq:m}, using the growth Lemma~\ref{lem:growth}.  Such curves have curvature bounded above
by a fixed constant \cite[Prop~4.29]{chernov book}.  Thus,
$T^{-1}\cW^s = \cW^s$, up to subdivision of curves.

For every $W\in \cW^s$, let $C^1(W)$
denote the space of $C^1$ functions on $W$ and for  every $\alpha \in (0,1)$ we
let $C^\alpha(W)$ denote the closure\footnote{Working with
the closure of $C^1$  will give injectivity of the inclusion
of the strong space in the weak.} of $C^1(W)$ for the
$\alpha$-H\"older norm  $|\psi|_{C^\alpha(W)}=\sup_W|\psi|+ H_W^\alpha(\psi)$,
where
\begin{equation}\label{ladefinition}
H_W^\alpha(\psi)=\sup_{\substack{x, y \in W \\ x \neq y}}\frac{|\psi(x)-\psi(y)|}{d(x,y)^\alpha}\, .
\end{equation}
We write 
$\psi \in C^\alpha(\cW^s)$  
if $\psi \in C^\alpha(W)$
for all $W \in \cW^s$,   with uniformly bounded H\"older norm.

\subsection{Definition of Norms and of the Spaces $\cB$ and $\cB_w$}
\label{normdef}

Since the stable cone $C^s$ is bounded away from the vertical, we may view each stable curve
$W \in \cW^s$ as the graph of a function $\vf_W(r)$ of the arclength coordinate $r$ ranging
over some interval $I_W$, i.e.,
\begin{equation}
\label{eq:graph}
W = \{ G_W(r) := (r, \vf_W(r)) \in M : r \in I_W \}\, .
\end{equation}
Given two curves $W_1, W_2 \in \cW^s$, we may use this representation to
define a distance\footnote{$d_{\cW^s}$ is not a metric since it does not satisfy the triangle
inequality; however, it is sufficient for our purposes to produce a usable notion of distance between stable manifolds.
See \cite[Footnote 4]{dzr} for a modification of $d_{\cW^s}$ which does satisfy the triangle inequality.} between them: Define 
\[
d_{\cW^s}(W_1, W_2) = | I_{W_1} \bigtriangleup I_{W_2}| + |\vf_{W_1} - \vf_{W_2}|_{C^1(I_{W_1} \cap I_{W_2})}
\]
if $I_{W_1} \cap I_{W_2} \neq \emptyset$.  Otherwise, set $d_{\cW^s}(W_1, W_2) = \infty$.

Similarly, given two test functions $\psi_1$ and $\psi_2$ on $W_1$ and $W_2$, respectively,
we define a distance between them by 
\[
d(\psi_1, \psi_2) = |\psi_1 \circ G_{W_1} - \psi_2 \circ G_{W_2} |_{C^0(I_{W_1} \cap I_{W_2})} \, ,
\]
whenever $d_{\cW^s}(W_1, W_2) < \infty$. Otherwise, set $d(\psi_1, \psi_2) = \infty$.

\smallskip
We are now ready to introduce the norms used to define the spaces $\cB$ and $\cB_w$.
Besides $\delta_0\in (0,1)$,
and a constant $\varepsilon_0>0$ to appear below, they will depend on positive real numbers $\alpha$, $\beta$, 
$\gamma$, and $\varsigma$ so that, recalling $s_0\in (0,1)$ from\footnote{If  $\gamma > 1$, we can get 
good bounds in Theorem~\ref{nbhdthm}. This is only possible if $h_*>s_0 \log 2$.}
\eqref{defs0},
\begin{equation}
\label{eq:gamma}
0 < \beta < \alpha \le 1/3\, , \, 
\, \quad 1<  2^{s_0\gamma} < e^{h_*} \, , \, \, \quad 0< \varsigma < \gamma \, .
\end{equation}
(The condition $\alpha \le 1/3$
 is used in Lemma~\ref{lem:embed bound} which is used to prove embedding
into distributions. The number $1/3$ comes from the $1/k^2$ decay in the
width of homogeneity strips \eqref{homogs}. The upper bound on $\gamma$ arises from
use of the growth lemma
from Section~\ref{GL0}.  See \eqref{eq:m}.)

For $f \in C^1(M)$, define the weak norm of $f$ by
\[
| f |_w = \sup_{W \in \cW^s} \sup_{\substack{\psi \in C^\alpha(W) \\ |\psi|_{C^\alpha(W)} \le 1}}
\int_W f \, \psi \, dm_W \, .
\]
Here, $dm_W$ denotes unnormalized Lebesgue (arclength) measure on $W$.

Define the strong stable norm of $f$ by\footnote{The logarithmic modulus of continuity 
 in $\|f\|_s$ is used to obtain a finite spectral radius.}
\[
\| f \|_s = \sup_{W \in \cW^s} \sup_{\substack{\psi \in C^\beta(W) \\ |\psi|_{C^\beta(W)} \le |\log |W||^\gamma}}
\int_W f \, \psi \, dm_W \, ,
\]
(note that $| f |_w \le \max \{1, |\log \delta_0|^{-\gamma}\} \| f \|_s$).
Finally, for $\varsigma \in (0,  \gamma)$, define
the strong unstable norm\footnote{The logarithmic modulus of continuity appears in $\|f\|_u$ because
of the logarithmic modulus of continuity in $\|f\|_s$. Its presence in $\|f\|_u$
causes  the loss of the
spectral gap.} of $f$ by
\[
\| f \|_u = \sup_{\ve \le \ve_0} \sup_{\substack{W_1, W_2 \in \cW^s \\ d_{\cW^s}(W_1, W_2) \le \ve}}
\sup_{\substack{\psi_i \in C^\alpha(W_i) \\ |\psi_i|_{C^\alpha(W_i)} \le 1 \\ d(\psi_1, \psi_2) = 0}} 
|\log \ve|^\varsigma \left| \int_{W_1} f \, \psi_1 \, dm_{W_1} - \int_{W_2} f \, \psi_2 \, dm_{W_2} \right| \,  .
\]

\begin{definition}[The Banach spaces]
The space $\cB_w$ is  the completion of $C^1(M)$ with respect to the weak norm
$| \cdot |_w$, while $\cB$ is the completion of $C^1(M)$ with respect to the strong norm,
$\| \cdot \|_{\cB} = \| \cdot \|_s +  \| \cdot \|_u$.
\end{definition}

In the next subsection, we shall prove
 the continuous embeddings $\BB \subset \BB_w \subset (C^1(M))^*$, i.e., elements
 of our Banach spaces are distributions of order at most one (see Proposition~\ref{distembed}).
 Proposition~\ref{cpte}  in  Section~\ref{cembedsec} gives the compact embedding of the unit ball of
 $\cB$ in $\cB_w$. 
 
\subsection{Embeddings into Distributions on $M$}\label{embedsec}

In this section we describe elements of our Banach spaces $\cB\subset \cB_w$
as
distributions of order at most one on $M$.
(This does not follow from the corresponding result in \cite{demzhang11}, 
in particular since
we use exact stable leaves to define our norms.) We will actually show that they belong to the dual
of a space $C^\alpha(\cW^s_{\bH})$ containing $C^1(M)$ that we define next:
We did not require elements of $\cW^s$ to be homogeneous. Now,
defining the usual homogeneity strips 
\begin{equation}\label{homogs}
\bH_k = \big\{ (r, \vf) \in M_i : \tfrac{\pi}{2} - \tfrac{1}{k^2} \le \vf \le \tfrac{\pi}{2} - \tfrac{1}{(k+1)^2} \big\}, \quad k \ge k_0 \, ,
\end{equation}
and analogously for $k \le -k_0$, 
we define $\cW^s_{\bH} \subset \cW^s$ to denote those stable manifolds 
$W \in \cW^s$ such that $T^nW$ 
lies in a single homogeneity strip for all $n \ge 0$.
We write 
$\psi \in C^\alpha(\cW^s_{\bH})$
if $\psi \in C^\alpha(W)$
for all $W \in \cW^s_{\bH}$ with uniformly bounded H\"older norm.
Similarly, we define $C^\alpha_{\cos}(\cW^s_{\bH})$ to comprise the set of functions
$\psi$ such that $\psi \cos \vf \in C^\alpha(\cW^s_{\bH})$.
Clearly $C^\alpha(\cW^s_{\bH}) \subset C^\alpha_{\cos}(\cW^s_{\bH})$.

Due to the uniform hyperbolicity  \eqref{eq:hyp} of $T$ and the invariance of $\cW^s$
and $\cW^s_{\bH}$, 
if $\psi \in C^\alpha(\cW^s)$ (resp. $C^\alpha(\cW^s_{\bH})$),
then $\psi \circ T \in C^\alpha(\cW^s)$ (resp. $C^\alpha(\cW^s_{\bH})$).  Also, since the stable Jacobian of $T$
satisfies $J^sT \approx \cos \vf$ \cite[eq.~(4.20)]{chernov book} and is $1/3$ log-H\"older continuous
on elements of $\cW^s_{\bH}$ \cite[Lemma~5.27]{chernov book}, then 
$\frac{\psi \circ T}{J^s T} \in C^\alpha_{\cos}(\cW^s_{\bH})$ for any $\alpha \le 1/3$. 

We can now state our first embedding result.
An  embedding $\BB_w  \subset (\cF)^*$
(for  $\cF=C^1(M)$ or  $\cF=C^\alpha(\cW^s_{\bH})$)
is understood in the following sense:
 for  $f \in \cB_w$ there exists $C_f<\infty$ such that, letting $f_n\in C^1(M)$ be a sequence 
 converging to $f$ in the $\cB_w$ norm,  
 for every $\psi\in \cF$ the following limit exists
 \begin{equation}\label{defd}
 f(\psi)=\lim_{n \to \infty} \int f_n \psi \, d\musrb
 \end{equation}
 and satisfies $|f(\psi)|\le C_f\|\psi\|_{\cF}$.

\begin{proposition}[Embedding into Distributions]
\label{distembed}
The continuous embeddings 
$$C^1(M) \subset \BB \subset \BB_w  \subset (C^\alpha(\cW^s_{\bH}))^*\subset (C^1(M))^*$$
 hold,
the first two embeddings\footnote{We do not expect the third embedding  to be injective, due to the
 logarithmic weight in the norm.}  being injective. 
Therefore, since $C^1(M)\subset \BB\subset \BB_w$ injectively
and continuously, we have
$$(\BB_w)^*\subset \BB^* \subset (C^1(M))^*\, .$$
\end{proposition}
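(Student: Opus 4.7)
My plan is to prove the embeddings in the order stated and then establish injectivity via a leafwise density argument. For $f \in C^1(M)$ the bound $\|f\|_{\cB} \le C|f|_{C^1(M)}$ follows from separate estimates on each component of the norm. Since $|W| \le \delta_0 < 1$, the weak and strong stable norms of $f$ are each controlled by $|f|_{C^0(M)}$ times a constant depending only on $\delta_0$ and $\gamma$. For the strong unstable norm, given $W_1, W_2 \in \cW^s$ parameterized as graphs of $\vf_{W_i}$ over $I_{W_i}$ with $d_{\cW^s}(W_1,W_2) \le \ve$, I split the comparison of $\int_{W_i} f\psi_i \, dm_{W_i}$ into a contribution from $I_{W_1} \triangle I_{W_2}$, bounded by $|f|_{C^0}\ve$, and one from the overlap, where $|f(r,\vf_{W_1}(r)) - f(r,\vf_{W_2}(r))| \le |f|_{C^1}|\vf_{W_1} - \vf_{W_2}|_{C^0} \le |f|_{C^1}\ve$; multiplying by $|\log \ve|^\varsigma$ keeps the estimate uniformly bounded over $\ve \in (0,\ve_0]$. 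The continuity of $\cB \hookrightarrow \cB_w$ is immediate from $|f|_w \le \max\{1,|\log\delta_0|^{-\gamma}\}\|f\|_s$.

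For the distribution embedding $\cB_w \hookrightarrow (C^\alpha(\cW^s_\bH))^*$, I would disintegrate $\musrb$ along a measurable family of homogeneous stable manifolds, which exist with positive length at $\musrb$-a.e.\ point of $M$. This produces a factor space $(\Xi, \hat\rho)$ of finite total mass and conditional measures supported on leaves $W_\xi \in \cW^s_\bH$, whose densities $\rho_\xi$ with respect to $dm_{W_\xi}$ are uniformly controlled in $C^\alpha(W_\xi)$. For any $\psi \in C^\alpha(\cW^s_\bH)$ and any sequence $(f_n) \subset C^1(M)$ Cauchy in $\cB_w$,
\[
\int_M f_n \psi \, d\musrb = \int_\Xi \Bigl( \int_{W_\xi} f_n\, (\psi\rho_\xi) \, dm_{W_\xi} \Bigr) d\hat\rho(\xi),
\]
and each inner integral is bounded by $C|f_n|_w \cdot |\psi|_{C^\alpha(\cW^s_\bH)}$. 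The Cauchy property produces a well-defined limit $f(\psi)$ as in \eqref{defd}, satisfying the required continuity bound and independent of the representing sequence. The final inclusion $(C^\alpha(\cW^s_\bH))^* \subset (C^1(M))^*$ follows from the trivial continuous embedding $C^1(M) \hookrightarrow C^\alpha(\cW^s_\bH)$ obtained by restricting $C^1$ functions to homogeneous stable curves.

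Injectivity of $C^1(M) \hookrightarrow \cB$ is immediate: $\|f\|_\cB = 0$ forces $|f|_w=0$, and the vanishing of $\int_W f\psi\,dm_W$ for all $W \in \cW^s$ and all admissible $\psi \in C^\alpha(W)$ forces $f \equiv 0$ by continuity. For the more delicate injectivity of $\cB \hookrightarrow \cB_w$, let $f$ be represented by a Cauchy sequence $(f_n) \subset C^1(M)$ with $|f_n|_w \to 0$. For each $W \in \cW^s$ and each $\psi \in C^\beta(W)$, the limit $\int_W f\psi\,dm_W := \lim_n \int_W f_n\psi\,dm_W$ is well-defined because $|\int_W (f_n-f_m)\psi\,dm_W| \le \|f_n-f_m\|_s |\psi|_{C^\beta(W)}/|\log|W||^\gamma$. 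For $\psi \in C^\alpha(W)$ with $|\psi|_{C^\alpha}\le 1$ this limit vanishes, since it is bounded by $|f_n|_w \to 0$. Since $C^\alpha(W)$ is dense in $C^\beta(W)$ (both arising as completions of $C^1(W)$) and the limit functional extends continuously to $C^\beta(W)$ (bounded by $\limsup_n \|f_n\|_s / |\log|W||^\gamma$), the vanishing extends to all $\psi \in C^\beta(W)$, yielding $\|f\|_s = 0$. An analogous argument applied to differences of integrals over pairs of nearby curves gives $\|f\|_u = 0$, so $f=0$ in $\cB$. The main technical obstacle is the disintegration step, since conditional densities and leaf geometry degenerate near $\cS_0$; I would circumvent this by performing the disintegration separately on a countable family of homogeneous Cantor rectangles (as in Section~\ref{supera}) covering $\musrb$-a.e.\ point, and summing the resulting contributions.
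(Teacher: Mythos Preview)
Your approach is essentially the same as the paper's: the embedding $\cB_w \subset (C^\alpha(\cW^s_\bH))^*$ goes through the disintegration of $\musrb$ along maximal homogeneous stable manifolds, and injectivity of $\cB \hookrightarrow \cB_w$ uses that $C^\alpha(W)$ is dense in $C^\beta(W)$ (both being closures of $C^1(W)$). The paper packages the disintegration argument as Lemma~\ref{lem:embed bound} and then cites it.

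There is one point where your write-up is vague in a way that matters. In the disintegration $d\musrb = \rho_\xi\, dm_{W_\xi}\, d\hat\rho(\xi)$, you cannot simultaneously have $\rho_\xi$ uniformly bounded in $C^\alpha(W_\xi)$ and $\hat\rho$ of finite total mass for free: the conditional \emph{probability} measures have density $|W_\xi|^{-1}\rho_\xi$ with $\rho_\xi$ uniformly log-H\"older (this is the time-reversal of \cite[Cor.~5.30]{chernov book}), so after bounding the leaf integrals by $|f|_w|\psi\rho_\xi|_{C^\alpha}$ you are left with
\[
\int_\Xi |W_\xi|^{-1}\, d\hatmusrb(\xi)\,,
\]
whose finiteness is exactly the nontrivial input. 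This is a known property of the SRB measure --- it decomposes as a \emph{proper standard family} in the sense of \cite[Exercises~7.15 and 7.22]{chernov book} --- and is what the paper invokes. Your proposed workaround via a countable cover by Cantor rectangles does not address this: within each rectangle the homogeneous stable leaves still accumulate arbitrarily short pieces near the homogeneity strip boundaries, so the same integrability question arises rectangle by rectangle. Replace the Cantor-rectangle detour by a direct appeal to the proper-family property and the argument is complete.
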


\begin{remark}[Radon Measures]\label{distembed*}
Proposition~\ref{distembed} has the following important
consequence: If $f \in \BB_w$ is such that $f(\psi)$ defined by \eqref{defd} is nonnegative for all
nonnegative $\psi \in \cF=C^1(M)$, then, by Schwartz's \cite[\S I.4]{Sch} generalisation of the Riesz representation
 theorem, it defines an element of the dual of $C^0(M)$, i.e.,
a Radon measure on $M$. If, in addition, $f(\psi)=1$ for
$\psi$ the constant function $1$, then this measure is a probability measure.
\end{remark}

\smallskip

The following lemma is important for the third inclusion in Proposition~\ref{distembed}.
Recalling \eqref{ladefinition}, we define 
$H^\alpha_{\cW^s_{\bH}}(\psi) = \sup_{W \in \cW^s_{\bH}} H^\alpha_W(\psi)$.

\begin{lemma}
\label{lem:embed bound}
There exists $C>0$ such that for any $f \in \cB_w$ and $\psi \in C^\alpha(\cW^s_{\bH})$, recalling \eqref{defd},
\[
|f(\psi)| \le C |f|_w \big(|\psi|_\infty + H^\alpha_{\cW^s_{\bH}}(\psi) \big) \, .
\]
\end{lemma}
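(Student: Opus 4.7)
The plan is to represent $\musrb$ as an integral over a measurable family of homogeneous local stable manifolds with Hölder densities, apply the weak-norm bound leaf by leaf, and then pass from $f\in C^1(M)$ to $f\in\BB_w$ by density.

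\textit{Step 1: Disintegration along $\cW^s_{\bH}$.} By the standard Sinai--Chernov theory of dispersing billiards (see e.g.\ \cite[Lem.~5.27]{chernov book} for the $1/3$-log-Hölder bound on the unstable Jacobian, from which uniform Hölder regularity of the conditional densities is extracted in \cite{demzhang11}), the smooth invariant measure admits a disintegration
\[
\int_M g\, d\musrb \;=\; \int_A \int_{W_a} g\,\rho_a\, dm_{W_a}\, d\nu(a), \qquad g\in C^0(M),
\]
where $\{W_a\}_{a\in A}\subset \cW^s_{\bH}$ is a measurable family of homogeneous local stable manifolds indexed by a Borel probability space $(A,\nu)$, and the conditional densities $\rho_a$ with respect to arclength are uniformly bounded and $\alpha$-Hölder along each $W_a$, with $\sup_a |\rho_a|_{C^\alpha(W_a)}\le C_0$ for any fixed $\alpha\le 1/3$.

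\textit{Step 2: Leafwise bound for $f\in C^1(M)$.} For $f\in C^1(M)$ and $\psi\in C^\alpha(\cW^s_{\bH})$, the leafwise product $\psi\rho_a\in C^\alpha(W_a)$ satisfies
\[
|\psi\rho_a|_{C^\alpha(W_a)} \;\le\; C_0\bigl(|\psi|_\infty + H^\alpha_{\cW^s_{\bH}}(\psi)\bigr).
\]
The definition of the weak norm then gives, uniformly in $a\in A$,
\[
\Bigl|\int_{W_a} f\,(\psi\rho_a)\, dm_{W_a}\Bigr| \;\le\; C_0\,|f|_w\bigl(|\psi|_\infty + H^\alpha_{\cW^s_{\bH}}(\psi)\bigr).
\]
Integrating against $d\nu(a)$ and using $\nu(A)\le 1$ yields the asserted inequality for smooth $f$.

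\textit{Step 3: Extension to $f\in \BB_w$.} Pick $f_n\in C^1(M)$ with $|f_n-f|_w\to 0$. Applying Step~2 to $f_n-f_m$ shows that $\int (f_n-f_m)\psi\, d\musrb$ is Cauchy in $\mathbb R$ for each fixed $\psi$, so $\int f_n\psi\, d\musrb$ converges; by the definition \eqref{defd} (valid because $C^1(M)\subset C^\alpha(\cW^s_{\bH})$, which makes the limit independent of the approximating sequence) the limit equals $f(\psi)$, and the inequality passes to the limit. The expected obstacle is entirely concentrated in Step~1 — producing a disintegration with uniformly Hölder leafwise densities — but this is a by-product of the standard theory of homogeneous stable manifolds for finite-horizon Sinai billiards, and it is precisely what forces the restriction $\alpha\le 1/3$ built into \eqref{eq:gamma}.
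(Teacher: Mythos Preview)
Your approach is the same as the paper's, but Step~1 contains a genuine gap that is exactly the point the paper takes care to address. You write the disintegration as
\[
\int_M g\, d\musrb \;=\; \int_A \int_{W_a} g\,\rho_a\, dm_{W_a}\, d\nu(a)
\]
with $(A,\nu)$ a \emph{probability} space and simultaneously $\sup_a |\rho_a|_{C^\alpha(W_a)}\le C_0$. These two normalizations are incompatible. In the standard disintegration of $\musrb$ over maximal homogeneous stable manifolds, the conditional measures are \emph{probability} measures $d\musrb^{W_\xi}=|W_\xi|^{-1}\rho_\xi\,dm_{W_\xi}$ with $\rho_\xi$ uniformly $1/3$-log-H\"older; the factor measure $\hatmusrb$ is then the probability measure. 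If you absorb the normalizing factor $|W_\xi|^{-1}$ into your density $\rho_a$, it blows up on short leaves and is not uniformly bounded; if instead you keep $\rho_a=\rho_\xi$ uniformly H\"older, then your factor measure must be $|W_\xi|^{-1}\,d\hatmusrb(\xi)$, and you have no right to claim $\nu(A)\le 1$.

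What saves the argument (and what the paper invokes explicitly) is the nontrivial fact that
\[
\int_{\Xi}|W_\xi|^{-1}\,d\hatmusrb(\xi)<\infty,
\]
i.e., that $\musrb$ decomposes as a \emph{proper standard family} over $\cW^s_{\bH}$ (\cite[Exercises~7.15 and~7.22]{chernov book}). This is precisely where the identification $f\leftrightarrow f\,d\musrb$ (rather than $f\,dm$) matters: with the Lebesgue factor measure the analogous integral would be infinite. Once you insert this estimate, your Steps~2--3 go through and coincide with the paper's proof; but as written, the line ``integrating against $d\nu(a)$ and using $\nu(A)\le1$'' hides the only substantive analytic input in the lemma.
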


\begin{proof}
By density it suffices to prove the inequality for $f \in C^1(M)$.
Let $\psi \in C^\alpha(\cW^s_{\bH})$.  Since by our convention, 
we identify $f$ with the measure $f d\musrb$, we must estimate,
\[
f(\psi) = \int f \, \psi \, d\musrb \, .
\]
In order to bound this integral, we disintegrate the measure $\musrb$ into
conditional probability measures $\musrb^{W_\xi}$ on maximal homogeneous stable manifolds 
$W_\xi \in \cW^s_{\bH}$
and a factor measure $d\hatmusrb(\xi)$ on the index set $\Xi$ of homogeneous stable manifolds;
thus $\cW^s_{\bH} = \{ W_\xi \}_{\xi \in \Xi}$.  According to the time reversal counterpart
of \cite[Cor~5.30]{chernov book}, the conditional measures $\musrb^{W_\xi}$ have
smooth densities with respect to the arclength measure on $W_\xi$, i.e.,
$d\musrb^{W_\xi} = |W_\xi|^{-1} \rho_\xi dm_{W_\xi}$, where $\rho_{\xi}$ is log-H\"older continuous
with exponent 1/3.  Moreover, $\sup_{\xi \in \Xi} |\rho_{\xi}|_{C^\alpha(W_\xi)} =: \bar C < \infty$
since $\alpha \le 1/3$. 

Using this disintegration, we estimate\footnote{This is  where we use  $f \musrb$:   Replacing $\hatmusrb$ by the
factor measure with respect to Lebesgue, this integral would be infinite.  
Using $\cW^s$ rather than $\cW^s_{\bH}$ may produce a finite integral 
with respect to Lebesgue, but the $\rho_\xi$ may not be
uniformly H\"older continuous on the longer curves.}  the required integral:

\begin{align}
\label{needsplit}|f(\psi)| & = \left| \int_{\xi \in \Xi} \int_{W_\xi} f \, \psi \, \rho_\xi \, |W_\xi|^{-1} dm_{W_\xi} \, d\hatmusrb(\xi) \right|
\\
\nonumber & \le \int_{\xi \in \Xi} |f|_w |\psi|_{C^\alpha(W_\xi)} |\rho_\xi|_{C^\alpha(W_\xi)} |W_\xi|^{-1} d\hatmusrb(\xi) \\
\nonumber & \le \bar C |f|_w \big( |\psi|_\infty + H^\alpha_{\cW^s_{\bH}}(\psi) \big) 
\int_{\xi \in \Xi} |W_\xi|^{-1} d\hatmusrb(\xi) \, .
\end{align}

This last integral is precisely that in \cite[Exercise~7.15]{chernov book} which measures the 
relative frequency of short curves in a standard family.  Due to \cite[Exercise~7.22]{chernov book},
the SRB measure decomposes into a proper family, and so this integral is finite.
\end{proof}

\begin{proof}[Proof of Proposition~\ref{distembed}]
The continuity and injectivity of the embedding of $C^1(M)$ into $\cB$ are clear from the definition.
The inequality $| \cdot |_w \le \| \cdot \|_s$ implies the continuity of $\cB \hookrightarrow \cB_w$,
while the injectivity follows from the definition of $C^\beta(W)$ as the closure of $C^1(W)$
in the $C^\beta$ norm, as described at the beginning of Section~\ref{transfert},
so that $C^\alpha(W)$ is dense in $C^\beta(W)$.

Finally, since $C^1(M) \subset C^\alpha(\cW^s_{\bH})$, the continuity of the third and fourth inclusions
follow from Lemma~\ref{lem:embed bound}. 
 \end{proof}

 \subsection{The Transfer Operator} 
 
 We now move to the key bounds on the transfer operator. First, we revisit the
 definition \eqref{opp} in order to let $\cL$ act on $\cB$ and $\cB_w$:
 We may define the
transfer operator $\cL : (C^\alpha_{\cos}(\cW^s_{\bH}))^* \to (C^\alpha(\cW^s))^*$
by
\[
\cL f (\psi) = f \big(\tfrac{\psi \circ T}{J^sT} \big), \quad \psi \in C^\alpha(\cW^s) \, .
\]

When $f \in C^1(M)$, we identify $f$ with
the measure\footnote{To show the claimed inclusion just use that $d\musrb=(2  |\partial Q|)^{-1} \cos \vf \, drd\vf$.} 
\begin{equation}\label{iddent}
f d\musrb \in (C^\alpha_{\cos}(\cW^s_{\bH}))^* \, . 
\end{equation}
The measure above 
is (abusively) still denoted by $f$.  For $f\in C^1(M)$ 
the transfer operator then indeed takes the  form
$\cL f = (f /J^sT )\circ T^{-1}$ announced in \eqref{opp}
since, due to our identification \eqref{iddent}, we have $\cL f (\psi) = \int \cL f \, \psi \, d\musrb = \int f \, \frac{\psi \circ T}{J^sT} \, d\musrb$.

\begin{remark}[Viewing $f\in C^1$ as a measure]If we viewed instead  $f$ as the measure $f dm$, 
it is not clear whether the embedding Lemma~\ref{lem:embed bound} 
would still hold since the weight $\cos W$ (crucial
to \cite[Lemma~3.9]{demzhang11}) is absent from the norms.  
Along these lines,  we do not claim that Lebesgue measure belongs to
our Banach spaces.

Slightly modifing \cite{demzhang11} due to the lack of homogeneity strips, we could replace $| \psi |_{C^\alpha(W)} \le 1$ 
by $|\psi \cos \vf|_{C^\alpha(W)} \le 1$ in our norms.  Then
it would be natural to view $f$ as $f dm$, and the embedding Lemma~\ref{lem:embed bound} 
would hold, but the transfer operator would have the form
$$
\LL_{cos} f = \frac{ f \circ T^{-1} }{ (J^sT \circ T^{-1})( JT \circ T^{-1})}\, ,
$$
where $JT$ is
the full Jacobian of the map (the ratio of cosines).  We do not make such a change since it would only
complicate our estimates unnecessarily.  Note that
the potentials of the operators $\cL$ and $\cL_{cos}$ differ by a coboundary, giving the
same spectral radius.
\end{remark}

\smallskip
 It follows from submultiplicativity
of $\# \cM^n_0$ that $e^{n h_*} \le \# \cM_0^n$ for all $n$. In Section~\ref{supera}, we shall
prove the supermultiplicativity statement Lemma~\ref{lem:super} from which we deduce
the following upper bound for $\# \cM^n_0$:

\begin{proposition}[Exact Exponential Growth]
\label{cor:exp}
Let $c_1>0$ be given by  Lemma~\ref{lem:super}. Then for all $n \in \mathbb{N}$, we have
$
e^{n h_*} \le \# \cM_0^n \le \tfrac{2}{c_1} e^{n h_*} 
$.
\end{proposition}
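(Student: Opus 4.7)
The proposition asserts a two-sided exponential growth rate for $\# \cM_0^n$, with constants $1$ and $2/c_1$. I would treat the two bounds separately: the lower bound is a direct consequence of subadditivity already established in Lemma~\ref{prop:equiv}, while the upper bound requires the forthcoming supermultiplicative estimate of Lemma~\ref{lem:super}.

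For the lower bound, recall from Lemma~\ref{prop:equiv} that the sequence $n \mapsto \log \# \cM_0^n$ is subadditive. By Fekete's lemma, the limit
\[
h_* \;=\; \lim_{n \to \infty} \tfrac{1}{n}\log \# \cM_0^n \;=\; \inf_{n \ge 1} \tfrac{1}{n}\log \# \cM_0^n
\]
exists and equals the infimum. In particular, $\tfrac{1}{n}\log \# \cM_0^n \ge h_*$ for every $n \ge 1$, which gives $\# \cM_0^n \ge e^{n h_*}$ as claimed.

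For the upper bound, I invoke Lemma~\ref{lem:super}, whose supermultiplicative conclusion I expect to take a form equivalent to $\tfrac{c_1}{2}\,\# \cM_0^n \cdot \# \cM_0^m \le \# \cM_0^{n+m}$ for all $n,m \ge 1$ (the factor $\tfrac{1}{2}$ being precisely what produces the $2/c_1$ in the statement). Setting $a_n := \log \# \cM_0^n + \log(c_1/2)$, this inequality translates into superadditivity $a_{n+m} \ge a_n + a_m$, so that by the superadditive version of Fekete's lemma
\[
\lim_{n \to \infty} \tfrac{a_n}{n} \;=\; \sup_{n \ge 1} \tfrac{a_n}{n}\, .
\]
The left-hand side equals $h_*$ (since $\tfrac{1}{n}\log(c_1/2) \to 0$), so $\tfrac{a_n}{n} \le h_*$ for every $n$. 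Unpacking this bound gives $\log \# \cM_0^n \le n h_* - \log(c_1/2)$, i.e., $\# \cM_0^n \le \tfrac{2}{c_1}\, e^{n h_*}$, completing the proof.

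The main obstacle is of course not in the Fekete-style bookkeeping above but in Lemma~\ref{lem:super} itself, whose supermultiplicativity must be established by the combinatorial fragmentation arguments of Section~\ref{sec:growth}; once that input is granted, the present proposition is essentially a two-line consequence. A secondary point one should check carefully is that the precise constants produced by Lemma~\ref{lem:super} do yield the factor $2$ rather than $1$; if the lemma instead delivered the cleaner inequality $c_1\,\# \cM_0^n \cdot \# \cM_0^m \le \# \cM_0^{n+m}$, the same argument would in fact improve the stated upper bound to $\tfrac{1}{c_1}e^{nh_*}$, so the factor $2$ here is dictated precisely by the form in which supermultiplicativity is proved.
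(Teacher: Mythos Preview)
Your lower bound is exactly the paper's (subadditivity plus Fekete). For the upper bound your approach is correct but differs from the paper's. Lemma~\ref{lem:super} actually delivers the cleaner inequality $c_1\,\#\cM_0^j\,\#\cM_0^{n-j}\le\#\cM_0^n$ (no factor $\tfrac12$), so your own final remark applies: setting $a_n=\log(c_1\,\#\cM_0^n)$ gives genuine superadditivity, and Fekete yields $\#\cM_0^n\le\tfrac{1}{c_1}e^{nh_*}$, which is slightly stronger than the stated bound. The paper instead argues by contradiction: defining $\psi(n)=\#\cM_0^n e^{-nh_*}\ge 1$, the supermultiplicativity becomes $\psi(n)\ge c_1\psi(j)\psi(n-j)$; if some $\psi(n_1)\ge 2/c_1$ then iterating along the doubling sequence gives $\psi(2kn_1)\ge 4^k/c_1$, contradicting $\tfrac{1}{n}\log\psi(n)\to 0$. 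Your Fekete route is shorter and extracts a marginally better constant; the paper's doubling argument is more hands-on and explains where the factor $2$ comes from, though as you observed it is not sharp.
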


The following proposition (proved
in Section~\ref{sec:proof prop}) gives the key norm estimates.

\begin{proposition}
\label{prop:ly}
Let $c_1$ be as in Proposition~\ref{cor:exp}.
There exist $\delta_0$,  $C > 0$, 
and $\varpi \in (0,1)$ such 
that\footnote{\label{pied}In fact the strong stable norm satisfies a stronger inequality:
$\| \cL^n f \|_s \le \frac{C}{c_1 \delta_0} ( \sigma^n \| f \|_s + |f|_w )e^{n h_*}$,
for some $\sigma < 1$.  We omit
the proof since we do not use this.}
 for all  $f \in \cB$,
\begin{align}
| \cL^n f |_w & \le  \frac{C}{c_1 \delta_0}  e^{nh_*} | f |_w \, , \quad \forall n\ge 0  \; ; \label{eq:weak ly} \\
\|\cL^n f\|_s & \leq   \frac{C}{c_1 \delta_0} e^{nh_*}  \|f\|_s   \, , \quad \forall n\ge 0 \; ; \label{cheapeq:stable ly} \\
\| \cL^n f \|_u & \le  \frac{C}{c_1 \delta_0} (\| f \|_u + n^\varpi \| f \|_s)  e^{nh_*}  \, , \quad \forall n \ge 0 \; . \label{eq:unstable ly}
\end{align}
If $h_*> s_0 \log 2$ (where $s_0 <1$ is defined by \eqref{defs0}) then in addition
there exist $\varsigma>0$ and  $C > 0$ such that for all $f \in \cB$
\begin{equation}
\| \cL^n f \|_u  \le  \frac{C}{c_1 \delta_0} (\| f \|_u +  \| f \|_s) e^{nh_*} \, , \quad \forall n \ge 0\; . \label{eq:unstable lyb}
\end{equation}
\end{proposition}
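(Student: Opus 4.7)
The plan is to refine the argument proving \eqref{eq:unstable ly} so that, under the extra hypothesis $h_*>s_0\log 2$, the polynomial factor $n^\varpi$ preceding $\|f\|_s$ is replaced by a constant. The key is to exploit the freedom in \eqref{eq:gamma} to pick $\gamma>1$ satisfying $2^{s_0\gamma}<e^{h_*}$ and $\varsigma<\gamma$, then couple this with the fragmentation and growth lemmas of Section~\ref{sec:growth}. I would fix $\ve\le\ve_0$, curves $W_1,W_2\in\cW^s$ with $d_{\cW^s}(W_1,W_2)\le\ve$, and admissible test functions $\psi_1,\psi_2$ with $d(\psi_1,\psi_2)=0$. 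The leafwise change of variables (which removes the factor $1/J^sT$) reduces the task to bounding
\[
\Big|\sum_j\int_{W_1^j} f\cdot(\psi_1\circ T^n)\,dm_{W_1^j} - \sum_j\int_{W_2^j} f\cdot(\psi_2\circ T^n)\,dm_{W_2^j}\Big|,
\]
where the $W_i^j$ enumerate the connected components of $T^{-n}W_i$. A matching procedure splits these into pairs $(W_1^j,W_2^j)$ lying above a common $r$-interval with $d_{\cW^s}\le\ve\Lambda^{-n}$, together with an unmatched remainder $\{U_k\}$ which I would index by the iterate $0\le m\le n-1$ at which each piece became isolated.

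For matched pairs the argument is essentially as in the proof of \eqref{eq:unstable ly}: since $\psi_i\circ T^n\in C^\alpha$ with bounded norm and the $d_{\cW^s}$-distance is $\le\ve\Lambda^{-n}$, the bound $|\log\ve|^\varsigma\le|\log(\ve\Lambda^{-n})|^\varsigma$ together with the growth Lemma~\ref{lem:growth} gives a matched contribution of at most $C\,e^{nh_*}\|f\|_u$. The gain lies in the unmatched estimate: each $U_k$ born at step $m$ has length at most $C\Lambda^{-(n-m)}$, so $|\log|U_k||\ge c(n-m)$, and trading the uniformly bounded $C^\beta$ norm of the pulled-back test function against the weight $|\log|U_k||^\gamma$ allowed inside $\|f\|_s$ yields a per-piece bound of order $(n-m+1)^{-\gamma}|U_k|\cdot\|f\|_s$. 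The fragmentation lemmas in Section~\ref{sec:growth}, combined with the sparse-recurrence definition \eqref{defs0}, then control the total length of pieces born at step $m$ by $C\cdot 2^{s_0 m}e^{(n-m)h_*}\delta_0$, so that
\[
|\log\ve|^\varsigma\sum_k\Big|\int_{U_k}f\cdot(\psi_i\circ T^n)\,dm\Big|\le C\|f\|_s\,e^{nh_*}\sum_{m=0}^{n-1}\Big(\frac{2^{s_0}}{e^{h_*}}\Big)^m(n-m+1)^{-\gamma}.
\]
Since $\gamma>1$ and $2^{s_0}\le 2^{s_0\gamma}<e^{h_*}$, the right-hand sum is uniformly bounded in $n$ (by $\zeta(\gamma)$, say), which delivers \eqref{eq:unstable lyb}.

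The main obstacle is the precise implementation of the matching procedure and the verification that the unmatched pieces born at iterate $m$ really form a family of total length controlled by $2^{s_0 m}e^{(n-m)h_*}\delta_0$; this is exactly what the fragmentation lemmas of Section~\ref{sec:growth} have to supply, and distinguishing genuinely new unmatched pieces at iterate $m$ from those carried over from earlier iterates is where the combinatorial care is needed. Without the hypothesis $h_*>s_0\log 2$ one is forced to take $\gamma\le 1$, in which case $\sum(n-m+1)^{-\gamma}$ grows like $\log n$ or a power of $n$; this logarithmic/polynomial loss is precisely what produces the $n^\varpi$ factor in the general estimate \eqref{eq:unstable ly}, and the extra room $2^{s_0\gamma}<e^{h_*}$ with $\gamma>1$ is exactly what tames the series into a convergent one.
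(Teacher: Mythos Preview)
The high-level architecture (split $T^{-n}W_1$, $T^{-n}W_2$ into matched and unmatched pieces, bound the matched contribution via $\|f\|_u$, index unmatched pieces by creation time and bound them via $\|f\|_s$) matches the paper, and your matched-piece estimate is fine. But the unmatched estimate has real errors. The strong stable norm gives $\int_U f\psi \le |\log|U||^{-\gamma}\|f\|_s$ for $|\psi|_{C^\beta}\le 1$, with \emph{no} factor $|U|$; so your ``per-piece bound of order $(n-m+1)^{-\gamma}|U_k|\cdot\|f\|_s$'' and the ensuing reduction to controlling a \emph{total length} are both wrong---one must count pieces, not length. Moreover, the length of an unmatched piece at its creation time $j$ is $\le C\Lambda^{-j}\ve$ (not $C\Lambda^{-(n-m)}$ with no $\ve$), and that $\ve$-dependence is exactly what must absorb the factor $|\log\ve|^\varsigma$, which simply vanishes without explanation in your displayed inequality. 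Finally, the fragmentation lemmas of Section~\ref{sec:growth} do not supply a bound of the form $2^{s_0 m}e^{(n-m)h_*}\delta_0$ for any total length; the factor $2^{s_0 m}$ has no source there.

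The paper's mechanism is different and is the missing idea. For an unmatched piece $W^1_k$ created at time $j$, one does \emph{not} apply $\|\cdot\|_s$ immediately but waits a further $q(j)\approx \frac{(\gamma-\varsigma)\log j}{\gamma s_0\log 2}$ steps, chosen so that (a) $T^{-q}W^1_k$ stays shorter than $\delta_1$, whence by \eqref{eq:control} its complexity grows only subexponentially like $(Km+1)^{q/m}$, and (b) $|\log|T^{-q}W^1_k||^{-\gamma}\le |\log\ve|^{-\varsigma}$, which is what handles the $|\log\ve|^\varsigma$ weight. Applying $\|\cL^{n-j-q(j)}f\|_s$ at level $j+q(j)$ and using supermultiplicativity (Lemma~\ref{lem:super}) to replace $\#\cM_0^j\,\#\cM_0^{n-j-q(j)}$ by $C e^{-q(j)h_*}\#\cM_0^n$ yields a total unmatched contribution of order $|\log\ve|^{-\varsigma}\|f\|_s\,\#\cM_0^n\sum_j e^{-q(j)(h_*-\frac1m\log(Km+1))}$. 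With the above choice of $q(j)$ this becomes $\sum_j j^{-(h_*-\ve_2)(\gamma-\varsigma)/(\gamma s_0\log 2)}$, and the hypothesis $h_*>s_0\log 2$ (together with $\gamma>1$ and $\varsigma$ small) is precisely what drives the exponent above $1$ and makes the series converge, giving \eqref{eq:unstable lyb}.
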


\begin{remark}\label{4.8}
Replacing $|\log \epsilon|$ by $\log |\log \epsilon|$
in the definition of $\|f\|_u$, we can replace $n^\varpi$ by a logarithm in \eqref{eq:unstable ly}.
\end{remark}

In spite of compactness of the
embedding $\cB \subset \cB_w$ (Proposition~\ref{cpte}),
 the above bounds do {\it not} deserve to be called Lasota--Yorke estimates since 
(even  replacing $\| \cdot \|_s +  \| \cdot \|_u$
   by  $ \| \cdot \|_s + c_u \| \cdot \|_u $  for small $c_u$ and using footnote \ref{pied}) they
do not lead to  bounds of the type $\|(e^{-h_*}\cL)^n f\|_\cB \le \sigma^n  \|f\|_\cB + K_n |f|_w$
for some  $\sigma <1$ and finite constants $K_n$.
We will nevertheless sometimes refer to them as  ``Lasota--Yorke'' estimates, in quotation marks.

Proposition~\ref{prop:ly} combined with the following lemma imply that
$\cL$ is a bounded operator on both $\cB$ and $\cB_w$:

\begin{lemma}[Image of a $C^1$ Function]
For any $f \in C^1(M)$ the image $\cL f\in (C^\alpha(\cW^s))^*$ is the limit of
a sequence of $C^1$ functions in
the strong norm $\|\cdot\|_{\cB}$. 
\end{lemma}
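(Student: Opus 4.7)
My plan is to show that $\cL f$, which is smooth on $M \setminus \cS_{-1}$ but discontinuous across $\cS_{-1}$ and unbounded near $T \cS_0 \subset \cS_{-1}$ (where the stable Jacobian vanishes), can be approximated in $\|\cdot\|_{\cB}$ by truncations supported away from $\cS_{-1}$.

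First I would construct the approximants. For $\epsilon > 0$, pick a $C^\infty$ cutoff $\chi_\epsilon : M \to [0,1]$ that vanishes on $\cN_\epsilon(\cS_{-1})$, equals $1$ outside $\cN_{2\epsilon}(\cS_{-1})$, and satisfies $\|\nabla \chi_\epsilon\|_\infty \le C/\epsilon$; such a $\chi_\epsilon$ exists because $\cS_{-1}$ is a finite union of smooth curves. Set $g_\epsilon := \chi_\epsilon \cdot \cL f$, extended by $0$ where $\cL f$ is undefined. On the support of $\chi_\epsilon$, the quantity $J^sT \circ T^{-1}$ is bounded away from $0$, so $g_\epsilon \in C^1(M)$.

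Next I would estimate $\|\cL f - g_\epsilon\|_{\cB}$ via the key observation that our test-function duality eliminates the Jacobian. Fix $W \in \cW^s$ and let $\{W_i\}$ denote the components of $T^{-1}W$ (each in $\cW^s$, up to subdivision). The change of variables $x = Ty$ on $W_i$, using $|dm_W/dm_{W_i}| = J^sT$ along stable curves, yields
\[
\int_W (\cL f - g_\epsilon)\,\psi\, dm_W = \sum_i \int_{W_i} f(y)\bigl(1 - \chi_\epsilon(Ty)\bigr)\,\psi(Ty)\, dm_{W_i}(y).
\]
The integrand vanishes outside $E_\epsilon := T^{-1}(W \cap \cN_{2\epsilon}(\cS_{-1}))$, and is bounded in absolute value by $|f|_\infty |\psi|_\infty$. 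Reversing the change of variables,
\[
\sum_i m_{W_i}(E_\epsilon) = \int_{W \cap \cN_{2\epsilon}(\cS_{-1})} \frac{dm_W(x)}{J^sT(T^{-1}x)}.
\]

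Finally I would use dominated convergence: since $J^sT \circ T^{-1} \asymp \cos\vf \circ T^{-1}$ and, along a stable curve $W$, this has at worst a square-root-type integrable singularity at the finitely many points of $W \cap \cS_{-1}$, the right-hand side is finite and tends to $0$ as $\epsilon \to 0$. Inserting this into the duality identity bounds $\|\cL f - g_\epsilon\|_s$ via the supremum over $\psi$ with $|\psi|_{C^\beta(W)} \le |\log |W||^\gamma$, which only contributes a factor $|\log \delta_0|^\gamma$. The same change-of-variables argument handles $\|\cL f - g_\epsilon\|_u$: for a matched pair $(W_1,W_2)$ with $d(\psi_1,\psi_2)=0$, the $\psi_i \circ T$ remain matched on components of $T^{-1}W_1, T^{-1}W_2$ (outside the bad neighborhood), so the difference again reduces to an integral over shrinking neighborhoods of $\cS_{-1}$.

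The main obstacle is Step 3: establishing the integrability of $(J^sT \circ T^{-1})^{-1}$ on stable curves \emph{uniformly} in $W \in \cW^s$, and extracting a uniform rate of convergence in $W$ so that the supremum in the strong norm is controlled. This is handled by the standard square-root bound $\cos\vf \gtrsim d(\cdot,\cS_0)^{1/2}$ for stable curves (see \cite[Prop.~4.29, eq.~(4.20)]{chernov book}) together with the finite-horizon assumption, which bounds the number of components of $T^{-1}W$ meeting $\cN_{2\epsilon}(\cS_{-1})$ and guarantees a uniform dominator.
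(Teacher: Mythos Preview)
Your cutoff-and-pullback strategy is the right one and matches what the paper invokes by citation: the paper's proof simply refers to Lemmas~3.7 and~3.8 of \cite{demzhang11}, observing that the present norms are weaker (so convergence in that setting transfers) and that dropping homogeneity strips does not affect the computation. Your treatment of $\|\cdot\|_s$ is sound: after the change of variables the integrand is bounded and supported on $T^{-1}(W\cap\cN_{2\epsilon}(\cS_{-1}))$, whose total length is $\lesssim\sqrt\epsilon$ uniformly in $W$ by the square-root expansion near tangential collisions and the uniform transversality of $\cS_{-1}$ to the stable cone.

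The gap is the unstable norm, and you have misidentified the obstacle. The difficulty is not uniform integrability but the weight $|\log\ve|^\varsigma$: bounding each integral separately by $C\sqrt\epsilon$ gives only $2C\sqrt\epsilon\,|\log\ve|^\varsigma$, which is unbounded as $\ve\to 0$ for fixed cutoff $\epsilon$. Also, the assertion that ``the $\psi_i\circ T$ remain matched'' is not literally true --- the vertical-segment matching on $(W_1,W_2)$ does not force $d(\psi_1\circ T,\psi_2\circ T)=0$ on the preimage components, since $T\circ G_{U_{1,j}}$ and $T\circ G_{U_{2,j}}$ do not parametrise the same $r$-intervals in $W_1,W_2$. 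The correct argument (and this is what Lemma~3.8 of \cite{demzhang11} supplies) splits into two regimes: for $\ve\ge\epsilon$ the crude bound $C\sqrt\epsilon\,|\log\epsilon|^\varsigma\to 0$ suffices; for $\ve<\epsilon$ one performs the matched/unmatched decomposition on $T^{-1}W_\ell$ as in Section~\ref{unstable norm}, interpolating via a $\phi_j$-type test function, and controls the residuals using $|\nabla\chi_\epsilon|\le C/\epsilon$ against the $O(\ve)$ curve distance and the $O(\sqrt\epsilon)$ support length in the preimage. Both regimes then yield a bound of order $\sqrt\epsilon\,|\log\epsilon|^\varsigma$, uniformly in $\ve$.
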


\begin{proof} Since our norms are weaker than the
norms of  \cite{demzhang11} (modulo the use of homogeneity layers
there), the statement follows from replacing $\cL_{\mbox{\tiny{SRB}}}$ by $\cL$ 
 in
the proofs of Lemmas 3.7 and 3.8 in \cite{demzhang11}, and checking  that
the absence of homogeneity layers does not affect the computations.
\end{proof}

Proposition~\ref{prop:ly} gives the upper bounds
in the following result 
(the bounds \eqref{spl} and \eqref{spub} are needed to construct a nontrivial maximal
eigenvector in Proposition~\ref{prop:exist}):

\begin{theorem}[Spectral Radius of $\cL$ on $\cB$]\label{sprad}
There
exist $\varpi \in (0,1)$,  $C < \infty$ such that,
\begin{equation}\label{spu}
\| \cL^n\|_\cB \le   C n^\varpi e^{n h_*}\, , \quad \forall n \ge 0\, .
\end{equation}
There exists $C > 0$ such that, letting $1$ be the function
$f \equiv 1$, we have,
\begin{equation}\label{spl}
\| \cL^n 1 \|_s\ge | \cL^n 1 |_w \ge C e^{n h_*} \, , \quad \forall n \ge 0\, .
\end{equation}
Recalling  \eqref{eq:weak ly}, the spectral radius of $\cL$ on $\cB$
and
$\cB_w$ is thus equal to $\exp(h_*)>1$.

If $h_*> s_0 \log 2$ (with $s_0 <1$  defined by \eqref{defs0}) then, if
$\varsigma>0$ and $\delta_0>0$ are small enough, 
there exists $\widetilde C < \infty$ such that, 
\begin{equation}\label{spub}
\| \cL^n\|_\cB \le  \widetilde C e^{n h_*}\, , \quad \forall n \ge 0\, .
\end{equation}
\end{theorem}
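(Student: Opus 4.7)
My plan is to deduce Theorem~\ref{sprad} in three pieces: the two upper bounds \eqref{spu} and \eqref{spub}, the lower bound \eqref{spl}, and the spectral radius conclusion. The upper bounds are an essentially immediate packaging of Proposition~\ref{prop:ly}. Since $\|f\|_\cB = \|f\|_s + \|f\|_u$, adding the strong stable estimate \eqref{cheapeq:stable ly} to the strong unstable estimate \eqref{eq:unstable ly} gives
\[
\|\cL^n f\|_\cB \le \frac{C}{c_1 \delta_0}\, e^{nh_*}\bigl(\|f\|_s + \|f\|_u + n^\varpi \|f\|_s\bigr) \le \widetilde{C}\, n^\varpi e^{nh_*}\|f\|_\cB \, ,
\]
which is \eqref{spu}. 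Under the sparse-recurrence assumption $h_* > s_0 \log 2$, the improved bound \eqref{eq:unstable lyb} replaces \eqref{eq:unstable ly} and eliminates the factor $n^\varpi$, giving \eqref{spub} (for $\varsigma$ and $\delta_0$ small enough).

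For the lower bound \eqref{spl}, the plan is to test $\cL^n 1$ against the constant function $\psi \equiv 1$ on a well-chosen stable curve $W \in \cW^s$. Since $\psi \equiv 1$ has $|\psi|_{C^\alpha(W)} = 1$, it is admissible in the definition of $|\cdot|_w$. The key ingredient is the change of variables for $\cL$ on $C^1$ functions: for $f \in C^1(M)$, $\psi \in C^\alpha(W)$ and $W$ a stable curve, the identity
\[
\int_W \cL f \cdot \psi \, dm_W = \sum_{W' \subset T^{-1}W} \int_{W'} f \cdot (\psi \circ T) \, dm_{W'}
\]
holds because along each stable component $W'$ of $T^{-1}W$ one has $J_{W'}T = J^s T$, so the weight $1/(J^s T \circ T^{-1})$ in the definition of $\cL$ cancels the geometric Jacobian produced by the substitution. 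Iterating $n$ times and taking $f \equiv \psi \equiv 1$,
\[
\int_W \cL^n 1 \, dm_W = \sum_{W^n \subset T^{-n}W} |W^n| = |T^{-n}W| \, .
\]
Invoking Corollary~\ref{cor:stable growth} (established in Section~\ref{sec:growth} via the fragmentation growth lemmas), which asserts that $|T^{-n}W| \ge c(W)\, e^{nh_*}$ for every stable manifold $W$, I obtain $|\cL^n 1|_w \ge c(W)\, e^{nh_*}$, yielding \eqref{spl}. The inequality $\|\cL^n 1\|_s \ge |\cL^n 1|_w$ follows from the embedding remark after the definition of the strong stable norm once $\delta_0$ is taken small enough that $|\log \delta_0|^{-\gamma}\le 1$.

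The spectral radius claim then follows from Gelfand's formula $\rho_\cB(\cL) = \lim_n \|\cL^n\|_\cB^{1/n}$: the upper bound \eqref{spu} yields $\rho_\cB(\cL) \le \lim_n (\widetilde{C} n^\varpi e^{nh_*})^{1/n} = e^{h_*}$, while $\|\cL^n\|_\cB \ge \|\cL^n 1\|_\cB / \|1\|_\cB$ together with $\|\cL^n 1\|_\cB \ge \|\cL^n 1\|_s \ge |\cL^n 1|_w \ge c\, e^{nh_*}$ and $\|1\|_\cB<\infty$ (since $1\in C^1(M)\subset \cB$ by Proposition~\ref{distembed}) gives the matching lower bound $\rho_\cB(\cL) \ge e^{h_*}$. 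An identical argument using \eqref{eq:weak ly} determines the spectral radius on $\cB_w$.

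The genuine obstacle is not internal to this theorem but concentrated in its lower-bound input, Corollary~\ref{cor:stable growth}. The tempting elementary bound, combining $\#\cM^n_0 \ge e^{nh_*}$ from Proposition~\ref{cor:exp} with the component length estimate $|W^n_i| \ge \Lambda^n |T^n W^n_i|$, controls the total length of $T^{-n}W$ only by $\Lambda^n|W|$, and this is generally much smaller than $e^{nh_*}$ since $h_* \ge h_{\musrb}(T) \ge \log \Lambda$ with a strict inequality expected. To extract the full rate $e^{nh_*}$ one must show that a definite proportion of the (exponentially many) components of $T^{-n}W$ retain length bounded below by a uniform constant, so that fragmentation at each crossing of $\cS_{-j}$ does not dissipate the combinatorial growth of $\#\cM_0^n$. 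This is exactly the content of the fragmentation lemmas announced in the introduction, and it is the one step whose proof is substantial rather than bookkeeping.
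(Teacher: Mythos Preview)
Your proposal is correct and follows essentially the same route as the paper: the upper bounds \eqref{spu} and \eqref{spub} are packaged directly from Proposition~\ref{prop:ly}, and for \eqref{spl} the paper also tests $\cL^n 1$ against $\psi\equiv 1$ on a stable curve $W$ with $|W|\ge \delta_1/3$, reduces $\int_W \cL^n 1\, dm_W$ to $|T^{-n}W|$ via the stable-Jacobian cancellation, and then appeals to the lower bound \eqref{eq:length lower} (the computation underlying Corollary~\ref{cor:stable growth}). Your identification of the fragmentation lemmas as the genuine input is exactly right; the only minor imprecision is that Corollary~\ref{cor:stable growth} requires $|W|\ge \delta_1/3$ and $n\ge n_1$, so one fixes a single sufficiently long $W\in\cW^s$ and adjusts the constant for the finitely many small $n$.
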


The above theorem is proved in Subection~\ref{prspu}.


\section{Growth Lemma and Fragmentation Lemmas}
\label{sec:growth}

This section  contains combinatorial
growth lemmas,  controlling the growth in complexity of the iterates of a stable curve.
They will be used to prove the ``Lasota--Yorke'' Proposition~\ref{prop:ly}, to
show Lemma~\ref{lem:short}, used in Section~\ref{prspu}
to get the lower bound \eqref{spl} on the spectral radius, and  to
show absolute continuity in Section~\ref{notmixing}.

In view of the compact embedding Proposition~ \ref{cpte}, and
also to get Lemma~\ref{lem:long piece} from Lemma~\ref{lem:short}, we must work with a more general class of stable
curves: We define 
a set of cone-stable curves $\hW^s$  whose tangent
vectors all lie in the stable cone for the map, with length at most $\delta_0$ and curvature
bounded above so that $T^{-1} \hW^s \subset \hW^s$, up to subdivision of curves.  
Obviously, $\cW^s \subset \hW^s$. 
We define a set of cone-unstable curves $\hW^u$ similarly.

For $W \in \hW^s$, let $\cG_0(W) = W$.  For $n \ge 1$, define $\cG_n(W)=\cG_n^{\delta_0}(W)$ inductively as 
the smooth components of $T^{-1}(W')$ for $W' \in \cG_{n-1}(W)$, where elements longer than $\delta_0$ are subdivided to have length between $\delta_0/2$ and $\delta_0$.  
Thus $\cG_n(W) \subset \hW^s$ for each $n$ and
$\cup_{U \in \cG_n(W)} U = T^{-n} W$.  Moreover, if $W \in \cW^s$, then $\cG_n(W) \subset \cW^s$.

Denote by $L_n(W)$ those elements of $\cG_n(W)$ having length at least $\delta_0/3$, and define
$\cI_n(W)$ to comprise those elements $U \in \cG_n(W)$ for which $T^iU$ is not contained
in an element of $L_{n-i}(W)$ for $0 \le i \le n-1$.

A fundamental fact \cite[Lemma~5.2]{chernov01} we will use is that the growth in complexity for the billiard is at most linear:
\begin{equation}
\label{eq:complex}
\begin{split}
\mbox{$\exists$ $K >0$ \mbox{ such that } $\forall$ $n \ge 0$, } & \mbox{the number of curves in $\cS_{\pm n}$ that intersect} \\
& \mbox{at a single point is at most $Kn$.}
\end{split}
\end{equation}

\subsection{Growth Lemma}
\label{GL0}

Recall $s_0\in (0,1)$ from \eqref{defs0}.
We shall prove:

\begin{lemma}[Growth Lemma]
\label{lem:growth}
For any $m \in \bN$, there exists $\delta_0=\delta_0(m) \in (0,1)$ such that for all $n \ge 1$, all $\bar\gamma \in [0,\infty)$ and all
$W \in \hW^s$, we have
\begin{itemize}
  \item[a)]  $\displaystyle \sum_{W_i \in \cI_{n}(W)} \left( \frac{\log |W|}{\log |W_i|} \right)^{\bar\gamma} \le 2^{(n s_0+1)\bar\gamma}  (Km+1)^{n/m}$ ;
  \item[b)]  $\displaystyle \sum_{W_i \in \cG_{n}(W)} \left( \frac{\log |W|}{\log |W_i|} \right)^{\bar\gamma} $
  
  $\displaystyle  \quad\le \min \bigl\{ 2 \delta_0^{-1} 2^{(ns_0+1) \bar\gamma}  \# \cM_0^n, \;
  2^{2\bar\gamma +1} \delta_0^{-1} \sum_{j=1}^n 2^{j s_0\bar \gamma} (Km+1)^{j/m} \#\cM_0^{n-j} \bigr\}$.
\end{itemize}
Moreover, if $|W| \ge \delta_0/2$, then both factors $2^{(ns_0+1)\bar\gamma}$ can be replaced by 
$2^{\bar \gamma}$.
\end{lemma}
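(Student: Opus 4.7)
I would prove the Growth Lemma in three stages, following the standard Chernov--Markarian strategy but tracking the logarithmic ratio $(\log|W|/\log|W_i|)^{\bar\gamma}$ instead of the ordinary length ratio.

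First, I would establish a \emph{one-step expansion estimate}: for $V \in \hW^s$ with $|V| \le \delta_0$, if $V'_1,\dots,V'_k$ are the smooth components of $T^{-1}V$ (before $\delta_0$-subdivision), then
$\sum_j (\log|V|/\log|V'_j|)^{\bar\gamma}$ is controlled. The key dichotomy is between components corresponding to $\varphi_0$-grazing preimages and non-grazing ones. For non-grazing components, the hyperbolicity bound \eqref{eq:hyp} gives $|V'_j| \ge C_1\Lambda |V|$ so that (for $\delta_0$ small) the ratio $|\log|V||/|\log|V'_j||$ is at most $1$, and the number of such components at a single point is bounded by the linear complexity \eqref{eq:complex}. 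For grazing components, expansion is extremely strong so $|V'_j|$ is much smaller than $|V|$, making the corresponding ratios small enough that even multiplied by the (linearly growing) number of pieces the total contribution across a grazing cut is at most a factor $2^{\bar\gamma}$.

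Second, for part (a) I would iterate the one-step bound along the chain from $W$ down to each $W_i \in \cI_n(W)$. Because $W_i \in \cI_n$, every intermediate iterate lies inside a curve of length less than $\delta_0/3$, so the one-step estimate applies at each step. I would group the $n$ iterations into $\lfloor n/m \rfloor$ blocks of length $m$: inside each block, the number of smooth pieces meeting at a point is bounded by $Km+1$ by \eqref{eq:complex}, giving the $(Km+1)^{n/m}$ factor. The sparse recurrence condition \eqref{defs0} guarantees that at most $s_0 n$ of the $n$ steps produce a $\varphi_0$-grazing cut, so the accumulated grazing factors telescope to at most $2^{s_0 n \bar\gamma}$, with the extra $2^{\bar\gamma}$ absorbing edge corrections at the endpoints. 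When $|W| \ge \delta_0/2$ the first step does not actually contribute a shrinking factor, which is how the improved constant $2^{\bar\gamma}$ in the final sentence arises.

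Third, for part (b) I would decompose $\cG_n(W) = \cI_n(W) \sqcup (\cG_n(W) \setminus \cI_n(W))$. For $W_i$ in the second set, let $j \in \{0,\dots,n-1\}$ be the \emph{last} time an ancestor of $W_i$ was long; that is, the element $U \in \cG_{n-j}(W)$ containing $T^jW_i$ lies in $L_{n-j}(W)$ while no later iterate does. Then $W_i \in \cI_j(U)$, so part (a) applied to $U$ gives $\sum_{W_i \in \cI_j(U)} (\log|U|/\log|W_i|)^{\bar\gamma} \le 2^{(js_0+1)\bar\gamma}(Km+1)^{j/m}$. Using $|U| \ge \delta_0/3$ (so $\log|W|/\log|U|$ is bounded by $2\bar\gamma$-type constants when $|W| \le \delta_0$) and the bound $\#L_{n-j}(W) \le 2\delta_0^{-1}\#\cM_0^{n-j}$ (each long piece corresponds to a distinct element of $\cM_0^{n-j}$, subdivided at most $2/\delta_0$ times), I would sum over $j$ to get the second bound in the min. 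The first bound in the min is obtained by a direct count: $\#\cG_n(W)$ is controlled by $\#\cM_0^n$ times the $\delta_0$-subdivision factor, while on each piece $(\log|W|/\log|W_i|)^{\bar\gamma} \le 2^{(ns_0+1)\bar\gamma}$ by iterating the one-step log-ratio bound $n$ times regardless of whether the piece belongs to $\cI_n$ or not.

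The principal obstacle is managing the logarithmic modulus simultaneously with the combinatorics: the naive substitution of ordinary length ratios produces an unbounded $\Lambda^{\pm n}$ factor which disappears under $\log$, but only if one is careful to let the tiny grazing-piece log-ratios compensate for the many pieces produced at each grazing cut. The sparse recurrence hypothesis \eqref{defs0} is exactly what is required to keep the accumulated grazing factor at $2^{s_0 n \bar\gamma}$ rather than an uncontrolled $2^{n\bar\gamma}$, and the block-of-length-$m$ device is what converts the linear-in-$n$ complexity of \eqref{eq:complex} into the sub-exponential $(Km+1)^{n/m}$ rather than $n^{n/m}$.
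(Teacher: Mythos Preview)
Your overall architecture (block-of-$m$ device for the cardinality, first/last long ancestor decomposition for part (b)) is essentially the paper's, but your one-step estimate is stated with the wrong sign, and the missing ingredient breaks the argument.

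Under $T^{-1}$ a stable curve is \emph{expanded}, so for a non-grazing step $|V'_j| \ge C_1\Lambda |V|$ means $|V'_j|$ is \emph{longer} than $|V|$; since both lengths are $<1$ this makes $|\log|V'_j||$ \emph{smaller} and the ratio $|\log|V||/|\log|V'_j||$ \emph{larger} than $1$, not at most $1$. Likewise at a grazing step the strong expansion makes $|V'_j|$ much \emph{larger}, not much smaller, and the log-ratio \emph{grows}. So your dichotomy is inverted, and as stated the inductive product of one-step ratios diverges instead of staying bounded by $2^{s_0 n\bar\gamma}$.

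What you are missing is the billiard-specific length bound (eq.~\eqref{eq:image} in the paper, from \cite[Ex.~4.50]{chernov book}): for short $W$, each smooth component of $T^{-1}W$ has length at most $C|W|^{1/2}$. This is an \emph{upper} bound on the new length, not a lower one, and it is exactly what caps the growth of the log-ratio by a factor of $2$ at a nearly-tangential step. Iterating it together with the bounded-expansion factor $\Lambda_1$ at non-grazing steps gives the global estimate $|W_i|\le C''|W|^{2^{-s_0 n}}$ for every $W_i\in\cG_n(W)$, from which the uniform bound $(\log|W|/\log|W_i|)^{\bar\gamma}\le 2^{(ns_0+1)\bar\gamma}$ drops out in one line. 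The paper does exactly this: it proves the global length bound first and then simply multiplies by the cardinality $(Km+1)^{n/m}$; there is no per-step summation of log-ratios. Your decomposition for part (b) (grouping by the last time an ancestor was long, so the tail lies in $\cI_j$) is fine and equivalent to the paper's first-long-ancestor version, but it still relies on the correct log-ratio bound from part (a).
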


\begin{proof}
First recall that if $W \in \hW^s$ is short, then 
\begin{equation}
\label{eq:image}
|T^{-1}W| \le C|W|^{1/2}
\quad \mbox{for some constant $C\ge 1$, independent of $W \in \hW^s$},
\end{equation}
\cite[Exercise 4.50]{chernov book}.
The above bound can be iterated, giving $|T^{-\ell}W| \le C'|W|^{2^{-\ell}}$, where $C' \le C^2$, for any number
of consecutive ``nearly tangential'' collisions (collisions with angle $|\vf| > \vf_0$).
Since in every
$n_0$ iterates, we have at most $s_0n_0$ nearly tangential collisions 
and $(1-s_0)n_0$ iterates that expand at most by a constant factor $\Lambda_1 > 1$ depending
only on $\vf_0$, we see that
\begin{align*}
&|T^{-n_0}W| \le C|W|^{2^{-s_0n_0}} \Lambda_1^{(1-s_0)n_0}\\
&\implies |T^{-2n_0}W| \le C^{1+2^{-s_0n_0}} |W|^{2^{-2s_0n_0}} \Lambda_1^{(1-s_0)n_0 2^{-s_0n_0}} \Lambda_1^{(1-s_0)n_0}  \, .
\end{align*}
Iterating this inductively, we conclude
\begin{equation}
\label{eq:control}
|T^{-j}W| \le C'' |W|^{2^{-s_0j}} \quad \mbox{for all $j \ge 1$},
\end{equation}
where $C''\ge 1$ depends only on $n_0$ and $\Lambda_1$.  
Therefore, if $\delta_0$  is smaller than $1/C''$,  we have 
\[
\left( \frac{\log |W|}{\log |W_i|} \right)^{\bar\gamma} \le 
\left( 2^{s_0 n} \Big( 1 - \frac{\log C''}{\log |W_i|} \Big) \right)^{\bar\gamma} \le 2^{(ns_0+1)\bar\gamma}\, ,\, \,
\forall \, \, W_i \in \cG_n(W)\, , 
\]
since $|W_i| \le \delta_0$.
Note that if $|W_i| \le |W|$, then $\frac{\log |W|}{\log |W_i|} \le 1$, so that such curves
do not contribute large terms to the sums in parts (a) and (b) of the lemma.

\smallskip
\noindent
(a)  Using the above argument, for any $W \in \hW^s$, we may bound the ratio of logs by
 $2^{(n+1)s_0 \bar\gamma}$.  Moreover, if $|W| \ge \delta_0/2$, then since $|W_i| \le \delta_0<2$,
 we have
 \[
 \frac{\log |W|}{\log |W_i|} \le \frac{\log (\delta_0/2)}{\log \delta_0} = 1 - \frac{\log 2}{\log \delta_0} \le 2
 \, .
 \]

Now, fixing $m$ and using the linear bound on complexity, choose $\delta_0=\delta_0(m) >0$ such that
if $|W| \le \delta_0$, then $T^{-\ell}W$ comprises at most $K\ell+1$ connected 
components for $0 \le \ell \le 2m$. Such a choice is always possible by \eqref{eq:image}. 
Then for $n = mj + \ell$, we split up the orbit into $j-1$ increments of length $m$ and the last increment
of length $m + \ell$.  Part (a) then follows by a simple induction, 
since elements of $\cI_{mj}(W)$ must be formed
from elements of $\cI_{m(j-1)}(W)$ which have been cut by singularity curves in $\cS_{-m}$.
At the last step, this estimate also holds for elements of which have been cut by singularity
curves in $\cS_{-m-\ell}$ by choice of $\delta_0$.

\smallskip
\noindent
(b) The bound on the ratio of logs is the same as in part (a).  
The first bound on the cardinality
of the sum follows by noting that each element of $\cG_n(W)$ is contained in one element
of $\cM_0^n$.  Moreover, due to subdivision of long pieces, there can be no more than
$2 \delta_0^{-1}$ elements of $\cG_n(W)$ in a single element of $\cM_0^n$.

For the second bound in part (b), we may assume that $|W| < \delta_0/2$; otherwise,
we may bound the sum by $2^{\bar \gamma + 1} \delta_0^{-1} \# \cM_0^n$, which is optimal
for what we need.  For $|W| < \delta_0/2$, let $F_1(W)$ denote those $V \in \cG_1(W)$
whose length is at least $\delta_0/2$.  Inductively, define $F_j(W)$, 
for $2 \le j \le n-1$, to contain those $V \in \cG_j(W)$
whose length is at least $\delta_0/2$, and such that $T^kV$ is not contained in an element of
$F_{j-k}(W)$ for any $1 \le k \le j-1$.  Thus $F_j(W)$ contains elements of $\cG_j(W)$ that are
``long for the first time'' at time $j$.

We group $W_i \in \cG_n(W)$ by its ``first long ancestor'' as follows.  We say $W_i$ has 
\emph{first long ancestor}\footnote{Note that ``ancestor'' refers to the backwards dynamics mapping
$W$ to $W_i$.}
$V \in F_j(W)$ for $1\le j \le n-1$ if $T^{n-j}W_i \subseteq V$.  Note that such a $j$ and $V$ are unique
for each $W_i$ if they exist.  If no such $j$ and $V$ exist, then $W_i$ has been forever short
and so must belong to $\cI_n(W)$.
Denote by $A_{n-j}(V)$ the set of $W_i \in \cG_n(W)$ corresponding
to one $V \in F_j(W)$.   Now
\begin{align*}
&\sum_{W_i \in \cG_n(W)} \left( \frac{\log |W|}{\log |W_i|} \right)^{\bar\gamma}
  \\
  &\quad= \sum_{j=1}^{n-1} \sum_{V_\ell \in F_j(W)} \sum_{W_i \in A_{n-j}(V_\ell)} \left( \frac{\log |W|}{\log |W_i|} \right)^{\bar\gamma} + \sum_{W_i \in \cI_n(W)}  \left( \frac{\log |W|}{\log |W_i|} \right)^{\bar\gamma} \\
&\quad \le \sum_{j=1}^{n-1} \sum_{V_\ell \in F_j(W)}  \left( \frac{\log |W|}{\log |V_\ell|} \right)^{\bar\gamma} 
\sum_{W_i \in A_{n-j}(V_\ell)} \left( \frac{\log |V_\ell|}{\log |W_i|} \right)^{\bar\gamma} 
+ 2^{(ns_0+1)\bar\gamma} (Km+1)^{n/m}
\\
&\quad \le \sum_{j=1}^{n-1} \sum_{V_\ell \in F_j(W)}  \left( \frac{\log |W|}{\log |V_\ell|} \right)^{\bar\gamma}
2^{\bar\gamma +1} \delta_0^{-1} \# \cM_0^{n-j}
+ 2^{(ns_0+1)\bar\gamma} (Km+1)^{n/m} \\
&\quad \le \sum_{j=1}^{n-1} 2^{(js_0+1)\bar\gamma} (Km+1)^{j/m} 2^{\bar\gamma +1} \delta_0^{-1} \# \cM_0^{n-j}
+ 2^{(ns_0+1)\bar\gamma} (Km+1)^{n/m} \displaybreak[0] \\
&\quad \le 2^{2 \bar\gamma +1} \delta_0^{-1} \sum_{j=1}^n 2^{js_0 \bar\gamma} (Km+1)^{j/m}  \# \cM_0^{n-j} \, ,
\end{align*}
where we have applied part (a) from time 1 to time $j$ and the first estimate in part (b) from
time $j$ to time $n$, since each $|V_\ell| \ge \delta_0/2$.  
\end{proof}

With the growth lemma proved, we can choose $m$ and  the length scale $\delta_0$ of curves in $\cW^s$.
Recalling $K$ from  \eqref{eq:complex}
and the condition on $\gamma$ from \eqref{eq:gamma}, we fix $m$ so large that
\begin{equation}
\label{eq:m}
\frac 1m \log(Km+1) < h_* - \gamma s_0 \log 2 \, ,
\end{equation}
and we choose $\delta_0 = \delta_0(m)$ to be the corresponding length scale from Lemma~\ref{lem:growth}.
If $h_* > s_0 \log 2$, then we take $\gamma > 1$, so that in fact
$\frac 1m \log(Km+1) < h_* - s_0 \log 2$.

\subsection{Fragmentation Lemmas}
\label{fragg}
The results in this subsection will be used in Sections~\ref{supera} and~\ref{notmixing}.
For $\delta \in (0, \delta_0)$ and $W \in \hW^s$, define $\cG_n^\delta(W)$ to be the 
smooth components of $T^{-n}W$, with long pieces subdivided to have length between
$\delta/2$ and $\delta$.  (So $\cG_n^\delta(W)$ is defined exactly like $\cG_n(W)$, but with 
$\delta_0$ replaced by $\delta$.)  Let $L_n^\delta(W)$ denote the set of curves in 
$\cG_n^\delta(W)$ that have
length at least $\delta/3$ and let $S_n^\delta(W) = \cG_n^\delta(W) \setminus L_n^\delta(W)$. 
Define $\cI_n^\delta(W)$ to be those curves in $\cG_n^\delta(W)$ that have no 
ancestors\footnote{For $k<n$, we say that $U \in \cG_k^\delta(W)$ is an {\em ancestor} of
$V \in \cG_n^\delta(W)$ if $T^{n-k}V \subseteq U$.}
of length at least $\delta/3$, as in the definition of $\cI_n(W)$ above.
The following lemma and its corollary bootstrap from
Lemma~\ref{lem:growth}(a) and will be crucial to get the lower bound on the spectral radius:

\begin{lemma}
\label{lem:short}
For each $\ve >0$, there exist $\delta \in (0, \delta_0]$ and $n_1 \in \bN$, such that for $n \ge n_1$,
\[
\frac{\# L_n^\delta(W)}{\# \cG_n^\delta(W)} \ge \frac{1 - 2\ve}{1-\ve}, \quad \mbox{for all $W \in \hW^s$ with $|W| \ge \delta/3$.}
\]
\end{lemma}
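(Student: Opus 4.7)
My plan is to show that short pieces (length $<\delta/3$) form a vanishing fraction of $\cG_n^\delta(W)$ by comparing a clean upper bound on $\#S_n^\delta(W)$ with a matching lower bound on $\#\cG_n^\delta(W)$.

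The key observation is that every short piece in $\cG_n^\delta(W)$ is necessarily non-subdivided, since subdivision produces pieces of length in $[\delta/2,\delta]$, all strictly greater than $\delta/3$. Hence $\#S_n^\delta(W)$ is bounded by the number of smooth components of $T^{-n}W$, which in turn is bounded by the number of cells of $\cM_{-n}^0$ that meet $W$. By Remark~\ref{2.2} and Proposition~\ref{cor:exp} this gives
\[
\#S_n^\delta(W) \;\le\; \#\cM_0^n \;\le\; \tfrac{2}{c_1}\,e^{nh_*},
\]
a bound that does not depend on $\delta$.

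For the matching lower bound on $\#\cG_n^\delta(W)$, I would use that, for $W$ with $|W|\ge\delta/3$, the total length $|T^{-n}W|$ of all smooth preimages grows at the rate $e^{nh_*}$: namely $|T^{-n}W|\ge c(\delta)\,e^{nh_*}$ for some constant $c(\delta)>0$ and all $n\ge n_1$. Since each piece in $\cG_n^\delta(W)$ has length at most $\delta$, this forces $\#\cG_n^\delta(W)\ge |T^{-n}W|/\delta \ge (c(\delta)/\delta)\,e^{nh_*}$. Combining the two bounds,
\[
\frac{\#S_n^\delta(W)}{\#\cG_n^\delta(W)} \;\le\; \frac{2\delta}{c_1\,c(\delta)},
\]
and choosing $\delta$ small enough relative to $c(\delta)$ makes this at most $\ve/(1-\ve)$, which is equivalent to the claimed ratio $\#L_n^\delta(W)/\#\cG_n^\delta(W) \ge (1-2\ve)/(1-\ve)$.

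The hard part will be establishing the uniform length lower bound $|T^{-n}W|\ge c(\delta)\,e^{nh_*}$ over all $W$ of length at least $\delta/3$, since Theorem~\ref{sprad} only asserts that \emph{some} curve $W$ achieves this rate (and in any case Theorem~\ref{sprad} is proved elsewhere using the present lemma, so cannot be invoked circularly). To handle this I would either iterate the fragmentation Lemma~\ref{lem:growth}(b) together with the subadditive lower bound $\#\cM_0^n \ge e^{nh_*}$ from Lemma~\ref{prop:equiv}, in order to show that a long $W$ must intersect a positive fraction of the cells of $\cM_{-n}^0$, or invoke a Cantor-rectangle-type argument anticipating the supermultiplicativity construction of the following subsection.
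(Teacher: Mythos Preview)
Your approach has a fundamental circularity problem that cannot be repaired. You invoke Proposition~\ref{cor:exp} for the upper bound $\#\cM_0^n \le \tfrac{2}{c_1}e^{nh_*}$, but Proposition~\ref{cor:exp} is deduced from Lemma~\ref{lem:super}, which in turn rests on Proposition~\ref{prop:good growth} and Lemma~\ref{lem:long piece}. Both of these require the length scale $\delta_1$ fixed in \eqref{eq:delta1}, and \eqref{eq:delta1} is precisely the output of the present Lemma~\ref{lem:short} with $\ve=1/4$. The same circularity infects your proposed lower bound: the uniform estimate $|T^{-n}W|\ge c(\delta)e^{nh_*}$ for all $W$ with $|W|\ge\delta/3$ is Corollary~\ref{cor:stable growth}, which again depends on Proposition~\ref{prop:good growth} and \eqref{eq:delta1}. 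Your fallback suggestion to ``invoke a Cantor-rectangle-type argument anticipating the supermultiplicativity construction'' is exactly Proposition~\ref{prop:good growth}, so it does not escape the loop. The alternative of extracting from Lemma~\ref{lem:growth}(b) that a long $W$ meets a positive fraction of cells of $\cM_{-n}^0$ does not work either: Lemma~\ref{lem:growth}(b) gives only upper bounds on $\#\cG_n(W)$ in terms of $\#\cM_0^n$, never lower bounds.

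The paper's proof is far more elementary and uses none of the $h_*$-growth machinery. It compares the linear complexity bound \eqref{eq:complex} against the uniform hyperbolic expansion \eqref{eq:hyp} on blocks of length $n_1$, with $n_1$ chosen so that $(Kn_1+1)\Lambda^{-n_1}<\ve$. The key decomposition groups each short piece $W_i\in S_n^\delta(W)$ by its \emph{most recent long ancestor} $V_j\in L_{qn_1}^\delta(W)$ at some block time $qn_1$. Since $|V_j|\ge\delta/3$, the total number of descendants of $V_j$ in $\cG_n^\delta(W)$ is at least $C_1\Lambda^{(k-q)n_1}/3$ by length growth, while the number that remain short (hence uncut by subdivision) is at most $(Kn_1+1)^{k-q}$ by complexity. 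The ratio is bounded by $\ve^{k-q}$, and summing the geometric series over $q$ gives $\ve/(1-\ve)$. This argument needs only \eqref{eq:complex}, \eqref{eq:hyp}, and Lemma~\ref{lem:growth}(a) with $\bar\gamma=0$, all of which are available prior to Lemma~\ref{lem:short}.
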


\begin{proof} 
Fix $\ve>0$ and choose $n_1$ so large that $3 C_1^{-1}(Kn_1+1) \Lambda^{-n_1} < \ve$
and $\Lambda^{n_1} > e$.  Next, choose
$\delta>0$ sufficiently small that if $W \in \hW^s$  with $|W| < \delta$, then $T^{-n}W$ comprises
at most $Kn+1$ smooth pieces of length at most $\delta_0$ for all $n \le 2n_1$.

Let $W \in \hW^s$ with $|W| \ge \delta/3$.
We shall prove the following equivalent inequality for $n \ge n_1$:
\[
\frac{\# S_{n}^\delta(W)}{\# \cG_{n}^\delta(W)} \le \frac{\ve}{1-\ve} \, .
\]
For $n \ge n_1$, write $n = kn_1 + \ell$ for some $0 \le \ell < n_1$.
If $k=1$, the above inequality is clear since $S_{n_1 + \ell}^\delta(W)$ contains at most $K(n_1+\ell)+1$ components by assumption
on $\delta$ and $n_1$, while 
$|T^{-(n_1+\ell)}W| \ge C_1 \Lambda^{n_1+\ell}|W| \ge C_1 \Lambda^{n_1 + \ell} \delta/3$.  Thus
$\cG_n^\delta(W)$ must contain at least $C_1 \Lambda^{n_1+\ell}/3$ curves since each has length at most $\delta$.  Thus,
\[
\frac{\# S_{n_1 + \ell}^\delta(W)}{\# \cG_{n_1+\ell}^\delta(W)} \le 3 C_1^{-1} \frac{K(n_1 + \ell) + 1}{\Lambda^{n_1 + \ell}} \le 3 C_1^{-1} \frac{Kn_1 + 1}{\Lambda^{n_1}} < \ve\, ,
\]
where the second inequality holds for all $\ell \ge 0$ as long as $\frac{1}{n_1} \le \log \Lambda$, 
which is
true by choice of $n_1$.

For $k >1$, we split $n$ into $k-1$ blocks of length $n_1$ and the last block of length $n_1 + \ell$.
We group elements $W_i \in S_{kn_1+\ell}^\delta(W)$ by most recent\footnote{We only consider
what happens at the beginning of a block of length $n_1$. It does not affect our argument if
 $W_i$ belongs to a long piece at an intermediate time, since we only consider the cardinality
 of short pieces that can be created in each block of length $n_1$ according to our choice of 
 $\delta$.} long ancestor
$V_j \in L_{q n_1}^\delta(W)$:  $q$ is the greatest index in $[0, k-1]$
such that $T^{(k-q)n_1 + \ell}W_i \subseteq V_j$ and $V_j \in L_{q n_1}^\delta(W)$.
Note that since $|V_j| \ge \delta/3$, then $\cG^\delta_{(k-q)n_1 + \ell}(V_j)$ must contain at least
$C_1 \Lambda^{(k-q)n_1}/3$ curves since each has length at most $\delta$.
Thus
using Lemma~\ref{lem:growth}(a) with $\bar \gamma = 0$, we estimate
\begin{equation}
\label{eq:ancestor}
\begin{split}
\frac{\# S_{kn_1+\ell}^\delta(W)}{\# \cG_{kn_1+\ell}^\delta(W)}
& = \frac{\sum_{W_i \in \cI_{kn_1+\ell}^\delta(W)} 1 }{\# \cG_{kn_1+\ell}^\delta(W)}+ 
\frac{\sum_{q = 1}^{k-1} \sum_{V_j \in L_{q n_1}^\delta(W)}
\sum_{W_i \in \cI_{(k - q)n_1+ \ell}^\delta(V_j)} 1}{\# \cG_{kn_1+\ell}^\delta(W)}  \\
& \le \frac{(Kn_1+1)^k}{C_1 \Lambda^{kn_1}/3}
+ \sum_{q=1}^{k-1}  \frac{\sum_{V_j \in L_{q n_1}^\delta(W)} (Kn_1 +1)^{k-q}}{\sum_{V_j \in L_{q n_1}^\delta(W)} C_1 \Lambda^{(k-q)n_1}/3} \\
& \le 3 C_1^{-1} \sum_{q = 1}^k (Kn_1+1)^q \Lambda^{-q n_1}
\le \sum_{q = 1}^k \ve^q
\le \frac{\ve}{1-\ve}\,  .
\end{split}
\end{equation}
\end{proof}

The following corollary is used  in Corollary~\ref{cor:abs cont} and in Lemma~\ref{lem:disint}:

\begin{cor}
\label{cor:short}
There exists $C_2 > 0$ such that for any $\varepsilon$, $\delta$ and $n_1$ as in Lemma~\ref{lem:short},
\[
\frac{\# L_n^\delta(W)}{\# \cG_n^\delta(W)} \ge \frac{1 - 3\ve}{1-\ve}\, ,\,\,\,
\forall W \in \hW^s\,, \, \, \forall n \ge C_2 n_1 \frac{|\log(|W|/\delta)|}{|\log \ve|}\, .
\]
\end{cor}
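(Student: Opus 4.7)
The plan is to follow the strategy of Lemma~\ref{lem:short} almost verbatim, adding one new ingredient to handle the regime $|W| < \delta/3$ not covered there (the regime in which $W$ itself cannot serve as a long ancestor): an a priori lower bound on $\#\cG_n^\delta(W)$ coming directly from the uniform expansion \eqref{eq:hyp}. Since $|T^{-n}W| \ge C_1 \Lambda^n |W|$ and every element of $\cG_n^\delta(W)$ has length at most $\delta$, one has $\#\cG_n^\delta(W) \ge C_1 \Lambda^n |W|/\delta$, which will replace the bound $\#\cG_n^\delta(W) \ge C_1\Lambda^n/3$ used in Lemma~\ref{lem:short} under the hypothesis $|W| \ge \delta/3$.

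If $|W| \ge \delta/3$, the conclusion is immediate from Lemma~\ref{lem:short} since $(1-2\varepsilon)/(1-\varepsilon) \ge (1-3\varepsilon)/(1-\varepsilon)$. In the main case $|W| < \delta/3$, I would write $n = kn_1 + \ell$ with $0 \le \ell < n_1$ and reproduce the grouping of short pieces $W_i \in S_n^\delta(W)$ by their most recent long ancestor $V_j \in L_{qn_1}^\delta(W)$ used in display \eqref{eq:ancestor} of the proof of Lemma~\ref{lem:short}. For indices $q \ge 1$, the lower bound $\#\cG_n^\delta(W) \ge \#L_{qn_1}^\delta(W) \cdot C_1 \Lambda^{(k-q)n_1}/3$ still applies (each $|V_j| \ge \delta/3$), and summing the resulting geometric series of ratio at most $(Kn_1+1)/\Lambda^{n_1} \le C_1\varepsilon/3$ bounds this part of $\#S_n^\delta(W)/\#\cG_n^\delta(W)$ by $\varepsilon/(1-\varepsilon)$, exactly as in Lemma~\ref{lem:short}.

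The genuinely new contribution comes from forever-short pieces $W_i \in \cI_n^\delta(W)$; by Lemma~\ref{lem:growth}(a) with $m = n_1$ and $\bar\gamma = 0$ one has $\#\cI_n^\delta(W) \le (Kn_1+1)^{k+1}$. Dividing by the new a priori lower bound $\#\cG_n^\delta(W) \ge C_1 \Lambda^{kn_1} |W|/\delta$ and again using $(Kn_1+1)/\Lambda^{n_1} \le C_1\varepsilon/3$ yields
\[
\frac{\#\cI_n^\delta(W)}{\#\cG_n^\delta(W)} \le \frac{(Kn_1+1)\,\delta}{C_1\,|W|}\left(\frac{C_1\,\varepsilon}{3}\right)^{\!k}.
\]
For the right-hand side to be bounded by $\varepsilon$, taking logarithms gives a requirement of the form $k \ge (\log(\delta/|W|) + O(1))/\log(3/(C_1\varepsilon))$, and since $\log(3/(C_1\varepsilon)) \ge \tfrac{1}{2}|\log\varepsilon|$ once $\varepsilon$ is sufficiently small, this is ensured by $n \ge kn_1 \ge C_2 n_1 |\log(|W|/\delta)|/|\log\varepsilon|$ for a suitable $C_2$. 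Combining the two contributions gives $\#S_n^\delta(W)/\#\cG_n^\delta(W) \le \varepsilon/(1-\varepsilon) + \varepsilon \le 2\varepsilon/(1-\varepsilon)$, which rearranges to the claimed lower bound on $\#L_n^\delta(W)/\#\cG_n^\delta(W)$.

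The main obstacle I anticipate is making $C_2$ genuinely independent of $\varepsilon$ and $n_1$: the additive corrections such as $\log((Kn_1+1)/C_1)$ in the required lower bound on $k$ depend mildly on $n_1$, which itself was chosen in terms of $\varepsilon$, so one has to check that they are absorbed into the linear-in-$|\log\varepsilon|$ term, probably after imposing an absolute smallness threshold on $\varepsilon$. This is a bookkeeping issue rather than a conceptual one; the conceptual content is entirely contained in the single new inequality $\#\cG_n^\delta(W) \ge C_1\Lambda^n|W|/\delta$ that gets us started when $W$ is short.
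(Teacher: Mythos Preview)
Your overall strategy is correct and very close to the paper's: both proofs reuse the most-recent-long-ancestor decomposition \eqref{eq:ancestor} from Lemma~\ref{lem:short}, observe that the sum over $q\ge 1$ still gives $\ve/(1-\ve)$, and isolate the forever-short contribution $\#\cI_n^\delta(W)/\#\cG_n^\delta(W)$ as the only new term.

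Where you differ is in how you bound that new term. You use the single inequality $\#\cG_n^\delta(W)\ge C_1\Lambda^n|W|/\delta$ directly from \eqref{eq:hyp}. The paper instead introduces the \emph{first long time} $n_2$: the least $\ell$ such that some element of $\cG_\ell^\delta(W)$ has length $\ge\delta/3$. Since before $n_2$ there are at most $K\ell+1$ pieces (by \eqref{eq:complex}) of total length $\ge C_1\Lambda^\ell|W|$, one gets $n_2\le\bar C_2|\log(|W|/\delta)|$ with $\bar C_2$ depending only on $K,\Lambda,C_1$. The paper then bounds $\#\cI_n^\delta(W)\le (Kn_2+1)(Kn_1+1)^{\lfloor(n-n_2)/n_1\rfloor}$ and $\#\cG_n^\delta(W)\ge C_1\Lambda^{n-n_2}/3$, so after factoring out $\ve^{\lfloor n/n_1\rfloor}$ the overshoot is $(Kn_2+1)\Lambda^{n_2}$, a quantity depending \emph{only} on $|W|/\delta$ and not on $n_1$.

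This is exactly the point where your flagged ``bookkeeping'' obstacle is real rather than cosmetic. Your overshoot factor is $(Kn_1+1)\delta/(C_1|W|)$, and after taking logarithms the required lower bound on $k$ contains $\log(Kn_1+1)$---this is not an $O(1)$ uniformly in $n_1$, and translates into an extra $n_1\log(Kn_1+1)/|\log\ve|$ (or, after using $(Kn_1+1)<C_1\ve\Lambda^{n_1}/3$, an $n_1^2\log\Lambda/|\log\ve|$) term in the threshold for $n$. For the minimal $n_1\sim|\log\ve|/\log\Lambda$ this is harmless, but the corollary is stated for \emph{any} $n_1$ satisfying the hypotheses of Lemma~\ref{lem:short}, so a universal $C_2$ is not obtained by your route as written. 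The paper's $n_2$ device is precisely what fixes this: it is a small idea, but it is the missing step, not just bookkeeping. (It is also worth noting that the constant $\bar C_2$ controlling $n_2$ is reused later in the paper, in Lemma~\ref{lem:disint} and Proposition~\ref{prop:max}.)
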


\begin{proof}
The proof is essentially the same as that for Lemma~\ref{lem:short}, except that for curves
shorter than length $\delta/3$ one must
wait $n \sim | \log (|W|/\delta)|$ for at least one component of $\cG_n^\delta(W)$ to belong to 
$L_n^\delta(W)$.

More precisely, fix $\ve >0$ and the corresponding $\delta$ and $n_1$ from Lemma~\ref{lem:short}.
Let $W \in \hW^s$ with $|W| < \delta/3$ and take $n > n_1$.  Decomposing $\cG_n^\delta(W)$
as in Lemma~\ref{lem:short}, we  estimate the second term of \eqref{eq:ancestor}
as before.  

For the first term  of \eqref{eq:ancestor}, $\# \cI_n^\delta(W) / \# \cG_n^\delta(W)$, 
for $\delta$ sufficiently small, notice that since the flow is continuous, either
$\# \cG_\ell^\delta(W) \le K \ell + 1$ by \eqref{eq:complex}
or at least one element of $\cG_\ell^\delta(W)$ has length at least
$\delta/3$.  Let $n_2$ denote the first iterate $\ell$ at which 
$\cG_\ell^\delta(W)$ contains at least
one element of length more than $\delta/3$.  By the complexity estimate  \eqref{eq:complex}
 and the fact that
$|T^{-n_2}W| \ge C_1 \Lambda^{n_2}|W|$ by \eqref{eq:hyp}, there exists
$\bar{C}_2>0$, independent of $W \in \hW^s$, such that $n_2 \le \bar{C}_2 | \log (|W|/\delta)|$.

Now for $n \ge n_2$, and some $W' \in \cG_{n_2}^\delta(W)$,
\[
\# \cI_n^\delta(W) \le (K n_2 + 1) \# \cI_{n-n_2}^\delta(W') \le (Kn_2+1) (K n_1 +1)^{\lfloor (n-n_2)/n_1 \rfloor}\, ,
\]
while
\[
\# \cG_n^\delta(W) \ge C_1 \Lambda^{n-n_2}/3\,  .
\]
Putting these together, we have,
\[
\frac{\# \cI_n^\delta(W)}{\# \cG_n^\delta(W)} \le \frac{(K n_2 + 1)(Kn_1+1)^{\lfloor n/n_1 \rfloor}}{C_1 \Lambda^n/3} \Lambda^{n_2} \le \ve^{\lfloor n/n_1 \rfloor} (Kn_2 + 1) \Lambda^{n_2} \, .
\]
Since $n_2 \le \bar C_2 | \log (|W|/\delta) |$, we may make this expression $< \ve$ by choosing
$n$ so large that $n/n_1 \ge C_2 \frac{\log (|W|/\delta)}{\log \ve}$, for some $C_2 > 0$.
For such $n$, the estimate \eqref{eq:ancestor} is bounded by 
$\ve + \frac{\ve}{1-\ve} \le \frac{2 \ve}{1- \ve}$, which completes the proof of the corollary.
\end{proof}

Choose $\ve = 1/4$ and let $\delta_1\le \delta_0$ and $n_1$ be the corresponding
$\delta$ and $n_1$ from Lemma~\ref{lem:short}.  With this choice, we have
\begin{equation}
\label{eq:delta1}
\# L_n^{\delta_1}(W) \ge \tfrac 23 \#\cG_n^{\delta_1}(W), \qquad \mbox{for all $W \in \hW^s$
with $|W| \ge \delta_1/3$
and $n \ge n_1$.}
\end{equation}

Notice that for $W \in \cW^s$, each element $V \in \cG_n^{\delta_1}(W)$ is contained in one
element of $\cM_0^n$ and its image $T^nV \subset W$ is contained in one element of
$\cM_{-n}^0$.  Indeed, there is a one-to-one correspondence between elements of $\cM_0^n$
and elements of $\cM_{-n}^0$.

The boundary of the partition formed by $\cM_{-n}^0$ is comprised of unstable curves
belonging to $\cS_{-n} = \cup_{j=0}^n T^j(\cS_0)$.  Let 
$\Lo(\cM_{-n}^0)$ denote the elements of
$\cM_{-n}^0$ whose unstable 
diameter\footnote{Recall from Section~\ref{sec:proof 1.4} that the unstable diameter of a set is the length of the longest unstable
curve contained in that set.} 
is at least $\delta_1/3$.  Similarly, let $\Los(\cM_0^n)$ denote the elements of 
$\cM_0^n$ whose stable diameter is at least $\delta_1/3$.

The following lemma will be used to get both lower and upper bounds on the
spectral radius via Proposition~\ref{prop:good growth}:

\begin{lemma}
\label{lem:long piece} Let $\delta_1$ and $n_1$ be  associated with $\ve = 1/4$ by
Lemma~\ref{lem:short}.
There exist $C_{n_1}>0$ and $n_2 \ge n_1$ such that for 
all $n \ge n_2$, 
\[
\# \Lo(\cM_{-n}^0) \ge C_{n_1} \delta_1 \# \cM_{-n}^0 \quad \mbox{ and } \quad
\# \Los(\cM_0^n) \ge C_{n_1} \delta_1 \# \cM_0^n \, .
\]
\end{lemma}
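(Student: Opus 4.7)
The two bounds are dual under time-reversal: applying the same argument to $T^{-1}$ in place of $T$ swaps $\cM_0^n$ with $\cM_{-n}^0$ (cf.~Remark~\ref{2.2}) and exchanges the stable and unstable cones, hence interchanges the two conclusions. So I would focus on proving $\#\Los(\cM_0^n) \ge C_{n_1}\delta_1\, \#\cM_0^n$ and recover the other bound by dualising.

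The starting tool is Lemma~\ref{lem:short} applied with $\varepsilon = 1/4$, which gives the fixed $\delta_1 \le \delta_0$ and $n_1$: for every $W \in \hW^s$ with $|W| \ge \delta_1/3$ and every $n \ge n_1$, $\# L_n^{\delta_1}(W) \ge \tfrac{2}{3}\, \#\cG_n^{\delta_1}(W)$. Each long smooth component $V \in L_n^{\delta_1}(W)$ is a stable curve of length $\ge \delta_1/3$ sitting inside a unique element $A_V \in \cM_0^n$, because smooth components of $T^{-n}W$ are separated exactly by the curves in $\cS_n$ that form the boundaries of $\cM_0^n$. In particular $A_V \in \Los(\cM_0^n)$. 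The map $V \mapsto A_V$ has multiplicity at most $O(\delta_1^{-1})$: two distinct $V$'s in the same $A_V$ must come from the subdivision (at scale $\delta_1$) of a single smooth component of $T^{-n}W$, whose length is bounded by $\diam(M)$. Consequently, for any finite family $\{W_j\}_{j=1}^N$ of stable curves with $|W_j| \ge \delta_1/3$,
\[
 \#\Los(\cM_0^n) \;\ge\; c\,\delta_1 \sum_{j=1}^N \#L_n^{\delta_1}(W_j) \;\ge\; \tfrac{2c\delta_1}{3} \sum_{j=1}^N \#\cG_n^{\delta_1}(W_j).
\]

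The remaining, and I expect hardest, step is to choose a fixed family $\{W_j\}_{j=1}^N$ (with $N$ depending only on $\delta_1$, not on $n$) such that
\[
 \sum_{j=1}^N \#\cG_n^{\delta_1}(W_j) \;\ge\; c'\, \#\cM_0^n \qquad \text{for all } n \ge n_2.
\]
A single $W$ yields only the hyperbolicity lower bound $\#\cG_n^{\delta_1}(W) \ge C_1 \Lambda^n/\delta_1$, which can be strictly smaller than $\#\cM_0^n \approx e^{nh_*}$ when $h_* > \log \Lambda$; so the argument really requires several curves arranged so that their combined backward iterates visit a positive fraction of elements of $\cM_0^n$. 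The plan is to select pairwise disjoint long stable manifolds $W_j$ of length $\delta_1$ that are $\delta_1$-dense in $M$, so that any element $A \in \cM_0^n$ of stable diameter at least $2\delta_1$ is crossed transversally by at least one $W_j$ (producing a component of $\cG_n^{\delta_1}(W_j)$ inside $A$), and then to control the number of \emph{exceptional} elements --- those of stable diameter below $2\delta_1$. These exceptional elements are either adjacent to the finite unstable-like boundary $\cS_0$ (controllable by a neighbourhood-of-$\cS_0$ estimate against $\musrb$, using the upper bound $\#\cM_0^n \le (2/c_1)e^{nh_*}$ from Proposition~\ref{cor:exp}) or trapped as thin strips between two nearby stable branches of $\cS_n$; the latter situation is the genuine obstacle, and here I would import the Cantor-rectangle machinery from Section~\ref{supera} to locate a positive-$\musrb$-measure set of long stable leaves inside a substantial fraction of elements. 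Combining these estimates with the per-curve bound above then closes the argument and determines $n_2$ and $C_{n_1}$.
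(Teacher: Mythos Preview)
Your proposal has a genuine circularity problem. You invoke Proposition~\ref{cor:exp} (the bound $\#\cM_0^n \le (2/c_1)e^{nh_*}$) and the ``Cantor-rectangle machinery from Section~\ref{supera}'' to control the exceptional short elements, but both of these are proved \emph{after} Lemma~\ref{lem:long piece} and depend on it: Proposition~\ref{prop:good growth} uses Lemma~\ref{lem:long piece} in its final step, Lemma~\ref{lem:super} uses both, and Proposition~\ref{cor:exp} uses Lemma~\ref{lem:super}. So those tools are not available here.

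More fundamentally, even setting aside the citations, your scheme is circular at the level of ideas. You want a finite family $\{W_j\}$ with $\sum_j \#\cG_n^{\delta_1}(W_j)\ge c'\,\#\cM_0^n$; your plan is to make the $W_j$ dense enough that every element of $\cM_0^n$ with stable diameter $\ge 2\delta_1$ is crossed, and then argue that the remaining ``thin'' elements are few. But ``thin elements are few'' is exactly the statement $\#\Los(\cM_0^n)\ge c\,\#\cM_0^n$ you are trying to prove. The paper avoids this trap by a direct combinatorial argument that never compares to $\#\cM_0^n$ or $e^{nh_*}$: it bounds the number of short cells of $\cM_{-n}^0$ by (twice) the number of short curves in $\bigcup_{j\le n} T^j\cS_0$ plus a linear term, then applies the unstable version of Lemma~\ref{lem:short} to the finitely many seed curves $TU_i\subset T\cS_0$ to show that short curves at time $j$ are exponentially dominated by long curves at time $n$, and finally observes that long curves in $T^n\cS_0$ correspond (up to a $\delta_1^{-1}$ factor) to elements of $\Lo(\cM_{-n}^0)$. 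This yields an inequality of the form $\#\Sh(\cM_{-n}^0)\le C\delta_1^{-1}\,\#\Lo(\cM_{-n}^0)+O(n)$, from which the lemma follows once $\#\cM_{-n}^0$ beats the linear term. The key insight you are missing is to track the boundary curves of the partition rather than the cells themselves.
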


\begin{proof}
We prove the lower bound for $\Lo(\cM_{-n}^0)$.  The lower bound for $\Los(\cM_0^n)$
then follows by time reversal.

Let $\Sh(\cM_{-n}^0)$
denote the elements of $\cM_{-n}^0$ whose unstable 
diameter 
is less than $\delta_1/3$.  Clearly, $\Sh(\cM_{-n}^0) \cup \Lo(\cM_{-n}^0) = \cM_{-n}^0$.
Similarly, Let 
$\Sh(T^j\cS_0)$ denote the set of unstable curves in
$T^j(S_0)$ whose length is less than $\delta_1/3$.

We first prove the following claim:  $\# \Sh(\cM_{-n}^0) \le 2 \sum_{j=1}^n \# \Sh(T^j\cS_0) + K_2 n$.
Recall that the boundaries of elements of $\cM_{-n}^0$ are comprised of elements of 
$\cS_{-n} = \cup_{i=0}^n T^{i} \cS_0$, which are unstable curves for $i \ge 1$.  
We use the following property established in Lemma~\ref{lem:conn}: If a smooth unstable curve $U_i \subset T^i\cS_0$ 
intersects a smooth curve $U_j \subset T^j\cS_0$, for 
$i < j$, then  $U_j$ must terminate on $U_i$. Thus if $A \in \Sh(\cM_{-n}^0)$, then either the boundary
of $A$ contains a short curve in $T^j(\cS_0)$ for some $1 \le j \le n$, or $\partial A$ contains
an intersection point of two curves in $T^j(\cS_0)$ for some $1 \le j \le n$ (see
Figure~\ref{fig:sing}).
But 
such intersections
of curves within $T^j(\cS_0)$ are images of intersections of curves within $T(\cS_0)$, 
and the cardinality of cells created by such intersections 
is bounded by some uniform constant $K_2 > 0$ depending only on $T(\cS_0)$. 
Then, since each short curve in
$T^j(\cS_0)$ belongs to the boundary of at most two $A \in \Sh(\cM_{-n}^0)$, the claim follows.

\begin{figure}[ht]
\begin{centering}
\begin{tikzpicture}[x=6mm,y=6mm]

\draw[thick] (10,10) to[out=220, in=35] (7.8,8.3) to[out=210, in=15] (5.7,7);
\draw[thick] (9,11) to[out=245, in=55] (8.3,9.6) to[out=235, in=50] (6.70,7.49);

\draw[thick] (7.62,8.65) to[out=220, in=30] (6.3,7.8) to[out=200, in=15] (4, 7);
\draw[thick] (6.2,7.75) to[out=240, in=40] (5.7,7) to[out=220, in=30] (4,5.7);

\node at (6.4,7.65){\scriptsize $A$};
\node at (7,9.5){\small $T^j(\cS_0)$};

 \end{tikzpicture}
\caption{A short cell $A \in I_u(\cM_{-n}^0)$ created by long elements of $T^j(\cS_0)$.  }  
\label{fig:sing}
\end{centering}
\end{figure}
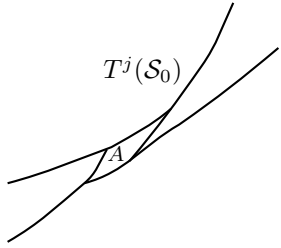

Next, subdivide $\cS_0$ into $\ell_0$ horizontal segments $U_i$ such that $TU_i$ is an unstable curve
of length between $\delta_1/3$ and $\delta_1$ for each $i$.  
Analogous to stable curves, let $\cG_j^{\delta_1}(U)$ denote the decomposition of the union of unstable curves comprising
$T^jU$ at length scale $\delta_1$.
Then for $j \ge n_1$ using the time reversal of \eqref{eq:delta1}, we have
\begin{equation}
\label{eq:short}
\# \Sh(T^j \cS_0) = \sum_{i = 1}^{\ell_0} \# \Sh(\cG_{j-1}^{\delta_1}(TU_i)) 
\le \tfrac 12 \sum_{i=1}^{\ell_0} \# \Lo(\cG_{j-1}^{\delta_1}(TU_i))\, .
\end{equation}

Using the claim and \eqref{eq:short} we split the sum over $j$ into 2 parts,
\begin{equation}
\label{eq:j split}
\# \Sh(\cM_{-n}^0) \le K_2 n + 2 \sum_{j=1}^{n_1-1} \# \Sh(T^j \cS_0) + \sum_{j = n_1}^{n}
\sum_{i =1}^{\ell_0} \# \Lo(\cG_{j-1}^{\delta_1}(TU_i)) \, .
\end{equation}

The cardinality of the sum over the first $n_1$ terms is bounded by a fixed constant depending 
on $n_1$, but not on $n$; let us call it $\bar C_{n_1}$.  We want to relate the sum over the terms
for $j \ge n_1$ to $\Lo(\cM_{-n}^0)$.  To this end, we follow the proof of Lemma~\ref{lem:short}
and split $n-j$ into blocks of length $n_1$.

For each $n_1 \le j \le n-n_1$, write $n-j = kn_1 + \ell$, for some $k \ge 1$.  If
$V \in \Lo(\cG_{j-1}^{\delta_1}(TU_i))$, then $|T^{n-j}V| \ge C_1 \Lambda^{n-j} \delta_1/3$,
while $T^{n-j}V$ can be cut into at most $(Kn_1+1)^k$ pieces.  Since we have chosen
$\ve = 1/4$ in the application of Lemma~\ref{lem:short}, by choice of $n_1$, 
\[
\# \Lo(\cG_{n-1}^{\delta_1}(TU_i)) \ge 4^k \# \Lo(\cG_{j-1}^{\delta_1}(TU_i))
\; \mbox{for each $n_1 \le j \le n-n_1$ and $k = \left \lfloor \frac{(n-j)}{n_1} \right \rfloor$.} 
\]
For $n-n_1 < j \le n$, we perform the same estimate, but relating $j$ with $j + n_1$,
\[
\# \Lo(\cG_{j+n_1-1}^{\delta_1}(TU_i)) \ge 4 \# \Lo(\cG_{j-1}^{\delta_1}(TU_i))
\; \; \mbox{for each $n-n_1 + 1\le j \le n$.}
\]
Gathering these estimates together and using \eqref{eq:j split}, we obtain,
\begin{equation}
\label{eq:short up}
\begin{split}
&\# \Sh(\cM_{-n}^0) \\
&\qquad  \le K_2 n + \bar C_{n_1} + \sum_{j=n_1}^{n-n_1}
4^{-\lfloor (n-j)/n_1 \rfloor} \# \Lo(T^n\cS_0) + \sum_{j=n-n_1+1}^n \tfrac 14
\# \Lo(T^{j+n_1} \cS_0) \\
&\qquad  \le 2K_2 n + \bar C_{n_1} + C \delta_1^{-1} n_1 \# \Lo(\cM_{-n}^0) + \sum_{j=n-n_1+1}^n C\delta_1^{-1} 
\# \Lo(\cM_{-j-n_1}^0) \, ,
\end{split}
\end{equation}
where the second inequality uses $\# \Lo(T^\ell\cS_0) \le C \delta_1^{-1} \Lo( \cM_{-\ell}^0) + K_2$
for $\ell \ge n$,
which stems from the same non-crossing property used earlier:  a curve in $T^\ell(\cS_0)$
must terminate on a curve in $T^i(\cS_0)$ if the two intersect for $i<\ell$.

To estimate the final sum in \eqref{eq:short up}, note that if $A \in \Lo(\cM_{-n-1}^0)$, then
$A \subseteq A' \in \Lo(\cM_{-n}^0)$.  Moreover, there exists a constant $B>0$, independent of $n$,
such that each $A' \in \Lo(\cM_{-n}^0)$ can contain at most $B$ elements of $\Lo(\cM_{-n-1}^0)$.
(Indeed by Lemma~\ref{prop:equiv}, $B$ is at most $| \hP|$, and depends only on
$\cS_1$.)  Inductively then, 
\[
\sum_{j=1}^{n_1} \# \Lo(\cM_{-n-j}^0) \le  \sum_{j=1}^{n_1} B^j \# \Lo(\cM_{-n}^0)
\le C B^{n_1} \# \Lo(\cM_{-n}^0) \, .
\]
Putting this estimate together with \eqref{eq:short up} yields,
\[
\# \Sh(\cM_{-n}^0)
\le \# \Lo(\cM_{-n}^0) C \delta_1^{-1} (n_1 + B^{n_1}) + C_{n_1} + 2K_2n \, .
\]
Using $\# \cM_{-n}^0 = \# \Lo(\cM_{-n}^0) + \# \Sh(\cM_{-n}^0)$, this implies,
\[
\# \Lo(\cM_{-n}^0) \ge \frac{ \# \cM_{-n}^0 - C_{n_1} - 2K_2n}{1 + C\delta_1^{-1}(n_1 + B^{n_1})} \, .
\]
Since $\# \cM_{-n}^0$ increases at an exponential rate and $n_1$ is fixed, there exists $n_2 \in \mathbb{N}$ such that $\# \cM_{-n}^0 - \bar C_{n_1} - 2K_2n \ge \frac 12 \# \cM_{-n}^0$, for $n \ge n_2$.
Thus there exists $C_{n_1} >0$ such that for $n \ge n_2$,
$
\# \Lo(\cM_{-n}^0) \ge C_{n_1} \delta_1 \# \cM_{-n}^0
$,
as required.
\end{proof}

\subsection{Exact Exponential Growth of $\# \cM_0^n$ ---  Cantor Rectangles}
\label{supera}

It follows from submultiplicativity
of $\# \cM_0^n$ that $e^{n h_*} \le \# \cM_0^n$ for all $n$. In this subsection, we shall
prove  a supermultiplicativity statement (Lemma~\ref{lem:super}) from which we deduce
the upper bound for $\# \cM^n_0$ in Proposition~\ref{cor:exp} giving the upper bound in
Proposition~\ref{prop:ly}, 
and ultimately the upper bound on the spectral radius of $\cL$ on $\cB$.

The following key estimate is a lower bound on the rate of growth of stable curves having a certain length.
The proof will crucially use the fact that the SRB measure is mixing in order
to bootstrap from  Lemma~\ref{lem:long piece}.

\begin{proposition}
\label{prop:good growth}
Let $\delta_1$ be the value of  
$\delta$ from Lemma~\ref{lem:short} associated with $\ve = 1/4$ (see \eqref{eq:delta1}).
There exists $c_0>0$ such that for all $W \in \widehat{\cW}^s$ with $|W| \ge \delta_1/3$
and $n \ge 1$, we have $\# \cG_n(W) \ge c_0 \# \cM_0^n$.
The constant $c_0$ depends on $\delta_1$.
\end{proposition}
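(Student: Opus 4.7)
The strategy is to reformulate the combinatorial bound as a count of transversal intersections and then invoke mixing of the SRB measure. For each $B \in \cM_0^n$, the smooth bijection $T^{-n}|_A : A \to B$ from Remark~\ref{2.2} (where $A = T^n B \in \cM_{-n}^0$) yields $T^{-n}W \cap B \neq \emptyset$ if and only if $W \cap A \neq \emptyset$, so distinct cells $A \in \cM_{-n}^0$ meeting $W$ produce distinct smooth components of $T^{-n}W$. Since the subdivision of components longer than $\delta_0$ inflates $\#\cG_n(W)$ by at most a factor $2/\delta_0$, the proposition reduces to producing $c_0' > 0$, independent of $n$ and $W$, with
\[
\#\{A \in \cM_{-n}^0 : W \cap A \neq \emptyset\} \ge c_0'\, \#\cM_{-n}^0 .
\]

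By Lemma~\ref{lem:long piece}, at least $C_{n_1}\delta_1 \#\cM_{-n}^0$ cells $A \in \Lo(\cM_{-n}^0)$ each contain an unstable curve $U_A$ of length $\ge \delta_1/3$. Since any intersection $W \cap U_A$ is transversal (stable versus unstable) and is contained in $A$, it suffices to show that $W$ meets a positive proportion of the family $\{U_A\}$. To produce such intersections I would fix a finite family of Cantor rectangles $R_1, \ldots, R_K \subset M$, each of stable and unstable diameters at most $\delta_1/3$ and of positive $\musrb$-measure, whose union has $\musrb$-measure arbitrarily close to $1$ (standard construction for dispersing billiards, cf. Chernov's book, Chapter~7). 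Then every stable curve of length $\ge \delta_1/3$ u-crosses at least one $R_j$, and by the Cantor product structure together with absolute continuity of the stable holonomy (classical for $\musrb$; compare Section~\ref{notmixing}), a stable curve that u-crosses $R_j$ transversally meets every unstable curve that s-crosses $R_j$.

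The remaining point is that a definite fraction of the $U_A$ must s-cross one of the $R_j$ that $W$ u-crosses. This is where the exponential mixing of $\musrb$ from~\cite{Y98} enters, combined with the fragmentation Lemma~\ref{lem:short}: in a standard-pair argument in the spirit of Chernov's book, Chapter~7, the long unstable curves $U_A$ become equidistributed in phase space as $n$ grows, and for $n \ge N_0 = N_0(\delta_1)$ a definite fraction of them s-cross each $R_j$. Combined with the intersection count from the Cantor product, this yields a uniform $c_0$. The main obstacle is to extract this uniform constant: translating the averaged mixing statement into a pointwise lower bound on $\#\{A : W \cap U_A \neq \emptyset\}$ requires the absolute continuity of the stable holonomy on each $R_j$ together with a careful management of the finite family $\{R_j\}$ to ensure that the multiplicity of the counted intersections remains bounded independently of $n$.
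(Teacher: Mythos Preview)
Your overall framework---count intersections of $W$ with cells of $\cM_{-n}^0$, use Cantor rectangles, and invoke mixing of $\musrb$---matches the paper's approach.  The execution diverges at the key step, and there you have a genuine gap.

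You fix the rectangle $R_j$ that $W$ crosses (depending on $W$, not on $n$) and then assert that ``a definite fraction of [the $U_A$] s-cross each $R_j$'' for large $n$.  This equidistribution claim for the family $\{U_A\}_{A \in \Lo(\cM_{-n}^0)}$ is never justified beyond an appeal to ``standard-pair arguments in the spirit of Chernov's book''; but the $U_A$ are not iterates of a fixed curve, they are representative unstable segments in a combinatorially defined family of cells, and there is no off-the-shelf mixing or coupling statement that delivers this.  Your closing paragraph acknowledges this is the ``main obstacle'' but the proposed fix (absolute continuity of the stable holonomy plus multiplicity control) does not address why \emph{each} $R_j$ should capture a definite fraction of the $U_A$.

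The paper sidesteps this difficulty by reversing the roles: instead of asking the cells to come to $W$'s rectangle, it brings $W$ to the cells' rectangle.  By pigeonhole, \emph{some} rectangle $R_{i_n}$ (depending on $n$) is fully crossed in the unstable direction by at least $\tfrac{1}{k}\#\Lo(\cM_{-n}^0)$ cells---this is trivial.  Then mixing of $\musrb$ is used only for a \emph{fixed finite time} $n_3$ (independent of $n$ and $W$): it guarantees that some component of $T^{-n_3}W$ properly crosses $R_{i_n}$ (via \cite[Lemma~7.90]{chernov book}), and that component then meets all the crossing cells.  This yields $\#\cG_{n+n_3}(W) \ge \tfrac{1}{k}\#\Lo(\cM_{-n}^0)$, and the $n_3$ offset is absorbed into the constant.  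The point is that mixing for a bounded time is easy, whereas your equidistribution-as-$n\to\infty$ claim is not.
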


This will be used for the lower bound  in Section~\ref{prspu}.
It also has the following important consequence. 

\begin{lemma}[Supermultiplicativity]
\label{lem:super}
There exists $c_1>0$ such that $\forall n,j \in \mathbb{N}$, with $j \le n$, we have 
\[
\# \cM_0^n \ge c_1 \# \cM_0^{n-j} \# \cM_0^j \, .
\]
\end{lemma}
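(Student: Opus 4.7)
The plan is to combine Proposition~\ref{prop:good growth} (growth of $\cG_j(W)$ from a long stable curve) with Lemma~\ref{lem:long piece} (abundance of cells in $\cM_0^{n-j}$ with long stable diameter), following the natural refinement structure $\cM_0^n = \cM_0^{n-j} \vee T^{-(n-j)} \cM_0^j$. The main mechanism is: each long cell of $\cM_0^{n-j}$ contains a stable curve that fragments into many pieces under $T^{-j}$, and each such piece lies in a distinct cell of $\cM_0^n$.

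Concretely, first I would treat the case $n - j \ge n_2$, where $n_2$ is from Lemma~\ref{lem:long piece}. For each $A \in \Los(\cM_0^{n-j})$, pick a stable curve $W_A \subset A$ with $|W_A| \ge \delta_1/3$, and apply Proposition~\ref{prop:good growth} with parameter $j$ to obtain $\# \cG_j(W_A) \ge c_0 \# \cM_0^j$. Each $U \in \cG_j(W_A)$ is a smooth component of $T^{-j} W_A$, so $U$ does not cross $\cS_j$ and hence lies in a single cell $B_U \in \cM_0^n$.

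Next I would show the map $U \mapsto B_U$ is injective across all choices of $A$. If $U_1, U_2 \in \cG_j(W_A)$ are distinct smooth components, then $U_1$ and $U_2$ lie on different sides of some curve of $\cS_j$ and therefore in different elements of $\cM_0^j$; since $\cM_0^n$ refines $\cM_0^j$, we get $B_{U_1} \neq B_{U_2}$. For different $A_1 \neq A_2 \in \Los(\cM_0^{n-j})$ and $U_i \in \cG_j(W_{A_i})$, the set $T^j B_{U_i}$ is connected and avoids $\cS_{n-j}$, so it lies in a single cell of $\cM_0^{n-j}$; since it meets $A_i$ via $T^j U_i \subset W_{A_i} \subset A_i$, this cell must be $A_i$, forcing $B_{U_1} \neq B_{U_2}$. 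Combined with Lemma~\ref{lem:long piece}, this yields
\[
\# \cM_0^n \;\ge\; \# \Los(\cM_0^{n-j}) \cdot c_0 \# \cM_0^j \;\ge\; C_{n_1} \delta_1 c_0 \,\# \cM_0^{n-j} \# \cM_0^j .
\]

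Finally, for the remaining range $n - j < n_2$, I would simply note that $\# \cM_0^{n-j}$ is bounded by the absolute constant $\# \cM_0^{n_2}$ and that $\# \cM_0^j \le \# \cM_0^n$ since $\cM_0^n$ refines $\cM_0^j$; thus the desired inequality holds trivially with constant $1/\# \cM_0^{n_2}$. Taking $c_1 = \min\{C_{n_1} \delta_1 c_0,\ 1/\# \cM_0^{n_2}\}$ completes the argument. The only subtle point is the injectivity of $U \mapsto B_U$ across different $A$'s, but this follows cleanly once one tracks which singularity sets separate the cells at each scale.
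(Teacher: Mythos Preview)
Your proof follows essentially the same route as the paper's: both select a long stable curve in each $A\in\Los(\cM_0^{n-j})$, apply Proposition~\ref{prop:good growth} to count $\cG_j$-pieces, and finish with Lemma~\ref{lem:long piece}; the paper packages the count via the join $\cM_0^{n-j}\vee\cM_{-j}^0$ (using $\#\cM_0^n=\#\cM_{-j}^{n-j}$) rather than via your map $U\mapsto B_U$, but the mechanism and the resulting constant $c_1=C_{n_1}\delta_1 c_0$ are the same.

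One small oversight in your injectivity claim~1: distinct $U_1,U_2\in\cG_j(W_A)$ are \emph{not} always separated by $\cS_j$, since $\cG_j$ includes artificial subdivisions of long smooth components into pieces of length between $\delta_0/2$ and $\delta_0$. This only costs a bounded factor (at most $2/\delta_0$ pieces per genuine component), so the conclusion is unaffected; the paper handles the same point with the parenthetical ``up to subdivision of long pieces in $\cG_j(V)$''.
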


We next introduce Cantor rectangles.
Let $W^s(x)$ and $W^u(x)$ denote the maximal smooth components of the local stable and unstable 
manifolds of $x \in M$.

\begin{definition}[(Locally Maximal) Cantor Rectangles]\label{MCR}
A solid rectangle $D$ in $M$ is a closed region whose boundary comprises precisely four 
nontrivial curves:
two stable manifolds and two unstable manifolds.
Given a solid rectangle $D$, the {\em locally maximal Cantor rectangle} $R$ in $D$ is formed by taking the union of all
points in $D$ whose local stable and unstable manifolds completely cross $D$.
Locally maximal Cantor rectangles have a natural product structure:
for any $x, y \in R$, $W^s(x) \cap W^u(y) \in R$, where $W^{s/u}(x)$ is the local
stable/unstable manifold containing $x$.  
It is proved in \cite[Section 7.11]{chernov book} that
such rectangles are closed and as such contain their outer boundaries, which coincide with the boundary of $D$.
We shall refer to this pair of stable 
and unstable manifolds as the stable and unstable boundaries of $R$.  In this case, we denote $D$ by 
$D(R)$ to emphasize that it is the smallest solid rectangle containing $R$.  
We shall sometimes drop the words ``locally maximal'' 
referring simply to Cantor rectangles $R$.
\end{definition}

\begin{definition}[Properly Crossing a (Locally Maximal) Cantor Rectangle]\label{PCCR}
For  a (locally maximal)  Cantor rectangle $R$ such that
\begin{equation}\label{**}
\inf_{x \in R} \frac{m_{W^u}(W^u(x) \cap R)}{m_{W^u}(W^u(x) \cap D(R))} \ge 0.9\, ,
\end{equation}
we\footnote{This is a  version
of Definition 7.85 of \cite{chernov book} formulated with
stable (instead of unstable) curves crossing $R$.  We have also dropped any mention of homogeneous components, which are
used in the construction in \cite{chernov book}.}
say a stable curve $W \in \widehat{\cW}^s$ 
{\em properly crosses}  $R$ if

 a) $W$ crosses both unstable sides of $R$;

 b) for every $x \in R$, the intersection $W \cap W^s(x) \cap D(R) = \emptyset$, i.e., $W$ does not cross any
  stable manifolds in $R$;
 
c) for all $x \in R$, the point $W \cap W^u(x)$ divides the curve $W^u(x) \cap D(R)$ in a 
  ratio between $0.1$ and $0.9$, i.e., $W$ does not come too close to either unstable boundary of $R$.
\end{definition}

\begin{remark}
The (unstable analogue of) condition b) is not needed in its full strength, even in the proof of 
\cite[Lemma~7.90]{chernov book}.  What is used there is that the fake unstable  is trapped between two real unstable that it does not cross.  Since the real unstable intersect and fully cross the target rectangle, this forces the fake unstable to do so as well.  
For us, we reverse time and consider stable manifolds.  For real stable manifolds, condition (b) 
is not needed at all: If a real stable fully crosses the initial rectangle, then, when it intersects the target rectangle under iteration
by $T^{-n}$, it must intersect a real stable manifold, and it must fully cross.  (Otherwise, the preimage of a singularity 
would lie on a real stable manifold in the interior of the target rectangle.  But this cannot be since real stable manifolds are never cut going forward and so do not intersect the preimages of singularity curves except at their end points.)
When discussing proper crossing for real stable manifolds, we will drop condition (b) and allow $W \in \cW^s$ to be
one of the stable manifolds defining $R$.
\end{remark}

\begin{proof}[Proof of Proposition~\ref{prop:good growth}]
Using \cite[Lemma~7.87]{chernov book}, we may cover $M$ by  Cantor rectangles 
$R_1, \ldots R_k$ satisfying \eqref{**} whose stable and unstable boundaries have length  
at most $\frac{1}{10} \delta_1$, with the property that any 
stable curve of length at least $\delta_1/3$ properly crosses at least one of them.
The cardinality $k$ is fixed, depending only on $\delta_1$.

Recall that $\Lo(\cM_{-n}^0)$ denotes the elements of $\cM_{-n}^0$ whose unstable
diameter is longer than $\delta_1/3$.
We claim that for all $n \in \mathbb{N}$, at least one 
$R_i$ is fully crossed in the unstable direction by at least $\frac 1k \# \Lo(\cM_{-n}^0)$ 
elements of
$\cM_{-n}^0$.  Notice that if $A \in \cM_{-n}^0$, then $\partial A$ is comprised of unstable
curves belonging to $\cup_{i=1}^n T^i\cS_0$, and possibly $\cS_0$.  By definition of unstable
manifolds, $T^i\cS_0$ cannot intersect the unstable boundaries of the $R_i$; 
thus if $A \cap R_i \neq \emptyset$, 
then either $\partial A$ 
terminates inside $R_i$ or $A$ fully crosses $R_i$.  Thus elements of $\Lo(\cM_{-n}^0)$
fully cross at least one $R_i$ and so at least one $R_i$ must be fully crossed by $1/k$ of them,
proving the claim.

For each $n \in \mathbb{N}$, denote by $i_n$ the index of a rectangle $R_{i_n}$ which is fully
crossed by at least $\frac 1k \# \Lo(\cM_{-n}^0)$ elements of $\cM_{-n}^0$.
The main idea at this point will be to force every stable curve to properly cross $R_{i_n}$ in
a bounded number of iterates and so to intersect all elements of $\cM_{-n}^0$ that fully cross
$R_{i_n}$.

To this end, fix $\delta_* \in (0, \delta_1/10)$ and for $i=1, \ldots k$, choose
a ``high density'' subset $R_i^* \subset R_i$ satisfying the
following conditions: $R_i^*$ has nonzero Lebesgue measure,
and for any unstable manifold
$W^u$ such that $W^u \cap R_i^* \neq \emptyset$ and $|W^u| < \delta_*$, we have
$\frac{m_{W^u}(W^u \cap R_i^*)}{|W^u|} \ge 0.9$.  (Such a $\delta_*$ and $R_i^*$ exist due to the
fact that  $m_{W^u}$-almost every 
 $y \in R_i$ is a Lebesgue density point
of the set $W^u(y) \cap R_i$ and
the unstable foliation is absolutely continuous
 with respect to $\musrb$ or, equivalently, Lebesgue.)

Due to the mixing property of $\musrb$ and the finiteness of the number of rectangles $R_i$, there
exist $\ve >0$ and  $n_3 \in \mathbb{N}$ such that for all $1 \le i, j \le k$ and all $n \ge n_3$,
$\musrb(R_i^* \cap T^{-n}R_j) \ge \ve$.  If necessary, we increase $n_3$ so that the unstable 
diameter of the set $T^{-n}R_i$ is less than $\delta_*$ for each $i$, and $n \ge n_3$.

Now let $W \in \widehat{\cW}^s$ with $|W|\ge \delta_1/3$ be 
arbitrary.  Let $R_j$ be a 
 Cantor rectangle 
 that is properly
crossed by $W$.  Let $n \in \mathbb{N}$ and let $i_n$ be as above.  By mixing, 
$\musrb(R_{i_n}^* \cap T^{-n_3} R_j) \ge \ve$.  By \cite[Lemma~7.90]{chernov book}, there is
a component of $T^{-n_3}W$ that fully crosses $R_{i_n}^*$ in the stable direction.  Call
this component $V \in \cG_{n_3}^{\delta_1}(W)$.  By choice of $R_{i_n}$, this implies that
$\# \cG_n(V) \ge \frac 1k \# \Lo(\cM_{-n}^0)$, and thus
\[
\# \cG_{n+n_3}(W) \ge \tfrac 1k \# \Lo(\cM_{-n}^0)
\implies \# \cG_n(W) \ge \tfrac{C'}{k} \# \Lo(\cM_{-n}^0)\, ,
\]
where $C'$ is a constant depending only on $n_3$ since at each refinement of $\cM_{-j}^0$
to $\cM_{-j-1}^0$, the cardinality of the partition increases by a factor which is at most $|\hP|$,
as noted in the proof of Lemma~\ref{lem:long piece}.  The final estimate needed is
$\# \Lo(\cM_{-n}^0) \ge C_{n_1} \delta_1 \# \cM_{-n}^0$, for $n \ge n_2$ from Lemma~\ref{lem:long piece}.
Thus the proposition holds for $n \ge \max\{ n_2, n_3 \}$.  It extends to all $n \in \mathbb{N}$ since
$\# \cM_0^n \le (\# \cM_0^1)^n$ and there are only finitely many values of $n$ to correct for. 
\end{proof}

\begin{proof}[Proof of Lemma~\ref{lem:super}]
Recall the singularity sets defined for $n, k \in \mathbb{N}$ by $\cS_n = \cup_{i=0}^n T^{-i}\cS_0$
and $\cS_{-k} = \cup_{i=0}^k T^i\cS_0$.  Due to the relation,
$
T^{-k}(\cS_{-k} \cup \cS_n) = \cS_k \cup T^{-k}\cS_n = \cS_{n+k}
$,
we have a one-to-one correspondence between elements of $\cM_{-k}^n$ and $\cM_0^{n+k}$.

Now fix $n, j \in \mathbb{N}$ with $j < n$.  Using the above relation, we have,
\[
\# \cM_0^n = \# \cM_{-j}^{n-j} = \# \big( \cM_0^{n-j} \vee \cM_{-j}^0 \big)\, .
\]
In order to prove the lemma, it suffices to show that a positive fraction (independent of $n$ and $j$)
of elements of $\cM_0^{n-j}$ intersect a positive fraction of elements of $\cM_{-j}^0$.
Note that $\partial \cM_0^{n-j}$ is comprised of stable curves, while $\partial \cM_{-j}^0$ is comprised
of unstable curves.

Recall that $\Lo(\cM_{-j}^0)$ denotes the elements of $\cM_{-j}^0$ whose unstable
diameter is longer than $\delta_1/3$.
Similarly, $\Los(\cM_0^{n-j})$ denotes those elements
of $\cM_0^{n-j}$ whose stable diameter is longer than $\delta_1/3$.
By Lemma~\ref{lem:long piece},
\[
\# \Los(\cM_0^{n-j}) \ge C_{n_1} \delta_1 \# \cM_0^{n-j}, \quad \mbox{for $n-j \ge n_2$}\, .
\]

Let $A \in \Los(\cM_0^{n-j})$ and let $V \in \widehat{\cW}^s$ be a stable curve in 
$A$ with length at least $\delta_1/3$.  By Proposition~\ref{prop:good growth},
$\# \cG_j(V) \ge c_0 \# \cM_0^j$.  Each component of $\cG_j(V)$ corresponds to one component
of $V \setminus \cS_{-j}$ (up to subdivision of long pieces in $\cG_j(V)$).  
Thus $V$ intersects at least $c_0 \# \cM_0^j = c_0 \# \cM_{-j}^0$ elements of
$\cM_{-j}^0$.  Since this holds for all $A \in \Los(\cM_0^{n-j})$, we have
\[
\# \cM_0^n = \# \big( \cM_0^{n-j} \vee \cM_{-j}^0 \big) \ge 
\# \Los(\cM_0^{n-j}) \cdot c_0 \# \cM_0^j \ge C_{n_1} \delta_1 c_0 \# \cM_0^{n-j} \# \cM_0^j\, ,
\]
proving the lemma with $c_1 = c_0 C_{n_1} \delta_1$ when $n-j \ge n_2$.  For $n-j \le n_2$,
since $\# \cM_0^{n-j} \le (\# \cM_0^1)^{n-j}$,
we obtain the lemma  by decreasing $c_1$ since there are only finitely many values
to correct for.
\end{proof}

\begin{proof}[Proof of Proposition~\ref{cor:exp}]
Define $\psi(n) = \# \cM_0^n e^{-n h_*}$, and note that $\psi(n) \ge 1$ for all
$n$.  From Lemma~\ref{lem:super} it follows that
\begin{equation}
\label{eq:sup psi}
\psi(n) \ge c_1 \psi(j) \psi(n-j), \quad \mbox{for all $n \in \mathbb{N}$, and $0 \le j \le n$.}
\end{equation}
Suppose there exists $n_1 \in \mathbb{N}$ such that $\psi(n_1) \ge 2/c_1$.  Then
using \eqref{eq:sup psi}, we have
\[
\psi(2n_1) \ge c_1 \psi(n_1)\psi(n_1) \ge \frac{4}{c_1}\,  .
\]
Iterating this bound, we have inductively for any $k \ge 1$,
\[
\psi(2kn_1) \ge c_1 \psi(2n_1)\psi(2(k-1)n_1) \ge c_1 \frac{4}{c_1} \frac{4^{k-1}}{c_1} = \frac{4^k}{c_1}\, . 
\]
This implies that $\lim_{k \to \infty} \frac{1}{2kn_1} \log \psi(2kn_1) \ge \frac{\log 4}{2 n_1}$, which 
contradicts the definition of $\psi(n)$
(since $\lim_{n \to \infty} \frac 1n \log \psi(n) = 0$).  We conclude that $\psi(n) \le 2/c_1$ for all $n \ge 1$.
\end{proof}

Our final result of this section demonstrates the uniform exponential rate of growth enjoyed by all
stable curves of length at least $\delta_1/3$.

\begin{cor}
\label{cor:stable growth}
For all stable curves $W \in \widehat{\cW}^s$ with $|W| \ge \delta_1/3$ and all $n \ge n_1$, we have
 \[
\frac{2 \delta_1 c_0}{9} e^{nh_*} \le  |T^{-n}W| \le \frac{4}{c_1} e^{nh_*} \, .
\]
\end{cor}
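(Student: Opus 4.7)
The plan is to derive both bounds as essentially immediate corollaries of Proposition~\ref{prop:good growth}, Lemma~\ref{lem:growth}(b), Proposition~\ref{cor:exp}, and the ``$2/3$ of pieces are long'' statement \eqref{eq:delta1}. Write $|T^{-n}W| = \sum_{V \in \cG_n(W)} |V|$, since the smooth components of $T^{-n}W$ (possibly subdivided at length scale $\delta_0$) partition $T^{-n}W$. Both bounds then reduce to controlling (i) the cardinality of $\cG_n(W)$ (or $\cG_n^{\delta_1}(W)$) and (ii) the length of individual pieces, and then invoking the matching bounds on $\# \cM_0^n$.

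For the upper bound, I would use that each element of $\cG_n(W) = \cG_n^{\delta_0}(W)$ has length at most $\delta_0$, while Lemma~\ref{lem:growth}(b) (with $\bar\gamma = 0$) gives $\# \cG_n(W) \le 2\delta_0^{-1} \# \cM_0^n$. Combined with the upper bound $\# \cM_0^n \le \frac{2}{c_1} e^{nh_*}$ from Proposition~\ref{cor:exp}, this yields
\[
|T^{-n}W| \le \delta_0 \cdot 2\delta_0^{-1} \# \cM_0^n = 2 \# \cM_0^n \le \frac{4}{c_1} e^{nh_*},
\]
as required. This bound holds for any stable curve (the hypothesis $|W| \ge \delta_1/3$ is not needed here).

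For the lower bound, I would work at the finer scale $\delta_1 \le \delta_0$ so as to apply \eqref{eq:delta1}. Since subdividing pieces at a smaller length scale only increases the count, $\# \cG_n^{\delta_1}(W) \ge \# \cG_n(W) \ge c_0 \# \cM_0^n$ by Proposition~\ref{prop:good growth}. Now, since $|W| \ge \delta_1/3$ and $n \ge n_1$, \eqref{eq:delta1} gives $\# L_n^{\delta_1}(W) \ge \tfrac{2}{3} \# \cG_n^{\delta_1}(W)$, and by definition each $V \in L_n^{\delta_1}(W)$ has $|V| \ge \delta_1/3$. Discarding the short pieces,
\[
|T^{-n}W| \;\ge\; \sum_{V \in L_n^{\delta_1}(W)} |V| \;\ge\; \frac{\delta_1}{3} \cdot \frac{2}{3} \# \cG_n^{\delta_1}(W) \;\ge\; \frac{2\delta_1}{9} c_0 \# \cM_0^n \;\ge\; \frac{2\delta_1 c_0}{9} e^{nh_*},
\]
using the lower bound $\# \cM_0^n \ge e^{nh_*}$ from Proposition~\ref{cor:exp}.

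There is no real obstacle here: the corollary is a packaging of estimates already established. The only minor subtlety is verifying that the inequality $\# \cG_n^{\delta_1}(W) \ge \# \cG_n^{\delta_0}(W)$ is legitimate, so that Proposition~\ref{prop:good growth} (stated at scale $\delta_0$) transfers to the $\delta_1$-scale needed to apply \eqref{eq:delta1}; but this is immediate from the subdivision rule, since going from scale $\delta_0$ to the smaller scale $\delta_1$ only further splits pieces. This is the step I would take greatest care to state cleanly.
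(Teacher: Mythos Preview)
Your proof is correct and follows essentially the same approach as the paper: both bounds come from writing $|T^{-n}W|$ as the sum of the lengths of pieces in $\cG_n(W)$ (or $\cG_n^{\delta_1}(W)$), bounding the cardinality via Lemma~\ref{lem:growth}(b), Proposition~\ref{prop:good growth}, and Proposition~\ref{cor:exp}, and using \eqref{eq:delta1} for the lower bound. The paper's proof is organized identically, including the observation that $\#\cG_n^{\delta_1}(W) \ge \#\cG_n(W)$.
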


\begin{proof}
For $W \in \widehat{\cW}^s$ with $|W| \le \delta_1/3$,
Lemma~\ref{lem:growth}(b) with $\bar\gamma=0$ together with Propositions~\ref{cor:exp}
and \ref{prop:good growth} yield,
\[
c_0 e^{n h_*} \le
c_0 \# \cM_0^n \le \# \cG_n(W) \le 2\delta_0^{-1} \# \cM_0^n 
\le \tfrac{4}{c_1 \delta_0} e^{n h_*} \, .
\] 
The upper bound of the corollary is completed by noting that
\[
|T^{-n}W| = \sum_{W_i \in \cG_n(W)} |W_i| \le \delta_0 \# \cG_n(W) \, .
\]
The lower bound follows using \eqref{eq:delta1} since $\# \cG_n^{\delta_1}(W) \ge \# \cG_n(W)$,
\begin{equation}
\label{eq:length lower}
|T^{-n}W| = \sum_{W_i \in \cG_n^{\delta_1}(W)} |W_i| 
\ge \frac{\delta_1}{3} \# L_n^{\delta_1}(W) 
\ge \tfrac{2\delta_1}{9} \# \cG_n^{\delta_1}(W) 
\ge \tfrac{2\delta_1c_0}{9} e^{n h_*} \, .
\end{equation}
\end{proof}


\section{Proof of the ``Lasota--Yorke''  Proposition~\ref{prop:ly} --- Spectral Radius}
\label{sec:proof prop}

\subsection{Weak Norm and Strong Stable Norm Estimates}

We start with the weak norm estimate \eqref{eq:weak ly}.
Let $f \in C^1(M)$, $W \in \cW^s$, and $\psi \in C^\alpha(W)$ be such that
$|\psi|_{C^\alpha(W)} \leq 1$.	 For $n \geq0$ we use the definition of the weak
norm on each $W_i \in \cG_n(W)$ to estimate
\begin{equation}
\label{eq:start}
\begin{split}
\int_W \cL^nf \, \psi \, dm_W
& =\sum_{W_i \in\cG_n(W)}\int_{W_i}f \, \psi \circ T^n \, dm_W   \le \sum_{W_i \in\cG_n(W)} |f|_w |\psi\circ T^n|_{C^\alpha(W_i)} \,  .
\end{split}
\end{equation}

Clearly, $\sup |\psi\circ T^n|_{W_i}\le \sup_W |\psi|$.
For $x,y \in W_i$, we have,
\begin{align}
\label{eq:C1 C0}
\frac{|\psi (T^nx) - \psi (T^ny)|}{d_W(T^nx,T^ny)^\alpha}
\cdot \frac{d_W(T^nx,T^ny)^\alpha}{d_W(x,y)^\alpha} &\leq C |\psi|_{C^\alpha(W)} 
|J^sT^n|^\alpha_{C^0(W_i)}\\
 \nonumber   &\leq C\Lambda^{-\alpha n} |\psi|_{C^\alpha(W)}\, ,
\end{align}
so that  $H_{W_i}^\alpha(\psi\circ T^n)
\le  C\Lambda^{-\alpha n} H_{W}^\alpha(\psi)$ and thus $|\psi \circ T^n|_{C^\alpha(W_i)} \leq C |\psi|_{C^\alpha(W)}$.
Using this estimate and Lemma~\ref{lem:growth}(b) with $\bar\gamma = 0$ 
in equation \eqref{eq:start}, we obtain
\[
\int_W \cL^n f \, \psi \, dm_W \; \leq \;  \sum_{W_i \in \cG_n(W)} C |f|_w
\le C \delta_0^{-1} |f|_w (\# \cM_0^n)  \,  .
\]
Taking the supremum over $W \in \cW^s$ and $\psi \in C^\alpha(W)$ with
$|\psi|_{C^\alpha(W)} \leq 1$
yields \eqref{eq:weak ly}, using the upper bound 
on $\# \cM_0^n$ in Proposition~\ref{cor:exp}.
\smallskip

We now prove the strong stable norm estimate \eqref{cheapeq:stable ly}.
Recall that our choice of $m$ in \eqref{eq:m} implies
$2^{s_0\gamma} (Km+1)^{1/m} < e^{h_*}$, where $K$ is from \eqref{eq:complex}.
Define
\begin{equation}
\label{eq:Dn}
D_n = D_n(m,\gamma) :=  2^{2\gamma +1} \delta_0^{-1} \sum_{j=1}^n 2^{j s_0 \gamma} (Km+1)^{j/m} \# \cM_0^{n-j} \, .
\end{equation}
We claim that it follows from Proposition~\ref{cor:exp} that  
\begin{equation}\label{ccor:exp}
D_n\le C e^{nh_*}\, .
\end{equation}
Indeed, by choice of $\gamma$ and $m$, setting
$\ve_1 := h_* - \log( 2^{s_0\gamma} (Km+1)^{1/m}) > 0$, we have
\begin{align*}
D_n & = 2^{2\gamma +1} \delta_0^{-1} \sum_{j=1}^n 2^{j s_0 \gamma} (Km+1)^{j/m} \# \cM_0^{n-j}
 \le
2^{2\gamma +1} \delta_0^{-1} \sum_{j=1}^n e^{(h_* - \ve_1)j}  \frac{2}{c_1} e^{(n-j)h_*}
\\
\nonumber &\le 2^{2\gamma +1} \delta_0^{-1} \frac{2}{c_1} e^{n h_*}
\sum_{j=1}^n e^{ - \ve_1 j}  \, .
\end{align*}

To prove the strong stable bound,
let $W \in \cW^s$ and $\psi \in C^\beta(W)$ with $|\psi|_{C^\beta(W)} \le |\log|W||^\gamma$.
Using equation \eqref{eq:start}, and applying the strong stable norm on each $W_i \in \cG_n(W)$, 
we write
\[
\int_W\cL^n f\, \psi \, dm_W	 =
\sum_{i} \int_{W_i}f \, \psi \circ T^n \, dm_W 
\le \sum_i \|f\|_s |\log |W_i||^{-\gamma} |\psi \circ T^n |_{C^\beta(W_i)} \, .
\]
From the estimate analogous to \eqref{eq:C1 C0}, we
have
$|\psi \circ T^n |_{C^\beta(W_i)} \leq C |\psi|_{C^\beta(W)} \leq C  |\log |W||^\gamma$. (Note that the contraction coming from the negative power of $\Lambda$ in \eqref{eq:C1 C0} cannot be exploited, 
see footnote~\ref{pied} and the comments after Remark \ref{4.8}.)

Thus,
\[
\int_W \cL^n f \, \psi \, dm_W  \leq
C  \| f\|_s \sum_{W_i \in \cG_n(W)}  
\left( \frac{\log |W|}{\log |W_i|} \right)^\gamma 
\; \leq \; C  \|f\|_s D_n \, ,
\]
where we have used 
Lemma~\ref{lem:growth}(b) with $\bar\gamma = \gamma$.

Taking the
supremum over $W$ and $\psi$ and recalling \eqref{ccor:exp}
proves
\eqref{cheapeq:stable ly},  since we have shown that
$\|\cL^n f\|_s \leq C D_n \|f\|_s$.


\subsection{Unstable Norm Estimate}
\label{unstable norm}

Fix $\ve \le \ve_0$ and
consider two curves $W^1, W^2 \in\cW^s$ with $d_{\cW^s}(W^1,W^2) \leq \ve$.
For $n \geq 1$, we describe how to partition $T^{-n}W^\ell$ into
``matched'' pieces $U^\ell_j$ and
``unmatched'' pieces $V^\ell_i$, $\ell=1,2$.

Let $\omega$ be a connected component of $W^1 \setminus \cS_{-n}$.
To each
point $x \in T^{-n}\omega$, we associate a vertical line segment $\gamma_x$ of length
at most $C\Lambda^{-n}\ve$ such that its image $T^n\gamma_x$,
if not cut by a singularity,
will have length $C\ve$.  By \cite[\S 4.4]{chernov book}, all the tangent vectors to $T^i\gamma_x$
lie in the unstable cone
$C^u(T^ix)$ for each $i \ge 1$ so that they remain uniformly transverse to the stable cone
and enjoy the minimum expansion given by $\Lambda$.

Doing this for each connected component of $W^1 \setminus \cS_{-n}$,
we subdivide $W^1 \setminus \cS_{-n}$ into a countable
collection of subintervals of points for which $T^n\gamma_x$ intersects
$W^2 \setminus \cS_{-n}$ and subintervals for which this is not the case.
This in turn induces a corresponding partition on $W^2 \setminus \cS_{-n}$.

We denote by $V^\ell_i$ the pieces in $T^{-n}W^\ell$ which are not matched up by this process
and note that
the images $T^nV^\ell_i$ occur either at the endpoints of $W^\ell$ or because the vertical segment
$\gamma_x$ has been cut by a singularity.  In both cases, the length of the
curves $T^nV^\ell_i$ can be at most $C\ve$ due to the uniform transversality of
$\cS_{-n}$ with the stable cone and of $C^s(x)$ with $C^u(x)$.

In the remaining
pieces the foliation $\{ T^n\gamma_x \}_{x \in T^{-n}W^1}$ provides a one-to-one correspondence
between points in $W^1$ and $W^2$.
We further subdivide these pieces in
such a way that the lengths of their images under $T^{-i}$
are less than $\delta_0$ for each $0 \le i \le n$ and
the pieces are pairwise matched by
the foliation $\{\gamma_x\}$. We call these matched pieces $U^\ell_j$.
Since the stable cone is bounded away from the vertical direction, we can
adjust the elements of $\cG_n(W^\ell)$ created by artificial subdivisions
due to length so that $U^\ell_j \subset W^\ell_i$ and $V^\ell_k \subset
W^\ell_{i'}$ for some $W^\ell_i, W^\ell_{i'} \in \cG_n(W^\ell)$ for all
$j,k \ge 1$ and $\ell = 1,2$, without changing the cardinality
of the bound on $\cG_n(W^\ell)$.
There is at most one $U^\ell_j$ and two $V^\ell_j$ per $W^\ell_i \in
\cG_n(W^\ell)$.

In this way we write $W^\ell = (\cup_j T^nU^\ell_j) \cup (\cup_i T^nV^\ell_i)$.
Note that the images $T^nV^\ell_i$ of the unmatched pieces must be short
while the images of the matched pieces
$U^\ell_j$
may be long or short.

We have arranged a pairing
of the pieces $U^\ell_j=G_{U^\ell_j}(I_j) $, $\ell=1, 2$,  with the  property:
\begin{equation}
\label{eq:match}
\begin{split}
\mbox{If } \; &
U^1_j 
= \{ (r, \vf_{U^1_j}(r)) : r \in I_j \}\, \, 
\mbox{then } 
U^2_j 
= \{ (r, \vf_{U^2_j}(r)) : r \in I_j \} \, ,
\end{split}
\end{equation}
so that the
point $x = (r, \vf_{U^1_j}(r))$ is associated with the point
$\bar x = (r, \vf_{U^2_j}(r))$ by the vertical
segment $\gamma_x \subset \{(r,s)\}_{s\in[-\pi/2, \pi/2]}$, for each $r \in I_j$.

Given $\psi_\ell$ on $W^\ell$ with $|\psi_\ell|_{C^\alpha(W^\ell)} \leq 1$ and
$d(\psi_1, \psi_2) \leq \ve$,
we must estimate
\begin{equation}
\label{eq:unstable split}
\begin{split}
& \left|\int_{W^1} \cL^n f \, \psi_1 \, dm_W - \int_{W^2} \cL^n f \, \psi_2 \, dm_W \right|
  \; \leq \; \sum_{\ell,i} \left|\int_{V^\ell_i} f \, \psi_\ell\circ T^n \, dm_W \right|\\
  & \qquad\qquad\qquad \qquad\qquad\quad+ \sum_j \left| \int_{U^1_j} f \, \psi_1\circ T^n \, dm_W
    - \int_{U^2_j} f \, \psi_2\circ T^n \, dm_W \right| \, .
\end{split}
\end{equation}
We first estimate the differences of matched pieces
$U^\ell_j$.
The function $\phi_j= \psi_1 \circ T^n \circ G_{U^1_j} \circ G_{U^2_j}^{-1}$ is well-defined on $U^2_j$, and we can estimate,
\begin{equation}
\label{eq:stepone}
\left |\int_{U^1_j} f \, \psi_1\circ T^n-
\int_{U^2_j} f \, \psi_2\circ T^n \right |
\leq \left |\int_{U^1_j} f \, \psi_1\circ T^n-
\int_{U^2_j} f \,\phi_j \right |
+\left |\int_{U^2_j} f (\phi_j -  \psi_2\circ
T^n) \right | \,  .
\end{equation}

We bound the first term in equation~\eqref{eq:stepone} using the strong unstable norm.
As before,
\eqref{eq:C1 C0} implies
$|\psi_1 \circ T^n|_{C^\alpha(U^1_j)} \le
C |\psi_1|_{C^\alpha(W^1)} \le C$.
We have $|G_{U^1_j} \circ G_{U^2_j}^{-1}|_{C^1} \le C_g$,
for some $C_g > 0$ due to the fact that each curve $U^\ell_j$ has uniformly bounded
curvature and slopes bounded away from infinity.
Thus
\begin{equation}
\label{eq:psi_1}
|\phi_j |_{C^\alpha(U^2_j)} \le C C_g |\psi_1|_{C^\alpha(W^1)} \, .
\end{equation}
Moreover, $d(\psi_1\circ T^n, \phi_j)
= \left|\psi_1\circ T^n  \circ G_{U^1_j}
  - \phi_j \circ G_{U^2_j} \right|_{C^0(I_j)} \; = \; 0$ 
by the definition of $\phi_j$.
  
  To complete the bound on the first term of \eqref{eq:stepone},
we need the following estimate from \cite[Lemma~4.2]{demzhang11}:
There exists $C>0$, independent of $W^1$ and $W^2$, such that
\begin{equation}
\label{eq:graph contract}
d_{\cW^s}(U^1_j,U^2_j)\leq C\Lambda^{-n}n \ve =: \ve_1\, , \qquad
\forall j  \, .
\end{equation}

In view of \eqref{eq:psi_1}, we renormalize the test functions by
$C C_g$.
Then we apply the definition of the strong unstable norm with
$\ve_1$ in place of $\ve$.	Thus,
\begin{equation}
\label{eq:second unstable}
\sum_j \left|\int_{U^1_j} f \, \psi_1\circ T^n -
\int_{U^2_j} f  \, \phi_j \; \right|	 
 \leq (C C_g) C \delta_0^{-1} |\log \ve_1|^{-\varsigma} \|f\|_u (\# \cM_0^n) \, ,
\end{equation}
where we used Lemma~\ref{lem:growth}(b) with $\bar\gamma = 0$ since there is at most
one matched piece $U^1_j$ corresponding to each component $W^{1}_i \in \cG_n(W^1)$ of $T^{-n}W^1$.

It remains to estimate the second term in \eqref{eq:stepone} using the
strong stable norm.
\begin{equation}
\label{eq:unstable strong}
 \left|\int_{U^2_j} f (\phi_j - \psi_2 \circ T^n) \right|		 
\leq  \|f\|_s | \log |U^2_j||^{-\gamma}
       \left|\phi_j -  \psi_2 \circ T^n\right|_{C^\beta(U^2_j)} \, .
\end{equation}
In order to estimate the $C^\beta$-norm of the function in \eqref{eq:unstable strong}, we 
use  that $|G_{U^2_j}|_{C^1}\le C_g$ and $ |G_{U^2_j}^{-1}|_{C^1} \le C_g$ to write
\begin{equation}
\label{eq:diff}
| \phi_j - \psi_2 \circ T^n|_{C^\beta(U^2_j)} \;
\leq \; C_g  | \psi_1
\circ T^n\circ G_{U^1_j} -\psi_2 \circ T^n\circ
G_{U^2_j}|_{C^\beta(I_j)} \, .
\end{equation}
The difference can now be bounded by the following estimate from 
\cite[Lemma~4.4]{demzhang11}
\begin{equation}
\label{lem:test}
|\psi_1 \circ T^n \circ G_{U^1_j} - \psi_2 \circ T^n \circ G_{U^2_j} |_{C^\beta(I_j)}
\le C  \ve^{\alpha - \beta}\, . 
\end{equation}

Indeed, using \eqref{lem:test} together with \eqref{eq:diff}
yields by \eqref{eq:unstable strong}
\begin{equation}
\label{eq:unstable three}
\begin{split}
& \sum_j \Big| \int_{U^2_j} f (\phi_j - \psi_2 \circ T^n ) \, dm_W \Big| \\
&\qquad \le C \|f\|_s \sum_j |\log |U^2_j||^{-\gamma} 
\, \ve^{\alpha - \beta}
\le C   |\log \delta_0 |^{-\gamma} \|f\|_s \ve^{\alpha - \beta} 2 \delta_0^{-1}(\# \cM_0^n)\, ,
\end{split}
\end{equation}
where used (as in \eqref{eq:second unstable}) Lemma~\ref{lem:growth}(b) with $\bar\gamma = 0$ since there is at most
one matched piece $U^2_j$ corresponding to each component $W^{2}_i \in \cG_n(W^2)$ of $T^{-n}W^2$. Since $\delta_0<1$ is fixed,
this completes the estimate on the second term of matched pieces in \eqref{eq:stepone}.

\smallskip
We next estimate over the unmatched pieces $V^\ell_i$ 
in \eqref{eq:unstable split}, using the strong stable norm.  Note that
by \eqref{eq:C1 C0}, $|\psi_\ell \circ T^n|_{C^\beta(V^\ell_i)} \leq C |\psi_\ell|_{C^\alpha(W^\ell)} \leq C$.  
The relevant sum for unmatched pieces in $\cG_n(W^1)$ is
\begin{equation}
\label{eq:unmatched}
\sum_i \int_{V^1_i} f \psi_1 \circ T^n \, dm_{V^1_i} \, ,
\end{equation}
with a similar sum for unmatched pieces in $\cG_n(W^2)$.

We say an unmatched curve $V^1_i$ is created at time $j$, $1 \le j \le n$, if  $j$ is the 
first time that $T^{n-j}V^1_i$ is not part of a matched element of $\cG_j(W^1)$.  Indeed, there may
be several curves $V^1_i$ (in principle exponentially
many in $n-j$) such that $T^{n-j}V^1_i$ belongs to the same unmatched element 
of $\cG_j(W^1)$.
Define
\begin{align*}
A_{j,k} = \{ i : V^1_i &\mbox{ is created at time $j$} 
\\ &
\mbox{and $T^{n-j}V^1_i$ belongs to the
unmatched curve $W^1_k \subset T^{-j}W^1$} \} \, .
\end{align*}
Due to the uniform hyperbolicity of $T$,  and, again,
uniform transversality of
$\cS_{-n}$ with the stable cone and of $C^s(x)$ with $C^u(x)$, we have $|W^1_k| \le C \Lambda^{-j} \ve$.

 Let $\delta_1$  be the value of  
$\delta\le \delta_0$ from Lemma~\ref{lem:short} associated with $\ve = 1/4$ (recall  \eqref{eq:delta1}).
For a certain time, 
the iterate $T^{-q}W^1_k$ remains shorter than length
$\delta_1$.  In this case, by Lemma~\ref{lem:growth}(a) for $\bar \gamma=0$,
 its complexity grows subexponentially,
\begin{equation}
\label{eq:slow grow}
\# \cG_q(W^1_k) \le (Km+1)^{q/m} \, .
\end{equation}
We would like to establish  the maximal value of $q$ as a function of $j$.

More precisely, we want to find $q(j)$ so that any $q\le q(j)$ satisfies the  conditions:

  (a) $T^{-q}W^1_k$ remains shorter than length $\delta_1$; \qquad
 
  (b) $\displaystyle \frac{|\log |T^{-q}W^1_k||^{-\gamma}}{|\log \ve|^{-\varsigma}} \le 1$.

For (a), we use \eqref{eq:control} together with the fact that $|W^1_k| \le C \Lambda^{-j} \ve$ to estimate
\[
|T^{-q}W^1_k| \le \delta_1 \impliedby C''|W^1_k|^{2^{-s_0q}} \le \delta_1
\impliedby C'' \Lambda^{-j 2^{-s_0q}} \ve^{2^{-s_0q}} \le \delta_1 \, . 
\]
Omitting the $\ve^{2^{-s_0q}}$ factor and solving the last inequality for $q$ yields,
\begin{equation}
\label{eq:max q a}
q \le \frac{\log j}{s_0 \log 2} + C_2 \, ,
\mbox{ where } C_2 = \frac{\log ( \frac{\log \Lambda}{|\log(\delta_1/C'')|})}{s_0 \log 2}\, .
\end{equation}

For (b), we again use \eqref{eq:control} to bound $|T^{-q}W^1_k| \le C'' (\Lambda^{-j} \ve)^{2^{-s_0 q}}$, 
so that
\begin{equation}
\label{eq:max q b0}
\frac{|\log (\Lambda^{-j} \ve)^{2^{-s_0q}}|^{-\gamma}}{|\log \ve|^{-\varsigma}} \le 1
\implies 2^{\gamma s_0 q} |\log \ve|^{\varsigma} \le (|\log \ve| + j \log \Lambda)^\gamma \, .
\end{equation}
implies (b). In turn, \eqref{eq:max q b0} is implied by
\begin{equation}
\label{eq:max q b}
q \le \frac{(\gamma - \varsigma) \log j}{\gamma s_0 \log 2} \, .
\end{equation}
Since the bound in \eqref{eq:max q b} is smaller than that in \eqref{eq:max q a} for $j$ larger than some
fixed constant depending only on $\delta_1$, $s_0$ and $C''$, we will use \eqref{eq:max q b}
to define $q(j)$.

Now we return to the estimate in \eqref{eq:unmatched}.  Grouping the unmatched pieces
$V^1_i$ by their creation times $j$, we estimate,
\footnote{When we sum the
integrals in the first line over the different $T^{n-j}V_i^1$, we find the
integral over $W^1_k$ since the union of those pieces is precisely $W^1_k$.}
\[
\begin{split}
\sum_i \int_{V^1_i} & f \, \psi_1 \circ T^n \, dm_{V^1_i}  \\
&=
\sum_{j=1}^n \sum_{i \in A_{j,k}} \int_{T^{n-j}V^1_i} (\cL^{n-j} f )\, \psi \circ T^j 
= \sum_{j=1}^n \sum_k \int_{W^1_k} (\cL^{n-j} f) \, \psi \circ T^j \\
& \le \sum_{j=1}^n \sum_k \sum_{V_\ell \in \cG_{q(j)}(W^1_k)} \int_{V_\ell} (\cL^{n-j-q(j)}f )\, \psi \circ T^{j+q(j)} \\
& \le \sum_{j=1}^n \sum_k \sum_{V_\ell \in \cG_{q(j)}(W^1_k)}
\| \cL^{n-j-q(j)} f \|_s C |\log |V_\ell||^{-\gamma} \\
& \le C \|f\|_s \sum_{j=1}^n \# \cM_0^j \# \cM_0^{n-j-q(j)} (Km+1)^{q(j)/m} 
|\log (\Lambda^{-j} \ve )^{2^{-s_0q(j)}}|^{-\gamma} \, ,
\end{split}
\]
where we have used \eqref{eq:slow grow} to bound $\# \cG_{q(j)}(W^1_k)$, 
the cardinality $\# \cM_0^j$ to bound
the cardinality of the possible pieces $W^1_k \subset T^{-j}W^1$, the estimate 
$\| \cL^{n-j-q(j)} f \|_s \le C \# \cM_0^{n-j-q(j)} \|f \|_s$, and, again $|T^{-q}W^1_k| \le C'' (\Lambda^{-j} \ve)^{2^{-s_0 q}}$.  We also have, by the supermultiplicativity
Lemma~\ref{lem:super},
\[
\# \cM_0^j \# \cM_0^{n-j-q(j)} \le C e^{- q(j) h_*} \# \cM_0^n \, .
\]
Thus using (b) in the definition of $q(j)$ (or, more precisely, \eqref{eq:max q b0}), we estimate
\begin{equation}
\label{eq:unmatched pieces}
\sum_i \int_{V^1_i} f\psi_1 \circ T^n \, dm_{V^1_i}
\le C \| f \|_s |\log \ve|^{-\varsigma} \# \cM_0^n \sum_{j=1}^n (Km+1)^{q(j)/m} e^{-q(j) h_*} \, .
\end{equation}
For the final sum over $j$, we let $\ve_2 = \frac 1m \log (Km+1)$ and use
\eqref{eq:max q b},
\begin{align*}
\sum_{j=1}^n (Km+1)^{q(j)/m} e^{-q(j) h_*} & = \sum_{j=1}^n e^{-q(j) (h_* - \ve_2)} 
\le \sum_{j=1}^n e^{- (h_* - \ve_2) \frac{(\gamma - \varsigma) \log j}{\gamma s_0 \log 2}} \\
&= \sum_{j=1}^n j^{-(h_* - \ve_2) \frac{\gamma - \varsigma}{\gamma s_0 \log 2}}\,  .
\end{align*}
Then by \eqref{eq:unmatched pieces},  since the exponent of $j$ in the above sum is strictly negative by choice of $m$ (see \eqref{eq:m}), 
there exist $C<\infty$ and  $\varpi \in [0, 1)$ such that the contribution to $\|\cL^n f\|_u$ of the 
unmatched pieces is bounded by
\begin{equation}\label{eq:first unstable}
\sum_{\ell,i}  \left|\int_{V^\ell_i} f \,\psi_\ell\circ T^n \, dm_W \right| \leq C |\log \ve|^{-\varsigma} 
n^{\varpi} \# \cM_0^n \| f \|_s  \, .
\end{equation}

Now we use  \eqref{eq:first unstable} together with \eqref{eq:second unstable}
and \eqref{eq:unstable three}
 to estimate \eqref{eq:unstable split}
\begin{align*}
& \left|\int_{W^1} \cL^n f \, \psi_1 \, dm_W - \int_{W^2} \cL^n f  \, \psi_2 \, dm_W \right|\\
&  \qquad \leq   C \delta_0^{-1} \|f\|_u |\log \ve_1|^{-\varsigma} \# \cM_0^n 
+ C \delta_0^{-1} ( n^{\varpi} \|f\|_s |\log \ve|^{- \varsigma} + \|f\|_s \ve^{\alpha - \beta} ) \# \cM_0^n \, .
\end{align*}
Dividing through by $|\log \ve|^{-\varsigma}$ and taking
the appropriate suprema, we complete the proof of \eqref{eq:unstable ly},
recalling Proposition~\ref{cor:exp}.

Finally, we study the consequences of the additional assumption  $h_* > s_0 \log 2$
on the estimate over unmatched pieces.   
In this case,  again recalling \eqref{eq:m} and following, we may choose
$\varsigma>0$ small enough such that
\[
\ve_1 := h_* - \frac 1m \log (Km+1) - \frac{\gamma}{\gamma - \varsigma} s_0 \log 2 > 0 \, .
\] 
Then
\[
\begin{split}
\sum_{j=1}^n j^{-(h_* - \ve_2) \frac{\gamma - \varsigma}{\gamma s_0 \log 2}}
= \sum_{j=1}^n j^{-1 - \ve_1 \frac{\gamma - \varsigma}{\gamma s_0 \log 2}} < \infty \, .
\end{split}
\]
Thus, by \eqref{eq:unmatched pieces}, the contribution to $\|\cL^n f\|_u$ of the unmatched pieces is bounded by 
\begin{equation}
\label{eq:first unstableb}
\sum_{\ell,i}  \left|\int_{V^\ell_i} f \,\psi_\ell\circ T^n \, dm_W \right| 
\leq C |\log \ve|^{-\varsigma}  \# \cM_0^n \| f \|_s
\end{equation}
 if $h_* > s_0 \log 2$.
 So we find \eqref{eq:unstable lyb}  for $h_* > s_0 \log 2$ by
replacing \eqref{eq:first unstable} with \eqref{eq:first unstableb}.

\subsection{Upper and Lower Bounds on the Spectral Radius}
\label{prspu}

We now deduce the bounds of Theorem~\ref{sprad} 
from the inequalities of Proposition~\ref{prop:ly} and the rate of growth 
of stable curves proved in Proposition~\ref{prop:good growth}.

\begin{proof}[Proof of  Theorem~\ref{sprad}]
The upper bounds \eqref{spu}  and \eqref{spub} are immediate consequences of Proposition~\ref{prop:ly}.
To prove the lower bound
on $| \cL^n 1 |_w$, 
recall the choice of $\delta_1=\delta >0$
from Lemma~\ref{lem:short} for  $\ve = 1/4$, giving \eqref{eq:delta1}.
Let $W \in \cW^s$ with $|W| \ge \delta_1/3$ and
set the test function $\psi \equiv 1$.  For $n \ge n_1$, 
\begin{equation}
\begin{split}
\label{eq:lower}
\int_W \cL^n 1 \, dm_W & = \sum_{W_i \in \cG_n^{\delta_1}(W)} \int_{W_i} 1 \, dm_{W_i}
= \sum_{W_i \in \cG_n^{\delta_1}(W)} |W_i| 
\ge \frac{2\delta_1}{9} c_0 e^{n h_*} \, ,
\end{split}
\end{equation}
by \eqref{eq:length lower}.
Thus,
\begin{equation}
\label{lowerb}
\| \cL^n 1 \|_s\ge  | \cL^n 1 |_w 
\ge  \frac{2 \delta_1}{9} c_0 e^{n h_*} \, . 
\end{equation}
 Letting $n$ tend to infinity, one obtains
 $\lim_{n \to \infty} \| \cL^n \|_{\cB}^{1/n} \ge e^{h_*}$.  
\end{proof}

\subsection{Compact Embedding}
\label{cembedsec}
 
The following compact embedding property is crucial to exploit Proposition~\ref{prop:ly} in
order to construct
 $\mu_*$ in Section~\ref{atlast0}.
 
\begin{proposition}[Compact Embedding]\label{cpte}
The embedding of the unit ball of $\cB$ in $\cB_w$ is compact.
\end{proposition}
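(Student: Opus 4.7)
The plan is a two-stage diagonal extraction: reduce the bound on $|f_{n_k}-f_{n_\ell}|_w$ to the convergence of finitely many numerical sequences of the form $\int_{W_i} f\,\psi_{i,j}\,dm_{W_i}$, controlling the error when an arbitrary admissible pair $(W,\psi)$ in the supremum defining $|\cdot|_w$ is replaced by a pair $(W_i,\psi_{i,j})$ drawn from a fixed reference net, with that error measured by the strong stable and strong unstable norms.

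First I would build the reference net in two layers. For each cylinder $M_k$, every $W\in\cW^s$ contained in $M_k$ is the graph of a function $\vf_W$ on an interval $I_W\subset\partial B_k$; by the uniform curvature bound of \cite[Prop.~4.29]{chernov book} the family $\{\vf_W\}$ has uniformly bounded $C^2$ norm and slopes bounded away from vertical, and the $I_W$ range over a compact parameter space. Arzelà-Ascoli therefore yields, for every $\epsilon>0$, a finite $\epsilon$-net $\{W_1,\dots,W_N\}\subset\cW^s$ in the pseudo-distance $d_{\cW^s}$. Next, since $\beta<\alpha$, the unit ball of $C^\alpha(W_i)$ is compactly embedded into $C^\beta(W_i)$, and I pick a finite $\epsilon$-net $\{\psi_{i,j}\}_{j=1}^{M_i}$ for this ball in the $C^\beta(W_i)$ topology.

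Given a bounded sequence $(f_n)\subset\cB$, each map $f\mapsto \int_{W_i} f\,\psi_{i,j}\,dm_{W_i}$ is a bounded linear functional on $\cB_w$ (dominated by $C|f|_w$), so the finitely many resulting numerical sequences are bounded and a diagonal extraction yields a subsequence $(f_{n_k})$ along which all of them converge. For an arbitrary admissible pair $(W,\psi)$ with $|\psi|_{C^\alpha(W)}\le 1$, I pick $W_i$ with $d_{\cW^s}(W,W_i)\le\epsilon$ and set $\psi_i=\psi\circ G_W\circ G_{W_i}^{-1}$ on $W_i$, so that $d(\psi,\psi_i)=0$ and $|\psi_i|_{C^\alpha(W_i)}\le C$ exactly as in the matched-pieces construction of \S\ref{unstable norm}; the strong unstable norm then gives
\[
\Bigl|\int_W f\,\psi\,dm_W - \int_{W_i} f\,\psi_i\,dm_{W_i}\Bigr|\le C\,|\log\epsilon|^{-\varsigma}\,\|f\|_u \, .
\]
Choosing $\psi_{i,j}$ with $|\psi_i-\psi_{i,j}|_{C^\beta(W_i)}\le\epsilon$ and rescaling by the factor $|\log|W_i||^\gamma/\epsilon$ so that the rescaled difference has $C^\beta$ norm equal to $|\log|W_i||^\gamma$, the strong stable norm yields
\[
\Bigl|\int_{W_i} f(\psi_i-\psi_{i,j})\,dm_{W_i}\Bigr|\le \epsilon\,|\log|W_i||^{-\gamma}\,\|f\|_s \le C\epsilon\,\|f\|_s \, ,
\]
since $|W_i|\le\delta_0<1$. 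Applying the same two bounds to $f_{n_\ell}$ and combining, the quantity $|\int_W(f_{n_k}-f_{n_\ell})\psi\,dm_W|$ is bounded by $2C(|\log\epsilon|^{-\varsigma}+\epsilon)$ plus the finite-dimensional difference $|\int_{W_i}(f_{n_k}-f_{n_\ell})\psi_{i,j}\,dm_{W_i}|$, which tends to zero as $k,\ell\to\infty$. Given $\eta>0$, I first fix $\epsilon$ so that the first two contributions are each $<\eta/3$, then choose $k,\ell$ large enough that the third is also $<\eta/3$; taking the supremum over $(W,\psi)$ shows $|f_{n_k}-f_{n_\ell}|_w<\eta$, so $(f_{n_k})$ is Cauchy and converges in the complete space $\cB_w$.

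The main obstacle is verifying uniform-in-$(W,W_i)$ bounds on the matched-test-function construction: the composition $\psi\circ G_W\circ G_{W_i}^{-1}$ must satisfy both a $C^\alpha$ bound independent of the pair (so that $\|\cdot\|_u$ can be applied with a universal normalization) and the $C^\beta$ closeness estimate of \cite[Lemma~4.4]{demzhang11}. Both of these are precisely the bounds already established in \S\ref{unstable norm} for the strong unstable estimate \eqref{eq:unstable ly}, and their uniformity follows from the uniform slope and curvature bounds on $\cW^s$; no new combinatorial or growth-lemma input beyond what has already been developed is required.
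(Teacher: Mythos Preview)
Your proposal is correct and follows essentially the same strategy as the paper's proof: build a finite net of stable curves via Arzel\`a--Ascoli, a finite net of test functions via the compact embedding $C^\alpha\hookrightarrow C^\beta$, and control the two replacement errors using $\|\cdot\|_u$ and $\|\cdot\|_s$ respectively. The paper packages the conclusion as an inequality $|f|_w\le \max_{i,j}\ell_{i,j}(f)+2C_g\|f\|_\cB|\log\ve|^{-\varsigma}$ rather than an explicit diagonal extraction, but this is cosmetic. One technical wrinkle you glossed over: your transported function $\psi_i=\psi\circ G_W\circ G_{W_i}^{-1}$ is only defined on the portion of $W_i$ with $r\in I_W\cap I_{W_i}$, so it must be extended to all of $W_i$ with a uniform $C^\alpha$ bound before it can be compared to the fixed net $\{\psi_{i,j}\}$; the paper sidesteps this by pushing everything down to the common circle $\mathbb{S}^1_\ell$, building a single $\epsilon$-net $\{\bar\psi_j\}$ there (independent of $i$), and lifting back to both $W$ and $W_i$ simultaneously.
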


\begin{proof}
Consider the set $\hW^s$ of (not necessarily homogeneous) cone-stable curves with uniformly bounded
curvature and the distance $d_{\cW^s}(\cdot , \cdot)$ between them defined in Section~\ref{normdef}.  
According to \eqref{eq:graph}, each of these curves can be viewed as
graphs of $C^2$ functions of the position coordinate $r$ with uniformly bounded second derivative,
$W = \{ G_W(r) \}_{r \in I_w} = \{ (r, \vf_W(r)) \}_{r \in I_W}$.  Thus
they are compact in the $C^1$ distance $d_{\cW^s}$.  Given $\ve>0$, we may choose finitely many
$V_i \in \hW^s$, $i = 1, \ldots N_\ve$, such that  
the balls of radius $\ve/2$ in the $d_{\cW^s}$ metric
centered at the curves
$\{ V_i \}_{i=1}^{N_\ve}$ form a covering of $\hW^s$.  

Since $\cW^s \subset \hW^s$, we proceed as follows.  
In each ball $B_{\ve/2}(V_i)$ centered at $V_i$ in
the space of $C^1$ graphs, if $B_{\ve/2}(V_i) \cap \cW^s \neq \emptyset$, then we choose
one representative $W_i \in B_{\ve/2}(V_i) \cap \cW^s$.  Otherwise, we discard $B_{\ve/2}(V_i)$.
The 
balls of radius $\ve$ in the $d_{\cW^s}$ metric
centered at the curves
 $\{ W_i \}_{i=1}^{N_\ve}$ constructed in this way form a covering of $\cW^s$.  (There may be fewer than $N_\ve$ such curves due to some balls having been discarded, but we will continue to use the symbol $N_\ve$ in any case.)  

We now argue one component of the phase space, $M_\ell = \partial B_\ell \times [-\pi/2, \pi/2]$, at a time.
Define $\mathbb{S}^1_\ell$ to be the circle of length $|\partial B_\ell|$ and let $C_g$ be the
graph constant from \eqref{eq:psi_1}.  
Since the ball of radius $C_g$ in the $C^\alpha(\mathbb{S}^1_\ell)$
norm is compactly embedded in $C^\beta(\mathbb{S}^1_\ell)$, we may choose finitely many functions
$\bpsi_j \in C^\alpha(\mathbb{S}^1_\ell)$ such that 
the balls of radius $\ve$ in the $C^\beta(\mathbb{S}^1_\ell)$ metric
centered at the functions
$\{ \bpsi_j \}_{j=1}^{L_\ve}$ form a covering
of the ball of radius $C_g$ in  $C^\alpha(\mathbb{S}^1_\ell)$.

Now let $W = G_W(I_W) \in \cW^s$, and $\psi \in C^\alpha(W)$ with $|\psi|_{C^\alpha(W)} \le 1$.
Viewing $I_W$ as a subset of $\mathbb{S}^1_\ell$, we define the push down of $\psi$ to 
$I_W$ by $\bpsi = \psi \circ G_W$.  We extend $\bpsi$ to $\mathbb{S}^1_\ell$ by linearly interpolating
between its two endpoint values on the complement of $I_W$ in $\mathbb{S}^1_\ell$.  Since
$I_W$ is much shorter than $\mathbb{S}^1_\ell$, this
can be accomplished while maintaining $|\bpsi|_{C^\alpha(\mathbb{S}^1_\ell)} \le C_g$.

Choose $W_i = G_{W_i}(I_{W_i})$ such that $d_{\cW^s}(W, W_i) < \ve$ and
$\bpsi_j$ such that $|\bpsi - \bpsi_j|_{C^\beta(\mathbb{S}^1_\ell)} < \ve$.  Define
$\psi_j = \bpsi_j \circ G_{W_i}^{-1}$ and $\tpsi_j = \bpsi_j \circ G_W^{-1}$ to be the lifts of $\bpsi_j$ to 
$W_i$ and $W$, respectively.  Note that $|\psi_j|_{C^\beta(W_i)} \le C_g$, $|\tpsi_j|_{C^\beta(W)} \le C_g$, while
\[
d(\psi_j , \tpsi_j) = |\psi_j \circ G_{W_i} - \tpsi_j \circ G_W|_{C^0(I_{W_i} \cap I_W)}  = 0 ,
\quad \mbox{and} \quad |\psi - \tpsi_j|_{C^\beta(W)} \le C_g \ve \, .
\]
Thus,
\begin{align*}
&\left| \int_W f \psi \, dm_W - \int_{W_i} f \psi_j \, dm_{W_i} \right|\\
 &\qquad\qquad \le \left| \int_W f (\psi - \tpsi_j) \, dm_W \right| + \left| \int_W f \tpsi_j \, dm_W - \int_{W_i} f \psi_j \, dm_{W_i} \right|  \\
&\qquad\qquad \le \| f \|_s |\log|W||^{-\gamma} |\psi - \tpsi_j|_{C^\beta(W)} + |\log \ve|^{-\varsigma} \| f \|_u C_g 
 \le 2C_g  \| f\|_{\cB}  |\log \ve|^{-\varsigma}\,  .
\end{align*}

We have proved that for each $\ve>0$, there exist finitely many bounded linear functionals 
$\ell_{i,j}(\cdot) = \int_{W_i} \cdot \, \, \psi_j \, dm_{W_i}$, such that for all $f \in \cB$,
\[
| f |_w \le \max_{i \le N_\ve, j \le L_\ve} \ell_{i,j}(f) + 2C_g  \| f\|_{\cB}  |\log \ve|^{-\varsigma} \, ,
\]
which implies the relative compactness of $\cB$ in $\cB_w$.
\end{proof}

 \section{The Measure $\mu_*$}
 \label{mmu}

\noindent\emph{In this section, we assume throughout that $h_*> s_0 \log 2$ (with $s_0 <1$ defined by \eqref{defs0}).}


\subsection{Construction of the Measure $\mu_*$ --- Measure
 of Singular Sets (Theorem~\ref{nbhdthm})}
\label{atlast0}
 
 In this section, we construct a $T$-invariant probability measure $\mu_*$
 on $M$  by combining in \eqref{defm} a maximal eigenvector of
$\LL$ on $\BB$ and a maximal eigenvector of its dual obtained
in Proposition~\ref{prop:exist}.
In addition,  the information on these left and right eigenvectors
will give Lemma~\ref{lem:approach} and Corollary~ \ref{integral}, which immediately
imply  Theorem~\ref{nbhdthm}. 

We first show that such maximal eigenvectors exist and are in fact nonnegative Radon measures
(i.e., elements of the dual of $C^0(M)$).

\begin{proposition}
\label{prop:exist} 
If $h_*> s_0 \log 2$  then 
there exist $\nu \in \BB_w$ and $\tilde \nu \in \BB_w^*$ such that
$\LL \nu=e^{h_*} \nu$ and $\LL^* \tilde \nu=e^{h_*} \tilde \nu$. In addition\footnote{Recall Proposition~\ref{distembed} and Remark~\ref{distembed*}.}  $\nu$ and $\tilde \nu$
take nonnegative values on nonnegative $C^1$ functions on $M$ and are thus
nonnegative Radon measures. Finally, $\tilde \nu(\nu)\ne 0$ and $\| \nu \|_u \le \bar{C}$.
\end{proposition}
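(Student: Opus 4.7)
The plan is to construct $\nu$ and $\tilde\nu$ as Cesàro averages of $(e^{-h_*}\cL)^n$ applied respectively to the constant function $1$ and to a suitably chosen starting functional, exploiting the upper bound \eqref{spub} (which crucially uses $h_*>s_0\log 2$) together with the compactness in Proposition~\ref{cpte}. First I would set $\nu_N := \frac{1}{N}\sum_{n=0}^{N-1}e^{-nh_*}\cL^n 1$; then \eqref{spub} gives $\|\nu_N\|_\cB \le \widetilde{C}\|1\|_\cB$ uniformly in $N$, and the compact embedding of Proposition~\ref{cpte} extracts a subsequence $\nu_{N_k}$ converging in $\cB_w$ to some $\nu$. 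The telescoping identity
\[
e^{-h_*}\cL\nu_N - \nu_N \;=\; \tfrac{1}{N}\bigl(e^{-Nh_*}\cL^N 1 - 1\bigr),
\]
combined with the uniform $|\cdot|_w$-bound from \eqref{eq:weak ly} and the continuity of $\cL$ on $\cB_w$, gives $\cL\nu = e^{h_*}\nu$ on passing to the limit. Nontriviality $|\nu|_w > 0$ follows from the lower bound \eqref{spl}, and $\|\nu\|_u \le \bar C$ follows by lower semicontinuity: each functional appearing in the supremum defining $\|\cdot\|_u$ is continuous on $\cB_w$ (once its parameters are fixed), so the uniform bound $\|\nu_{N_k}\|_u \le \widetilde C$ passes to the weak limit in $\cB_w$.

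For the dual eigenvector, I would fix a stable manifold $W_0 \in \cW^s$ with $|W_0| \ge \delta_1/3$ and use as starting functional $\tilde\ell_0(f) := \int_{W_0} f\, dm_{W_0}$, which lies in $\cB_w^*$ with norm at most $1$ (take $\psi \equiv 1$ in the definition of $|\cdot|_w$). The Cesàro averages $\tilde\nu_N := \frac{1}{N}\sum_{n=0}^{N-1} e^{-nh_*}(\cL^*)^n \tilde\ell_0$ are then bounded in $\cB_w^*$ by \eqref{eq:weak ly}, and sequential Banach--Alaoglu (available since $\cB_w$ is separable) furnishes a weak-$*$ convergent subsequence $\tilde\nu_{N_k} \to \tilde\nu$. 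The dual version of the telescoping identity gives $\cL^*\tilde\nu = e^{h_*}\tilde\nu$.

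Nonnegativity of $\nu$ and $\tilde\nu$ on nonnegative $C^1$ functions is immediate once one notes that $\cL^n 1$ is a nonnegative function (the weight $1/J^sT^n$ being positive) and that $\cL^n f \ge 0$ on $W_0$ when $f \in C^1(M)$ satisfies $f \ge 0$; Remark~\ref{distembed*} then identifies both as nonnegative Radon measures. To verify $\tilde\nu(\nu) \ne 0$, I would use the eigenvector equation for $\tilde\nu$ to compute $\tilde\nu(\cL^n 1) = e^{nh_*}\tilde\nu(1)$, whence $\tilde\nu(\nu_{N_k}) = \tilde\nu(1)$ for every $k$; since $\nu \in \cB_w$, weak-$*$ convergence gives $\tilde\nu(\nu) = \tilde\nu(1)$. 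Corollary~\ref{cor:stable growth} (see also \eqref{lowerb}) provides the key lower bound
\[
\tilde\ell_0(\cL^n 1) \;=\; \int_{W_0} \cL^n 1\, dm_{W_0} \;\ge\; \tfrac{2\delta_1 c_0}{9}\, e^{nh_*} \qquad (n \ge n_1),
\]
so $\tilde\nu_N(1) \ge \tfrac{2\delta_1 c_0}{9}\cdot \tfrac{N-n_1}{N}$; evaluating the weak-$*$ limit at $1 \in \cB_w$ then yields $\tilde\nu(1) \ge \tfrac{2\delta_1 c_0}{9} > 0$, and hence $\tilde\nu(\nu) > 0$.

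The main obstacle in this strategy is precisely this nondegeneracy step. A naïve choice such as $\tilde\ell(f) := f(1) = \int f\, d\musrb$ would leave one needing a lower bound $\int \cL^n 1\, d\musrb \gtrsim e^{nh_*}$, which is not obvious from the estimates at hand. Integrating instead along a long stable curve avoids the obstruction, since Corollary~\ref{cor:stable growth} delivers length growth of $T^{-n}W_0$ at exactly the rate $e^{nh_*}$.
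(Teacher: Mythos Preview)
Your construction of $\nu$ is identical to the paper's, and your argument for $\|\nu\|_u \le \bar C$ via lower semicontinuity of the unstable norm under $\cB_w$-convergence matches the paper's reasoning (see \eqref{eq:norm bound}). The proof is correct.

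For $\tilde\nu$, your approach diverges from the paper's in two respects. First, you use sequential Banach--Alaoglu on $(\cB_w)^*$ (valid since $\cB_w$, being the completion of the separable space $C^1(M)$, is separable), whereas the paper extracts a limit in $\cB^*$ via the compact embedding of Proposition~\ref{cpte} and then checks a posteriori that the limit lies in $(\cB_w)^*$. Your route is more direct here. Second, and more substantively, you seed the dual iteration with integration along a single long stable curve $\tilde\ell_0(f)=\int_{W_0}f\,dm_{W_0}$, while the paper uses $d\musrb$ as seed. Your choice makes the nondegeneracy step $\tilde\nu(1)>0$ immediate from Corollary~\ref{cor:stable growth}, which is a nice simplification. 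However, your remark that the choice $d\musrb$ ``would leave one needing a lower bound $\int\cL^n 1\,d\musrb\gtrsim e^{nh_*}$, which is not obvious'' is misplaced: this is exactly what the paper does, obtaining the bound by disintegrating $\musrb$ along homogeneous stable manifolds and applying \eqref{eq:lower} on the leaves of length at least $\delta_1/3$ (see \eqref{eq:tilde} and the estimate following it). The paper's choice of seed is motivated by later use (Lemma~\ref{lem:disint}), where the disintegration of $\nu$ with respect to $\hatmusrb$ is needed; your $\tilde\nu$ would a priori be a different eigenfunctional, though this does not affect the statement of Proposition~\ref{prop:exist} itself.
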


\begin{remark}
 The norm of the space $\BB$ depends on the parameter $\gamma$ and
is used in the proof of the proposition. However, this proof provides $\nu$ and $\tilde \nu$ which do not
depend on $\gamma$  (as soon as  $2^{s_0\gamma} < e^{h_*}$),
and do not depend on the parameters $\beta$ and $\varsigma$
of $\BB$.
\end{remark}

 It is easy to see that 
$|f \varphi|_w\le |\varphi|_{C^1}
|f|_w$ (use 
 $|\varphi \psi|_{C^\alpha(W)}\le |\varphi|_{C^1} |\psi|_{C^\alpha(W)}$). 
Clearly, if $f\in C^1$ and $\varphi \in C^1$ then $f\varphi \in C^1$.
Therefore,  if $h_*> s_0 \log 2$,
a bounded linear map  $\mu_*$ from $C^1(M)$ to  $\mathbb{C}$ can be defined by
taking $\nu$ and $\tilde \nu$ from Proposition~\ref{prop:exist} and setting
\begin{equation}\label{defm}
\mu_*(\varphi)= \frac{ \tilde \nu (\nu \varphi)}{ \tilde \nu (\nu)}\, .
\end{equation}
This map
is nonnegative for all nonnegative $\varphi$ and thus defines a nonnegative measure $\mu_*\in (C^0)^*$,
with  $\mu_*(1)=1$.
Clearly,  $\mu_*$ is a $T$ invariant  probability measure
since for every $\varphi \in C^1$ we have
$$
\tilde \nu(\nu \varphi)=e^{-h_*}\tilde \nu(\varphi \LL(\nu))=e^{-h_*} \tilde \nu(\LL(\nu (\varphi \circ T)))=
\tilde \nu (\nu (\varphi \circ T))=\tilde \nu(\nu)\mu_*(\varphi \circ T) \, .
$$

\begin{proof}[Proof of Proposition~\ref{prop:exist}]
Let $1$ denote the constant function\footnote{We could replace the
seed function $1$ by any $C^1$ positive function $f$ on $M$.} equal to one
on $M$. We will take this as a seed in our construction of
a maximal eigenvector. From  
\eqref{spl} in Theorem~\ref{sprad} we see that
$\| \cL^n 1\|_\BB\ge \| \cL^n 1\|_s\ge | \cL^n 1|_w \ge C \# \cM_0^n \ge C e^{n h_*}$.  Now, 
consider 
\begin{equation}\label{nun}
\nu_n = \frac{1}{n} \sum_{k=0}^{n-1} e^{-k h_*} \cL^k 1  \in \cB\, , \quad n \ge 1\, .
\end{equation}
By construction the $\nu_n$ are nonnegative, and thus Radon measures.
By 
our assumption on $h_*$ and \eqref{spub} in Theorem~\ref{sprad}
they satisfy $\| \nu_n \|_\cB \le \bar{C}$, so  using the relative compactness of $\cB$ in $\cB_w$
(Proposition~\ref{cpte}),
we extract a subsequence $(n_j)$ such that $\lim_j \nu_{n_j} = \nu$ is a nonnegative
measure,
and the convergence
is in $\cB_w$.  (Changing the value
of $\gamma$ does not affect $\nu$ since $\cB_w$
does not depend on $\gamma$.) Since $\cL$ is continuous on $\cB_w$, we may write,
\[
\begin{split}
\cL \nu & = \lim_{j \to \infty}\frac{1} {n_j} \sum_{k=0}^{n_j-1} e^{-k h_*} \cL^{k+1} 1 \\
& = \lim_{j \to \infty} \left( \frac{e^{h_*}} {n_j} \sum_{k=0}^{n_j-1} e^{-k h_*} \cL^k 1 
- \frac{1}{n_j} e^{h_*}  1 + \frac{1}{n_j} e^{-(n_j-1) h_*} \cL^{n_j} 1 \right)  = e^{h_*} \nu \, ,
\end{split}
\]
where we used that the second and third terms go to $0$ (in the $\cB$-norm).
We thus obtain a  nonnegative measure $\nu \in \cB_w$ such that
$\cL \nu = e^{h_*} \nu$.  

Although $\nu$ is not a priori an element of $\cB$, it does inherit bounds
on the unstable 
norm from the sequence $\nu_n$.  The convergence of $(\nu_{n_j})$ to $\nu$ in
$\cB_w$ implies that 
\begin{equation}
\label{eq:norm bound}
\lim_{j \to \infty} \sup_{W \in \cW^s} \sup_{\substack{\psi \in C^\alpha(W) \\ |\psi|_{C^\alpha(W)} \le 1}}
\left( \int_W \nu \, \psi \, dm_W
- \int_W \nu_{n_j} \, \psi \, dm_W \right) = 0 \, .
\end{equation}
Since 
$\| \nu_{n_j} \|_u \le \bar{C}$, it follows that $\| \nu \|_u \le \bar{C}$, as claimed.

Next, recalling the
bound $|\int f \, d\musrb|\le \hat C |f|_w$ from Proposition~\ref{distembed},
setting  $d\musrb\in (\cB_w)^*$ to be the functional
defined on $C^1(M)\subset \cB_w$ by $d\musrb(f)=\int f\, d\musrb$
and extended by density,  we define\footnote{We could again replace the seed $\musrb$
by  $f\musrb$ for any $C^1$ positive function $f$ on $M$.} 
\begin{equation}\label{7.*}
\tilde \nu_n=
\frac 1 {n}
\sum_{k=0}^{n-1} e^{-k h_*}  (\LL^*)^k (d\musrb) \, .
\end{equation}
Then,  we have
$|\tilde \nu_n(f)|\le  C|f|_w$ for all $n$ and all $f \in  \BB_w$.
So $\tilde \nu_n$ is bounded in $(\BB_w)^* \subset \BB^*$. By compactness
of this embedding (Proposition~\ref{cpte}), we can find a subsequence $\tilde\nu_{\tilde n_j}$
converging to $\tilde \nu \in \BB^*$. By the argument above, we have $\LL^* \tilde \nu =e^{h_*}
\tilde \nu$. 
The nonnegativity claim on $\tilde \nu$ follows by 
construction.\footnote{To check $\gamma$-independence of $\tilde \nu$,
note that if $\tilde \gamma>\gamma$
then, since  the dual norms satisfy $\|\tilde\nu_{\tilde n_j}-\tilde \nu\|_{*,\tilde \gamma}
\le \|\tilde\nu_{\tilde n_j}-\tilde \nu\|_{*,\gamma}$, the subsequence converges to $\tilde \nu$
in the $\|\cdot \|_{*,\tilde\gamma}$-norm as well.
If $\tilde \gamma<\gamma$ then  a further subsequence of $\tilde n_j$ must
converge to some $\tilde \nu_{\tilde \gamma}$ in the $\|\cdot \|_{*,\tilde\gamma}$ norm. The domination
then implies $\tilde\nu=\tilde \nu_{\tilde \gamma}$.}

We next check that $\tilde \nu$, which in principle lies
in the dual of $\cB$, is in fact an element of $(\cB_w)^*$.
For this, it suffices to find $\tilde C<\infty$ so that for any
$f\in \cB$ we have
\begin{equation}
\label{eq:bweak}
\tilde \nu(f)\le \tilde C | f|_{w} \, . 
\end{equation}
Now, for $f \in \cB$ and any $n \ge 1$, we have
\[
|\tilde\nu(f)| \le |(\tilde\nu - \tilde\nu_n)(f)| + |\tilde\nu_n(f)| 
\le |(\tilde\nu - \tilde\nu_n)(f)| + |f|_w \, .
\]
Since $\tilde\nu_n \to \tilde\nu$ in $\cB^*$, we conclude $|\tilde\nu(f)| \le |f|_w$ for all
$f \in \cB$.  Since $\cB$ is dense in $\cB_w$, 
by \cite[Thm~I.7]{reed} $\tilde\nu$ extends uniquely to a 
bounded linear functional on $\cB_w$, satisfying \eqref{eq:bweak}.
It only remains to see that $\tilde \nu(\nu)>0$.

Let $(n_j)$ (resp. $(\tilde n_j)$) denote the subsequence such that $\nu = \lim_j \nu_{n_j}$ (resp. $\tilde\nu = \lim_j \tilde\nu_{\tilde n_j}$.)
Since $\tilde\nu$ is continuous on $\cB_w$, we have on the one hand
\begin{equation}\label{7.6a}
\tilde\nu(\nu) = \lim_{j \to \infty} \tilde\nu(\nu_{n_j}) = \lim_j \frac{1}{n_j} \sum_{k=0}^{n_j-1} e^{-k h_*} \tilde\nu(\cL^k1) 
= \lim_j \frac{1}{n_j} \sum_{k=0}^{n_j-1} \tilde\nu(1) = \tilde\nu(1)\, ,
\end{equation}
where we have used that $\tilde\nu$ is an eigenvector for $\cL^*$.
On the other hand,
\begin{equation}
\label{eq:tilde}
\tilde\nu(1) = \lim_{j \to \infty} \frac{1}{\tilde n_j} \sum_{k=0}^{\tilde n_j-1} e^{-k h_*} (\cL^*)^kd\musrb(1) 
= \lim_j \frac{1}{\tilde n_j} \sum_{k=0}^{\tilde n_j-1} e^{-k h_*} \int \cL^k 1 \, d\musrb \,  .
\end{equation}
Next, we disintegrate $\musrb$ as in the proof of Lemma~\ref{lem:embed bound} into conditional measures
$\musrb^{W_\xi}$ on maximal homogeneous stable manifolds $W_\xi \in \cW^s_{\bH}$ and a factor measure $d{\hatmusrb}(\xi)$
on the index set $\Xi$ of stable manifolds.  Recall that $\musrb^{W_\xi} = |W_\xi|^{-1} \rho_\xi dm_W$, where $\rho_\xi$
is uniformly log-H\"older continuous so that
\begin{equation}
\label{eq:log rho}
0< c_\rho \le \inf_{\xi \in \Xi} \inf_{W_\xi} \rho_\xi \le \sup_{\xi \in \Xi} |\rho_\xi|_{C^\alpha(W_\xi)} \le C_\rho 
< \infty \, .
\end{equation}
Let $\Xi^{\delta_1}$ denote those $\xi \in \Xi$ such that $|W_\xi| \ge \delta_1/3$ and note that ${\hatmusrb}(\Xi^{\delta_1}) > 0$.
Then, disintegrating as usual, we get 
by \eqref{eq:lower} for $k \ge n_1$,
\[
\begin{split}
\int \cL^k 1 \, d\musrb &= \int_{\Xi} \int_{W_\xi} \cL^k 1 \, \rho_\xi |W_\xi|^{-1} \, dm_{W_\xi} d{\hatmusrb}(\xi) \\
& \ge \int_{\Xi^{\delta_1}} \int_{W_\xi} \cL^k 1 \, dm_{W_\xi} c_\rho 3 \delta_1^{-1} d{\hatmusrb}(\xi)
\ge c_\rho \frac{2c_0}{3} e^{k h_*} \hatmusrb(\Xi^{\delta_1}) \, .  \\
\end{split}
\]
Combining this with \eqref{7.6a} and \eqref{eq:tilde} yields
$\tilde\nu(\nu) = \tilde\nu(1) \ge \frac{2c_\rho c_0}{3} {\hatmusrb}(\Xi^{\delta_1}) > 0$ as required.
\end{proof}

We next study the measure of neighbourhoods of singularity
sets and stable manifolds, in order to establish \eqref{nbhd} in Theorem~\ref{nbhdthm}.

\begin{lemma}
\label{lem:approach} 
For any $\gamma>0$ such that $2^{s_0 \gamma} <e^{h_*}$ and
any $k \in \mathbb{Z}$, there exists $C_k > 0$ such that 
$$\mu_*(\cN_\ve(\cS_k)) \le C_k |\log \ve|^{-\gamma} \, ,
\qquad \forall \ve > 0\, . $$
In particular, for any $p > 1/\gamma$
(one can choose $p<1$ if $\gamma > 1$), $\eta > 0$, and $k \in \mathbb{Z}$, for
$\mu_*$-almost every $x \in M$, there exists $C>0$ such that 
\begin{equation}
\label{rateeta}
d(T^nx, \cS_k) \ge C e^{-\eta n^p}\, , \quad \forall n \ge 0 \, .
\end{equation}
\end{lemma}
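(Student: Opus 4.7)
The plan has two parts: (i) prove the uniform bound $\mu_*(\cN_\ve(\cS_k)) \le C_k|\log\ve|^{-\gamma}$ by reducing via $\tilde\nu \in (\cB_w)^*$ to the weak norm of $\mathbf{1}_{\cN_\ve(\cS_k)}\nu$, with the strong stable norm $\|\nu\|_s \le \bar C$ (inherited from the uniform $\cB$-bound on the sequence $\nu_n$ constructed in Proposition~\ref{prop:exist}) doing the work on short arcs; (ii) derive the almost-sure approach rate by Borel--Cantelli.

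For (i), I first dispose of any smooth curve $\cS$ uniformly transverse to the stable cone. Since $\vf_W'(r) \in [-(\cK_{\max}+1/\tau_{\min}), -\cK_{\min}]$ on any $W \in \cW^s$, the intersection $W \cap \cN_\ve(\cS)$ consists of finitely many arcs $W_i' \in \cW^s$ of length $O(\ve)$. On each such short arc, a test function $\psi$ with $|\psi|_{C^\alpha(W)} \le 1$ satisfies $|\psi|_{C^\beta(W_i')} \le C$ by H\"older interpolation, and rescaling to respect the strong stable norm constraint gives
\[
\int_{W_i'} \nu\, \psi \, dm_{W_i'} \;\le\; \|\nu\|_s \, \frac{|\psi|_{C^\beta(W_i')}}{|\log |W_i'||^\gamma} \;\le\; C\, \|\nu\|_s\, |\log\ve|^{-\gamma} \, .
\]
After $C^1$ approximation (using that $\nu$ is a positive Radon measure and monotone convergence), summing on $i$ and passing to $\sup_{W,\psi}$ yields $\mu_*(\cN_\ve(\cS)) \le C|\log\ve|^{-\gamma}$. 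This covers $\cS_k$ for $k \le 0$, since then $\cS_k = \bigcup_{i=0}^{|k|} T^i\cS_0$ is a finite union of $\cS_0 = \{\vf = \pm\pi/2\}$ with unstable curves. For $k > 0$, I argue by induction on $k$: each component stable curve $V \subset T^{-k}\cS_0$ satisfies $TV \subset \cS_{k-1}$, and the standard near-singularity billiard estimate $d(Ty, \cS_0) \gtrsim \sqrt{d(y, T^{-1}\cS_0)}$ (combined with bounded Lipschitz control away from $T^{-1}\cS_0$) yields $T\cN_\ve(V) \subseteq \cN_{C_V\sqrt{\ve}}(\cS_{k-1})$. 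Hence, by $T$-invariance of $\mu_*$ (which gives $\mu_*(A) \le \mu_*(TA)$) and the inductive hypothesis,
\[
\mu_*(\cN_\ve(V)) \;\le\; \mu_*(T\cN_\ve(V)) \;\le\; \mu_*(\cN_{C_V\sqrt{\ve}}(\cS_{k-1})) \;\le\; C_{k-1}\, 2^\gamma\, |\log\ve|^{-\gamma},
\]
and summing over the finitely many such $V$ closes the induction.

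For (ii), set $A_n := T^{-n}\cN_{e^{-\eta n^p}}(\cS_k)$. By invariance and the main bound, $\mu_*(A_n) \le C_k(\eta n^p)^{-\gamma}$, summable when $p\gamma > 1$. Borel--Cantelli yields $\mu_*$-a.e.\ $x \notin A_n$ for all $n \ge N(x)$, i.e., $d(T^nx, \cS_k) \ge e^{-\eta n^p}$ for such $n$. Letting $\ve \to 0$ in the main bound shows $\mu_*(\cS_k) = 0$, so by invariance $\mu_*(\bigcup_n T^{-n}\cS_k) = 0$ and a.e.\ $x$ has $T^nx \notin \cS_k$ for every $n \ge 0$; absorbing $\min_{n \le N(x)} d(T^nx, \cS_k) > 0$ into a constant $C(x)$ gives the claim for all $n$. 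The main obstacle will be the inductive step: correctly applying the square-root distortion estimate requires that $\cN_\ve(V)$ lie in the continuity region of $T$ (true for small $\ve$, since the endpoints of $V$ on $\cS_0$ are handled via the transverse case applied at scale $2\ve$), and the constants $C_k$ inevitably grow roughly like $(2^\gamma)^k$; a secondary technical subtlety is justifying the action of $\tilde\nu$ on the non-smooth object $\mathbf{1}_{\cN_\ve(\cS_k)}\nu$ through $C^1$ approximation and monotone convergence for the positive Radon measure $\nu$.
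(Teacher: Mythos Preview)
Your approach is essentially the same as the paper's: bound $|1_{\cN_\ve(\cS_{-k})}\nu|_w$ via the strong stable norm on the short arcs $W\cap\cN_\ve(\cS_{-k})$, transfer to $\mu_*$ through $\tilde\nu\in(\cB_w)^*$, handle $k>0$ by invariance plus the square-root distortion $T(\cN_\ve(\cS_1))\subset\cN_{C\sqrt\ve}(\cS_{-1})$, and conclude with Borel--Cantelli.

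One point deserves care. You invoke $\|\nu\|_s\le\bar C$ as ``inherited'' from the $\nu_n$, but Proposition~\ref{prop:exist} only asserts $\|\nu\|_u\le\bar C$; the paper explicitly notes that $\nu$ is not a priori in $\cB$. The paper therefore does \emph{not} bound $\|\nu\|_s$ directly. Instead it writes
\[
|1_{k,\ve}\nu|_w \;\le\; A_k\,|\nu-\nu_n|_w \;+\; C_k'\,|\log\ve|^{-\gamma}\,\|\nu_n\|_\cB
\]
and sends $n\to\infty$. Your shortcut can be justified (the leafwise measures $\nu_n\,dm_W$ have uniformly bounded mass and converge on the dense set $C^\alpha(W)\subset C^0(W)$, hence weak-$*$, so the strong-stable-type bound survives for continuous test functions), but it is not immediate from what the paper establishes and should be argued. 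Separately, the paper disposes of the issue ``$1_{k,\ve}\nu\in\cB_w$'' by citing \cite[Lemma~5.3]{demzhang14}, which is cleaner than the $C^1$-approximation-plus-positivity route you sketch; your route works because $\nu$ is a nonnegative leafwise measure, but the consistency between the leafwise measure on $W$ restricted to $W_i'$ and the leafwise measure on $W_i'$ itself needs a line (it follows for compactly supported test functions, and $\nu_W$ has no atoms by the very estimate you are proving).
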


\begin{proof}
First, for each $k \ge 0$, we claim that there exists $C_k > 0$ such that for all  $\ve > 0$,
\begin{equation}
\label{-7}
|\nu(\cN_\ve(\cS_{-k}))| \le C |1_{k, \ve} \nu |_w  \le C_k |\log \ve|^{-\gamma}  \, ,\end{equation}
where $1_{k, \ve}$ is the indicator function of the set $\cN_{\ve}(\cS_{-k})$.
To prove the first inequality in \eqref{-7},
first note that since $\cS_{-k}$ comprises finitely many smooth
curves, uniformly transverse to the stable cone, this also holds
for the boundary curves of the set $\cN_\ve(\cS_{-k})$.  By \cite[Lemma~5.3]{demzhang14}, we have $1_{k ,\ve} f \in \cB$ for $f\in \cB$; similarly (and by a simpler approximation)
 if $f \in \cB_w$, then $1_{k,\ve} f \in \cB_w$. So the first inequality
in \eqref{-7}
follows from  Lemma~\ref{lem:embed bound}.

We next prove the second inequality in \eqref{-7}.
Let $W \in \cW^s$ and $\psi \in C^\alpha(W)$ with $|\psi|_{C^\alpha(W)} \le 1$. 
Due to the uniform transversality of the curves in $\cS_{-k}$ with the stable cone, the 
intersection $W \cap \cN_\ve(\cS_{-k})$
can be expressed as a finite union with
cardinality  bounded by a constant $A_k$ (depending only on $\cS_{-k}$) of stable
manifolds $W_i \in \cW^s$, of lengths  at most $C\ve$.  Therefore,
for any $f\in C^1$,
\begin{equation}
\int_{W_\xi} f \, 1_{k, \ve} \, \psi \, dm_W 
= \sum_i \int_{W_i} f \, \psi \, dm_{W_i} \le \sum_i |f|_w  |\psi|_{C^\alpha(W_i)}
  \le C A_k | f|_w  \, .
\end{equation}
It follows that
$
|1_{k, \ve} f |_w \le A_k |f|_w$ for all $f \in \cB_w$.
Similarly, we have $
|1_{k, \ve} f |_w \le A_k \|f\|_s |\log \ve|^{-\gamma}$ for all $f \in \cB$.
Now recalling $\nu_n$ from \eqref{nun}, 
we estimate,
\[
|1_{k,\ve} \nu|_w \le |1_{k,\ve} (\nu - \nu_n)|_w + |1_{k,\ve} \nu_n|_w
\le A_k |\nu - \nu_n |_w + C_k' |\log \ve|^{-\gamma} \| \nu_n \|_{\cB}\,  .
\]
Since $\| \nu_n \|_{\cB} \le \bar{C}$ for all $n \ge 1$, we take the limit as $n \to \infty$ to conclude
that $|1_{k, \ve} \nu|_w \le C_k |\log \ve|^{-\gamma}$, 
concluding the proof of \eqref{-7}.

Next, applying \eqref{eq:bweak}, we have
\[
\mu_*(\cN_\ve(\cS_{-k})) = \tilde{\nu}(1_{k, \ve} \nu) \le \tilde{C}|1_{k, \ve} \nu|_w \le \tilde{C} C_k |\log \ve|^{-\gamma}, \qquad \forall k \ge 0\,  .
\]
To obtain the analogous bound for $\cN_\ve(\cS_k)$, for $k >0$, we use the invariance of $\mu_*$.
It follows from the time reversal of \eqref{eq:image} 
that 
$T(\cN_\ve(\cS_1)) \subset \cN_{C \ve^{1/2}}(\cS_{-1})$.
Thus,
\[
\mu_*(\cN_\ve(\cS_1)) \le \mu_*(\cN_{C\ve^{1/2}}(\cS_{-1})) \le C_1 |\log (C\ve^{1/2})|^{-\gamma}
\le C_1' |\log \ve|^{-\gamma} .
\]
The estimate for $\cN_\ve(\cS_k)$, for $k \ge 2$, follows similarly since $T^k\cS_k = \cS_{-k}$. 

Finally, fix $\eta > 0$, $k \in \mathbb{Z}$ and $p > 1/\gamma$.  Since
\begin{equation}
\label{eq:borel}
\sum_{n \ge 0} \mu_*(\cN_{e^{-\eta n^p}}(\cS_k)) \le \tilde C
 C_k \eta^{-\gamma} \sum_{n \ge 1} n^{-p \gamma} < \infty,
\end{equation}
by the Borel--Cantelli Lemma, $\mu_*$-almost every $x \in M$ visits $\cN_{e^{-\eta n^p}}(\cS_k)$ only
finitely many times, and the last statement of the lemma follows.
\end{proof}

Lemma~\ref{lem:approach} will imply the following:
\begin{cor}
\label{integral} 
\noindent a)  For any $\gamma>0$ so that
$2^{s_0 \gamma}< e^{h_*}$  and any $C^1$ curve $S$ uniformly transverse to the stable cone,  there exists $C>0$ such that 
$
\nu(\cN_\ve(S)) \le C | \log \ve|^{-\gamma}$
and
$\mu_*(\cN_\ve(S)) \le C | \log \ve|^{-\gamma}$ for all $\ve > 0$.\\
\noindent b) The measures
$\nu$ and $\mu_*$ have no atoms, and $\mu_*(W)=0$ for all $W \in \cW^s$ and $W \in \cW^u$.\\
\noindent c) $\int |\log d(x, \cS_{\pm 1})| \, d\mu_* < \infty$. \\
\noindent d) $\mu_*$-almost every point in $M$ has a stable and unstable manifold of positive length.
\end{cor}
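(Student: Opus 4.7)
The plan is to deduce all four assertions from the machinery already set up, essentially by generalizing Lemma~\ref{lem:approach} and then combining with $T$-invariance and Borel--Cantelli.

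For part (a), I would run the proof of Lemma~\ref{lem:approach} with the singularity set $\cS_{-k}$ replaced by the arbitrary transverse curve $S$. The only property of $\cS_{-k}$ used in the key step was that it is a finite union of $C^1$ curves uniformly transverse to the stable cone, which ensures that for every $W \in \cW^s$ the intersection $W \cap \cN_\ve(S)$ is a finite union (with cardinality bounded independently of $W$) of stable subcurves of length $O(\ve)$. This gives $|1_{\cN_\ve(S)} f|_w \le C|f|_w$ and $|1_{\cN_\ve(S)} f|_w \le C |\log \ve|^{-\gamma} \|f\|_s$; passing to the limit along the sequence $\nu_n$ from \eqref{nun} yields the bound for $\nu$, and applying $\tilde\nu$ via $\mu_*(\cN_\ve(S)) = \tilde\nu(1_{\cN_\ve(S)} \nu)/\tilde\nu(\nu)$ together with \eqref{eq:bweak} yields the bound for $\mu_*$.

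For part (b), the atomlessness of $\nu$ and $\mu_*$ follows immediately from (a): if $\mu_*(\{x\}) > 0$, pick any $C^1$ segment $S$ through $x$ whose tangent lies in the unstable cone; then $\{x\} \subset \cN_\ve(S)$ for all $\ve$, contradicting $\mu_*(\cN_\ve(S)) \to 0$. The case $W \in \cW^u$ is likewise immediate from (a), since an unstable manifold is itself uniformly transverse to the stable cone and $W \subset \cN_\ve(W)$. The main obstacle is $W \in \cW^s$, which is \emph{not} transverse to the stable cone, so (a) does not apply directly. Here I would use $T$-invariance: $\mu_*(W) = \mu_*(T^nW)$, and by \eqref{eq:hyp} the stable curve $T^nW$ has length at most $C\Lambda^{-n}|W|$. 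Through any point $x_0 \in T^nW$ choose a short $C^1$ segment $U$ with tangent in the unstable cone. Since stable and unstable cones are uniformly transverse, elementary geometry gives $T^nW \subset \cN_{|T^nW|}(U)$, and part (a) then yields $\mu_*(W) \le C |\log(C\Lambda^{-n}|W|)|^{-\gamma} \to 0$ as $n \to \infty$.

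For part (c), use the layer-cake formula:
\[
\int |\log d(x,\cS_{\pm 1})|\,d\mu_* = \int_0^\infty \mu_*\bigl(\cN_{e^{-t}}(\cS_{\pm 1})\bigr)\,dt.
\]
Since $h_* > s_0 \log 2$ we may choose $\gamma > 1$ satisfying $2^{s_0\gamma} < e^{h_*}$, and Lemma~\ref{lem:approach} with $k = \pm 1$ bounds the integrand by $C t^{-\gamma}$ for large $t$ and by $1$ near $0$, giving convergence. For part (d), the same choice of $\gamma > 1$ permits taking $p < 1$ in \eqref{rateeta}, so that $\mu_*$-almost every $x$ satisfies $d(T^n x,\cS_{\pm 1}) \ge C e^{-\eta n^p}$ for all $n \ge 0$ (and symmetrically for backward iterates). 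This sub-exponential rate of approach to the singularity set is precisely the Katok--Strelcyn condition which, combined with the uniform hyperbolicity \eqref{eq:hyp}, guarantees the existence of local stable and unstable manifolds of positive length at $\mu_*$-a.e. point, via the standard construction cited as \cite{katok strelcyn}.
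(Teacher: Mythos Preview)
Your proof is correct and follows the same overall strategy as the paper, with a few minor but arguably cleaner variations. Parts (a) and (d) match the paper essentially verbatim. In part (c), your layer-cake formula is more direct than the paper's argument, which instead decomposes into shells $\cN_{e^{-n^p}}(\cS_1)\setminus\cN_{e^{-(n+1)^p}}(\cS_1)$ with an auxiliary parameter $p>1/(\gamma-1)$ before summing. In part (b), your treatment of $W\in\cW^u$ by direct application of (a) is simpler than the paper's approach; the paper handles both $\cW^s$ and $\cW^u$ by a single invariance-and-atom argument: assuming $\mu_*(W)=a>0$, the images $\{T^nW\}_{n\ge 0}$ (each of measure $a$) can occupy only finitely many curves since $\mu_*$ is a probability measure, and since $|T^nW|\to 0$ one extracts a subsequence shrinking to a single point of measure $a$, contradicting atomlessness. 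Your route for $W\in\cW^s$ --- shrinking $T^nW$ and covering it by $\cN_{|T^nW|}(U)$ for an unstable segment $U$ --- also works (the constant in (a) is uniform over curves with tangent in the unstable cone, since the transversality angle is uniform), and avoids having to locate an actual atom.
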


\begin{proof}
a)  This follows immediately from the bounds in the proof of Lemma~\ref{lem:approach} since the only property required
of $\cS_{-k}$ is that it comprises finitely many smooth curves uniformly transverse to the stable cone.

\smallskip
\noindent
b) That $\nu$ and $\mu_*$ have no atoms follows from part (a).  If $\mu_*(W) = a >0$, then by invariance, $\mu_*(T^nW) = a$
for all $n > 0$.  Since $\mu_*$ is a probability measure and $T^n$ is continuous on stable manifolds, 
$\cup_{n \ge 0} T^nW$ must be the union of only finitely many smooth curves. 
Since $|T^nW| \to 0$ there is a subsequence $(n_j)$ such that $\cap_{j \ge 0} T^{n_j}W = \{ x \}$.  Thus $\mu_*(\{ x \}) = a$,
which is impossible.  A similar argument applies to $W \in \cW^u$, using the fact that $T^{-n}$ is continuous on such manifolds.

\smallskip
\noindent
c)  Choose $\gamma>1$ and $p > 1/(\gamma - 1)$.  Then by Lemma~\ref{lem:approach},
\begin{align*}
\int |\log d(x, \cS_1)|& \, d\mu_* 
 = \sum_{n \ge 0} \int_{\cN_{e^{-n^p}}(\cS_1)
\setminus \cN_{e^{-(n+1)^p}}(\cS_1)} |\log d(x, \cS_1)| \, d\mu_* \\
& \le \sum_{n \ge 0} (n+1)^p \mu_*(\cN_{e^{-n^p}}(\cS_1))
\le 1+\sum_{n \ge 1} C_1
n^{p(1-\gamma)}(1+1/n)^p < \infty .
\end{align*}
A similar estimate holds for $\int |\log d(x, \cS_{-1})| \, d\mu_*$.

\smallskip
\noindent
d)  The existence of stable and unstable manifolds for $\mu_*$-almost every $x$ follows from the Borel--Cantelli estimate
\eqref{eq:borel} by a standard argument if we choose
$\gamma >1$, $p=1$ and  $e^{\eta} < \Lambda$ (see, for example, \cite[Sect. 4.12]{chernov book}).
\end{proof}

Lemma~\ref{lem:approach} and Corollary~\ref{integral} prove all the items of Theorem~\ref{nbhdthm}.


\subsection{$\nu$-Almost Everywhere Positive Length of Unstable Manifolds}
\label{regular}

We  establish  almost everywhere positive length of unstable manifolds in
the sense of the measure $\nu$ (the maximal eigenvector of $\cL$).
The proof of this fact, as well as some arguments in subsequent sections, will require
viewing elements of $\cB_w$ as  {\em leafwise distributions}, see Definition~\ref{LW} below.
Indeed, to prove Lemma~\ref{lem:hyp}, 
we make in Lemma~\ref{lem:disint} an explicit connection\footnote{This connection
is used in Section~\ref{notmixing}.} between the element 
$\nu \in \cB_w$ viewed as a measure on $M$, and the
family of  leafwise measures  defined on the set of stable manifolds $\cW^s$.

While $\nu$ is not an invariant measure, the almost-everywhere existence of
positive length unstable manifolds on {\em every} stable manifold $W \in \cW^s$ follows
from the regularity inherited from the strong stable norm.  
This property may have some independent interest
as it has not been proved in previous uses of this type of norm 
\cite{demzhang11, demzhang14}, and it will be important for proving the absolute continuity 
of the unstable foliation
for $\mu_*$ (Corollary~\ref{cor:abs cont}), which relies on the analogous property for the measure $\nu$
(Proposition~\ref{prop:abs cont}).
Lemmas~\ref{lem:hyp} and \ref{lem:disint}  will also be useful to
obtain that $\mu_*$ has full support (Proposition~\ref{lastitem}).

\begin{definition}[Leafwise distributions and leafwise measures]\label{LW}
For $f \in C^1(M)$ and $W \in \cW^s$, the map defined on  $C^\alpha(W)$ by
\[
\psi \mapsto \int_W f \, \psi \, dm_W \, , 
\]
can be viewed as a distribution  of order $\alpha$ on $W$. Since we have the bound $|\int_W f \, \psi \, dm_W| \le |f|_w |\psi|_{C^\alpha(W)}$, the map sending $f\in C^1$ to this distribution
of order $\alpha$ on $W$  
can be extended to $f \in \cB_w$. We denote this extension by
$\int_W \psi \, f$ or $\int_W f \, \psi \, dm_W$, and  we call the corresponding family
of distributions (indexed by $W$) the leafwise distribution $(f, W)_{W\in \cW^s}$
associated with $f\in \cB_w$.
Note that if $f\in \cB_w$ is such that $\int_W \psi \, f\ge 0$ for all
$\psi \ge 0$ then using again \cite[\S I.4]{Sch}, the  leafwise distribution on $W$ extends 
to a bounded linear functional on $C^0(W)$,
i.e., it is a Radon measure. If this holds for all $W \in \cW^s$, the leafwise distribution is called
a leafwise measure.
\end{definition}

\begin{lemma}[Almost Everywhere Positive Length of Unstable Manifolds, for $\nu$]
\label{lem:hyp}
For $\nu$-almost every $x \in M$ the stable and unstable manifolds have positive length.
Moreover, viewing $\nu$ as a leafwise measure, for every $W \in \cW^s$, $\nu$-almost every $x \in W$ has an unstable manifold
of positive length.
\end{lemma}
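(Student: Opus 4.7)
The plan is a Borel--Cantelli argument combined with the strong stable norm bound and the transfer operator identity $\cL^n \nu = e^{n h_*} \nu$. The leafwise statement is the heart; the global statement will follow by combining it with the disintegration of $\nu$ along stable manifolds (Lemma~\ref{lem:disint}).

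First I would establish the key leafwise mass bound: for every $V \in \cW^s$,
\[
\nu|_V(V) \le \bar C \, | \log |V| |^{-\gamma},
\]
with $\gamma > 1$ (possible under $h_* > s_0 \log 2$). This follows by pairing $\nu$ with the constant test function $\psi \equiv 1 \in C^\alpha(V)$: the $\cB_w$-convergence $\nu_{n_j} \to \nu$ from Proposition~\ref{prop:exist} yields $\int_V d\nu|_V = \lim_j \int_V \nu_{n_j} \, dm_V$; each integrand is bounded by $\|\nu_{n_j}\|_s \, |\log |V||^{-\gamma}$ via the strong stable norm (since $|1|_{C^\beta(V)} \le |\log|V||^\gamma$ as soon as $|V|$ is small), and the uniform bound $\|\nu_n\|_\cB \le \bar C$ coming from Theorem~\ref{sprad} applied to the construction \eqref{nun} closes the estimate. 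The same argument, with nonnegativity of each $\nu_n$ as a function preserved in the limit, shows that $\nu|_V$ is indeed a nonnegative Radon measure in the sense of Definition~\ref{LW}.

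Applying this to the short sub-curves $W_i \in \cW^s$ in the decomposition $W \cap \cN_\ve(\cS_{-1}) = \sqcup_i W_i$ (at most $A_1$ components, each of length at most $C'\ve$, by uniform transversality of $\cS_{-1}$ to the stable cone) yields $\nu|_W(W \cap \cN_\ve(\cS_{-1})) \le C |\log \ve|^{-\gamma}$. Combining this with the leafwise transfer identity, derived from the change of variables $x = T^n y$ on stable curves (where the Jacobian $J^s T^n$ cancels)
\[
e^{n h_*}\, \nu|_W(E) \;=\; \sum_{U_j \in \cG_n(W)} \nu|_{U_j}\bigl(U_j \cap T^{-n}E\bigr), \qquad E \subset W,
\]
together with Proposition~\ref{cor:exp} and Lemma~\ref{lem:growth}(b) giving $\# \cG_n(W) \le C e^{nh_*}$, the sets $E_n := T^n \cN_{e^{-\eta n}}(\cS_{-1}) \cap W$ satisfy $\nu|_W(E_n) \le C'' n^{-\gamma}$. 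Since $\gamma > 1$, Borel--Cantelli gives that $\nu|_W$-a.e.\ $x \in W$ has $d(T^{-n} x, \cS_{-1}) \ge e^{-\eta n}$ eventually; for $\eta < \log \Lambda$ this is the Katok--Strelcyn condition (\cite[\S4.12]{chernov book}) for existence of an unstable manifold of positive length at $x$, proving the leafwise claim. The global statement then follows by disintegrating $\nu$ along stable manifolds via Lemma~\ref{lem:disint}: $\nu$-a.e.\ $x$ lies on some $W \in \cW^s$, which serves as its stable manifold, while the leafwise claim fiberwise supplies the unstable manifolds. The main technical obstacle is justifying both the leafwise transfer identity and the $\cB_w$-to-leafwise descent of the norm bound, since $\cB_w$-convergence a priori only yields pairings against $C^\alpha$ test functions; both rely on careful density approximation, exploiting that our choice $\psi\equiv 1$ lies in every $C^\alpha(V)$ with trivial H\"older seminorm.
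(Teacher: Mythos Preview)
Your approach is correct and uses the same core ingredient as the paper (the strong stable norm bound, passed to $\nu$ via the uniform bound $\|\nu_n\|_{\cB}\le \bar C$), but organizes the argument differently. The paper does not first isolate the leafwise mass bound $\nu|_V(V)\le \bar C\,|\log|V||^{-\gamma}$; instead it works with the approximating sequence $\nu_{\ell_j}$ throughout, defines disjoint sets $O_n\subset W$ indexed by the \emph{first} time $j$ at which $d_u(T^{-j}x,\cS_1)<\ve C_e\hat\Lambda^{-j}$, splits $\int_{W\cap O_n}\cL^k 1$ into the cases $k<n$ and $k\ge n$ (in the second case changing variables to time $-n+1$ and applying $\|\cL^{k-n+1}1\|_s$ after converting from $\cS_1$ to $\cS_{-1}$), sums the resulting bound $C|\log(\ve^{1/2}\hat\Lambda^{-n/2})|^{-\gamma}$ over $n$ to obtain $C'|\log\ve|^{1-\gamma}$, and lets $\ve\to 0$. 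Your route is more streamlined: once the leafwise mass bound is in hand, the eigenvector identity $\cL^n\nu=e^{nh_*}\nu$ transports it to each $U_j\in\cG_n(W)$ in one stroke, and a standard Borel--Cantelli replaces the $\ve\to 0$ limit. Two small technical points to watch: (i) the existence criterion in \cite[\S4.12, Lemma~4.67]{chernov book} is stated via $d_u(T^{-j}x,\cS_1)$, and $\cS_1\setminus\cS_0$ consists of stable curves (not transverse to $W$), so---as the paper does---one must pass to $\cS_{-1}$; the conversion costs a square root, so one needs $2\eta<\log\Lambda$ rather than $\eta<\log\Lambda$; (ii) extending the leafwise transfer identity from $\psi\in C^\alpha(W)$ to indicators of open subarcs requires the Radon property of the leafwise measures on both $W$ and each $U_j$, which you correctly flag as the residual technical step.
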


  Recall  the disintegration of $\musrb$ into conditional
measures $\musrb^{W_\xi}$ on maximal homogeneous stable manifolds $W_\xi \in \cW^s_{\bH}$ 
and a factor measure $d{\hatmusrb}(\xi)$
on the index set $\Xi$ of homogeneous stable manifolds, with $d\musrb^{W_\xi} = |W_\xi|^{-1} \rho_\xi dm_W$, where $\rho_\xi$
is uniformly log-H\"older continuous as in \eqref{eq:log rho}.

\begin{lemma}
\label{lem:disint}  
Let $\nu^{W_\xi}$ and $\hat{\nu}$ denote the conditional measures and factor measure obtained by disintegrating
$\nu$ on the set of homogeneous stable manifolds $W_\xi \in \cW^s_{\bH}$, $\xi \in \Xi$.  Then
for any $\psi \in C^\alpha(M)$, 
\[
\int_{W_\xi} \psi \, d\nu^{W_\xi} = \frac{\int_{W_\xi} \psi \, \rho_\xi \, \nu}{\int_{W_\xi} \rho_\xi \, \nu } \quad \forall \xi \in \Xi,
\mbox{ and} \quad
d\hat{\nu}(\xi)  = |W_\xi|^{-1} \Big( \int_{W_\xi} \rho_\xi \, \nu \Big) \, d{\hatmusrb}(\xi)\,  .
\] 
Moreover, viewed as a leafwise measure, $\nu(W)>0$ for all $W \in \cW^s$.
\end{lemma}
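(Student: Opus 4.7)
The lemma has two parts: verify the explicit disintegration formulas for $\nu^{W_\xi}$ and $d\hat{\nu}$, and establish positivity of the leafwise measure $\nu$ on every $W\in\cW^s$.

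\emph{Disintegration.} By uniqueness of the Rokhlin disintegration (applicable since $\nu$ is a Radon measure by Proposition~\ref{prop:exist}), it suffices to check that the candidate formulas satisfy
\[
\int_M \psi\,d\nu \;=\; \int_\Xi \int_{W_\xi}\psi\,d\nu^{W_\xi}\,d\hat{\nu}(\xi)
\]
for every $\psi\in C^\alpha(M)$. Substituting the candidates, the normalization factor in $d\nu^{W_\xi}$ cancels against the corresponding factor in $d\hat{\nu}$, so the identity reduces to
\[
\nu(\psi) \;=\; \int_\Xi |W_\xi|^{-1}\Big(\int_{W_\xi}\psi\,\rho_\xi\,\nu\Big)\,d\hatmusrb(\xi).
\]
When $\nu$ is replaced by a $C^1$ density $f$ identified with $f\,d\musrb$, this is precisely the given disintegration $d\musrb^{W_\xi}=|W_\xi|^{-1}\rho_\xi\,dm_{W_\xi}$ of $\musrb$. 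To handle the general case, I would pass to the limit along the Cesaro sequence $\nu_m$ from \eqref{nun}: the LHS converges by Lemma~\ref{lem:embed bound} since $\psi\in C^\alpha(\cW^s_{\bH})$ and $\nu_m\to\nu$ in $\cB_w$, and for each $\xi$ the leafwise bound $|\int_{W_\xi}\psi\rho_\xi\,(\nu-\nu_m)|\le C|\nu-\nu_m|_w$ arising from Definition~\ref{LW} gives pointwise convergence on $\Xi$; combined with the standard-family integrability $\int_\Xi|W_\xi|^{-1}\,d\hatmusrb(\xi)<\infty$ established during the proof of Lemma~\ref{lem:embed bound} and the uniform bound $\sup_m |\nu_m|_w<\infty$, dominated convergence handles the RHS. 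Exceptional $\xi$ where $\int_{W_\xi}\rho_\xi\,\nu=0$ can be absorbed into a $\hat{\nu}$-null set without affecting the identity.

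\emph{Positivity.} I would compute $\int_W \cL^k 1\,dm_W = |T^{-k}W|$ by the change of variables $x=Ty$ on each smooth component of $T^{-k}W$, which exactly cancels the stable Jacobian weight $(J^sT^k)^{-1}$. Since the total leafwise mass is a bounded linear functional with $|\int_W g| \le |g|_w$ on $\cB_w$ (Definition~\ref{LW}, tested against the constant function $1$), and $\nu_{n_j}\to\nu$ in $\cB_w$,
\[
\int_W \nu \;=\; \lim_{j\to\infty}\,\frac{1}{n_j}\sum_{k=0}^{n_j-1} e^{-kh_*}|T^{-k}W|.
\]
If $|W|\ge\delta_1/3$, Corollary~\ref{cor:stable growth} gives $e^{-kh_*}|T^{-k}W|\ge 2\delta_1 c_0/9$ for every $k\ge n_1$, yielding $\int_W\nu \ge 2\delta_1 c_0/9>0$. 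If $|W|<\delta_1/3$, Corollary~\ref{cor:short} with $\varepsilon=1/4$ produces $N=N(W)$ such that $\cG_N^{\delta_1}(W)$ contains at least one element $W_i$ of length at least $\delta_1/3$; applying Corollary~\ref{cor:stable growth} to $W_i$ and using $T^{-k}W\supseteq T^{-(k-N)}W_i$ gives $e^{-kh_*}|T^{-k}W|\ge(2\delta_1 c_0/9)e^{-Nh_*}$ for all $k\ge N+n_1$, whence $\int_W\nu>0$.

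\emph{Main obstacle.} The principal technical hurdle is the disintegration half, specifically the passage to the limit on the right-hand side: the factor $|W_\xi|^{-1}$ is a genuine (integrable) singularity of the factor measure with respect to $d\hatmusrb$, so dominated convergence rests entirely on the standard-family integrability inherited from Lemma~\ref{lem:embed bound}; without it, even the candidate formula for $d\hat{\nu}$ would not be well defined. The positivity part is more routine once the fragmentation apparatus of Section~\ref{sec:growth} is available, reducing to identifying a single long descendant of $W$ after finitely many backward iterates and summing its exponential contribution to the Cesaro average.
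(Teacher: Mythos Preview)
Your proposal is correct and follows essentially the same route as the paper: both establish the disintegration identity for the $C^1$ approximants $\nu_n$ via the known disintegration of $\musrb$ and then pass to the limit using weak-norm convergence together with the integrability $\int_\Xi |W_\xi|^{-1}\,d\hatmusrb(\xi)<\infty$, and both prove positivity by waiting until $T^{-N}W$ first contains a piece of length at least $\delta_1/3$ and then invoking the uniform exponential lower growth bound (your Corollary~\ref{cor:stable growth} is the paper's \eqref{eq:lower}). The only cosmetic difference is ordering: the paper proves positivity first, which guarantees the denominator $\int_{W_\xi}\rho_\xi\,\nu$ never vanishes (so your ``exceptional $\xi$'' set is in fact empty), whereas you handle this a posteriori.
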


\begin{proof}
First, we we establish the following claim:
For $W \in \cW^s$, we let $n_2 \le \bar{C}_2 |\log (|W|/\delta_1)|$ be the constant from the
proof of Corollary~\ref{cor:short}. (This is the first time $\ell$ such that $\cG_\ell(W)$ has at least one
element of length at least $\delta_1/3$.) Then there exists $\bar{C}>0$ such that for all 
$W \in \cW^s$,
\begin{equation}
\label{eq:lower nu}
\int_W \nu \ge \bar{C} |W|^{h_* \bar{C}_2} \, .
\end{equation}
Indeed, recalling \eqref{nun} and using \eqref{eq:lower}, we have for
 $\bar{C} = \frac{2 c_0}{9} \delta_1^{1-h_* \bar{C}_2}$,
\begin{align*}
\int_W \nu & = \lim_{n_j} \frac{1}{n_j} \sum_{k=0}^{n_j-1} e^{-k h_*} \int_W \cL^k 1 dm_W \\
 &\ge \lim_{n_j} \frac{1}{n_j} \sum_{k=n_2}^{n_j-1} e^{-k h_*} \sum_{W_i \in \cG_{n_2}(W)} \int_{W_i} \cL^{k-n_2} 1 dm_{W_i} \\
& \ge  \lim_{n_j} \frac{1}{n_j} \sum_{k=n_2}^{n_j-1} e^{-k h_*} \tfrac{2 \delta_1}{9} c_0 e^{h_* (k-n_2)}
\ge \tfrac{2 \delta_1}{9} c_0 e^{-h_* n_2} \ge \bar{C} |W|^{h_* \bar{C}_2} \, .
\end{align*}
This proves the last statement of the lemma.

Next, for any $f \in C^1(M)$, according to our convention, we view $f$ as an element of $\cB_w$ by considering it as a measure
integrated against $\musrb$.  Now suppose $(\nu_n)_{n \in \mathbb{N}}$ is the sequence of functions from \eqref{nun}
such that $| \nu_n - \nu|_w \to 0$.  For any
$\psi \in C^\alpha(M)$, we have
\begin{equation}
\label{eq:disint}
\begin{split}
\nu_n(\psi) & = \int_M \nu_n \, \psi \, d\musrb = \int_{\Xi} \int_{W_\xi} \nu_n \, \psi \, \rho_\xi \, dm_{W_\xi} |W_\xi|^{-1} d{\hatmusrb}(\xi) \\
& = \int_{\Xi} \frac{\int_{W_\xi} \nu_n \, \psi \, \rho_\xi \, dm_{W_\xi}}{\int_{W_\xi} \nu_n \, \rho_\xi \, dm_{W_\xi}} \, d{(\hatmusrb)}_n(\xi)\, ,
\end{split}
\end{equation}
where $d{(\hatmusrb)}_n(\xi) = |W_\xi|^{-1} \int_{W_\xi} \nu_n \, \rho_\xi \, dm_{W_\xi} \, d{\hatmusrb}(\xi)$.
By definition of convergence in $\cB_w$ (see for example \eqref{eq:norm bound}) since $\psi, \rho_\xi \in C^\alpha(W_\xi)$,
the ratio of integrals converges (uniformly in $\xi$) to
$\int_{W_\xi} \psi \, \rho_\xi \, \nu / \int_{W_\xi} \rho_\xi \, \nu$, and the factor measure converges to 
$|W_\xi|^{-1} \int_{W_\xi} \rho_\xi \, d\nu \, d{\hatmusrb}(\xi)$.  Note that since $\rho_\xi$ is uniformly log-H\"older, and
due to  \eqref{eq:lower nu},
we have $\int_{W_\xi} \nu \, \rho_\xi \, dm_{W_\xi} >0$ with lower bound 
depending only on the length of $W_\xi$.

Finally, by Proposition~\ref{distembed} and Lemma~\ref{lem:embed bound}, we have $\nu_n(\psi)$ converging to $\nu(\psi)$.
Disintegrating $\nu$ according to the statement of the lemma yields the claimed identifications. 
\end{proof}

\begin{proof}[Proof of Lemma~\ref{lem:hyp}]
The statement about stable manifolds of positive length
follows from the characterization of $\hat{\nu}$ in Lemma~\ref{lem:disint}, since the set of points with stable manifolds of
zero length has zero ${\hatmusrb}$-measure \cite{chernov book}.

We fix $W \in \cW^s$ and prove the statement about $\nu$ as a leafwise measure.  
This will imply the statement regarding unstable manifolds for the measure $\nu$ by Lemma~\ref{lem:disint}.
 
Fix $\ve > 0$ and
$\hLambda \in (\Lambda, 1)$,
 and define
$O = \cup_{n \ge 1} O_n$, where
\[
O_n = \{ x \in W : n = \min j \mbox{ such that } d_u(T^{-j}x, \cS_1) < \ve C_e \hLambda^{-j} \} ,
\]
and $d_u$ denotes distance restricted to the unstable cone.
By \cite[Lemma~4.67]{chernov book}, any $x \in W \setminus O$ has unstable manifold of length at least $2 \ve$.  We proceed
to estimate $\nu(O) = \sum_{n \ge 1} \nu(O_n)$, where equality holds since the $O_n$ are disjoint.  In addition, since
$O_n$ is a finite union of open subcurves of $W$, we have
\begin{equation}
\label{eq:O_n}
\int_W 1_{O_n} \, \nu = \lim_{j \to \infty} \int_W 1_{O_n} \, \nu_{\ell_j} = \lim_{j \to \infty} \ell_j^{-1} \sum_{k=0}^{\ell_j - 1}
e^{-k h_*} \int_W 1_{O_n} \, \cL^k 1 \, dm_W\,  .
\end{equation}
We estimate two cases.

\smallskip
\noindent
{\em Case I: $k < n$.}
Write
$\int_{W \cap O_n} \cL^k1 \, dm_W = \sum_{W_i \in \cG_k(W)} \int_{W_i \cap T^{-k}O_n} 1 \, dm_{W_i}.$

If $x \in T^{-k}O_n$, then $y = T^{-n+k}x$ satisfies $d_u(y, \cS_1) < \ve C_e \hLambda^{-n}$ and thus
we have $d_u(Ty, \cS_{-1}) \le C \ve^{1/2} \hLambda^{-n/2}$.  Due to the uniform transversality of stable and unstable cones, as well
as the fact that elements of $\cS_{-1}$ are uniformly transverse to the stable cone, we have
$d_s(Ty, \cS_{-1}) \le C \ve^{1/2} \hLambda^{-n/2}$ as well, with possibly a larger constant $C$. 

Let $r^s_{-j}(x)$ denote the distance from $T^{-j}x$ to the nearest endpoint of $W^s(T^{-j}x)$, where $W^s(T^{-j}x)$ is the maximal
local stable manifold containing $T^{-j}x$.  From the above analysis, we see that 
$W_i \cap T^{-k}O_n \subseteq \{ x \in W_i : r^s_{-n+k+1}(x) \le C \ve^{1/2} \hLambda^{-n/2} \}$.
The time reversal of the growth lemma \cite[Thm~5.52]{chernov book} gives
$m_{W_i}(r^s_{-n+k+1}(x) \le C \ve^{1/2} \hLambda^{-n/2} ) \le C' \ve^{1/2} \hLambda^{-n/2}$
for a constant $C'$ that is uniform in $n$ and $k$.  Thus, using Proposition~\ref{cor:exp}, we find
\[
\int_{W \cap O_n} \cL^k1 \, dm_W \le \# \cG_k(W) C' \ve^{1/2} \hLambda^{-n/2} \le C e^{k h_*} \ve^{1/2} \hLambda^{-n/2}\,  .
\]

\smallskip
\noindent
{\em Case II: $k \ge n$.}
Using the same observation as in Case I, if $x \in T^{-n+1}O_n$, then $x$ satisfies
$d_s(x, \cS_{-1}) \le C \ve^{1/2} \hLambda^{-n/2}$.  We change variables to estimate the integral precisely at time $-n+1$,
again using Proposition~\ref{cor:exp},
\begin{align*}
&\int_{W \cap O_n} \cL^k1 \, dm_W  = \sum_{W_i \in \cG_{n-1}(W)} \int_{W_i \cap T^{-n+1}O_n} \cL^{k-n+1} 1 \, dm_{W_i} \\
&\quad\le \sum_{W_i \in \cG_{n-1}(W)} |\log |W_i \cap T^{-n+1} O_n||^{-\gamma} \| \cL^{k-n+1} 1 \|_s \\
& \quad\le \sum_{W_i \in \cG_{n-1}(W)} |\log (C \ve^{1/2} \hLambda^{-n/2})|^{-\gamma} C e^{(k-n+1)h_*} 
\le |\log (C \ve^{1/2} \hLambda^{-n/2})|^{-\gamma} C e^{k h_*} \, .
\end{align*}
Using the estimates of Cases I and II in \eqref{eq:O_n} and using the weaker bound, we see that,
\[
\int_W 1_{O_n} \, \nu_{\ell_j} \le C |\log (C \ve^{1/2} \hLambda^{-n/2})|^{-\gamma}\, .
\]
Summing over $n$, we have,
$\int_W 1_O \, \nu_{\ell_j} \le C' |\log \ve|^{1-\gamma}$, uniformly in $j$.  Since $\nu_{\ell_j}$ converges to $\nu$ 
in the weak norm, this bound carries over to $\nu$.  Since $\gamma>1$ and $\ve>0$ was arbitrary, 
this implies $\nu(O) = 0$, completing the proof of the lemma.
\end{proof}


\subsection{Absolute Continuity of $\mu_*$ --- Full Support.}
\label{notmixing}

{\it In this subsection, we assume throughout that $\gamma > 1$ (this is possible since we assumed $h_*>s_0 \log 2$ to construct $\mu_*$).}

Our proof of the Bernoulli property relies on showing first that $\mu_*$
is K-mixing (Proposition~\ref{prop:mixing}).
As a first step, we will prove that $\mu_*$ is ergodic (see the Hopf-type Lemma~\ref{lem:ergodic}). 
This will require establishing absolute continuity of the unstable foliation for $\mu_*$
(Corollary~ \ref{cor:abs cont}), which  will be deduced from the following
absolute continuity
 result for $\nu$:

\begin{proposition}
\label{prop:abs cont}
Let $R$ be a Cantor rectangle.
Fix $W^0 \in \cW^s(R)$ and for $W \in \cW^s(R)$, let $\Theta_W$ 
denote the holonomy map from $W^0 \cap R$ to $W \cap R$ along unstable manifolds in $\cW^u(R)$.
Then $\Theta_W$ is absolutely continuous with respect to the leafwise measure $\nu$.
\end{proposition}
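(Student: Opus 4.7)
The plan is to construct the Radon--Nikodym derivative of the pushforward $(\Theta_W)_*(\nu|_{W^0\cap R})$ with respect to the leafwise measure $\nu|_{W\cap R}$ as the formal infinite product of unstable Jacobian ratios
\[
J_\infty(x)\,:=\,\prod_{k=0}^{\infty}\frac{J^uT(T^k\Theta_W(x))}{J^uT(T^k x)}\, ,\qquad x\in W^0\cap R\, ,
\]
mimicking the classical Sinai--Bowen strategy but using the $\cL$-eigenvector $\nu$ as the reference measure in place of an equilibrium state.

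First, I would verify that $J_\infty$ is well-defined and strictly positive for $\nu|_{W^0}$-almost every $x$. The log-H\"older estimates for $\log J^uT$ on homogeneity layers, combined with the exponential unstable contraction $d_u(T^k x,T^k\Theta_W x)\le C\Lambda^{-k}|W^u(x)\cap R|$ under forward iteration, reduce absolute convergence of $\sum_k[\log J^uT(T^k\Theta_W x)-\log J^uT(T^k x)]$ to a subexponential control on how fast the forward orbit of $x$ approaches $\cS_{\pm 1}$. This control is provided by the leafwise counterpart of Theorem~\ref{nbhdthm}, namely $\nu|_{W^0}(\cN_\epsilon(\cS_{\pm 1})\cap W^0)\le C|\log\epsilon|^{-\gamma}$ (the leafwise analogue of \eqref{-7}, proved by the same renormalisation argument applied to the sequence $\nu_n$ from \eqref{nun}), together with a Borel--Cantelli application as in the proof of Lemma~\ref{lem:approach}. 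It is at this point that the assumption $h_*>s_0\log 2$ enters decisively, since it permits the choice $\gamma>1$.

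Second, for any Borel $E\subset W^0\cap R$ I would apply the leafwise conformality $\cL^n\nu=e^{nh_*}\nu$, which yields
\[
e^{nh_*}\nu|_{W^0}(E)=\sum_{V\in\cG_n(W^0)}\nu|_V(T^{-n}E)\, ,\quad e^{nh_*}\nu|_W(\Theta_W E)=\sum_{V'\in\cG_n(W)}\nu|_{V'}(T^{-n}\Theta_W E)\, ,
\]
and pair preimages using the vertical foliation construction from Section~\ref{unstable norm}: most ``long'' $V\in\cG_n(W^0)$ admit a canonical partner $V'\in\cG_n(W)$ with $d_{\cW^s}(V,V')\le Cn\Lambda^{-n}$, and the subsets $T^{-n}E\cap V$ and $T^{-n}(\Theta_W E)\cap V'$ are themselves paired by the same unstable foliation. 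The unmatched pieces will be controlled exactly as in Section~\ref{unstable norm}, via the fragmentation Lemma~\ref{lem:short} and Corollary~\ref{cor:short} together with the strong stable norm bound on $\nu$, giving a total unmatched contribution of $o(e^{nh_*})$ as $n\to\infty$.

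On each matched pair I would compare $\nu|_{V'}(T^{-n}\Theta_W E)$ not directly with $\nu|_V(T^{-n}E)$ but with the Jacobian-weighted integral $\int_{T^{-n}E\cap V}(J_n\circ T^n)\,d\nu|_V$, where $J_n$ is the $n$-fold truncation of $J_\infty$. This insertion is designed to convert the absolute error $|\log\Lambda^{-n}|^{-\varsigma}$ delivered by the strong unstable norm $\|\nu\|_u\le\bar C$ into a \emph{relative} error proportional to $\nu|_V(T^{-n}E)$, so that after summing over $\sim e^{nh_*}$ matched pairs and dividing by $e^{nh_*}$ one obtains a bound of the form $|\log\Lambda^{-n}|^{-\varsigma}\,\nu|_{W^0}(E)$, which vanishes. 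Passing to the limit by dominated convergence (using $J_n\circ T^n\to J_\infty$ from the first step) would yield $\nu|_W(\Theta_W E)=\int_E J_\infty\,d\nu|_{W^0}$, which gives absolute continuity in both directions since $J_\infty$ is $\nu|_{W^0}$-essentially bounded and bounded below. The hardest step will be this Jacobian insertion: the strong unstable norm only accepts $C^\alpha$ test functions with unit norm, so the indicator $\mathbf{1}_{T^{-n}E\cap V}$ has to be approximated by smooth bumps whose $C^\alpha$ cost $\sim|T^{-n}E\cap V|^{-\alpha}$ must be absorbed by the gain $|\log\Lambda^{-n}|^{-\varsigma}$ at a carefully chosen intermediate scale; this is the reason a genuine modulus of continuity (rather than a mere $L^\infty$ bound) in the definition of $\|\cdot\|_u$ is indispensable here.
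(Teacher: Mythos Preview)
Your proposal has a fatal error in the very first step. You claim that $d_u(T^kx,T^k\Theta_Wx)\le C\Lambda^{-k}$ under forward iteration, but this is exactly backwards: the points $x$ and $\Theta_W(x)$ lie on the \emph{same} local unstable manifold (that is the definition of the unstable holonomy $\Theta_W$), so forward iteration by $T$ \emph{expands} the unstable distance between them by a factor at least $\Lambda$ per step. Hence $|\log J^uT(T^k\Theta_Wx)-\log J^uT(T^kx)|$ does not tend to zero and your product $J_\infty$ diverges. Even if you repair the direction and write $\prod_{k\ge 0} J^uT(T^{-k}\Theta_Wx)/J^uT(T^{-k}x)$ (now convergent, since $T^{-1}$ contracts unstable distances), what you obtain is the Jacobian of $\Theta_W$ with respect to \emph{arclength} or SRB conditionals, not with respect to the leafwise measure $\nu$. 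The eigenvector $\nu$ has no reason to transform under holonomy by this smooth formula; for Margulis-type measures one expects the holonomy Jacobian to be essentially constant, not the smooth product. Your third step simply asserts that inserting $J_n$ converts the absolute error $|\log\Lambda^{-n}|^{-\varsigma}$ from $\|\nu\|_u$ into a relative one, but never explains the mechanism; the unstable norm compares integrals of matched $C^\alpha$ test functions and yields an absolute bound regardless of any Jacobian weight. Finally, your own ``hardest step'' --- approximating $\mathbf 1_{T^{-n}E\cap V}$ by $C^\alpha$ bumps --- incurs a cost $\sim|T^{-n}E\cap V|^{-\alpha}$, which for a general Borel $E$ can be exponentially large in $n$ and swamps the polylogarithmic gain $n^{-\varsigma}$.

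The paper sidesteps all of this by proving only absolute continuity (null sets to null sets), not an explicit Radon--Nikodym formula. Given $E\subset W^0\cap R$ with $\nu(E)=0$, one covers $E$ by an open $O_\ve$ with $\nu(O_\ve)\le\ve$, takes a smooth bump $\psi_\ve\equiv 1$ on $O_\ve$ with $|\psi_\ve|_{C^1}\le 2\ve^{-1}$, and chooses $n=n(\ve)$ so that $|\psi_\ve\circ T^n|_{C^1(T^{-n}W^0)}\le 1$ (this uses stable contraction, correctly). One then transports $\psi_\ve$ to a function $\tilde\psi_\ve$ on $W$ via an interpolated unstable foliation $\bar\Gamma$ of $D(R)$ that contains all of $\cW^u(R)$, and applies the matched/unmatched machinery of Section~\ref{unstable norm} directly to these \emph{smooth} test functions --- never to indicators. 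The eigenvalue relation $\cL^n\nu=e^{nh_*}\nu$ together with $\|\nu\|_u<\infty$ yields $\bigl|\int_{W^0}\psi_\ve\,\nu-\int_W\tilde\psi_\ve\,\nu\bigr|\le C|\log\ve|^{-\varsigma}\|\nu\|_u+C\ve^{1-\alpha}|\nu|_w$; since $\int_{W^0}\psi_\ve\,\nu\le 2\ve$ and $\tilde\psi_\ve=1$ on $\Theta_W(E)$, letting $\ve\to 0$ gives $\nu(\Theta_W(E))=0$.
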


\begin{proof}
Since by Lemma~\ref{lem:hyp} unstable manifolds comprise a set of full $\nu$-measure, it suffices to fix a set $E \subset W^0 \cap R$ with $\nu$-measure zero, and prove that the $\nu$-measure of $\Theta_W(E) \subset W$
is also zero.

Since $\nu$ is a regular measure on $W^0$, for $\ve > 0$, there exists an open set $O_\ve \subset W^0$, $O_\ve \supset E$, 
such that $\nu(O_\ve) \le \ve$.  Indeed, since $W^0$ is compact, we may choose $O_\ve$ to be a finite union of intervals.
Let $\psi_\ve$ be a smooth function which is 1 on $O_\ve$ and 0 outside of an $\ve$-neighbourhood of $O_\ve$.  We may choose
$\psi_\ve$ so that $| \psi_\ve|_{C^1(W^0)} \le 2 \ve^{-1}$.

Using \eqref{eq:C1 C0}, we choose $n = n(\ve)$ such that $| \psi_\ve \circ T^n |_{C^1(T^{-n}W^0)} \le 1$.
Note this implies in particular that $\Lambda^{-n} \le \ve$.
Following the procedure described at the beginning of Section~\ref{unstable norm}, we subdivide $T^{-n}W^0$ and 
$T^{-n}W$ into matched 
pieces $U^0_j$, $U_j$ and
unmatched pieces $V^0_i$, $V_i$.  With this construction, none of the unmatched pieces $T^nV^0_i$ intersect an unstable manifold
in $\cW^u(R)$ since unstable manifolds are not cut under $T^{-n}$.

Indeed, on matched pieces, we may choose a foliation 
$\Gamma_j = \{ \gamma_x \}_{x \in U^0_j}$ such that:

i) $T^n\Gamma_j$ contains all unstable manifolds in $\cW^u(R)$ that intersect $T^nU^0_j$;

ii) between unstable manifolds in $\Gamma_j \cap T^{-n}(\cW^u(R))$, we interpolate via unstable curves;

iii) the resulting holonomy $\Theta_j$ from $T^nU^0_j$ to $T^nU_j$ has uniformly bounded Jacobian\footnote{Indeed,
  \cite{bdl} shows the Jacobian is H\"older continuous, but we shall not need this here.} with
  respect to arc-length, with bound depending on the
  unstable diameter of $D(R)$, by \cite[Lemmas~6.6, 6.8]{bdl};  

iv)  pushing forward $\Gamma_j$ to $T^n\Gamma_j$ in $D(R)$, we interpolate in the gaps using unstable curves;
  call $\bGamma$ the resulting foliation of $D(R)$;

v)  the associated holonomy map $\bTheta_W$ extends $\Theta_W$ and has uniformly bounded Jacobian,
  again by \cite[Lemmas~6.6 and 6.8]{bdl}.

Using the map $\bTheta_W$, we define $\tpsi_\ve = \psi_\ve \circ \bTheta_W^{-1}$, and note that
$| \tpsi_\ve|_{C^1(W)} \le C |\psi_\ve |_{C^1(W^0)}$, where we write $C^1(W)$ for the set of Lipschitz functions on
$W$, i.e., $C^\alpha$ with $\alpha =1$.

Next, we modify $\psi_\ve$ and $\tpsi_\ve$ as follows:  We set them
equal to $0$ on the images of unmatched pieces, $T^nV^0_i$ and $T^nV_i$, respectively.  Since these curves do not intersect
unstable manifolds in $\cW^u(R)$, we still have $\psi_\ve = 1$ on $E$ and $\tpsi_\ve = 1$ on $\Theta_W(E)$.  Moreover, the
set of points on which $\psi_\ve > 0$ (resp. $\tpsi_\ve > 0$) is a finite union of open intervals that cover $E$ (resp. $\Theta_W(E)$).

Following Section~\ref{unstable norm}, we estimate
\begin{equation}
\label{eq:match split}
\begin{split}
\int_{W^0}  \psi_\ve \, \nu - \int_W \tpsi_\ve \, \nu & = e^{-n h_*} \left( \int_{W^0} \psi_\ve \, \cL^n \nu - \int_W \tpsi_\ve \, \cL^n \nu \right)  \\
& = e^{-n h_*} \sum_j \int_{U^0_j} \psi_\ve \circ T^n \, \nu - \int_{U_j} \phi_j \, \nu + \int_{U_j} (\phi_j - \tpsi_\ve \circ T^n) \, \nu \,  , 
\end{split}
\end{equation}
where $\phi_j = \psi_\ve \circ T^n \circ G_{U^0_j} \circ G_{U_j}^{-1}$, and $G_{U^0_j}$ and $G_{U_j}$ represent the functions
defining $U^0_j$ and $U_j$, respectively, defined as in \eqref{eq:match}.  
Next, since $d(\psi_\ve \circ T^n, \phi_j) = 0$
by construction, and using \eqref{eq:graph contract} and the assumption that $\Lambda^{-n} \le \ve$, we have by
\eqref{eq:second unstable},
\begin{equation}
\label{eq:match one}
e^{-n h_*} \left| \sum_j \int_{U^0_j} \psi_\ve \circ T^n \, \nu - \int_{U_j} \phi_j \, \nu \right| \le C 
|\log \ve|^{-\varsigma} \| \nu \|_u \, .
\end{equation}

It remains to estimate the last term in \eqref{eq:match split}.  This we do using the weak norm,
\begin{equation}
\label{eq:match two}
\int_{U_j} (\phi_j - \tpsi_\ve \circ T^n) \, \nu \le |\phi_j - \tpsi_\ve \circ T^n|_{C^\alpha(U_j)} \, |\nu|_w \, .
\end{equation}
By \eqref{eq:diff}, we have
\[
|\phi_j - \tpsi_\ve \circ T^n|_{C^\alpha(U_j)} \le C |\psi_\ve \circ T^n \circ G_{U^0_j} - \tpsi_\ve \circ T^n \circ G_{U_j} |_{C^\alpha(I_j)} \, ,
\]
where $I_j$ is the common $r$-interval on which $G_{U^0_j}$ an $G_{U_j}$ are defined.

Fix $r \in I_j$, and let $x = G_{U^0_j}(r) \in U_j$ and $\bx = G_{U_j}(r)$.  Since $U^0_j$ and $U_j$ are matched, there
exists $y \in U^0_j$ and an unstable curve $\gamma_y \in \Gamma_j$ such that $\gamma_y \cap U_j = \bx$.  By definition
of $\tpsi_\ve$, we have $\tpsi_\ve \circ T^n(\bx) = \psi_\ve \circ T^n(y)$.  Thus,
\[
\begin{split}
|\psi_\ve \circ T^n \circ G_{U^0_j}(r) &- \tpsi_\ve \circ T^n \circ G_{U_j} (r)|\\
& \le  |\psi_\ve \circ T^n (x) - \psi_\ve \circ T^n(y)| + |\psi_\ve \circ T^n(y) - \tpsi_\ve \circ T^n(\bx)| \\
& \le  |\psi_\ve \circ T^n|_{C^1(U^0_j)} d(x,y)  \le C \Lambda^{-n} \le C \ve \, ,
\end{split}
\] 
where we have used the fact that $d(x,y) \le C\Lambda^{-n}$ due to the uniform transversality of stable and unstable curves.

Now given $r, s \in I_j$, we have on the one hand,
\[
|\psi_\ve \circ T^n \circ G_{U^0_j}(r) - \tpsi_\ve \circ T^n \circ G_{U_j} (r) -
\psi_\ve \circ T^n \circ G_{U^0_j}(s) + \tpsi_\ve \circ T^n \circ G_{U_j} (s)|
 \le 2C\ve \, ,
\]
while on the other hand,
\begin{align*}
|\psi_\ve \circ T^n \circ G_{U^0_j}(r)& - \tpsi_\ve \circ T^n \circ G_{U_j} (r) -
 \psi_\ve \circ T^n \circ G_{U^0_j}(s) + \tpsi_\ve \circ T^n \circ G_{U_j} (s)| \\
& \qquad\qquad\qquad  \le (|\psi_\ve \circ T^n|_{C^1(U^0_j)} + |\tpsi_\ve \circ T^n|_{C^1(U_j)} ) C |r-s| \, ,
\end{align*}
where we have used the fact that $G_{U^0_j}^{-1}$ and $G_{U_j}^{-1}$ have bounded derivatives since the
stable cone is bounded away from the vertical.

The difference is bounded by the minimum of these two expressions.  This is 
greatest
when the two are equal, i.e.,
when $|r-s| = C \ve$.  Thus $H^\alpha(\psi_\ve \circ T^n \circ G_{U^0_j} - \tpsi_\ve \circ T^n \circ G_{U_j}) \le C \ve^{1-\alpha}$,
and so
$
|\phi_j - \tpsi_\ve \circ T^n|_{C^\alpha(U_j)} \le C \ve^{1-\alpha} 
$.
Putting this estimate together with \eqref{eq:match one} and \eqref{eq:match two} in \eqref{eq:match split}, we conclude,
\begin{equation}\label{eq:weak small2}
\left| \int_{W^0}  \psi_\ve \, \nu - \int_W \tpsi_\ve \, \nu \right| \le C |\log \ve|^{-\varsigma} \| \nu \|_u + C \ve^{1-\alpha} |\nu|_w \, .
\end{equation}
Now since $\int_{W^0} \psi_\ve \, \nu \le 2\ve$, we have
\begin{equation}
\label{eq:weak small}
\int_W \tpsi_\ve \, \nu \le C' |\log \ve|^{-\varsigma}\,  ,
\end{equation}
where $C'$ depends on $\nu$.  Since $\tpsi_\ve = 1$ on $\Theta_W(E)$ and $\tpsi_\ve > 0$ on an open set containing 
$\Theta_W(E)$ for every $\ve > 0$, we have $\nu(\Theta_W(E)) = 0$, as required.
\end{proof}

We next state our main absolute continuity result:

\begin{cor}
[Absolute Continuity of $\mu_*$ with Respect to Unstable Foliations]
\label{cor:abs cont}
Let $R$ be a Cantor rectangle with $\mu_*(R) > 0$.  Fix $W^0 \in \cW^s(R)$ and for $W \in \cW^s(R)$, let $\Theta_W$ 
denote the holonomy map from $W^0 \cap R$ to $W \cap R$ along unstable manifolds in $\cW^u(R)$.
Then $\Theta_W$ is absolutely continuous with respect to the measure $\mu_*$.
\end{cor}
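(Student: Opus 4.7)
The plan is to reduce the statement to Proposition~\ref{prop:abs cont} via a disintegration of $\mu_*$ on the stable foliation inside $R$. First I would disintegrate $\mu_*|_R$ along $\cW^s(R)$, obtaining conditional measures $\mu_*^W$ on each $W \in \cW^s(R)$ together with a factor measure on a fixed unstable transversal $U^0 \in \cW^u(R)$. With such a disintegration available, the corollary amounts to the implication that $\mu_*^{W^0}(E) = 0$ forces $\mu_*^W(\Theta_W(E)) = 0$ for every measurable $E \subset W^0 \cap R$.

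The crux is to identify $\mu_*^W$ with a positive multiple of the leafwise measure $\nu|_{W \cap R}$ from Lemma~\ref{lem:hyp}, for almost every $W$ in the factor measure. To this end, starting from the definition $\mu_*(\varphi) = \tilde{\nu}(\nu\varphi)/\tilde{\nu}(\nu)$ with $\varphi$ supported in $R$, I would view $\nu\varphi \in \cB_w$ via its leafwise action $\psi \mapsto \int_W \psi\varphi \, d\nu|_W$ and mimic the disintegration argument of Lemma~\ref{lem:disint}. Approximating $\tilde{\nu}$ by the sequence \eqref{7.*}, whose terms are multiples of $(\cL^*)^k d\musrb$ and thus disintegrate explicitly over $\musrb^{W_\xi}$, one rewrites $\tilde{\nu}(\nu\varphi)$ as a limit of integrals of the leafwise quantity $\int_{W \cap R} \varphi \, d\nu|_W$ against a transverse weight induced by $\tilde{\nu}$. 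Passing to the limit and using a density argument in $C^1$ then identifies $\mu_*^W$ with a positive multiple of $\nu|_{W \cap R}$, as desired.

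Granted this identification, the corollary follows directly: if $E \subset W^0 \cap R$ satisfies $\mu_*^{W^0}(E) = 0$, equivalence of $\mu_*^{W^0}$ with $\nu|_{W^0 \cap R}$ forces $\nu|_{W^0}(E) = 0$; Proposition~\ref{prop:abs cont} then yields $\nu|_W(\Theta_W(E)) = 0$, and equivalence on $W$ gives $\mu_*^W(\Theta_W(E)) = 0$. The hypothesis $\mu_*(R) > 0$ ensures that the disintegration is nontrivial on $R$. The main obstacle will be the rigorous construction of the transverse factor measure: the functional $\tilde{\nu}$ lives abstractly in $(\cB_w)^*$ and must be shown to restrict to a genuine positive Radon measure on a transversal to $\cW^s(R)$. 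This requires combining its positivity on nonnegative $C^1$ functions (Proposition~\ref{prop:exist}) with the continuity and compactness afforded by the Banach-space setup, together with approximation by functions compatible with the product structure of $R$.
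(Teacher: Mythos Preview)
Your overall strategy---reduce to Proposition~\ref{prop:abs cont} by showing that the conditional measures $\mu_*^W$ on stable leaves are equivalent to the leafwise measure $\nu|_W$---is exactly what the paper does. The execution, however, differs in a way that matters.

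You propose to prove the equivalence $\mu_*^W \sim \nu|_W$ by constructing a transverse factor measure for $\tilde\nu$ and then reading off $\mu_*^W = c_W\,\nu|_{W\cap R}$. The paper does \emph{not} do this, and your flagged obstacle (making $\tilde\nu$ into a genuine Radon measure on a transversal) is real and never resolved. Instead, the paper works with saturations: given a $\nu$-null set $E\subset W^0$, it builds the functions $\psi_\ve$, $\tpsi_\ve$ and the foliation $\bGamma$ exactly as in the proof of Proposition~\ref{prop:abs cont}, and then estimates $\mu_*(\tpsi_\ve)=\tilde\nu(\tpsi_\ve\nu)$ directly via the \emph{quantitative} bound $|\tpsi_\ve\nu|_w\le C'|\log\ve|^{-\varsigma}$ (inherited from \eqref{eq:weak small2}--\eqref{eq:weak small}) together with $|\tilde\nu(f)|\le|f|_w$. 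This gives $\mu_*(\bar E)=0$ for the saturation $\bar E$, and disintegration then yields $\mu_*^W\ll\nu$ for $\hat\mu_*$-a.e.\ $W$. So the paper uses Proposition~\ref{prop:abs cont} not as a black box but through its internal weak-norm estimate.

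For the reverse direction $\nu\ll\mu_*^W$, your sketch is silent, but this is where genuine new work is needed: the paper takes $E$ with $\nu(E)>0$, expands $\mu_*(\tpsi_\ve)$ via \eqref{7.*} as in your plan, changes variables to get sums over $\cG_k(W_\xi)$, and then must \emph{count} how many of these curves properly cross $R$. This requires Corollary~\ref{cor:short}, the covering by Cantor rectangles, and topological mixing---ingredients absent from your outline. Without this counting argument the lower bound on $\mu_*(\bar E)$ does not follow from the disintegration alone. Finally, note that the paper only establishes \emph{equivalence} of $\mu_*^W$ and $\nu|_W$, not that one is a constant multiple of the other; aiming for the stronger identification may send you down a harder path than necessary.
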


To deduce  the corollary from Proposition~\ref{prop:abs cont}, we shall
introduce a set $M^{reg}$ of regular points and
a countable cover of this set by Cantor rectangles.
The set $M^{reg}$ is defined by
\[
M^{reg} = \{ x \in M : d(x, \partial W^s(x))>0 \, , \, \, \, d(x, \partial W^u(x)) > 0 \}\,  .
\]
At each $x \in M^{reg}$, by \cite[Prop~7.81]{chernov book}, we construct a (closed) locally maximal\footnote{Recall that, as in Section~\ref{supera}, by locally maximal we mean
that $y \in R_x$ if and only if $y \in D(R_x)$ and $y$ has stable and unstable manifolds that
completely cross $D(R_x)$.}
Cantor rectangle $R_x$, containing $x$,
which is the direct product of local stable and unstable manifolds  (recall Section~\ref{supera}).
By trimming the sides, we may arrange it so that
$
\frac12 \diam^s(R_x) \le \diam^u(R_x) \le 2 \diam^s(R_x) 
$.

\begin{lemma}[Countable Cover  of  $M^{reg}$ by Cantor Rectangles]
\label{lem:cover}
There exists a countable set $\{ x_j \}_{j \in \mathbb{N}} \subset M^{reg}$, such that 
$\cup_{j} R_{x_j} = M^{reg}$ and each $R_j:=R_{x_j}$ satisfies \eqref{**}.
\end{lemma}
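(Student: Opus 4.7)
The plan is to proceed in two steps: first, construct for each $y \in M^{reg}$ a locally maximal Cantor rectangle $R_y$ containing $y$ and satisfying the density condition \eqref{**}; second, extract a countable subcover of $M^{reg}$ from the uncountable family $\{R_y\}_{y \in M^{reg}}$ using second countability of $M$.

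For the first step, I fix $y \in M^{reg}$, and for each small $\epsilon >0$ below $\min(d(y,\partial W^s(y)), d(y, \partial W^u(y)))$ I build a solid rectangle $D_\epsilon(y)$ from stable/unstable sub-pieces of $W^{s}(y)$ and $W^{u}(y)$ of comparable length $\sim \epsilon$ (trimming to enforce $\tfrac 12 \diam^s \le \diam^u \le 2\diam^s$), and let $R_\epsilon(y)$ be the locally maximal Cantor rectangle given by \cite[Prop~7.81]{chernov book}: the set of points in $D_\epsilon(y)$ whose stable and unstable manifolds fully cross $D_\epsilon(y)$. By absolute continuity of the disintegration of $\musrb$ on unstable manifolds and since $\musrb$-a.e.\ point has stable manifolds of positive length, $y$ is a Lebesgue density point (with respect to arclength on $W^u(y)$) of the set of $z$ whose local stable manifold has length greater than $\epsilon$; hence for $\epsilon$ small enough, $m_{W^u(y)}(W^u(y)\cap R_\epsilon(y))\ge 0.95\, m_{W^u(y)}(W^u(y)\cap D_\epsilon(y))$. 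To upgrade this into \eqref{**}, which requires $\ge 0.9$ uniformly over all $x\in R_\epsilon(y)$, I then invoke absolute continuity of the unstable holonomy between nearby unstable manifolds crossing $D_\epsilon(y)$, whose Jacobian tends to $1$ as $\epsilon\to 0$ by \cite[Lemmas~6.6 and~6.8]{bdl}; this propagates the density estimate from $W^u(y)$ to every unstable manifold crossing $R_\epsilon(y)$ for $\epsilon$ small enough, yielding \eqref{**}.

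For the second step, stratify by manifold length: set $M^{reg}_n := \{y \in M^{reg} : d(y,\partial W^s(y)) \ge 1/n \text{ and } d(y,\partial W^u(y)) \ge 1/n\}$, so that $M^{reg} = \bigcup_{n\ge 1} M^{reg}_n$. Given $y \in M^{reg}_n$, choose $\epsilon(y) < 1/(10n)$ small enough that \eqref{**} still holds for $R_{\epsilon(y)}(y)$. If $y' \in M^{reg}_n$ lies in the (open in $M$) interior $\mathrm{int}(D_{\epsilon(y)}(y))$, then the local stable and unstable manifolds of $y'$ have length $\ge 1/n$, which dwarfs the diameter of $D_{\epsilon(y)}(y)$, so they completely cross $D_{\epsilon(y)}(y)$; by local maximality, $y' \in R_{\epsilon(y)}(y)$. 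Thus $\{\mathrm{int}(D_{\epsilon(y)}(y))\}_{y\in M^{reg}_n}$ is an open cover of $M^{reg}_n$ in $M$, and since $M$ is compact and hence second countable, I extract a countable subcover $\{\mathrm{int}(D_{\epsilon(y^n_k)}(y^n_k))\}_{k\ge 1}$. The countable family $\{R_{\epsilon(y^n_k)}(y^n_k)\}_{n,k\ge 1}$ then covers $M^{reg}$, with each member satisfying \eqref{**} and the required balance between stable and unstable diameters.

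The main obstacle is in the first step: passing from Lebesgue density at the single point $y$ (easy from the density theorem applied to $W^u(y)$) to the uniform lower bound across all $x\in R_\epsilon(y)$ required by \eqref{**}. This requires genuinely using the uniform (as $\epsilon \to 0$) closeness to one of the Jacobian of the unstable holonomy between $W^u(y)$ and nearby unstable manifolds; the second step, by contrast, is a routine stratification-plus-Lindelöf argument once the correct length scale $\epsilon(y) \ll 1/n$ has been chosen.
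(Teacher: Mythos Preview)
Your overall strategy is sound and genuinely different from the paper's: you build rectangles pointwise and extract a countable subfamily via a stratification-plus-Lindel\"of argument, whereas the paper invokes \cite[Lemma~7.87]{chernov book} to obtain, for each length scale $1/n$, a \emph{finite} family of rectangles satisfying \eqref{**} (built by a compactness argument in the space of stable curves) with the property that every stable manifold of length $\ge 1/n$ properly crosses one of them; then for any $y\in M^{reg}$ the paper checks, using only the diameter balance and the lengths of $W^{s/u}(y)$, that both manifolds of $y$ cross the same pre-built rectangle, so $y$ belongs to it by local maximality. Your Step~2 is correct and clean; the paper's route trades your Lindel\"of extraction for the finiteness coming from compactness.

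There is, however, a genuine gap in your Step~1. You assert that each $y\in M^{reg}$ is a Lebesgue density point on $W^u(y)$ of $\{z:|W^s(z)|>\epsilon\}$, but the Lebesgue density theorem only gives this for $m_{W^u(y)}$-almost every $y$, not for every $y\in M^{reg}$. Membership $y\in M^{reg}$ says nothing about the density, along $W^u(y)$, of nearby points with short stable manifolds; the function $z\mapsto r^s(z)$ is not lower semicontinuous, and singularity curves can accumulate so as to give positive upper density of short-stable points at a specific $y$. (The paper itself, in the proof of Proposition~\ref{prop:good growth}, explicitly treats ``Lebesgue density point of $W^u(y)\cap R$'' as an a.e.\ statement.) Consequently $R_{\epsilon(y)}(y)$ need not satisfy \eqref{**} for every $y$, and your holonomy step has nothing to propagate. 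This is precisely what the paper's route avoids: by citing \cite[Lemma~7.87]{chernov book}, the rectangles satisfying \eqref{**} are produced once and for all (presumably centred at density points), and the inclusion $y\in R_{n,i}$ is then argued purely by crossing, with no density hypothesis at $y$. To repair your argument you would need either to restrict the centres to density points and then argue separately that the resulting open rectangles still cover every point of $M^{reg}_n$, or simply to cite \cite[Lemma~7.87]{chernov book} as the paper does.
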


\begin{proof}
Let $n_\delta \in \mathbb{N}$ be such that $1/n_\delta \le \delta_0$.  As already mentioned,
in the proof of Proposition~\ref{prop:good growth}, for each $n \ge n_\delta$, by \cite[Lemma~7.87]{chernov book},
there exists a finite number of $R_x$ such that any stable manifold of length at least $1/n$ properly crosses one of the
$R_x$ (see Section~\ref{supera} for the definition of proper crossing, recalling that
each $R_x$ must satisfy \eqref{**}).  This fact follows from the compactness of the set
of stable curves in the Hausdorff metric.  Call this finite set of rectangles $\{ R_{n, i} \}_{i \in \tilde{I}_n}$.

Fix $y \in M^{reg}$ and define $\epsilon = \min \{ d(y, \partial W^s(y)), d(y, \partial W^u(y) \} > 0$.
Choose $n \ge n_\delta$ such that $2/n < \epsilon$.  By construction, there exists $i \in \tilde{I}_n$ such that
$W^s(y)$ properly crosses $R_{n,i}$.  Now $\diam^s(R_{n,i}) \le 1/n$, which implies 
$\diam^u(R_{n.i}) \le 2/n < \epsilon$.  Thus $W^u(y)$ crosses $R_{n,i}$ as well.  By maximality, $y \in R_{n,i}$.
\end{proof}

Let $\{ R_{n,i} : n \ge n_\delta, i \in \tilde{I}_n \}$ be the Cantor rectangles
constructed in the proof of Lemma~\ref{lem:cover}.  Since $\mu_*(M^{reg}) = 1$, by discarding any $R_{n,i}$ of zero measure, we
obtain a countable collection of Cantor rectangles 
\begin{equation}\label{fakecover}
\{R_j\}_{j \in \mathbb{N}}:=\{ R_{n,i} : n \ge n_\delta, i \in I_n \}
\end{equation} 
such that 
$\mu_*(R_{j}) > 0$ for all $j$
and $\mu_*( \cup_{j} R_{j}) =1$.
In the rest of the paper we shall work with this countable collection of rectangles.

Given a Cantor rectangle $R$, define $\cW^s(R)$ to be the set of stable manifolds that completely cross $D(R)$, and similarly for
$\cW^u(R)$.

\begin{proof}[Proof of Corollary~\ref{cor:abs cont}]
 In order to prove absolute continuity of the unstable foliation with respect
to $\mu_*$, we will show that the conditional measures $\mu_*^W$ of $\mu_*$ are equivalent
to $\nu$ on $\mu_*$-almost every $W \in \cW^s(R)$. 

Fix a  Cantor rectangle $R$ satisfying \eqref{**}
with $\mu_*(R)>0$, and $W^0$ as in the statement of the corollary.  
 Let $E \subset W^0 \cap R$ satisfy $\nu(E) = 0$, for the leafwise measure $\nu$.

For any 
$W \in \cW^s(R)$, we have the holonomy map $\Theta_W: W^0 \cap R \to W \cap R$ as in the proof of 
Proposition~\ref{prop:abs cont}.  For $\ve > 0$, we approximate $E$, choose $n$ and construct a foliation $\bGamma$ of the
solid rectangle $D(R)$ as before.
Define $\psi_\ve$ and use the foliation $\bGamma$ to define
$\tpsi_\ve$ on $D(R)$.  We have $\tpsi_\ve = 1$ on $\bar E = \cup_{x \in E} \bar\gamma_x$, where $\bar\gamma_x$ is the
element of $\bGamma$ containing $x$.
We extend $\tpsi_\ve$ to $M$ by setting it equal to $0$ on $M \setminus D(R)$.

It follows from the proof of Proposition~\ref{prop:abs cont}, in particular \eqref{eq:weak small},
that $\tpsi_\ve \nu \in \cB_w$, and
$
|\tpsi_\ve \nu|_w \le C' |\log \ve|^{-\varsigma} 
$.
Now,
\begin{equation}
\label{eq:limit mu}
\begin{split}
\mu_*(\tpsi_\ve) & = \tilde\nu(\tpsi_\ve\nu) = \lim_{j \to \infty} \frac{1}{n_j} \sum_{k=0}^{n_j-1} e^{-k h_*} (\cL^*)^kd\musrb(\tpsi_\ve \nu) \\
& = \lim_{j \to \infty} \frac{1}{n_j} \sum_{k=0}^{n_j-1} e^{-k h_*} \musrb(\cL^k(\tpsi_\ve \nu)) \, .
\end{split}
\end{equation}
For each $k$, using the disintegration of $\musrb$ as in the proof of Lemma~\ref{lem:disint} with the same notation
as there, we estimate, 
\begin{align*}
\musrb(\cL^k(\tpsi_\ve \nu)) & = \int_{\Xi} \int_{W_\xi}  \cL^k(\tpsi_\ve \nu) \, \rho_\xi \, dm_{W_\xi} \, |W_\xi|^{-1} \, d{\hatmusrb}(\xi) \\
& \le C \int_{\Xi} |\cL^k(\tpsi_\ve \nu) |_w \, |W_\xi|^{-1} \, d{\hatmusrb}(\xi)\\
&\le C e^{k h_*} |\tpsi_\ve \nu|_w \le C e^{k h_*} |\log \ve|^{-\varsigma} \, ,
\end{align*}
where we have used \eqref{eq:weak ly} in the last line.
Thus $\mu_*(\tpsi_\ve) \le C |\log \ve|^{-\varsigma}$, for each $\ve >0$, so that $\mu_*(\bar E) = 0$.  

Disintegrating
$\mu_*$ into conditional measures $\mu_*^{W_\xi}$ on $W_\xi \in \cW^s$ and a factor measure $d\hat\mu_*(\xi)$ on the 
index set $\Xi_R$ of stable manifolds in $\cW^s(R)$, it follows that $\mu_*^{W_\xi}(\bar E) = 0$ for $\hat\mu_*$-almost every 
$\xi \in \Xi_R$.  
Since $E$ was arbitrary, 
the conditional measures of $\mu_*$ on $\cW^s(R)$ are absolutely continuous with respect to the leafwise measure
$\nu$.

To show that in fact $\mu_*^W$ is equivalent to $\nu$, suppose now that $E \subset W^0$
has $\nu(E)>0$.  For any $\ve >0$ such that $C'|\log \ve|^{-\varsigma} < \nu(E)/2$, where
$C'$ is from \eqref{eq:weak small}, choose $\psi_\ve \in C^1(W^0)$ such that
$\nu(|\psi_\ve - 1_E|) < \ve$, where $1_E$ is the indicator function of the set $E$. 
As above, we extend $\psi_\ve$ to a function $\tpsi_\ve$
on $D(R)$ via the foliation $\bGamma$, and then to $M$ by setting $\tpsi_\ve = 0$
on $M \setminus D(R)$.

We have $\tpsi_\ve \nu \in \cB_w$ and by \eqref{eq:weak small2}
\begin{equation}
\label{eq:lower psi}
\nu(\tpsi_\ve \, 1_W) \ge \nu(\psi_\ve \, 1_{W^0}) - C'|\log \ve|^{-\varsigma} ,
\qquad \mbox{for all } W \in \cW^s(R) \, .
\end{equation}

Now following \eqref{eq:limit mu} and disintegrating $\musrb$ as usual, we obtain,
\begin{equation}
\label{eq:counting}
\begin{split}
\mu_*(\tpsi_\ve) 
& = \lim_n \frac 1n \sum_{k=0}^{n-1} e^{-k h_*} \int_{\Xi} \int_{W_\xi} \cL^k(\tpsi_\ve \nu)
\, \rho_\xi \, dm_{W_\xi} \, d\hatmusrb(\xi) \\
& = \lim_n \frac 1n \sum_{k=0}^{n-1} e^{-k h_*} \int_\Xi \left( \sum_{W_{\xi, i} \in \cG_k(W_\xi)}  \int_{W_{\xi, i}} \tpsi_\ve
\, \rho_\xi \circ T^k \, \nu \right) \, d\hatmusrb(\xi) \,  .
\end{split}
\end{equation}
To estimate this last expression, we estimate the cardinality of the curves
$W_{\xi, i}$ which properly cross the rectangle $R$.

By Corollary~\ref{cor:short} and the choice of $\delta_1$ in \eqref{eq:delta1}, there exists $k_0$, depending only on the minimum length
of $W \in \cW^s(R)$, such that   
$\# L_k^{\delta_1}(W_\xi) \ge \frac 13 \# \cG_k(W_\xi)$ for all $k \ge k_0$.

By choice of our covering $\{ R_i \}$ from \eqref{fakecover}, 
 all $W_{\xi, j} \in L_k^{\delta_1}(W_\xi)$ properly cross one of finitely
many $R_i$.  By the topological mixing property of $T$, there exists $n_0$, depending
only on the length scale $\delta_1$, such that some smooth component of $T^{-n_0}W_{\xi, j}$
properly crosses $R$.  Thus, letting $\cC_k(W_\xi)$ denote those
$W_{\xi, i} \in \cG_k(W_\xi)$ which properly cross $R$, we have
\[
\# \cC_k(W_\xi) \ge \# L_{k-n_0}^{\delta_1}(W_\xi) \ge \tfrac 13 \# \cG_{k-n_0}(W_\xi) 
\ge \tfrac 13 c e^{(k-n_0)h_*}\, ,
\]
for all $k \ge k_0 + n_0$, where $c>0$ depends on $c_0$ from Proposition~\ref{prop:good growth}
as well as the minimum length of $W \in \cW^s(R)$. 

Using this lower bound on the cardinality together with \eqref{eq:lower psi} yields,
\[
\mu_*(\tpsi_\ve) \ge \tfrac 13 c e^{-n_0 h_*} \big( \nu(\psi_\ve) - C' |\log \ve|^{-\varsigma}\big)
\ge C'' \big( \nu(E) - |\log \ve|^{-\varsigma}\big)\,  .
\]
Taking $\ve \to 0$, we have $\mu_*(\bar E) \ge C'' \nu(E)$, and so $\mu_*^W(\bar E) > 0$
for almost every
$W \in \cW^s(R)$.
\end{proof}

A consequence of the proof of Corollary~\ref{cor:abs cont} is the positivity of $\mu_*$ on open sets.

\begin{proposition}[Full Support]\label{lastitem}
We have $\mu_*(O) > 0$ for any open set $O$. 
\end{proposition}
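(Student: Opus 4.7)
The plan is to adapt the lower bound estimate obtained at the end of the proof of Corollary~\ref{cor:abs cont}. Given a non-empty open set $O \subset M$, since $M^{reg}$ has full Lebesgue (and full $\musrb$) measure, the intersection $O \cap M^{reg}$ is non-empty; pick $x_0$ there. Shrinking the local stable and unstable manifolds through $x_0$ so that the solid rectangle $D$ they span lies entirely in $O$, and following the construction of Lemma~\ref{lem:cover} inside $D$, we obtain a locally maximal Cantor rectangle $R$ satisfying \eqref{**} with $D(R) \subset O$. Set $W^0 = W^s(x_0) \cap D(R) \in \cW^s(R)$.

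Next, I will verify that $\nu(W^0 \cap R) > 0$ for the leafwise measure $\nu$ on $W^0$. By \eqref{eq:lower nu}, $\nu(W^0) > 0$; by Lemma~\ref{lem:hyp}, $\nu$-a.e.\ $x \in W^0$ has an unstable manifold of positive length; and since $\diam^u(D(R))$ is finite and may be taken arbitrarily small, the subset of $x \in W^0$ whose unstable manifold fully crosses $D(R)$ has positive $\nu$-measure. Every such $x$ belongs to $R$ by local maximality, so $\nu(W^0 \cap R) > 0$.

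Finally, I will replay the lower bound argument from the proof of Corollary~\ref{cor:abs cont}. Choose $E \subset W^0 \cap R$ with $\nu(E) > 0$ and $\psi_\ve \in C^1(W^0)$ with $\mathrm{supp}(\psi_\ve) \subset R$ and $\nu(|\psi_\ve - 1_E|) < \ve$. Extend $\psi_\ve$ via the holonomy foliation $\bGamma$ of $D(R)$ to $\tpsi_\ve$, then by zero to all of $M$; since the foliation is built inside $D(R)$, the support of $\tpsi_\ve$ lies in $D(R) \subset O$. Expanding $\mu_*(\tpsi_\ve) = \tilde\nu(\tpsi_\ve\,\nu)$ as in \eqref{eq:counting} and using the cardinality bound $\# \cC_k(W_\xi) \geq \tfrac{1}{3} c\, e^{(k-n_0) h_*}$ on components of $\cG_k(W_\xi)$ properly crossing $R$, combined with the leafwise lower bound \eqref{eq:lower psi}, yields
\[
\mu_*(O) \;\geq\; \mu_*(\tpsi_\ve) \;\geq\; C''\bigl(\nu(E) - C'|\log \ve|^{-\varsigma}\bigr),
\]
for some $C'' > 0$ depending on $R$ (through the length scale of $\cW^s(R)$ and the mixing time $n_0$). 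Letting $\ve \to 0$ gives $\mu_*(O) > 0$.

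The only subtle point is confirming that the holonomy extension $\tpsi_\ve$ remains supported in $O$, which is guaranteed because the foliation $\bGamma$ is constructed entirely within $D(R)$; the remaining ingredients (topological mixing producing crossings of $R$, uniform cardinality growth of $\cC_k(W_\xi)$, and the lower bound \eqref{eq:lower nu} for the leafwise measure) transfer from Corollary~\ref{cor:abs cont} verbatim, with constants depending on the chosen rectangle $R$ but remaining strictly positive.
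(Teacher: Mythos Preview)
Your argument is correct and follows the same route as the paper's proof: locate a Cantor rectangle $R$ with $D(R) \subset O$, show the leafwise $\nu$-measure on a stable leaf of $R$ is positive, and then run the counting estimate \eqref{eq:counting} from the end of the proof of Corollary~\ref{cor:abs cont}. One trivial slip: the constraint $\mathrm{supp}(\psi_\ve) \subset R$ should read $\subset D(R)$ (or simply be dropped), since $R$ is a Cantor set and a continuous function supported there would vanish identically.
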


\begin{proof}
Suppose $R$ is a  Cantor rectangle with index set of stable leaves $\Xi_R$.
We call $I \subset \Xi_R$ an interval if
$a, b \in I$ implies that $c \in I$ for all $c \in \Xi_R$ such that $W_c$ lies between
$W_a$ and $W_b$.\footnote{Notice that if $I \subset \Xi_j$ is an interval such that 
$\hatmusrb(I)>0$, then $\cup_{\xi \in I} W_\xi \cap R_j$ is a 
Cantor rectangle which contains
a subset satisfying the high density condition \eqref{**}, so we can talk about proper crossings.} 
It follows from the proof of Corollary~\ref{cor:abs cont} that for any
interval $I \subset \Xi_R$ such that 
$\hatmusrb(I)>0$, then $\mu_*(\cup_{\xi \in I}W_\xi) > 0$.  Indeed, by Lemma~\ref{lem:disint},
$\hat\nu$ is equivalent to $\hatmusrb$ (since $\nu(W)>0$ for all $W \in \cW^s$, when $\nu$
is viewed as a leafwise measure), so that
$\hatmusrb(I)>0$ implies $\hat\nu(I) >0$.  
Then by Lemma~\ref{lem:hyp} there exists a Cantor rectangle $R'$ with
$D(R')\subset D(R)$ and $\Xi_{R'}\subset I$ such that $\nu(R')>0$.
Then we simply apply \eqref{eq:counting} and the
argument following it with $\tpsi_\ve$ replaced by the characteristic 
function of $\cup_{\xi \in \Xi_{R'}}W_\xi$.

Then if $O$ is an open set in $M$, it contains a Cantor rectangle $R$ such that 
$D(R) \subset O$ and $\musrb(R)>0$.  It follows that
$\hatmusrb(\Xi_R)>0$, and so $\mu_*(\cup_{\xi \in \Xi_R}W_\xi) > 0$.
\end{proof}

\subsection{Bounds on Dynamical Bowen Balls --- Comparing $\mu_*$ and $\musrb$}
\label{BoBa}
In this section we show upper and lower bounds on the $\mu_*$-measure
of dynamical Bowen balls, from which we establish a necessary condition for $\mu_*$ and $\musrb$
to coincide. (The lower bound will use results from Section~\ref{notmixing}.)

For $\epsilon >0$ and $x\in M$, we denote by $B_n(x,\epsilon)$ the dynamical (Bowen) ball at $x$ of
length $n\ge 1$ for
 $T^{-1}$, i.e.,
$$
B_n(x,\epsilon)=\{ y \in M\mid d(T^{-j} (y), T^{-j} (x)) \le \epsilon \, , \, \, \forall \, 0 \le j \le n\} .
$$

For $\eta, \delta > 0$ and $p \in (1/\gamma, 1]$, let $M^{reg}(\eta, p, \delta)$ denote those
$x \in M^{reg}$ such that $d(T^{-n}x, \cS_{-1}) \ge \delta e^{- \eta n^p}$.  It follows from 
Lemma~\ref{lem:approach} that $\mu_*(\cup_{\delta > 0}M^{reg}(\eta, p, \delta)) = 1$.

\begin{proposition}[Topological Entropy and Measure of Dynamical Balls]
\label{prop:max}  Assume that $h_*>s_0 \log 2$.
There exists $A<\infty$ such that for all
$\epsilon >0$ sufficiently small,
$x\in M$, and $n \ge 1$, the measure 
$\mu_*$ constructed in \eqref{defm} 
satisfies
\begin{equation}
\label{eq:upper ball}
\mu_*(B_n(x,\epsilon)) \le \mu_*(\overline{B_n(x,\epsilon) })
\le A e^{-n h_*} \, .
\end{equation}
Moreover, for all $\eta, \delta >0$ and $p \in (1/\gamma, 1]$, for each $x \in M^{reg}(\eta, p, \delta)$, 
and all $\ve > 0$ sufficiently small, there exists
$C(x, \epsilon, \eta, p, \delta)>0$ such that for all $n \ge 1$, 
\begin{equation}
\label{eq:lower ball}
C(x, \epsilon, \eta, p, \delta) \, e^{-n h_* - \eta h_* \bar{C}_2 n^p} \le \mu_*(B_n(x,\epsilon)),
\end{equation}
where $\bar{C}_2>0$ is the constant from the proof of Corollary~\ref{cor:short}. 
\end{proposition}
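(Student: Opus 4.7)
The plan is to derive both inequalities from the eigenvector pair $(\nu,\tilde\nu)$ of Proposition~\ref{prop:exist}, the growth and fragmentation lemmas of Section~\ref{sec:growth}, and the structural results of Sections~\ref{atlast0}--\ref{notmixing}. For the upper bound \eqref{eq:upper ball}, the essential geometric input is that for every stable curve $W \in \cW^s$, the intersection $W\cap \overline{B_n(x,\epsilon)}$ is a single connected subcurve of length at most $C\epsilon\Lambda^{-n}$: if $y_1,y_2$ lie in this intersection, then $T^{-n}y_i$ are at distance at most $\epsilon$ on $T^{-n}W$, and $T^{-n}$ expands stable lengths by at least $\Lambda^n$. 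I would first verify, adapting the approximation in the proof of Lemma~\ref{lem:approach}, that $1_{\overline{B_n(x,\epsilon)}}\nu \in \cB_w$, using that the boundary of $\overline{B_n(x,\epsilon)}$ is a finite union of smooth curves uniformly transverse to the stable/unstable cones. The bound \eqref{eq:bweak} then reduces the proof to showing $|1_{\overline{B_n(x,\epsilon)}}\nu|_w \le A' e^{-n h_*}$.

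To establish this estimate, I would combine the transfer-operator identity
\[
e^{n h_*}\, 1_{\overline{B_n(x,\epsilon)}}\,\nu \;=\; \cL^n\bigl(1_{T^{-n}\overline{B_n(x,\epsilon)}}\,\nu\bigr)\, ,
\]
obtained by iterating $1_A\cL f=\cL(1_{T^{-1}A}f)$ together with $\cL\nu=e^{h_*}\nu$, with a leafwise upper bound on the $\nu$-measure of short stable intervals. On any stable curve $V$ the intersection $V \cap T^{-n}\overline{B_n(x,\epsilon)}$ has length at most $C\epsilon$, and iterating the eigenvalue scaling of $\nu$ backwards until a piece reaches macroscopic length, in the spirit of \eqref{eq:lower nu} but for the upper inequality, should yield a leafwise bound of the form $\nu|_V(I) \lesssim |I|^{h_*\bar C_2}$ with $\bar C_2 = 1/\log\Lambda$ (the value implicit in the proof of Corollary~\ref{cor:short}). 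Applied with $|I| \le C\epsilon\Lambda^{-n}$ and combined with the weak-norm bound \eqref{eq:weak ly}, this gives the desired decay, since $(\Lambda^{-n})^{h_*\bar C_2}=e^{-nh_*}$ cancels against the $e^{nh_*}$ from the Lasota--Yorke estimate.

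For the lower bound \eqref{eq:lower ball}, since $x \in M^{reg}(\eta,p,\delta)$ implies $d(T^{-j}x,\cS_{-1})\ge \delta e^{-\eta j^p}$ for all $j \ge 0$, choosing $\rho$ of order $c(\epsilon,\delta)\Lambda^{-n} e^{-\eta n^p}$ makes $T^n$ a smooth diffeomorphism on $B(T^{-n}x,\rho)$ with $T^j B(T^{-n}x,\rho)\subset B(T^{-n+j}x,\epsilon)$ for $0\le j\le n$, by an inductive application of the hyperbolicity bounds together with the clearance from $\cS_{-1}$. Hence $T^n B(T^{-n}x,\rho)\subset B_n(x,\epsilon)$, and $T$-invariance of $\mu_*$ gives $\mu_*(B_n(x,\epsilon))\ge \mu_*(B(T^{-n}x,\rho))$. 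To lower-bound the right-hand side, I would fit into $B(T^{-n}x,\rho)$ a locally maximal Cantor rectangle $R$ of stable and unstable diameter of order $\rho$ (possible since $T^{-n}x\in M^{reg}$), disintegrate $\mu_*$ on $R$ via Lemma~\ref{lem:disint}, and apply the leafwise lower bound \eqref{eq:lower nu}, $\int_W d\nu\ge\bar C|W|^{h_*\bar C_2}$, to the stable cross-sections, giving
\[
\bar C\, \rho^{h_*\bar C_2} \;=\; \bar C\,\bigl(c(\epsilon,\delta)\Lambda^{-n}e^{-\eta n^p}\bigr)^{h_*/\log\Lambda} \;\gtrsim\; e^{-nh_*}e^{-\eta h_*\bar C_2\, n^p}\, .
\]
A positive transversal contribution from full support (Proposition~\ref{lastitem}) and the absolute continuity of the unstable foliation (Corollary~\ref{cor:abs cont}), absorbed into the constant $C(x,\epsilon,\eta,p,\delta)$, then completes \eqref{eq:lower ball}.

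The main technical obstacle is the upper bound, specifically the leafwise upper estimate $\nu|_V(I) \lesssim |I|^{h_*\bar C_2}$ on short stable intervals, which is the dual of \eqref{eq:lower nu}: one must handle the possibly unbounded weight $1/J^sT^n$ along orbits with near-grazing collisions (the analog of the distortion control obstruction which led to relinquishing homogeneity layers in Section~\ref{sec:transfer}), and verify that the interaction between the shortening factor $\Lambda^{-n}$ from the Bowen ball and the spectral growth $e^{nh_*}$ from the Lasota--Yorke bound conspires to produce exactly $e^{-nh_*}$ via the identity $h_*\bar C_2 \log\Lambda = h_*$.
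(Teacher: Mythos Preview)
Your upper-bound argument has a genuine gap. The leafwise estimate $\nu|_V(I)\lesssim |I|^{h_*\bar C_2}$ you need is not established anywhere in the paper, and it is almost certainly false: the strong stable norm only gives the logarithmic bound $\int_I\nu\le \bar C\,|\log|I||^{-\gamma}$ (by approximating with $\nu_n$ and using $\|\nu_n\|_s\le\bar C$), and no polynomial improvement is available. The obstruction is exactly the one you flag at the end: near a grazing collision a short stable interval $I$ can expand to macroscopic length in a \emph{single} iterate, so the waiting time $n_2$ in the proof of Corollary~\ref{cor:short} can be $1$ regardless of how small $|I|$ is, which destroys any hope of $e^{-n_2 h_*}\lesssim |I|^{h_*\bar C_2}$. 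With only logarithmic leafwise decay, your scheme yields $\mu_*(B_n(x,\epsilon))\lesssim n^{-\gamma}$, not $e^{-nh_*}$.

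The paper's argument avoids this entirely through a combinatorial observation you are missing: for $\epsilon<\ve_0$ (the constant from the proof of Lemma~\ref{lem:hsep}), the Bowen ball $B_n(x,\epsilon)$ is contained in a \emph{single} element of $\cM_{-n}^0$, hence $T^{-n}B_n(x,\epsilon)$ lies in a single element of $\cM_0^n$. Consequently, in the decomposition
\[
\int_W\psi\,1^B_{n,\epsilon}\,\nu
= e^{-nh_*}\sum_{W_i\in\cG_n(W)}\int_{W_i}(\psi\circ T^n)\,1_{T^{-n}B_n(x,\epsilon)}\,\nu,
\]
at most two $W_i$ can intersect $T^{-n}B_n(x,\epsilon)$ (no singularity cuts occur inside, only artificial length subdivisions). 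By the positivity of $\nu$ as a leafwise measure, each such term is bounded by $|\nu|_w$, giving $|1^B_{n,\epsilon}\nu|_w\le 2C e^{-nh_*}|\nu|_w$ directly. No quantitative control of $\nu$ on short intervals is needed.

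For the lower bound your strategy is closer in spirit, but building a Cantor rectangle at the \emph{moving} point $T^{-n}x$ with diameter $\rho=\rho(n)$ makes both the transversal factor $\hat\mu_*(\Xi_R)$ and the implied constants $n$-dependent, so they cannot be absorbed into $C(x,\epsilon,\eta,p,\delta)$. The paper instead fixes a rectangle $R_x$ at $x$ once (using that the local stable manifold at $x$ has length $\ge\delta C_1$), and shows that for every stable leaf $W_\xi$ through $R_x$ there is a piece $W'_{\xi,i}\subset T^{-n}B_n(x,\epsilon)$ of length $\ge\tfrac{\epsilon}{2}e^{-\eta n^p}$; this uses the $M^{reg}(\eta,p,\delta)$ hypothesis to ensure $T^{-n}$ is smooth on $W^u(x)\cap D(R_x)$. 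Then \eqref{eq:lower nu} and the equivalence $\mu_*^W\sim\nu$ from Corollary~\ref{cor:abs cont} give the bound with a constant depending only on $x,\epsilon,\eta,p,\delta$.
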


\begin{proof}
Assume $\gamma >1$.
Fix $\epsilon > 0$ such that $\epsilon \le \min \{ \delta_0, \ve_0 \}$, where $\ve_0$ is from the proof of Lemma~\ref{lem:hsep}.
For $x \in M$ and $n \ge 0$, define $1^B_{n,\epsilon}$  to be the indicator function of the
dynamical ball $B_n(x,\epsilon)$.

Since $\nu$ is attained as the (averaged) limit of $\cL^n 1$ in the weak norm and  since we have
$\int_W(\cL^n 1)\, \psi dm_W \ge 0$ whenever $\psi \ge 0$, it follows that, viewing $\nu$
as a  leafwise distribution,
\begin{equation}
\label{eq:pos}
\int_W \psi \, \nu  \ge 0, \quad \mbox{ for all $\psi \ge 0$.}
\end{equation}
Then the inequality $|\int_W \psi \,\nu| \le \int_W |\psi| \, \nu$ implies that the supremum
in the weak norm can be obtained by restricting to $\psi \ge 0$. 

Let $W \in \cW^s$ be a curve intersecting $B_n(x, \epsilon)$, and let $\psi \in C^\alpha(W)$
satisfy $\psi \ge 0$ and $|\psi|_{C^\alpha(W)} \le 1$.  Then, since $\cL \nu = e^{h_*} \nu$, we have
\begin{equation}
\label{eq:unwrap}
\int_W \psi \, 1^B_{n , \epsilon} \, \nu = \int_W \psi \, 1^B_{n, \epsilon} \, e^{-n h_*} \cL^n \nu 
 = e^{-n h_*} \sum_{W_i \in \cG_n(W)} \int_{W_i} \psi \circ T^n \, 1^B_{n,\epsilon} \circ T^n \, \nu \, .
\end{equation}

We claim that $1^B_{n,\ve} \nu \in \cB_w$ (and indeed in $\cB$).  To see this, note that
\[
1^B_{n, \ve} = \prod_{j=0}^n 1_{\cN_\ve(T^{-j}x)} \circ T^{-j}
= \prod_{j=0}^n \cL_{\mbox{\tiny{SRB}}}^j (1_{\cN_\ve(T^{-j}x)}) \, ,
 \]
 where, as in Section~\ref{sec:transfer}, $\cL_{\mbox{\tiny{SRB}}}$ denotes the transfer operator
 with respect to $\musrb$.  Since $\cL_{\mbox{\tiny{SRB}}}$ preserves $\cB$ and $\cB_w$
 (\cite[Lemma~3.6]{demzhang14}), it suffices to show that $1_{\cN_\ve(T^{-j}x)}$ satisfies the
 assumptions of \cite[Lemma~5.3]{demzhang14}.  This follows from the fact that
 $\partial \cN_{\ve}(T^{-j}x)$ comprises a single circular arc, possibly together with a segment
 of $\cS_0$, which satisfies the weak transversality condition of that lemma with $t_0 = 1/2$. 
 Then applying \cite[Lemma~5.3]{demzhang14} successively for each $j$ yields the claim.

In the proof of Lemma~\ref{lem:hsep}, it was shown that if $x, y$ lie in different elements of
$\cM_0^n$, then $d_n(x,y) \ge \ve_0$, where $d_n(\cdot, \cdot)$ is the dynamical distance 
defined in \eqref{eq:d_n}.  Since $B_n(x, \epsilon)$ is defined with respect to $T^{-1}$, we will use
the time reversal counterpart of this property.  Thus since $\epsilon < \ve_0$, we conclude that
$B_n(x,\epsilon)$ is contained in a single component of $\cM_{-n}^0$, i.e.,
$B_n(x, \epsilon) \cap \cS_{-n} = \emptyset$, so that $T^{-n}$ is a diffeomorphism of
$B_n(x, \epsilon)$ onto its image.  Note that $1^{B}_{n, \epsilon} \circ T^n = 1_{T^{-n}(B_n(x,\epsilon))}$
and that $T^{-n}(B_n(x, \epsilon))$ is contained in a single component of $\cM_0^n$,  denoted $A_{n, \epsilon}$.

It follows that for each $W_i \in \cG_n(W)$ we have $W_i \cap A_{n, \epsilon} = W_i$. 
By \eqref{eq:pos}, we have
$$\int_{W_i} (\psi \circ T^n )\, 1_{T^{-n}(B_n(x, \epsilon))} \, \nu \le \int_{W_i} \psi \circ T^n \, \nu\, .
$$
Moreover, there can be at most
two $W_i \in \cG_n(W)$  having nonempty intersection with $T^{-n}(B_n(x, \epsilon))$.
This follows from the facts that $\epsilon \le \delta_0$, and that, in the absence of any cuts due to 
singularities, the only subdivisions occur when a curve has grown to length longer than 
$\delta_0$ and is subdivided into two curves of length at least $\delta_0/2$.

Using these facts together with \eqref{eq:C1 C0}, we sum over $W_i' \in \cG_n(W)$ such that
$W_i' \cap T^{-n}(B_n(x, \epsilon)) \neq 0$, to obtain
\[
\int_W \psi \, 1^B_{n,\epsilon} \, \nu \le e^{-n h_*} \sum_i \int_{W_i'} \psi \circ T^n \, \nu
\le 2C e^{-n h_*} |\nu|_w \, .
\]
This implies  that  $| 1^B_{n, \ve} \nu |_w \le 2C e^{-n h_*} |\nu|_w$.
Applying \eqref{eq:bweak}, implies \eqref{eq:upper ball}.

Next we prove \eqref{eq:lower ball}. 
Fix $\eta, \delta > 0$ with $e^\eta < \Lambda$ and $p \in (1/\gamma, 1]$, and
let $x \in M^{reg}(\eta, p, \delta)$.  
By \cite[Lemma~4.67]{chernov book} the length of the local stable
manifold containing $x$ is at least $\delta C_1$, where $C_1$ is from \eqref{eq:hyp}.
So by \cite[Lemma~7.87]{chernov book}, there exists a Cantor rectangle
$R_x$ containing $x$ such that $\musrb(R_x)>0$ and
whose diameter depends only on the length scale $\delta C_1$.  
By the proof of Proposition~\ref{lastitem}, we also have $\mu_*(R_x) > 0$.  In particular,
$\hat\mu_*(\Xi_{R_x}) = c_x > 0$, where $\Xi_{R_x}$ is the index set of stable manifolds comprising
$R_x$.
Let $\delta'>0$ denote the minimum length of $W_\xi \cap D(R_x)$ for
$\xi \in \Xi_{R_x}$, where $D(R_x)$ is the smallest solid rectangle containing $R_x$,
as in Definition~\ref{MCR}.

Choose $\epsilon >0$ such that 
$\epsilon \le \min \{ \delta_0, \ve_0,  \delta', \delta \}$.
As above, we note that $B_n(x,\epsilon)$ is contained in a single component of $\cM_{-n}^0$, and
thus $T^{-n}(B_n(x,\epsilon))$ is contained in a single component of $\cM_0^n$.  Moreover,
$T^{-n}$ is smooth on $W^u(x) \cap D(R_x)$.
Now suppose $y \in W^u(x) \cap R_x$.  Then since $x \in M^{reg}(\eta, p , \delta)$,
\[
d(T^{-n}y, \cS_{-1}) \ge d(T^{-n}x, \cS_{-1}) - d(T^{-n}y, T^{-n}x) 
\ge \delta e^{-\eta n^p} - C_1 \Lambda^{-n} \ge \tfrac{\delta}{2} e^{-\eta n^p}\, ,
\]   
for $n$ sufficiently large.
It follows that for each $\xi \in \Xi_{R_x}$, there exists $W_{\xi,i} \in \cG_n(W_\xi)$ such that 
$W'_{\xi, i} = W_{\xi, i} \cap T^{-n}(B_n(x, \epsilon))$ is a single curve and 
$|W'_{\xi, i}| \ge \min\{ \frac{\delta}{2} e^{-\eta n^p}, \epsilon \} \ge \frac{\epsilon}{2} e^{-\eta n^p}$.  
Thus recalling \eqref{eq:lower nu} and following
\eqref{eq:unwrap} with $\psi \equiv 1$,
\[
\int_{W_\xi} 1^B_{n , \epsilon} \, \nu \ge e^{-n h_*} \int_{W'_{\xi, i}} \nu 
\ge \bar{C} e^{-n h_*} |W'_{\xi, i}|^{h_* \bar{C}_2} \ge C' e^{-n h_* - \eta h_* \bar{C}_2 n^p} \, ,
\]
where $C'$ depends on $\epsilon$.

Finally, using the fact from the proof of Corollary~\ref{cor:abs cont} that $\mu_*^W$ is equivalent to 
$\nu$ on $\mu_*$-a.e. $W \in \cW^s$, we estimate,
\[
\begin{split}
\mu_*(B_n(x,\epsilon)) & \ge \mu_*(B_n(x, \epsilon ) \cap D(R_x)) =
\int_{\Xi_{R_x}} \mu_*^{W_\xi}(B_n(x, \epsilon)) \, d\hat\mu_*(\xi) \\
& \ge C \int_{\Xi_{R_x}} \nu(B_n(x, \epsilon) \cap W_\xi) \, d\hat\mu_*(\xi)
\ge C'' e^{-n h_* - \eta h_* \bar{C}_2 n^p} \hat\mu_*(\Xi_{R_x})\, .
\end{split}
\]
\end{proof}

Periodic points whose orbit do not have  grazing collisions belong to $M^{reg}$. We call them {\em regular}.

\begin{proposition}[$\mu_*$ and $\musrb$]
\label{prop:not eq}
Assume $h_*>s_0\log 2$.
If there exists a regular periodic point $x$ of period $p$ such that $\lambda_x = \frac{1}{p} \log |\det (DT^{-p}|_{E^s}(x))|\neq h_*$,
then $\mu_* \neq \musrb$.  
\end{proposition}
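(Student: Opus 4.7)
The plan is to compare the $\musrb$- and $\mu_*$-measures of small dynamical Bowen balls at the regular periodic point $x$ using Proposition~\ref{prop:max}, and derive a two-sided pinch on $\lambda_x = \tfrac 1p \log|\det DT^{-p}|_{E^s}(x)|$ that forces $\lambda_x = h_*$, contradicting the hypothesis. Being regular, the finite orbit $\{T^{-j}x : 0 \le j < p\}$ stays a uniform distance $3\delta>0$ from $\cS_{\pm 1}$, so $x \in M^{reg}(\eta,p',\delta)$ for every $\eta>0$ and every exponent $p'\in(0,1]$. Since $h_*>s_0\log 2$, the parameter $\gamma$ in the construction of $\mu_*$ may be chosen greater than $1$, and I would fix an exponent $p'\in(1/\gamma,1)$ so that both bounds in Proposition~\ref{prop:max} are available at $x$.

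The key step will be to establish the two-sided estimate
\begin{equation}
\label{musrbest}
c_x\,\epsilon^2\,e^{-n\lambda_x} \;\le\; \musrb(B_n(x,\epsilon)) \;\le\; C_x\,\epsilon^2\,e^{-n\lambda_x},\qquad \forall n\ge 1,
\end{equation}
valid for all $\epsilon>0$ below some $\epsilon_x>0$ chosen so that the $\epsilon$-neighbourhood of each $T^{-j}x$, $0\le j<p$, avoids $\cS_0$, and so that the backwards tube $\bigcup_{j=0}^n T^{-j}(B_n(x,\epsilon))$ stays in a fixed neighbourhood of the finite periodic orbit, in particular remains in the smooth region. In a smooth chart around $x$ adapted to the splitting $E^u\oplus E^s$, the condition $d(T^{-j}y,T^{-j}x)\le\epsilon$ for $0\le j\le n$ cuts out a curvilinear rectangle of size $\asymp\epsilon$ in the $E^u$-direction (where $DT^{-1}$ contracts, so the binding constraint occurs at $j=0$) and of size $\asymp\epsilon\,|DT^{-n}|_{E^s}(x)|^{-1}=\epsilon\,e^{-n\lambda_x}$ in the $E^s$-direction (binding at $j=n$, with the backwards cocycle governed by iterates of the single fixed matrix $DT^{-p}(x)$ thanks to periodicity). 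Bounded distortion of $DT^{\pm j}$ along the orbit, valid since the orbit avoids $\cS_0$, legitimizes this rectangular approximation uniformly in $n$. Integrating the smooth positive density $\cos\vf/(2|\partial Q|)$ of $\musrb$ over this rectangle yields \eqref{musrbest}.

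Assuming for contradiction that $\mu_*=\musrb$, I would then confront \eqref{musrbest} with Proposition~\ref{prop:max}. The upper bound \eqref{eq:upper ball} gives $c_x\epsilon^2e^{-n\lambda_x}\le Ae^{-nh_*}$ for every $n$, forcing $\lambda_x\ge h_*$. The lower bound \eqref{eq:lower ball}, applicable since $x\in M^{reg}(\eta,p',\delta)$, gives $C_x\epsilon^2 e^{-n\lambda_x}\ge C(x,\epsilon,\eta,p',\delta)\,e^{-nh_*-\eta h_*\bar C_2 n^{p'}}$; taking logarithms, dividing by $n$, and letting $n\to\infty$ (using $p'<1$, so $n^{p'-1}\to 0$) forces $\lambda_x\le h_*$. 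Combining, $\lambda_x=h_*$, contradicting the standing hypothesis.

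The main technical task is the two-sided Bowen-ball estimate \eqref{musrbest}. This is a transparent local product calculation precisely because $x$ is a regular periodic point: the backward cocycle is governed by a single fixed matrix $DT^{-p}(x)$, and regularity makes all ambient derivatives uniformly finite and non-degenerate along the orbit. The only subtlety is to choose $\epsilon$ small enough, uniformly in $n$, that the iterates of $B_n(x,\epsilon)$ under $T^{-j}$ avoid singularities for every $0\le j\le n$; this is straightforward since both the $E^u$- and $E^s$-widths of $T^{-j}(B_n(x,\epsilon))$ never exceed $\epsilon$, confining the tube to a fixed neighbourhood of the finite orbit $\{T^{-j}x\}_{j=0}^{p-1}$.
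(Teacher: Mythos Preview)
Your proposal is correct and follows essentially the same line as the paper: prove a two-sided estimate $\musrb(B_n(x,\epsilon))\asymp e^{-n\lambda_x}$ (the paper packages this as Lemma~\ref{lem:srb}) and compare it with the two-sided bounds on $\mu_*(B_n(x,\epsilon))$ from Proposition~\ref{prop:max}. The one simplification you miss is that for a \emph{regular periodic} point the distance $d(T^{-n}x,\cS_{-1})$ is bounded below by a constant independent of $n$, so you may invoke \eqref{eq:lower ball} directly with $\eta=0$, obtaining a clean lower bound $C e^{-nh_*}$ and avoiding the detour through $p'<1$ and the limit $n^{p'-1}\to 0$; your argument with $\eta>0$ and $p'<1$ is nonetheless valid.
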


Although $h_*$ may not be known a priori, using Proposition~\ref{prop:not eq} it
suffices to find two regular periodic points $x$, $y$ 
such that $\lambda_x \neq \lambda_y$, to conclude that $\mu_* \neq \musrb$. 
(All known examples of dispersing billiard tables satisfy this condition.)

Proposition~\ref{prop:not eq} relies on the following lemma.

\begin{lemma}
\label{lem:srb}
Let $x \in M^{reg}$ be a regular periodic point. There exists $A > 0$ such that for all 
$\epsilon>0$ sufficiently
small, there exists $C(x,\epsilon) >0$ such that for all $n \ge 1$,
\[
C(x, \epsilon) e^{-n \lambda_x} \le \musrb(B_n(x, \epsilon)) \le A e^{- n \lambda_x} \, .
\]
\end{lemma}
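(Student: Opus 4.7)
My plan is to exploit that $x$ is a regular periodic point, so the entire orbit $\{T^{j}x : j \in \mathbb{Z}\}$ avoids $\cS_0$ and therefore lies at positive distance $d_0 > 0$ from $\cS_1 \cup \cS_{-1}$. Consequently, there exists a tubular neighborhood $U$ of the orbit of width $d_0/2$ on which $T$ and $T^{-1}$ are $C^2$ diffeomorphisms with uniformly bounded derivatives. Fix $\epsilon_0 < d_0/10$ small enough that the $\epsilon_0$-neighborhood of the orbit lies in $U$ and so that local coordinates $(u, s)$ adapted to the splitting $E^u(T^{-j}x) \oplus E^s(T^{-j}x)$ at each of the finitely many orbit points are well-defined and uniformly bi-Lipschitz equivalent to $(r,\varphi)$-coordinates.

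The first step is to show $B_n(x,\epsilon)$ is contained in (and contains, up to universal constants) a parallelogram $P_n(x,\epsilon)$ in these coordinates of unstable-width $C_u \epsilon$ and stable-width $C_s \epsilon e^{-n\lambda_x}$. Indeed, writing $y - x = (u,s)$ in local coordinates, the periodicity of the orbit together with the chain rule gives $\|DT^{-jp}|_{E^s(x)}\| = e^{jp\lambda_x}$, with bounded distortion between consecutive multiples of $p$ since $DT^{\pm 1}$ is uniformly bounded on $U$. Thus the stable component at time $j$ is comparable to $e^{j\lambda_x}|s|$ and the unstable component to $e^{-j\lambda_x^u}|u|$ (for some $\lambda_x^u > 0$). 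The constraints $d(T^{-j}y, T^{-j}x) \le \epsilon$ for $0 \le j \le n$ then reduce, up to uniform constants controlled by the transversality of $E^s$ and $E^u$, to the two binding constraints $|u| \lesssim \epsilon$ (from $j=0$) and $|s| \lesssim \epsilon e^{-n\lambda_x}$ (from $j = n$); the intermediate constraints are automatically satisfied by monotonicity in $j$ of each component. A standard Hadamard-type argument (or the graph transform applied near the periodic orbit) gives the matching lower inclusion.

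The second step is an elementary volume computation. The density of $\musrb$ with respect to $dr\, d\varphi$ is $(2|\partial Q|)^{-1} \cos\varphi$, which is bounded between positive constants $c_1(x) \le c_2(x)$ on the $\epsilon_0$-neighborhood of $x$ because $x$ is regular (so $\cos\varphi(x) > 0$). Combined with the uniform transversality of stable and unstable cones (which bounds the Jacobian relating $(u,s)$-coordinates to $(r,\varphi)$-coordinates above and below), the Lebesgue area of $P_n(x,\epsilon)$ is comparable to $\epsilon^2 e^{-n\lambda_x}$, hence
\[
c(x)\, \epsilon^2 e^{-n\lambda_x} \le \musrb(B_n(x,\epsilon)) \le C(x)\, \epsilon^2 e^{-n\lambda_x}
\]
for constants $c(x), C(x) > 0$ depending only on $x$. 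Setting $A := C(x) \epsilon_0^2$ (independent of the particular $\epsilon < \epsilon_0$ used, since $\epsilon^2 \le \epsilon_0^2$) yields the claimed upper bound, while $C(x,\epsilon) := c(x)\epsilon^2$ yields the claimed lower bound.

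The main obstacle is the first step: justifying the parallelogram description of $B_n(x,\epsilon)$ uniformly in $n$. In the smooth Anosov setting this is routine, but for billiards one must verify that the regular periodic point genuinely lies in a region where derivative bounds are uniform, which is why the assumption $x \in M^{reg}$ (i.e., orbit never grazes) is used in an essential way to fix $d_0 > 0$ and hence $\epsilon_0$. Once one has this, the billiard plays no special role and the argument is the same as for a smooth hyperbolic diffeomorphism.
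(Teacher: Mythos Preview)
Your approach is correct and genuinely different from the paper's. The paper works through the billiard machinery: it disintegrates $\musrb$ along homogeneous stable manifolds, invokes the $1/3$-log-H\"older distortion bound for $J^sT^n$ from \cite[Lemma~5.27]{chernov book} (which relies on homogeneity strips), obtains the estimate first for the conditional measure $\musrb^{W^s(x)}(B_n(x,\epsilon))$, and then integrates across a Cantor rectangle $R_x \subset \cN_\epsilon(x)$ of positive SRB measure, using that $W^u(x) \cap D(R_x)$ is never cut by $\cS_{-n}$. Your route bypasses all of this: since a regular periodic orbit lives at positive distance from $\cS_{\pm 1}$, the map is a $C^2$ hyperbolic diffeomorphism on a tubular neighborhood of the orbit, and the problem reduces to the classical description of a Bowen ball near a hyperbolic periodic point as a parallelogram in adapted $(u,s)$ coordinates, followed by an area computation against the smooth density $\cos\vf$.

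One point deserves sharpening. Uniform boundedness of $DT^{\pm 1}$ on $U$ alone does not give distortion of $J^sT^{-n}$ bounded uniformly in $n$; what makes it work is that for $y \in B_n(x,\epsilon)$ the distances $d(T^{-i}y,T^{-i}x)$ are \emph{summable} in $i$ (the unstable component decays like $\epsilon e^{-i\lambda_x^u}$ and the stable component like $\epsilon e^{(i-n)\lambda_x}$), so the telescoped log-ratio $\sum_i \log\frac{J^sT^{-1}(T^{-i}y)}{J^sT^{-1}(T^{-i}x)}$ is $O(\epsilon)$ via the $C^2$ bound on $U$. This is what your ``Hadamard-type argument'' really uses, and it gives both inclusions of the parallelogram; it would be better to say this explicitly rather than to attribute it to bounded first derivatives.

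The paper's approach buys a little extra: its upper-bound constant $A$ can be taken independent of the periodic point $x$ (the distortion constant $C_d$ being uniform). Your constants depend on $x$ through $d_0$ and the local derivative bounds on $U$. This is sufficient for the lemma as stated and for its sole application in Proposition~\ref{prop:not eq}, where one works with a single fixed periodic point.
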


\begin{proof}
Let $x$ be a regular periodic point for $T$ of period $p$.  For $\epsilon$ sufficiently small,
$T^{-i}(\cN_\epsilon(x))$ belongs to a single homogeneity strip for $i = 0, 1, \ldots, p$.  Thus if
$y \in B_n(x, \epsilon) \cap W^s(x)$, then the stable Jacobians $J^sT^n(x)$ and $J^sT^n(y)$ 
satisfy the bounded distortion estimate,
$|\log \frac{J^sT^n(x)}{J^sT^n(y)}| \le C_d d(x,y)^{1/3}$,
for a uniform $C_d>0$ \cite[Lemma~5.27]{chernov book}.  It follows that the conditional measure
on $W^s(x)$ satisfies
\begin{equation}
\label{eq:shrink}
C_x^{-1} \epsilon e^{-n \lambda_x} \le  \musrb^{W^s(x)}(B_n(x, \epsilon)) \le C_x \epsilon e^{-n \lambda x}\, ,
\end{equation}
for some $C_x \ge 1$, depending on the homogeneity strips in which the orbit
of $x$ lies.

Next, using again \cite[Prop~7.81]{chernov book}, we can find a Cantor rectangle 
$R_x \subset \cN_\epsilon(x)$ with diameter at most $\ve/(2C_1)$ and
$\musrb(R_x) \ge C \musrb(\cN_\epsilon(x))/(2C_1)^2$, for a constant $C>0$ depending on the distortion
of the measure.  Note that $W^u(x) \cap D(R_x)$ is never cut by $\cS_{-n}$ and lies in 
$B_n(x, \epsilon)$ by \eqref{eq:hyp}.  Thus each $W \in \cW^s(R_x)$ has a component in
$B_n(x, \ve)$ and this component has length satisfying the same bounds as
\eqref{eq:shrink}.  Integrating over $B_n(x,\epsilon)$ as in the proof of Proposition~\ref{prop:max}
proves the lemma.  An inspection of the
proof shows that the constant 
in the upper bound
can be chosen independently of $x$ when $\epsilon$ is sufficiently small, while the constant
in the lower bound cannot.
\end{proof}

\begin{proof}[Proof of Proposition~\ref{prop:not eq}]
If $x$ is a regular periodic point, then the upper and lower bounds
on $\mu_*(B_n(x,\epsilon))$ from Proposition~\ref{prop:max} hold with\footnote{Here, it is convenient to have the role of $\eta$ explicit in \eqref{eq:lower ball}.}  $\eta = 0$
for $\epsilon$ sufficiently small.  If $\lambda_x \neq h_*$,
these do not match the exponential rate in the bounds on $\musrb(B_n(x,\epsilon))$ from
Lemma~\ref{lem:srb}.  Thus for $n$ sufficiently large, 
$\mu_*(B_n(x,\epsilon)) \ne \musrb(B_n(x,\epsilon))$.
\end{proof}


\subsection{K-mixing and Maximal Entropy  of $\mu_*$ --- Bowen--Pesin--Pitskel Theorem~\ref{PP}}
\label{mixing}

In this section we use the absolute continuity results from Section~\ref{notmixing}
to establish K-mixing of $\mu_*$. We also show that $\mu_*$ has maximal
entropy, exploiting the upper bound from Section~\ref{BoBa}. Finally, we show that
$h_*$ coincides with the Bowen--Pesin--Pitskel entropy.

\begin{lemma}[Single Ergodic Component]
\label{lem:ergodic}
If $R$ is a Cantor rectangle with $\mu_*(R)>0$, then the set of stable manifolds $\cW^s(R)$ belongs to a 
single ergodic component of $\mu_*$.  
\end{lemma}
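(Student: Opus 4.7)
The plan is a classical Hopf-type argument adapted to the singular setting, using as main inputs Corollary~\ref{cor:abs cont} (absolute continuity of the unstable holonomy inside $R$ with respect to $\mu_*$) and Theorem~\ref{nbhdthm} ($\mu_*$-almost every point has stable and unstable manifolds of positive length). Fix an arbitrary $\varphi\in C^0(M)$. Birkhoff's ergodic theorem applied to both $T$ and $T^{-1}$ provides a $T$-invariant set $M^\varphi\subset M$ of full $\mu_*$-measure on which the forward and backward averages
\[
\varphi^{\pm}(x)=\lim_{n\to\infty}\frac{1}{n}\sum_{k=0}^{n-1}\varphi(T^{\pm k}x)
\]
both exist and agree.

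First I would check that $\varphi^+$ is constant along stable manifolds and $\varphi^-$ is constant along unstable manifolds wherever they are defined. This is the usual uniform continuity argument: for $y\in W^s(x)$, one has $d(T^n x, T^n y)\to 0$, and since a stable manifold is never cut by $\cS_n$ for $n\ge 0$, the Birkhoff sums along $x$ and $y$ differ by a vanishing average. A symmetric statement applies to $\varphi^-$ along unstable manifolds.

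Next, using the product structure of $R$ and Lemma~\ref{lem:hyp} together with Corollary~\ref{cor:abs cont}, I would produce a reference leaf $W^0\in\cW^s(R)$ and a subset $E^0\subset W^0\cap R$ of full $\mu_*^{W^0}$-measure such that: (a) on $E^0$ one has $\varphi^+=\varphi^-=c$ for some constant $c$ (the constant exists because $\varphi^+$ is $W^s$-constant), and (b) for every $x\in E^0$, $\mu_*$-a.e. point of $W^u(x)\cap R$ lies in $M^\varphi$. A double application of Fubini to the disintegration of $\mu_*|_{R}$ along stable and unstable leaves, combined with the fact (Corollary~\ref{integral}) that the conditional measures along these leaves are nonatomic and equivalent to the leafwise measure $\nu$, makes this selection possible. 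Now fix any other $W\in\cW^s(R)$ and let $\Theta_W\colon W^0\cap R\to W\cap R$ denote the unstable holonomy. By Corollary~\ref{cor:abs cont} $\Theta_W$ is absolutely continuous with respect to $\mu_*^{W^0}$, so $\Theta_W(E^0)$ has full $\mu_*^{W}$-measure in $W\cap R$. On $\Theta_W(E^0)$ the function $\varphi^-$ equals $c$ (since each $y=\Theta_W(x)$ lies on the unstable manifold through $x$, where $\varphi^-$ is constant), hence $\varphi^+=c$ there as well. Because $\varphi^+$ is $W^s$-constant, this forces $\varphi^+\equiv c$ at $\mu_*^W$-a.e. point of $W$.

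Integrating over $\xi\in\Xi_R$ against the factor measure $\hat\mu_*$ then yields that $\varphi^+$ is $\mu_*$-a.e. equal to $c$ on $\bigcup_{\xi\in\Xi_R}W_\xi$. Since $\varphi\in C^0(M)$ was arbitrary and $C^0(M)$ separates invariant sets of a Borel probability measure, every $T$-invariant measurable set has $\mu_*$-measure $0$ or equal to $\mu_*(\bigcup_{\xi\in\Xi_R}W_\xi)$ when intersected with this union; equivalently, $\cW^s(R)$ lies in a single ergodic component of $\mu_*$. The main technical obstacle is ensuring (in step two of the above paragraph) that the Fubini argument really produces a reference leaf $W^0$ whose unstable holonomies meet $M^\varphi$ on full measure; this is where the combination of Corollary~\ref{cor:abs cont} with the equivalence of $\mu_*^W$ and the leafwise measure $\nu$ (established in the proof of that corollary) is essential. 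The rest is standard Hopf bookkeeping.
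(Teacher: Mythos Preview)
Your plan is the same Hopf argument the paper runs, and it is correct. One simplification worth noting: the paper avoids your condition~(b) and the ``double Fubini'' along unstable leaves by only asking that $\Theta_W(W^0\cap G_\varphi)$ have \emph{positive} $\mu_*^W$-measure, not full measure. Since $\overline\varphi$ is already constant on the full-measure set $W\cap G_\varphi$, exhibiting a single point of $\Theta_W(W^0\cap G_\varphi)\cap G_\varphi$ is enough to match the constants on $W^0$ and $W$, and one-directional absolute continuity (Corollary~\ref{cor:abs cont}) delivers this immediately. Your route via full measure also works, but it needs the holonomy to be \emph{bi}-absolutely continuous (equivalently, the equivalence $\mu_*^W\sim\nu$ you correctly flag at the end, applied on both $W^0$ and $W$ together with symmetry of the holonomy); once you have that, condition~(b) and the disintegration along unstable leaves become superfluous and can be dropped.
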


\begin{proof}
We follow the well-known Hopf strategy outlined in \cite[Section~6.4]{chernov book}
of smooth ergodic theory to show that $\mu_*$-almost every stable and unstable manifold has a full measure set of points belonging to a single ergodic component:  Given a continuous function $\vf$ on $M$, let $\bvf_+$, $\bvf_-$ denote the forward and
backward ergodic averages of $\vf$, respectively.  Let $M_\vf = \{ x \in M^{reg} : \bvf_+(x) = \bvf_-(x) \}$. When the two functions agree, denote their common value by $\bvf$.

Now fix a Cantor rectangle $R$ with $\mu_*(R)>0$.
 By Corollary~\ref{integral}, if
$\gamma >1$ then $\mu_*(M^{reg})=1$.
So, by the Birkhoff ergodic theorem, $\mu_*(M_\vf)=1$.  Thus for $\mu_*$ almost every 
$W \in \cW^s(R)$, the conditional measure $\mu_*^W$ satisfies $\mu_*^W(M_\vf)=1$.  Due to the fact that
forward ergodic averages are the same for any two points in $W$, it follows that $\bvf$ is constant on $W \cap M_\vf$.
The analogous fact holds for unstable manifolds in $\cW^u(R)$.

Let 
$$G_\vf = \{ x \in M_\vf : \bvf \mbox{ is constant on a full measure subset of $W^u(x)$ and $W^s(x)$} \}
\, .
$$
Clearly, $\mu_*(G_\vf)=1$, so the same facts apply to $G_\vf$ as $M_\vf$.

Let $W^0, W \in \cW^s(R)$ be stable manifolds with $\mu_*^{W0}(G_\vf) = \mu_*^W(G_\vf)=1$.  Let
$\Theta_W$ denote the holonomy map from $W^0 \cap R$ to $W \cap R$. 
By absolute continuity, Corollary~\ref{cor:abs cont}, $\mu_*^W(\Theta_W(W^0 \cap G_\vf)) > 0$.  Thus $\bvf$ is constant
for almost every point in $\Theta_W(W^0 \cap G_\vf)$.  Let $y$ be one such point and let $x = \Theta_W^{-1}(y)$.
Then since $x \in W^u(y) \cap G_\vf$,
\[
\bvf(x) = \bvf_-(x) = \bvf_-(y) = \bvf(y)\, , 
\]
so that the values of $\bvf$ on a positive measure set of points in $W^0$ and $W$ agree.  Since $\bvf$ is constant
on $G_\vf$, the values of $\bvf$ on a full measure set of points in $W$ and $W^0$ are equal.
Since this applies to any $W$ with $\mu_*^W(G_\vf) =1$, we conclude that $\bvf$ is constant almost everywhere
on the set $\cup_{W \in \cW^s(R)} W$.  Finally, since $\vf$ was an arbitrary continuous function, the set $\cW^s(R)$
belongs (mod 0) to a single ergodic component of $\mu_*$.
\end{proof}

We are now ready to prove  the K-mixing property of $\mu_*$.

\begin{proposition}
\label{prop:mixing}
$(T, \mu_*)$ is K-mixing.
\end{proposition}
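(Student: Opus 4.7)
The plan is to follow the Hopf-type strategy of Chernov--Haskell, building on the ergodicity statement of Lemma~\ref{lem:ergodic}. The first step is to upgrade Lemma~\ref{lem:ergodic} from ``one ergodic component per Cantor rectangle'' to global ergodicity. Recall the countable covering $\{R_j\}_{j \in \mathbb{N}}$ of $M^{reg}$ from Lemma~\ref{lem:cover}, with $\mu_*(R_j) > 0$ for each $j$. Given any two indices $i,j$, topological mixing of $T$ (a classical property of finite horizon Sinai billiards) together with the full support of $\mu_*$ (Proposition~\ref{lastitem}) provides $N \in \mathbb{N}$ and a Cantor sub-rectangle $R \subset R_i \cap T^{-N}R_j$ with $\mu_*(R) > 0$. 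Stable manifolds properly crossing $R$ thus also properly cross $R_i$ and, after iterating by $T^{N}$, unstable manifolds in $T^N R$ properly cross $R_j$. Applying Lemma~\ref{lem:ergodic} to $R_i$, $R$, and $R_j$, together with $T$-invariance of ergodic components, shows $\cW^s(R_i)$ and $\cW^s(R_j)$ lie in the same ergodic component, so $(T,\mu_*)$ is globally ergodic.

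Second, I would show that $(T^n, \mu_*)$ is ergodic for every $n \ge 1$. The key observation is that the whole argument above (Lemma~\ref{lem:ergodic}, absolute continuity from Corollary~\ref{cor:abs cont}, topological mixing, full support) transfers verbatim to $T^n$: Cantor rectangles are still Cantor rectangles for $T^n$, stable/unstable manifolds only refine, and topological mixing of $T$ implies topological mixing of $T^n$. Thus the rotation factor of any K-decomposition must be trivial.

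Third, I would invoke the Pesin--Katok--Strelcyn theory of non-uniformly hyperbolic systems with singularities, in the form adapted to billiards by Chernov--Haskell~\cite{ChH}, to conclude K-mixing. The prerequisites are: (i) $\mu_*$ is $T$-adapted, established in Theorem~\ref{nbhdthm}; (ii) $\mu_*$ has a positive Lyapunov exponent, which follows because $h_{\mu_*}(T)=h_*>0$ combined with the Ruelle inequality gives $\chi^+_{\mu_*} \ge h_* > 0$; (iii) local stable and unstable manifolds exist $\mu_*$-almost everywhere, also given by Theorem~\ref{nbhdthm}; and (iv) both stable and unstable foliations are $\mu_*$-absolutely continuous on every Cantor rectangle, the unstable case being Corollary~\ref{cor:abs cont} and the stable case following by time reversal since $\mu_*$ is a measure of maximal entropy for $T^{-1}$ as well (Remark~\ref{2.2}). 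Under these hypotheses, the Chernov--Haskell argument shows that on each ergodic component the Pinsker $\sigma$-algebra is trivial modulo a cyclic permutation of finitely many pieces; combined with the ergodicity of all $T^n$ from the previous step, the permutation is trivial, yielding K-mixing.

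The main obstacle, and the reason this proposition is not ``automatic,'' is the verification that the classical hyperbolic theory—originally developed for the smooth invariant measure $\musrb$ with its bounded density, distortion bounds in homogeneity strips, and the exponential mixing machinery of Young towers—truly applies to the singular measure $\mu_*$ constructed by transfer-operator methods. In particular, the proofs in \cite{ChH} use Rokhlin-type measurable partitions subordinated to unstable manifolds, whose entropy computation traditionally exploits the conditional densities of $\musrb$ along unstables. Here one must replace these densities by the leafwise measures of $\mu_*$ (cf.\ Lemma~\ref{lem:disint} and Proposition~\ref{prop:abs cont}) and check that the only input actually used is the absolute continuity of the holonomies, not their Jacobian regularity. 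The Borel--Cantelli estimate \eqref{rateeta} of Theorem~\ref{nbhdthm} is exactly what replaces the geometric bounds that, for $\musrb$, are usually extracted from $\int |\log d(\cdot, \cS_{\pm 1})|\,d\musrb < \infty$; thus the argument does go through, but each step of the adaptation has to be justified.
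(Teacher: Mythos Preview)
Your strategy---ergodicity of all $T^n$ via Cantor rectangles and topological mixing, then triviality of the Pinsker $\sigma$-algebra via absolute continuity of holonomies---is exactly the paper's approach. Two points, however, deserve correction.

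First, the paper does not invoke Katok--Strelcyn or Chernov--Haskell as a black box for the K-step; those results are stated for smooth invariant measures (or measures with conditional densities in a suitable class), and do not apply to $\mu_*$ off the shelf. Instead the paper carries out the Pinsker argument directly: one checks that the partitions $\xi^s$ and $\xi^u$ into maximal local stable and unstable manifolds satisfy $\xi^s \wedge \xi^u \ge \pi(T)$ by the standard Sinai--Rokhlin fact, then shows each Cantor rectangle $R_i$ lies (mod~0) in a single element of $\xi^s \wedge \xi^u$ using only the unstable holonomy absolute continuity of Corollary~\ref{cor:abs cont}, and concludes $\pi(T)$ is trivial from ergodicity of all $T^n$. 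This is short and self-contained, and avoids the adaptation issues you raise in your final paragraph.

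Second, your item (iv)---that absolute continuity of the \emph{stable} holonomy follows ``by time reversal since $\mu_*$ is a measure of maximal entropy for $T^{-1}$''---is circular at this point in the paper. Time reversal gives a measure $\mu_*^{-}$ built from the transfer operator of $T^{-1}$; to identify $\mu_*^{-}$ with $\mu_*$ you would need uniqueness of the MME, which is Proposition~\ref{prop:unique} and is proved only afterward. Fortunately this is unnecessary: the Pinsker argument requires only one direction of absolute continuity, and Corollary~\ref{cor:abs cont} suffices.
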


\begin{proof}
We begin by showing that $(T^n, \mu_*)$ is ergodic for all $n \ge 1$.
Recall the countable set of (locally maximal)
Cantor rectangles 
$\{ R_i \}_{i \in \mathbb{N}}$ with $\mu_*(R_i) >0$, such that $\cup_i R_i = M^{reg}$ 
from \eqref{fakecover}.

We fix $n$ and let $R_1$ and $R_2$ be two such Cantor rectangles.  By Lemma~\ref{lem:ergodic}, $\cW^s(R_i)$ belongs
(mod 0) to a single ergodic component of $\mu_*$.  Since $T$ is topologically mixing, and using \cite[Lemma~7.90]{chernov book},
there exists $n_0 > 0$ such that for any $k \ge n_0$, a smooth component of $T^{-k}(D(R_1))$ properly crosses $D(R_2)$.
Let us call $D_k$ the part of this smooth component lying in $D(R_2)$.
  
Since the set of stable manifolds is invariant under $T^{-k}$, by the maximality of the set $\cW^s(R_2)$, we have that 
$T^{-k}(\cW^s(R_1)) \cap D_k \supseteq \cW^s(R_2) \cap D_k$.  And since this set of stable manifolds in $R_1$ has positive measure
with respect to $\hat{\mu}_*$, it follows that $\mu_*(T^{-k}(\cW^s(R_1)) \cap \cW^s(R_2)) > 0$.  Thus $R_1$ and $R_2$
belong to the same ergodic component of $T$.  Indeed, since we may choose $k = j n$ for some $j \in \mathbb{N}$, $R_1$
and $R_2$ belong to the same ergodic component of $T^n$.  Since this is true for each pair of Cantor rectangles $R_i$, $R_j$ 
in our countable collection, and $\mu_*(\cup_i R_i) =1$, we conclude that $T^n$ is ergodic.

We shall use  the Pinsker partition 
$$
\pi(T) = \bigvee \{ \xi : \xi \mbox{ finite partition of } M, h_{\mu_*}(T, \xi) = 0 \}\, .
$$
Since $T$ is an automorphism, the sigma-algebra generated by $\pi(T)$ is $T$-invariant.

Given two measurable partitions $\xi_1$ and $\xi_2$, the meet of the two partitions $\xi_1 \wedge \xi_2$ is defined
as the finest measurable partition with the property that $\xi_1 \wedge \xi_2 \le \xi_j$ for $j = 1, 2$.  
All definitions of measurable partitions and inequalities between them are taken to be mod 0, with respect to the measure
$\mu_*$.
It is a standard fact in ergodic theory (see e.g. \cite{sinai rokhlin}) that if $\xi$ is a partition of $M$ such that
(i) $T\xi \ge \xi$ and (ii) $\vee_{n=0}^\infty T^n \xi = \epsilon$, where $\epsilon$ is the partition of $M$ into points, then
$\wedge_{n=0}^\infty T^{-n}\xi \ge \pi(T)$ (mod 0).

Define $\xi^s$ to be the partition of $M$ into maximal local stable manifolds.  If $x \in M$ has no stable manifold or $x$ is
an endpoint of a stable manifold then define $\xi^s(x) = \{x\}$.  Similarly, define $\xi^u$ to be the partition of $M$ into
maximal local unstable manifolds.  Note that $\xi^s$ is a measurable partition of $M$ since it is
generated by the countable family of finite partitions given by the elements of $\cM_0^n$ and their closures.  Similarly,
$\cM_{-n}^0$ provides a countable generator for $\xi^u$.

It is a consequence of the uniform hyperbolicity of $T$ that $\xi^s$ satisfies (i) and (ii) above.  
Also, $\xi^u$ satisfies these conditions with respect to $T^{-1}$, i.e.,
$T^{-1}\xi^u \ge \xi^u$ and $\vee_{n=0}^\infty T^{-n}\xi^u = \epsilon$.  Thus $\wedge_{n = 0}^\infty T^n \xi^u \ge \pi(T)$.

Define $\eta_{\infty} = \wedge_{n=0}^\infty (T^n \xi^u \wedge T^{-n} \xi^s)$, and notice that $\eta_\infty \ge \pi(T)$ by the above.
Then since $\xi^s \wedge \xi^u \ge \eta_\infty$, we have $\xi^s \wedge \xi^u \ge \pi(T)$ as well.

We will show that each Cantor rectangle in our countable family belongs to one element
of $\xi^s \wedge \xi^u$ (mod 0).  This will follow from the product structure of each $R_i$ coupled with the 
absolute continuity of the holonomy map given by Corollary~\ref{cor:abs cont}.

For brevity, let us fix $i$ and set $R = R_i$.   We index the curves $W^s_\zeta \in \cW^s(R)$ by $\zeta \in Z$.
Define $\mu_R = \frac{\mu_*|_R}{\mu_*(R)}$.  We disintegrate the measure $\mu_R$ into a family of conditional 
probability measures
$\mu_{R}^{ W^s}$, $W^s \in \cW^s(R)$, and a factor measure $\hat\mu_R$ on the set $Z$.  Then
\[
\mu_R(A) = \int_{\zeta \in Z} \mu_{R}^{ W^s_\zeta}(A) \, d\hat\mu_R(\zeta), \quad \mbox{for all measurable sets $A$}\, .
\]

The set $R$ belongs to a single element of $\xi^s \wedge \xi^u$ if a full measure set of points can be connected by elements of
$\xi^s$ and $\xi^u$ even after the removal of a set of $\mu_*$-measure 0.  Let 
$N \subset M$ be such that $\mu_*(N) = 0$.  By the above disintegration, it follows that 
for $\hat\mu_R$-almost every $\zeta \in Z$, we have $\mu_{R}^{ W^s_\zeta}(N) = 0$.

Let $W^s_1$ and $W^s_2$ be two elements of $\cW^s(R)$ such that $\mu_{R}^{W^s_j}(N) = 0$, for $j=1, 2$.
For all $x \in W^s_1 \cap R$, $\xi^u(x)$ intersects $W^s_2$, and vice versa.  Let $\Theta$ denote the holonomy map from
$W^s_1$ to $W^s_2$.  Then by Corollary~\ref{cor:abs cont}, we have $\mu_{R} ^{ W^s_2}(\Theta(W^s_1 \cap N)) = 0$
and $\mu_{R}^{ W^s_1}(\Theta^{-1}(W^s_2 \cap N)) = 0$.  Thus the set $\Theta(W^s_1 \setminus N)$ has full measure
in $W^s_2$ and vice versa.
It folllows that $W^s_1$ and $W^s_2$ belong to one element of $\xi^s \wedge \xi^u$.  
This proves that $R$ belongs to a single
element of $\xi^s \wedge \xi^u$ (mod 0).

Since $\xi^s \wedge \xi^u \ge \pi(T)$, we have shown that
each $R_i$ belongs to a single element of $\pi(T)$, mod 0.  Since $\mu_*(R_i) > 0$ and 
$\mu_*(\cup_i R_i) = 1$, the ergodicity of $T$ and the invariance of $\pi(T)$ imply that $\pi(T)$ contains finitely
many elements, all having the same measure, whose union has full measure.  The action of $T$ is simply a permutation
of these elements.  Since $(T^n, \mu_*)$ is ergodic for all $n$, it follows that $\pi(T)$ is trivial.  Thus $(T, \mu_*)$
is K-mixing.
\end{proof}

Now that we know that $\mu_*$ is ergodic,
the upper bound in Proposition~\ref{prop:max} will easily\footnote{It is not much
harder to deduce this fact in the absence of ergodicity,  using only \eqref{eq:lower ball} 
with Theorem~\ref{thm:h_*}.} imply that $h_{\mu_*}(T) = h_*$:

\begin{cor}[Maximum Entropy]
\label{cor:max}
For $\mu_*$ defined as in \eqref{defm},  we have $h_{\mu_*}(T) = h_*$. 
\end{cor}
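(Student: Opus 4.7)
The plan is to combine the upper bound \eqref{eq:upper ball} on the $\mu_*$-measure of Bowen balls with the ergodic theorem of Katok (or Brin--Katok) to obtain the lower bound $h_{\mu_*}(T) \ge h_*$, and then invoke the easy direction of the variational principle already established in Theorem~\ref{thm:h_*}\eqref{varprinc1} to get the matching upper bound.

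First, I would recall that $\mu_*$ is ergodic: this follows from the K-mixing property just proved in Proposition~\ref{prop:mixing}. Since $\mu_*$ charges the set of continuity points of $T$ with full measure (by Theorem~\ref{nbhdthm}, $\mu_*(\cup_n \cS_n)=0$), one may work on the $T$-invariant full-measure set $M' = M \setminus \cup_n \cS_n$ where $T$ is continuous and uniformly hyperbolic, and apply the ergodic entropy formula of Katok in the form
\[
h_{\mu_*}(T) \;=\; \lim_{\ve \to 0}\, \liminf_{n \to \infty} -\frac{1}{n} \log \mu_*(B_n(x, \ve))
\]
for $\mu_*$-a.e. $x \in M$. (Equivalently, one may use Katok's formulation in terms of the minimal cardinality $r_n^{\mu_*}(\ve,\delta)$ of an $(n,\ve)$-spanning set for a subset of $\mu_*$-measure $\ge 1-\delta$.)

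Next, I would apply \eqref{eq:upper ball}: for all $\ve>0$ sufficiently small, every $x \in M$ and every $n \ge 1$,
\[
-\frac{1}{n}\log \mu_*(B_n(x,\ve)) \;\ge\; h_* - \frac{\log A}{n} \, ,
\]
so that $\liminf_n -\frac{1}{n}\log \mu_*(B_n(x,\ve)) \ge h_*$ pointwise, and letting $\ve \to 0$ yields $h_{\mu_*}(T) \ge h_*$. The reverse inequality $h_{\mu_*}(T) \le h_*$ is immediate from Theorem~\ref{thm:h_*}\eqref{varprinc1}, so $h_{\mu_*}(T) = h_*$.

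The main technical point to verify is the applicability of the Brin--Katok/Katok formula in our piecewise smooth setting. The classical proof requires continuity on a compact metric space; here $T$ is discontinuous on $\cS_0$, but $\mu_*$ assigns no mass to $\cup_n \cS_n$, and $\cP$ is a finite generating partition for $\mu_*$ (as observed in the proof of Lemma~\ref{lem:geq}), so the Shannon--McMillan--Breiman argument underlying the Katok formula applies essentially verbatim after restricting to the invariant full-measure continuity set $M'$. If one prefers to avoid this subtlety, a direct Katok-style covering argument suffices: for any $\delta \in (0,1)$ and any $(n,\ve)$-spanning set $F$ of a set $A_n \subset M$ with $\mu_*(A_n) \ge 1-\delta$, one has
\[
1-\delta \;\le\; \mu_*(A_n) \;\le\; \sum_{y \in F}\mu_*(B_n(y,\ve)) \;\le\; |F|\, A\, e^{-n h_*}\, ,
\]
hence $|F| \ge (1-\delta) A^{-1} e^{n h_*}$, and this lower bound on spanning numbers at every sufficiently small scale translates, via the ergodicity of $\mu_*$ and the existence of a finite generator, into the inequality $h_{\mu_*}(T) \ge h_*$.
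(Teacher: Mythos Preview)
Your approach is essentially the same as the paper's: apply a Brin--Katok type local entropy formula together with the Bowen ball estimate from Proposition~\ref{prop:max}. The paper likewise invokes ergodicity (from K-mixing) and then cites \cite[Prop~3.1]{DWY} applied to $T^{-1}$ (recall $B_n(x,\ve)$ is defined via backward iterates, so the formula yields $h_{\mu_*}(T^{-1})=h_{\mu_*}(T)$).

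There are two minor differences worth noting. First, the paper uses \emph{both} \eqref{eq:upper ball} and \eqref{eq:lower ball} (with $p<1$) to pin down the limit $-\tfrac1n\log\mu_*(B_n(x,\ve))\to h_*$ directly, whereas you use only \eqref{eq:upper ball} for the lower bound $h_{\mu_*}(T)\ge h_*$ and then close with the variational inequality from Theorem~\ref{thm:h_*}\eqref{varprinc1}. Your route is slightly more economical and perfectly valid. Second, your justification for applying Brin--Katok in the discontinuous setting (``restrict to $M'$ and run Shannon--McMillan--Breiman'') is a bit informal: $M'$ is not compact, so the classical statement does not apply verbatim. The paper instead invokes the $T$-adapted condition $\int|\log d(x,\cS_{\pm1})|\,d\mu_*<\infty$ from Theorem~\ref{nbhdthm} to justify the use of \cite[Prop~3.1]{DWY}, a version of Brin--Katok that does not require continuity. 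You have this ingredient available, so the cleanest fix is simply to cite it rather than to argue via restriction to $M'$.
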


\begin{proof}
Since 
$\int |\log d(x, \cS_{\pm 1})| \, d\mu_* < \infty$ by Theorem~ \ref{nbhdthm}, and $\mu_*$
is ergodic, we may apply \cite[Prop~3.1]{DWY}\footnote{This is a slight generalization of the Brin-Katok local theorem \cite{brin},
using \cite[Lemma~2]{M}. 
Continuity of the map is not used in the proof of the theorem, and so it applies to our setting.} 
 to $T^{-1}$, which states that for $\mu_*$-almost every $x \in M$,
$$ \lim_{\epsilon \to 0} \liminf_{n \to \infty}
  - \tfrac 1n \log \mu_*(B_n(x, \epsilon)) =  \lim_{\epsilon \to 0} \limsup_{n \to \infty}
  - \tfrac 1n \log \mu_*(B_n(x, \epsilon)) = h_{\mu_*}(T^{-1})\, .
  $$
Using \eqref{eq:upper ball} and \eqref{eq:lower ball} with $p <1$, it follows that
$\lim_{n \to \infty}  - \tfrac 1n \log \mu_*(B_n(x, \epsilon)) = h_*$, for any $\ve>0$ sufficiently small. 
Thus $h_{\mu_*}(T) = h_{\mu_*}(T^{-1}) = h_*$.
\end{proof}

Corollary~\ref{cor:max} next allows us to prove Theorem~\ref{PP} about the
Bowen--Pesin--Pitskel entropy:

\begin{proof}[Proof of Theorem~\ref{PP}]
To show 
$
h_* \le \htop(T|_{M'})
$, we first use  Corollary~\ref{cor:max} and
the fact that $\mu_*(M')=1$ (since $\mu_*(\cS_n)=0$ for every $n$ by 
Theorem~\ref{nbhdthm})
to see that
$$
h_* =h_{\mu_*}(T)=\sup_{\mu: \mu(M')=1} h_\mu(T)\, .
$$
Then we apply the  bound  \cite[(A.2.1)]{Pesin} or
\cite[Thm 1]{PesinP} (by Remarks I and II there, $T$ need not be continuous on $M$) to get
$$
\sup_{\mu: \mu(M')=1} h_\mu(T)\le \htop(T |_{M'})\, .
$$

To show 
$
 \htop(T|_{M'})\le h_*
$, we  use that  \cite[(11.12)]{Pesin} implies\footnote{Just like in \cite[I and II]{PesinP}, it is essential that $M$ is compact, but
the fact that $T$ is not continuous on $M$ is irrelevant. Note also that \cite[(A.3'), p.~66]{Pesin} should be corrected, replacing
``any $\varepsilon > \epsilon>0$'' by ``any $\varepsilon > 1/m>0$.''}
$
\htop(T|_{M'})\le Ch_{M'}(T)
$, where  $Ch_{M'}(T)$ denotes the capacity topological entropy of the (invariant) set $M'$.  
Now, for any $\delta>0$, the elements of $\hP^{k}_{-k}=\cM^{k+1}_{-k-1}$ form an open
cover of $M'$ of diameter $<\delta$, if $k$ is large enough (see the proof of Lemma~\ref{lem:hsep}).
By adding finitely many open sets, we obtain  an open cover $\UU_\delta$ of $M$ of diameter $<\delta$.
Next \cite[(11.13)]{Pesin} gives that 
$$Ch_{M'}(T)=\lim_{\delta\to 0}\lim_{n\to \infty}\frac 1 n
\log \Lambda(M',\UU_\delta, n)\, ,
$$
where $\Lambda(M',\UU_\delta, n)$ is the smallest cardinality of a cover of $M'$
by elements of $\bigvee_{j=0}^ n T^{-j} \UU_\delta$.
Since for any $n\ge 1$, the sets of $\bigvee_{j=0}^ n T^{-j} \hP^{k}_{-k}$ form a cover of $M'$,
the second equality of  Lemma~\ref{prop:equiv} 
(i.e., $\lim_n \frac 1 n \log \# \hP^{k+n}_{-k}=h_*$)   implies that
$
Ch_{M'}(T)\le h_*
$.
\end{proof}


\subsection{Bernoulli Property of $\mu_*$}
\label{sec:bernoulli}

In this section, we prove that $\mu_*$ is Bernoulli by bootstrapping from K-mixing.
The key ingredients of the proof, in addition to K-mixing, are 
Cantor rectangles with a product structure of stable and unstable manifolds, the absolute
continuity of the unstable foliation with respect to $\mu_*$, and the bounds \eqref{nbhd}
on the neighbourhoods of the singularity sets.
First, we recall some
definitions, following Chernov--Haskell \cite{ChH} and the notion of 
very weak Bernoulli partitions introduced by Ornstein \cite{orn}.

Let $(X, \mu_X)$ and $(Y, \mu_Y)$ be two non-atomic Lebesgue probability spaces.
A {\em joining} $\lambda$ of the two spaces, is a measure on $X \times Y$ whose
marginals on $X$ and $Y$ are $\mu_X$ and $\mu_Y$, respectively.
Given finite partitions\footnote{As we shall not need the norms of $\cB$ and $\cB_w$ in this section,  we are free to
use the letters $\alpha$ and $\beta$ to denote partitions instead of real parameters.}
 $\alpha = \{ A_1, \ldots, A_k \}$ of $X$ and $\beta = \{ B_1, \ldots, B_k \}$ of 
$Y$, let $\alpha(x)$ denote the element
of $\alpha$ containing $x \in X$ (and similarly for $\beta$).  Moreover, if $x \in A_j$ and $y \in B_j$
for the same value of $j$ (which depends on the order in which the elements are listed), 
then we will write $\alpha(x) = \beta(y)$.  

The distance $\bd$ defined below considers two
partitions to be close if there is a joining $\lambda$ such that most of the measure
lies on the set of points $(x,y)$ with $\alpha(x) = \beta(y)$:
given two finite sequences of partitions $\{ \alpha_i \}_{i=1}^n$ of $X$ and $\{ \beta_i \}_{i=1}^n$
of $Y$, define
\[
\bd(\{\alpha_i\}, \{\beta_i\}) = \inf_{\lambda} \int_{X \times Y} h(x,y) \, d\lambda\, ,
\] 
where $\lambda$ is a joining of $X$ and $Y$ and $h$ is defined by
\[
h(x,y) = \frac 1n \#\{ i \in [1, \ldots, n] : \alpha_i(x) \neq \beta_i(y) \} \, .
\]

We will adopt the following notation:  If $E \subset X$, then $\alpha | E$ denotes the partition
$\alpha$ conditioned on $E$, i.e., the partition of $E$ given by elements of the form
$A \cap E$, for $A \in \alpha$.  Similarly, $\mu_X(\, \cdot \, | E)$ is the measure $\mu_X$ conditioned on $E$.
If a property holds for all atoms of $\alpha$ except for a collection whose union has measure
less than $\ve$, then we say the property holds for $\ve$-almost every atom of $\alpha$.

If $f : X \to X$ is an invertible, measure preserving transformation of $(X, \mu_X)$,
and $\alpha$ is a finite partition of $X$, then $\alpha$ is said to be 
{\em very weak Bernoullian} (vwB) if for all $\ve > 0$, there exists $N > 0$ such that
for every $n > 0$ and $N_0, N_1$ with $N < N_0 < N_1$, and for $\ve$-almost every atom $A$ of
$\bigvee_{N_0}^{N_1} f^i \alpha$, we have
\begin{equation}
\label{eq:vwB def}
\bd( \{ f^{-i}\alpha \}_{i=1}^n, \{ f^{-i}\alpha | A \}_{i=1}^n ) < \ve \, .
\end{equation}
The following theorem from \cite{orn weiss} provides the essential connection between the
Bernoulli property and vwB partitions.  (See also Theorems 4.1 and 4.2 in \cite{ChH}.)

\begin{theorem}
\label{thm:vwB}
If a partition $\alpha$ of $X$ is vwB, then 
$(X , \bigvee_{n=-\infty}^{\infty} f^{-n} \alpha, \mu_X, f)$ is a Bernoulli shift.
Moreover, if $\bigvee_{n=-\infty}^{\infty} f^{-n} \alpha$ generates the whole $\sigma$-algebra of $X$,
then $(X, \mu_X, f)$ is a Bernoulli shift.
\end{theorem}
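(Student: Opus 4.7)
The plan is to invoke the structure of Ornstein's isomorphism theory, where the very weak Bernoulli property is one of several equivalent characterizations of processes isomorphic to a Bernoulli shift. Since the theorem is stated abstractly (for any invertible measure-preserving $f$ on a non-atomic Lebesgue space and any finite partition), one cannot exploit specific features of the billiard; the argument must proceed purely at the level of abstract ergodic theory.

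First I would reduce the problem to showing that the factor process $(X, \bigvee_n f^{-n}\alpha, \mu_X, f)$ is isomorphic to a Bernoulli shift of the same entropy $h = h_{\mu_X}(f,\alpha)$, because the second statement (when $\bigvee_n f^{-n}\alpha$ is the full $\sigma$-algebra) is then automatic. The key intermediate notion is \emph{finitely determined} (FD): a partition $\alpha$ is FD if for every $\ve>0$ there exist $\delta>0$ and $N$ such that any other process $(Y, \nu, g, \beta)$ with the same cardinality, with distribution of $\bigvee_{i=0}^{N-1} g^{-i}\beta$ within $\delta$ (in variation) of that of $\bigvee_{i=0}^{N-1} f^{-i}\alpha$, and with $|h_\nu(g,\beta)-h|<\delta$, satisfies $\bd(\{f^{-i}\alpha\}_0^{N-1},\{g^{-i}\beta\}_0^{N-1})<\ve$. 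I would carry out the standard Ornstein argument that vwB implies FD: using the vwB inequality \eqref{eq:vwB def}, one controls the conditional $\bd$-distance on long cylinders of $\bigvee_{N_0}^{N_1}f^i\alpha$, then couples this with any auxiliary process $\beta$ close enough in distribution and entropy, assembling a joining block-by-block by a marriage/coupling construction to produce the required $\bd$-bound.

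Second, I would apply Ornstein's isomorphism theorem in its FD form: any finitely determined process of entropy $h$ is measurably isomorphic to the Bernoulli shift of entropy $h$. The idea is that the set of FD processes of a given entropy is $\bd$-closed, it contains i.i.d.\ processes of that entropy, and the isomorphism class of Bernoulli shifts is determined by entropy; an explicit isomorphism is built by a residuality argument combined with the coding/copying technique ("Sinai factor theorem plus reverse coding"), which uses precisely that $\bd$-close FD processes admit nearly measure-preserving isomorphisms of long cylinders, iterated and glued across scales. This gives Bernoullicity of the factor generated by $\alpha$, which is the first conclusion; if this factor $\sigma$-algebra equals the whole Borel $\sigma$-algebra mod $0$, then the full system $(X,\mu_X,f)$ is itself isomorphic to a Bernoulli shift, giving the second conclusion.

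The main obstacle is the second step, namely the explicit construction of the isomorphism from an FD process to a Bernoulli shift: this requires the full weight of the Ornstein machinery, in particular the iterative coding scheme that upgrades a $\bd$-approximate isomorphism on long cylinders into a genuine measurable isomorphism on the whole space. The first step (vwB $\Rightarrow$ FD) is also delicate but is essentially a combinatorial marriage-lemma argument. Since both steps are classical results contained in \cite{orn weiss} and its precursors \cite{orn}, in the context of the present paper one would simply cite these; the sketch above indicates how one would reconstruct the argument if needed.
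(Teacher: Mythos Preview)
Your proposal is correct, and in fact it goes further than the paper does: Theorem~\ref{thm:vwB} is not proved in the paper at all, but is simply quoted from \cite{orn weiss} (with a pointer to Theorems~4.1 and~4.2 of \cite{ChH}). Your sketch of the classical route (vwB $\Rightarrow$ finitely determined $\Rightarrow$ Bernoulli via Ornstein's isomorphism machinery) is the standard one, and your final remark---that in this context one should simply cite the references---matches exactly what the paper does.
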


We are ready to state and prove the main result of this section.

\begin{proposition}
\label{Bern} 
The measure $\mu_*$ is Bernoulli.
\end{proposition}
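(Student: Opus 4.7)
Following the strategy of Chernov--Haskell \cite{ChH}, the plan is to invoke Theorem~\ref{thm:vwB}: it suffices to exhibit a finite partition $\alpha$ of $M$ that is a generator for $T$ under $\mu_*$ and is very weak Bernoullian (vwB). All the essential ingredients have been assembled in the preceding sections, namely K-mixing (Proposition~\ref{prop:mixing}), absolute continuity of the unstable foliation with respect to $\mu_*$ (Corollary~\ref{cor:abs cont}), full support with positive-length stable/unstable manifolds $\mu_*$-a.e.\ (Proposition~\ref{lastitem} and Theorem~\ref{nbhdthm}), the $T$-adapted property, and the quantitative singularity-neighbourhood estimate \eqref{nbhd} with exponent $\gamma>1$.

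First, I would construct $\alpha$ out of the countable family of Cantor rectangles $\{R_j\}$ from \eqref{fakecover}. Since $\mu_*(\cup_j R_j)=1$, one can extract finitely many $R_{j_1},\ldots,R_{j_\ell}$ of small diameter whose union has $\mu_*$-measure arbitrarily close to one, trim them to be pairwise disjoint, and combine them into a finite partition $\alpha$ whose atoms have diameter less than a pre-chosen $\eta>0$ and whose boundary $\partial\alpha$ is a finite union of stable manifolds, unstable manifolds, and pieces of $\cS_{\pm 1}$, together with a residual atom of small measure. Corollary~\ref{integral}(b) together with Theorem~\ref{nbhdthm} yields $\mu_*(\partial\alpha)=0$. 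Taking $\eta$ small and using uniform hyperbolicity of $T$ on $M'$ together with $\mu_*(M')=1$, the refinement $\bigvee_{n\in\mathbb{Z}}T^{-n}\alpha$ separates $\mu_*$-a.e.\ pair of points, hence generates the Borel $\sigma$-algebra mod $\mu_*$.

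Second, and this is the main work, I would verify \eqref{eq:vwB def} by adapting the Sinai--Chernov--Haskell joining construction. Given $\varepsilon>0$ and large $N<N_0<N_1$, atoms $A$ of $\bigvee_{i=N_0}^{N_1}T^i\alpha$ are thin in the stable direction and generically long in the unstable direction, because $T$ contracts stable and expands unstable directions. Fix a Cantor rectangle $R$ of positive $\mu_*$-measure from the cover, whose stable diameter is much larger than the stable extent of $A$ on the scale relevant here. For a typical atom $A$, one identifies a subrectangle $R_A\subset R$ such that unstable manifolds in $\cW^u(R_A)$ that meet $A$ do so in a single sub-arc lying in $A$, and $\mu_*(\cdot\mid A)$ restricted to $R_A\cap A$ captures most of the conditional mass. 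The joining $\lambda$ of $(M,\mu_*)$ with $(A,\mu_*(\cdot\mid A))$ is then defined via the unstable holonomy inside $R$: for $\mu_*$-a.e.\ $x\in R$, set $y=W^u(x)\cap A\cap R_A$ (nonempty on a large subset), and off $R$ match arbitrarily. Corollary~\ref{cor:abs cont} ensures $\lambda$ is well defined with the correct marginals up to an $o(1)$ error, and K-mixing (Proposition~\ref{prop:mixing}) makes $\mu_*(R\cap A)/\mu_*(A)$ close to $\mu_*(R)$ for $\mu_*$-typical $A$. For every matched pair $(x,y)$, the two points share an unstable manifold, so $d(T^{-i}x,T^{-i}y)\le C\Lambda^{-i}$ as long as no singularity separates them before time $-i$.

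The main obstacle, and the technical heart of the argument, is quantifying how often matched pairs fall in different atoms of $\alpha$: this can happen either because the arc of $W^u$ between $x$ and $y$ gets cut by some $T^j\cS_0$ with $j\le i$, destroying the common unstable manifold at time $-i$, or because $T^{-i}x$ and $T^{-i}y$ straddle $\partial\alpha$, which includes pieces of $\cS_{\pm 1}$ and of stable/unstable boundary manifolds of the $R_{j_k}$. Both are rare: by \eqref{nbhd} and the Borel--Cantelli argument of \eqref{rateeta}, with $\gamma>1$ one has $d(T^{-i}x,\cS_{\pm 1})\ge e^{-\eta i^p}$ eventually for $\mu_*$-a.e.\ $x$ and any $p\in(1/\gamma,1)$. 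Comparing with the exponential contraction $d(T^{-i}x,T^{-i}y)\le C\Lambda^{-i}$, the bad set of indices $i\in[1,n]$ has density tending to zero as $\varepsilon\to0$ and $\eta\to0$, uniformly on a large-$\mu_*$-measure set of $x$. A Fubini estimate against $\lambda$, together with K-mixing to guarantee that $\mu_*$-most atoms $A$ enjoy the good geometric properties above, bounds the integrand in \eqref{eq:vwB def} by $\varepsilon$ for $N$ sufficiently large. Theorem~\ref{thm:vwB} then completes the proof.
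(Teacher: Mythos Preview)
Your overall strategy matches the paper's---both follow Chernov--Haskell \cite{ChH}, verifying that a generating partition is vwB using K-mixing (Proposition~\ref{prop:mixing}), absolute continuity of unstable holonomy (Corollary~\ref{cor:abs cont}), and the singularity bound \eqref{nbhd} with $\gamma>1$---but two of your implementation choices create real problems.

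The serious gap is in your joining. You fix a \emph{single} Cantor rectangle $R$ and match $x\in R$ to $y=W^u(x)\cap A$, matching ``arbitrarily off $R$.'' But $\mu_*(R)$ is a fixed number strictly less than $1$, so the arbitrary matching on $M\setminus R$ contributes a term of order $1-\mu_*(R)$ to $\bd$, which cannot be made smaller than $\varepsilon$. K-mixing only gives $\mu_*(R\mid A)\approx\mu_*(R)$, not $\approx 1$. The remedy, which the paper imports directly from \cite{ChH}, is a $\delta$-\emph{regular covering}: finitely many disjoint Cantor rectangles covering $M$ up to $\mu_*$-measure $\delta$, on each of which $\mu_*$ is $\delta$-close to a product measure (Corollary~\ref{cor:abs cont} plus a Lusin argument on the Jacobian of the holonomy). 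The joining is then assembled rectangle by rectangle.

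Two further simplifications the paper makes are worth noting. First, it takes $\alpha=\cM_{-1}^1$ rather than building $\alpha$ from trimmed solid rectangles; then $\partial\alpha=\cS_1\cup\cS_{-1}$ and \eqref{nbhd} applies directly, avoiding separate estimates for stable and unstable boundary pieces of your $R_{j_k}$. Second, instead of Borel--Cantelli and ``density of bad indices,'' the paper bounds the bad sets $F_3,F_4$ (points whose local stable or unstable manifold meets $\partial\alpha$ too soon) directly via
\[
\sum_{i\ge m}\mu_*\bigl(\cN_{C\Lambda^{-i}\delta}(\partial\alpha)\bigr)\le C\sum_{i\ge m}|\log(\Lambda^{-i}\delta)|^{-\gamma}\le C'|\log\delta|^{1-\gamma},
\]
which is small precisely because $\gamma>1$ and delivers the uniformity in $n$ that \eqref{eq:vwB def} requires without passing through almost-sure statements.
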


\begin{proof} 
First notice that since $f$ is measure preserving in \eqref{eq:vwB def}, then to prove that 
a partition $\alpha$ is vwB, it suffices to show that for every $\ve > 0$, there exist
integers $m$ and $N > 0$ such that for every $n, N_0, N_1$ with $N < N_0 < N_1$, and for
$\ve$-almost every atom $A$ of $\bigvee_{N_0-m}^{N_1-m} f^i\alpha$,
\begin{equation}
\label{eq:vwB alt}
\bd ( \{ f^{-i} \alpha \}_{i=1+m}^{n+m}, \{ f^{-i} \alpha | A \}_{i=1+m}^{n+m} ) < \ve \,  .
\end{equation}

To prove Proposition~\ref{Bern}, we will follow the arguments in Sections 5 and 6 of \cite{ChH}, 
only indicating
where modifications should be made.

First, we remark that \cite{ChH} decomposes the measure $\musrb$ into conditional measures
on unstable manifolds and a factor measure on the set of unstable leaves.  Due 
to Corollary~\ref{cor:abs cont}, we prefer to decompose $\mu_*$ into conditional 
measures on stable manifolds and the factor measure $\hat\mu_*$.  For this reason, we exchange
the roles of stable and unstable manifolds throughout the proofs of \cite{ChH}.

To this end, we take $f = T^{-1}$ in the set-up presented above, and $X = M$.  
Moreover, we set $\alpha = \cM_{-1}^1$, since this (mod 0) partition generates the full $\sigma$-algebra on $M$.
We will follow the proof of \cite{ChH} to show that $\alpha$ is vwB, and so by
Theorem~\ref{thm:vwB}, $\mu_*$ will be Bernoulli with respect to $T^{-1}$, and therefore with
respect to $T$.  
The proof in \cite{ChH} proceeds in two steps.

\smallskip
\noindent
{\em Step 1.  Construction of $\delta$-regular coverings.}  Given $\delta > 0$, 
the idea is to cover $M$, up to a set of $\mu_*$-measure at most $\delta$, by 
Cantor rectangles of stable and unstable manifolds such that $\mu_*$ restricted to each
rectangle is arbitrarily close to a product measure.  This is very similar to our covering
$\{ R_i \}_{i \in \mathbb{N}}$ from \eqref{fakecover}; however, some adjustments must be made
in order to guarantee uniform properties for the Jacobian of the relevant holonomy map.

On a Cantor rectangle $R$ with $\mu_*(R)>0$, we can define a product measure as 
follows.\footnote{We follow the definition in \cite[Section 5.1]{ChH}, exchanging the roles
of stable and unstable manifolds.}
Fix a point $z \in R$, and consider $R$ as the product of $R \cap W^s(z)$ with $R \cap W^u(z)$,
where $W^{s/u}(z)$ are the local stable and unstable manifolds of $z$, respectively.
As usual, we disintegrate $\mu_*$ on $R$ into conditional measures 
$\mu_{*,R}^W$, on $W \cap R$, where
$W \in \cW^s(R)$, and a factor measure $\hat\mu_*$ on the index set $\Xi_R$ of the curves
$\cW^s(R)$.

Define $\mu_{*,R}^p = \mu_{*,R}^{W^s(z)} \times \hat\mu_*$ and note that we can view
$\hat\mu_*$ as inducing a measure on $W^u(z)$.   Corollary~\ref{cor:abs cont} implies 
that $\mu_{*,R}^p$ is absolutely continuous with respect to $\mu_*$. 
The following definition is taken from \cite{ChH} (as mentioned
above,  a $\delta$-regular covering of $M$ is a collection of rectangles which covers
$M$ \emph{up to a set of measure $\delta$}).

\begin{definition}
\label{def:reg}
For $\delta > 0$, a $\delta$-regular covering of $M$ is a finite collection of disjoint Cantor rectangles
$\cR$ for which,\footnote{The corresponding definition in \cite{ChH} has a third condition, but
this is trivially satisfied in our setting since our stable and unstable manifolds are one-dimensional
and have uniformly bounded curvature.}
\begin{itemize}
  \item[a)]  $\mu_*(\cup_{R \in \cR}R) \ge 1- \delta$.
  \item[b)] Every $R \in \cR$ satisfies $\big| \frac{\mu_{*,R}^p(R)}{\mu_*(R)} - 1\big| < \delta$.
  Moreover, there exists $G \subset R$, with $\mu_*(G) > (1-\delta) \mu_*(R)$, such that
  $\bigl | \frac{d\mu_{*,R}^p}{d\mu_*}(x) - 1\bigr| < \delta$ for all $x \in G$.
\end{itemize}
\end{definition}

By \cite[Lemma~5.1]{ChH}, such coverings exist for any $\delta>0$.  The proof
essentially uses the covering from \eqref{fakecover}, and then subdivides the rectangles into
smaller ones on which the Jacobian of the holonomy between stable manifolds is nearly 1, in order to satisfy item (b) above.
This argument relies on Lusin's theorem and goes through in our setting with no changes.
Indeed, the proof in our case is simpler since the angles between stable and unstable subspaces
are uniformly bounded away from zero, and the hyperbolicity constants in \eqref{eq:hyp}
are uniform for all $x \in M$.

\smallskip
\noindent
{\em Step 2.  Proof that $\alpha = \cM_{-1}^1$ is vwB.}
Indeed, \cite{ChH} prove that any $\alpha$ with piecewise smooth boundary is vwB, but due to
Theorem~\ref{thm:vwB}, it suffices to prove it for a single partition which generates the $\sigma$-algebra
on $M$.  Moreover, using $\alpha = \cM_{-1}^1$ allows us to apply 
the bounds \eqref{nbhd} directly since $\partial \alpha = \cS_1 \cup \cS_{-1}$.

Fix $\ve > 0$, and define
\[
\delta = e^{- (\ve/C')^{2/(1-\gamma)}} \, ,
\]
where $C'>0$ is the constant from \eqref{eq:bad set}.

Let $\cR = \{ R_1, R_2, \ldots, R_k \}$ be a $\delta$-regular cover of $M$ such that
the diameters of the $R_i$ are less than $\delta$.
Define the partition $\pi = \{ R_0, R_1, \ldots, R_k \}$, where $R_0 = M \setminus \cup_{i=1}^k R_i$.
For each $i \ge 1$, let $G_i \subset R_i$ denote the set identified in 
Definition~\ref{def:reg}(b).

Since $T^{-1}$ is K-mixing, there exists an even integer $N = 2m$, such that for any
integers $N_0, N_1$ such that $N < N_0 < N_1$, $\delta$-almost every atom 
$A$ of $\bigvee_{N_0-m}^{N_1-m} T^{-i}\alpha$ satisfies,
\begin{equation}
\label{eq:K}
\left| \frac{\mu_*(R|A)}{\mu_*(R)} -1 \right| < \delta, \qquad \mbox{for all $R \in \pi$}\, .
\end{equation}

Now let $n, N_0, N_1$ be given as above, and define $\omega = \bigvee_{N_0-m}^{N_1-m} T^{-i}\alpha$.
\cite{ChH} proceeds to show that $c\ve$-almost every atom of $\omega$ satisfies \eqref{eq:vwB alt}
with $\ve$ replaced by $c\ve$ for some uniform constant $c>0$.  The first set of estimates in the
proof is to bound the measure of bad sets which must be thrown out, and to show that these
add up to at most $c\ve$.

The first set is $\hat F_1$, which is the union of all atoms in $\omega$, which do not
satisfy \eqref{eq:K}.  By choice of $N$, we have $\mu_*(\hat F_1) < \delta$.

The second set is $\hat F_2$.  Let $F_2 = \cup_{i=1}^k R_i \setminus G_i$, and
define $\hat F_2$ to be the union of all atoms $A \in \omega$, for which either
$\mu_*(F_2|A) > \delta^{1/2}$, or
\[
\sum_{i=1}^k \frac{\mu_{*, R_i}^p(A \cap F_2)}{\mu_*(A)} > \delta^{1/2} \, .
\]
It follows as in \cite[Page 38]{ChH}, with no changes, that $\mu_*(\hat F_2) < c\delta^{1/2}$, for 
some $c>0$ independent of $\delta$ and $k$.

Define $F_3$ to be the set of all points $x \in M \setminus R_0$ such that 
$W^s(x)$ intersects the boundary of the element $\omega(x)$ before it fully crosses
the rectangle $\pi(x)$.  Thus if $x \in F_3$, there exists a subcurve of $W^s(x)$
connecting $x$ to the boundary of $(T^{-i}\alpha)(x)$ for some $i \in [N_0 - m, N_1-m]$.
Then since $\pi(x)$ has diameter less than $\delta$, 
$T^i(x)$ lies within a distance $C_1\Lambda^{-i}\delta$ of the boundary of $\alpha$, where
$C_1$ is from \eqref{eq:hyp}.  Using \eqref{nbhd}, the total measure of such points must add
up to at most
\begin{equation}
\label{eq:bad set}
\sum_{i=N_0-m}^{N_1-m} \frac{C}{|\log(C_1\Lambda^{-i} \delta)|^{\gamma}}
\le C' |\log \delta|^{1-\gamma} \, ,
\end{equation}
for some $C'>0$.  Letting $\hat F_3$ denote the union of atoms $A \in \omega$ such that
$\mu_*(F_3 | A) > |\log \delta|^{\frac{1-\gamma}{2}}$, it follows that 
$\mu_*(\hat F_3) \le C' |\log \delta|^{\frac{1-\gamma}{2}}$.  This is at most $\ve$ by choice
of $\delta$.

Define $F_4$ (following \cite[Section~6.1]{ChH}, and not  \cite[Section 6.2]{ChH}) to be the set of all 
$x \in M\setminus R_0$ for which there exists
$y \in W^u(x) \cap \pi(x)$ such that $h(x,y) > 0$.  This implies that 
$W^u(x)$ intersects the boundary of the element $(T^i \alpha)(x)$ for some
$i \in [1+m, n+m]$, remembering \eqref{eq:vwB alt}, and the definition of $h$.
Using again the uniform hyperbolicity \eqref{eq:hyp}, this implies that $T^{-i}(x)$ lies in 
a $C_1\Lambda^{-i}\delta$-neighbourhood of the boundary of $\alpha$.  Thus the
same estimate as in \eqref{eq:bad set} implies $\mu_*(F_4) \le C'|\log \delta|^{1-\gamma}$.
Finally, letting $\hat F_4$ denote the union of all atoms $A \in \omega$ such that
$\mu_*(F_4 | A) > |\log \delta|^{\frac{1-\gamma}{2}}$, it follows as before that 
$\mu_*(\hat F_4) \le C' |\log \delta|^{\frac{1-\gamma}{2}}$.

Finally, the bad set to be avoided in the construction of the joining $\lambda$ is
$R_0 \cup (\cup_{i=1}^4 \hat F_i)$.  Its measure is less than $c\ve$ by choice of $\delta$.
From this point, once the measure of the bad set is controlled, 
the rest of the proof in Section 6.2 of \cite{ChH} can be repeated verbatim.  This proves that
\eqref{eq:vwB alt} holds for $c\ve$-almost every atom of $\omega$, and thus that $\alpha$
is vwB.
\end{proof}


\subsection{Uniqueness of the measure of maximal entropy}
\label{sec:un}

This subsection is devoted to the following proposition:

\begin{proposition}
\label{prop:unique}
The measure $\mu_*$ is the unique measure of maximal entropy.
\end{proposition}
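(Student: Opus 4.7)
The plan is to show that any ergodic $T$-invariant Borel probability measure $\mu'$ with $h_{\mu'}(T)=h_*$ must coincide with $\mu_*$. By affinity of $\mu\mapsto h_\mu(T)$ on the set of $T$-invariant probabilities and the ergodic decomposition, uniqueness among ergodic measures of maximal entropy implies uniqueness in general, so I focus on the ergodic case. Ruelle's inequality $h_{\mu'}(T)\le \int \log J^u T\, d\mu'$ together with $h_*>0$ already gives that $\mu'$ has a positive Lyapunov exponent, so $\mu'$ is hyperbolic.

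The route I would then take is via symbolic dynamics, paralleling what was done for $\mu_*$ in the proof of Corollary~\ref{periodic}. After establishing that $\mu'$ is $T$-adapted (the main obstacle, discussed below), Theorem~1.3 of \cite{LM} applies to produce a countable topological Markov shift $(\Sigma,\sigma)$ and a H\"older finite-to-one factor map $\pi:\Sigma\to M$ such that $\mu'$ lifts to a $\sigma$-invariant probability $\hat\mu'$ on $\Sigma$ with $h_{\hat\mu'}(\sigma)=h_{\mu'}(T)=h_*$. Since the same construction applies to $\mu_*$, we obtain a lift $\hat\mu_*$ of $\mu_*$ with $h_{\hat\mu_*}(\sigma)=h_*$. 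By Proposition~\ref{prop:mixing}, $\mu_*$ is K-mixing, hence $\hat\mu_*$ is supported on a single irreducible topologically mixing component $\Sigma_0\subset\Sigma$ whose Gurevi\v{c} entropy equals $h_*$. The ergodic lift $\hat\mu'$ is supported on a single irreducible component of Gurevi\v{c} entropy $\ge h_*$; standard arguments (see e.g.\ the proof of Thm.~1.1 in \cite{Sar} or \cite[\S3]{Bu}) identify this component with $\Sigma_0$. The Gurevi\v{c}--Sarig uniqueness theorem for measures of maximal entropy on topologically mixing countable Markov shifts of finite Gurevi\v{c} entropy then yields $\hat\mu'=\hat\mu_*$, and pushing forward under $\pi$ gives $\mu'=\mu_*$.

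The \emph{main obstacle} is to prove that any ergodic measure of maximal entropy $\mu'$ is $T$-adapted, i.e.\ that $\int |\log d(x,\cS_{\pm 1})|\, d\mu' < \infty$. This is exactly where the sparse-recurrence hypothesis $h_*>s_0\log 2$ should enter. Heuristically, the iterate of a short stable curve inside a near-grazing tubular neighbourhood contracts only like $2^{-s_0 n}$ (cf.\ \eqref{eq:control}), so orbit segments concentrated near $\cS_{\pm 1}$ can contribute at most roughly $s_0\log 2$ to the entropy per unit time; if $\mu'$ were not adapted, too much mass would sit near the singular set and $h_{\mu'}(T)$ could not reach $h_*$. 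Making this quantitative could proceed by combining the fragmentation estimate Lemma~\ref{lem:growth}(a) with an entropy-defect argument along the lines used in \cite{LM,Bu}, or alternatively by using the identification $h_*=\htop(T|_{M'})$ of Theorem~\ref{PP} and the variational principle relative to the non-compact set of continuity $M'$ to force $\mu'$ to charge only points whose orbits do not approach $\cS_{\pm 1}$ too fast. The delicate point is ensuring that the heuristic bound $s_0\log 2$ for the ``singular contribution'' to entropy is rigorously sharp and compatible with the Lima--Matheus hypotheses (A5)--(A6).
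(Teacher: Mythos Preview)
Your approach is genuinely different from the paper's, and it contains a real gap that you yourself identify: you do not prove that an arbitrary ergodic measure of maximal entropy $\mu'$ is $T$-adapted. The heuristic you sketch---that orbit segments near $\cS_{\pm 1}$ contribute at most $s_0\log 2$ to the entropy---is suggestive but is not a proof, and turning it into one looks difficult. The bound \eqref{eq:control} controls the \emph{length} of a short stable curve under iteration, but translating this into an entropy defect for an \emph{arbitrary} invariant measure that might charge neighbourhoods of the singularity set requires something like the lower bounds on $\mu_*$-measures of partition elements that the paper establishes; without these, there is no obvious mechanism forcing $\int|\log d(x,\cS_{\pm 1})|\,d\mu'<\infty$. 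There is also a secondary issue you gloss over: the Lima--Matheus coding is built for a \emph{given} adapted measure, so the shifts $(\Sigma,\sigma)$ obtained from $\mu_*$ and from $\mu'$ are a priori different, and identifying the irreducible component supporting $\hat\mu'$ with $\Sigma_0$ is not automatic (Buzzi's later work addresses related questions, but this needs to be argued carefully).

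The paper's proof avoids both difficulties by working directly in $M$ and adapting Bowen's classical argument. It suffices to show that any invariant probability $\mu$ singular to $\mu_*$ has $h_\mu(T)<h_*$. The key input, replacing the Gibbs property, is a lower bound $\mu_*(A)\ge C_{\delta_2}e^{-jh_*}$ for elements $A\in\cM_{-j}^0$ that are ``long'' in both directions (Lemma~\ref{lem:lower bound}), together with a combinatorial estimate (Lemma~\ref{lem:bad}) showing that the elements of $\cM_{-2n}^0$ that never have a long ancestor form a subexponentially small fraction. These two facts allow one to bound $H_\mu(\cQ_{2n})$ by splitting into good and bad elements and invoking convexity of $x\log x$, exactly as in Bowen's proof, with the singularities handled entirely through the counting estimates of Section~\ref{sec:growth} rather than through any a~priori adaptedness of $\mu$.
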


The proof of uniqueness relies on exploiting the fact that while the lower bound on Bowen balls (or elements of $\cM_{-n}^0$)
cannot be improved for $\mu_*$-almost every $x$, yet if one fixes $n$, most elements of $\cM_{-n}^0$ should either
have unstable 
diameter of a fixed length, or have previously been contained in an element of $\cM_{-j}^0$ with this property for
some $j<n$.  Such elements collectively satisfy stronger lower bounds on their measure.  
Since we have established good control of the elements of $\cM_{-n}^0$ and $\cM_0^n$ 
in the fragmentation lemmas of
Section~\ref{sec:growth}, we will work with these partitions instead of Bowen balls.  

Recalling \eqref{eq:complex}, choose $m_1$ such that $(Km_1+1)^{1/m_1} < e^{h_*/4}$.
Now choose $\delta_2>0$ sufficiently small that for all $n,k \in \mathbb{N}$, if $A \in \cM_{-n}^k$ is
such that
$$\max \{ \diam^u(A), \diam^s(A) \} \le \delta_2\, ,
$$ then $A \setminus \cS_{\pm m_1}$ consists of no more than 
$Km_1+1$ connected components.  

For $n \ge1$, define
\begin{align*}
B_{-2n}^0 = \{ A \in \cM_{-2n}^0 &: \forall \, j, 0 \le j \le n/2, \,\\
&\quad  T^{-j}A \subset E \in \cM_{-n+j}^0 \mbox{ such that } \diam^u(E) < \delta_2 \} \, ,
\end{align*}
with the analogous definition for $B_0^{2n} \subset \cM_0^{2n}$ replacing unstable diameter by stable diameter.
Next, set $B_{2n} = \{ A \in \cM_{-2n}^0 : \mbox{ either $A \in B_{-2n}^0$ or $T^{-2n}A \in B_0^{2n}$ } \}$.
Define $G_{2n} = \cM_{-2n}^0 \setminus B_{2n}$.

Our first lemma shows that the set $B_{2n}$ is small relative to $\# \cM_{-2n}^0$ for large $n$. 
Let $n_1 >2 m_1$ be chosen so that for all $A \in \cM_{-n}^0$, $\diam^s(A) \le C \Lambda^{-n} \le \delta_2$ 
for all $n \ge n_1$.

\begin{lemma}
\label{lem:bad}
There exists $C>0$ such that for all $n \ge n_1$,
\[
\# B_{2n} \le C e^{3n h_*/2}  (Km_1+1)^{\frac{n}{m_1}+1} \le C e^{7n h_*/4} \, .
\]
\end{lemma}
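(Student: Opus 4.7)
The plan is to reduce to bounding $\#B_{-2n}^0$. By definition, $B_{2n}\subseteq B_{-2n}^0\cup\{A\in\cM_{-2n}^0 : T^{-2n}A\in B_0^{2n}\}$, so $\#B_{2n}\le \#B_{-2n}^0 + \#B_0^{2n}$. The construction of these two sets is symmetric under time reversal (swapping the roles of stable and unstable directions, and $T$ with $T^{-1}$), so the same upper bound will apply to both with constants absorbed at the end.

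A preliminary observation is that for $A\in B_{-2n}^0$ and $0\le j\le n/2$, the cell $E\in\cM_{-(n-j)}^0$ containing $T^{-j}A$, which is assumed to have $\diam^u(E)<\delta_2$, automatically satisfies $\diam^s(E)\le C\Lambda^{-(n-j)}\le C\Lambda^{-n/2}\le\delta_2$ for $n\ge n_1$. Therefore $\max\{\diam^u(E),\diam^s(E)\}\le\delta_2$, and by the choice of $m_1$ and $\delta_2$ at the start of this section, $E\setminus\cS_{\pm m_1}$ has at most $Km_1+1$ connected components. This bidirectional smallness is the tool that unlocks the chunked refinement count below.

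The heart of the argument is a factorization of each $A\in B_{-2n}^0$ into a coarse cell plus refinement data. I would coarsen $T^{-n/2}A\in\cM_{-3n/2}^{n/2}$ to an element $F\in\cM_{-3n/2}^0$; by Proposition~\ref{cor:exp}, $\#\cM_{-3n/2}^0\le\tfrac{2}{c_1}e^{3nh_*/2}$, supplying the factor $e^{3nh_*/2}$. Using the $j=n/2$ constraint and the fact that $\cM_{-3n/2}^0$ refines $\cM_{-n/2}^0$, the cell $F$ is contained in the small $E^{(n/2)}\in\cM_{-n/2}^0$ that contains $T^{-n/2}A$, hence $F$ is itself small in both directions. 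For the refinement count, I would partition the layers of cuts needed to pass from $F$ to the specific $\cM_{-3n/2}^{n/2}$-cell containing $T^{-n/2}A$ (together with any additional cuts needed to enforce the remaining $j<n/2$ conditions) into batches of $m_1$ layers, translating each batch into the range $\cS_{\pm m_1}$ via a suitable power of $T$. The setup condition then bounds the number of refinements produced by each batch by $Km_1+1$; summed over at most $n/m_1+1$ batches this gives the factor $(Km_1+1)^{n/m_1+1}$. Multiplying yields $\#B_{-2n}^0\le Ce^{3nh_*/2}(Km_1+1)^{n/m_1+1}$, and the second inequality $\le Ce^{7nh_*/4}$ follows because $(Km_1+1)^{n/m_1+1}\le(Km_1+1)\cdot e^{nh_*/4}$ by the choice of $m_1$ with $(Km_1+1)^{1/m_1}<e^{h_*/4}$.

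The main obstacle will be the chunking argument: at each batch we must verify that the $T^k$-transformed cell remains small in both stable and unstable directions so that the setup condition applies. This is subtle because $T$ contracts one direction while expanding the other, and the shift parameter $k$ varies across batches over a range of order $n$; a naive estimate would allow blowups of order $\Lambda^n$ in one direction. The resolution is that the starting cell $F$ is already bidirectionally small at scale $C\Lambda^{-n/2}$, which is much tinier than $\delta_2$ for large $n$ and provides enough slack to absorb the $T^k$-distortions at every batch while still landing in the regime $\diam\le\delta_2$ where the setup condition applies.
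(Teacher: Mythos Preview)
Your overall strategy coincides with the paper's: reduce to $\#B_{-2n}^0$ by time-reversal, coarsen to $F=A_0\in\cM_{-\lceil 3n/2\rceil}^0$ to pick up the factor $e^{3nh_*/2}$ via Proposition~\ref{cor:exp}, then chunk the remaining $\lfloor n/2\rfloor$ forward iterates into blocks of length $m_1$ to get $(Km_1+1)^{n/m_1+1}$. Your preliminary observation (that for each $j$ the cell $E\in\cM_{-(n-j)}^0$ is small in both stable and unstable directions) is exactly the right tool.

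The genuine gap is in your resolution of the ``main obstacle.'' The claim that $F$ is ``bidirectionally small at scale $C\Lambda^{-n/2}$'' is false in the unstable direction: all you know is $\diam^u(F)\le\diam^u(E^{(n/2)})<\delta_2$, not $C\Lambda^{-n/2}$. Worse, forward iteration $T^{m_1}$ expands the unstable direction, with no uniform bound on the expansion factor (the derivative of $T$ blows up near grazing), so no amount of initial slack in $\diam^u$ can be propagated through $\lfloor n/2\rfloor$ iterates. Your slack argument only controls the stable side, which was never the issue.

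The correct resolution is the one you already set up but did not invoke. Define, as the paper does, $A_{jm_1}\in\cM_{-\lceil 3n/2\rceil-jm_1}^0$ to be the element containing $T^{-(\lfloor n/2\rfloor-jm_1)}A$. Then apply your preliminary observation at the parameter $j'=\lfloor n/2\rfloor-jm_1\in[0,n/2]$: the $B_{-2n}^0$ condition gives a cell $E\in\cM_{-(n-j')}^0$ with $\diam^u(E)<\delta_2$ containing $T^{-j'}A$, and since $\cM_{-(2n-j')}^0$ refines $\cM_{-(n-j')}^0$ we get $A_{jm_1}\subset E$, hence $\diam^u(A_{jm_1})<\delta_2$. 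The stable bound $\diam^s(A_{jm_1})\le C\Lambda^{-(2n-j')}\le\delta_2$ is automatic. Thus the $\delta_2$-hypothesis applies at every batch, and the induction goes through exactly as you outlined. The key point is that the definition of $B_{-2n}^0$ imposes the unstable-smallness constraint at \emph{every} $j\in[0,n/2]$, and the proof must spend that constraint at every chunking step, not just at $j=n/2$.
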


\begin{proof}
Fix $n \ge n_1$ and
suppose $A \in B_{-2n}^0 \subset \cM_{-2n}^0$.  For $0 \le j \le \lfloor n/2 \rfloor$, define $A_j \in \cM_{-\lceil 3n/2 \rceil-j}^0$ 
to be the element containing $T^{-(\lfloor n/2 \rfloor -j)}A$ (note that $T^{-k}A \in \cM_{-2n + k}^k$ for each $k \le 2n$).

By definition of $B^0_{-2n}$ and choice of $n_1$, we have $\max \{ \diam^u(A_j), \diam^s(A_j) \} \le \delta_2$.  Thus
the number of connected components of $\cM_{-\lceil 3n/2 \rceil}^{m_1}$ in $A_0$ is at most $Km_1+1$.
Thus the number of connected components of $T^{m_1}A_0$ (one of which is $A_{m_1}$) is at most $Km_1+1$.
Since the stable and unstable diameters of $A_{m_1}$ are again both shorter than $\delta_2$ (since $A \in B_{-2n}^0$)
and $n >2 m_1$, we may apply this
estimate inductively.  Thus writing $\lfloor n/2 \rfloor = \ell m_1 + i$ for some $i < m_1$, we have that
$\# \{A' \in B_{-2n}^0 : T^{-\lfloor n/2 \rfloor}A' \subset A_0 \} \le (Km_1+1)^{\ell+1}$.  Summing over all possible 
$A_0 \in \cM_{-\lceil 3n/2 \rceil}^0$ yields by Proposition~\ref{cor:exp} and choice of $m_1$,
\[
\# B_{-2n}^0 \le \# \cM_{-\lceil 3n/2 \rceil}^0 (Km_1+1)^{n/m_1+1}
\le C e^{7nh_*/4} \, .
\]
A similar estimate holds for $\# B_0^{2n}$.   Given the one-to-one correspondence between elements of $\cM_{-2n}^0$ and
$\cM_0^{2n}$, it follows that $\# B_{2n} \le 2 \# B_{-2n}^0$, proving the required estimate.
\end{proof}

Next, the following lemma establishes the importance of long pieces in providing good lower bounds on the measure
of partition elements.

\begin{lemma}
\label{lem:lower bound}
There exists $C_{\delta_2} > 0$,
such that for all $j \ge 1$ and all $A \in \cM_{-j}^0$ such that $\diam^u(A) \ge \delta_2$ and $\diam^s(T^{-j}A) \ge \delta_2$,
we have\footnote{It also follows from the proof of Proposition~\ref{prop:max} that the upper bound
$\mu_*(A) \le C e^{-j h_*}$ holds for all $A \in \cM_{-j}^0$ for some constant $C>0$ independent of
$j$ and $\delta_2$, but we shall not need this here.}
\[
\mu_*(A) \ge C_{\delta_2} e^{-j h_*} \, .
\]
\end{lemma}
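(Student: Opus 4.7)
The plan is to exploit the $T$-invariance $\mu_*(A) = \mu_*(T^{-j}A)$ and build a locally maximal Cantor rectangle $\tilde R$ inside $B := T^{-j}A$ that witnesses the product structure forced by the two hypotheses, then bound $\mu_*(\tilde R)$ from below via the absolute continuity and leafwise lower bound already available. Concretely, the hypothesis $\diam^s(B)\ge \delta_2$ gives a stable curve $V\subset B$ with $|V|\ge \delta_2$, and the long unstable curve $U\subset A$ of length $\ge \delta_2$ produces, via \eqref{eq:hyp}, an unstable curve $T^{-j}U\subset B$ of length at least $c\Lambda^{-j}\delta_2$. After shifting along stable/unstable directions if necessary, pick a transverse intersection point $p\in V\cap T^{-j}U$ in the middle thirds of both curves, and apply \cite[Prop.~7.81]{chernov book} to construct a locally maximal Cantor rectangle $\tilde R$ through $p$ whose stable side is a subarc of $V$ of length $\ge \delta_2/3$ and whose unstable side is a subarc of $T^{-j}U$ of length $\ge c\Lambda^{-j}\delta_2/3$. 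The containment $\tilde R\subset B$ is automatic: $\partial B\subset \cS_j$ is a union of stable curves, which the real unstable manifolds making up $\tilde R$ (lying along pieces of $T^{-j}U$) cannot cross without forcing $T^{-j}U$ itself to be cut, contradicting $T^{-j}U\subset B\in \cM_0^j$; symmetrically for the real stable manifolds forming $\tilde R$.

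With $\tilde R\subset B$ in hand, the stable side of $\tilde R$ (length $\ge \delta_2/3$) properly crosses at least one rectangle $R_{i_0}$ from the countable covering $\{R_i\}_{i\in\mathbb{N}}$ of \eqref{fakecover}, with $\mu_*(R_{i_0})>0$. Disintegrating $\mu_*$ on $R_{i_0}$ along stable manifolds and writing
\[
\mu_*(\tilde R)\;\ge\;\mu_*(\tilde R\cap R_{i_0})\;=\;\int_{\Xi_{R_{i_0}}^{\tilde R}} \mu_*^{W_\xi}(\tilde R)\, d\hat\mu_*(\xi),
\]
where $\Xi_{R_{i_0}}^{\tilde R}$ indexes the stable manifolds of $R_{i_0}$ meeting $\tilde R$, Corollary~\ref{cor:abs cont} combined with the leafwise lower bound \eqref{eq:lower nu} applied to the stable arc $W_\xi\cap \tilde R$ (of length $\ge c\delta_2$) yields $\mu_*^{W_\xi}(\tilde R)\ge c_1(\delta_2)>0$ for $\hat\mu_*$-a.e.\ $\xi\in\Xi_{R_{i_0}}^{\tilde R}$.

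The main obstacle—and the step where I expect the real work to happen—is to show
\[
\hat\mu_*\bigl(\Xi_{R_{i_0}}^{\tilde R}\bigr)\;\ge\;c_2(\delta_2)\,e^{-j h_*}.
\]
By the product structure of $R_{i_0}$ this set is naturally parametrized by the intersection of $\tilde R$ with a single unstable manifold in $R_{i_0}$, i.e.\ a sub-arc of length $\ge c\Lambda^{-j}\delta_2/3$. The desired bound is thus a factor-side analogue of the Bowen ball lower bound \eqref{eq:lower ball}: the thin unstable arc has $\hat\mu_*$-mass of order $e^{-jh_*}$ rather than the Lebesgue-like order $\Lambda^{-j}$, reflecting the Margulis-type balance of $\mu_*$ rather than the SRB-type scaling. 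To extract it, I would apply \eqref{eq:lower ball} (at $\eta=0$, which is legitimate here because the relevant orbit segment is the smooth curve $T^{-j}U$, whose uniform distance to $\cS_{-1}$ along the $j$ forward iterates is controlled purely in terms of $\delta_2$ and the table, not by a recurrence exponent) at any point $x\in R_{i_0}\cap T^{-j}U$, noting that the corresponding forward Bowen ball of depth $j$ at $x$ is contained in $\tilde R$ up to matching constants. Combining the three steps yields $\mu_*(A)=\mu_*(T^{-j}A)\ge \mu_*(\tilde R)\ge C_{\delta_2}\,e^{-jh_*}$, as required.
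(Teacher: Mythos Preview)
Your outline has the right ingredients (Cantor rectangles, absolute continuity, the leafwise lower bound \eqref{eq:lower nu}), but the place you flag as ``the real work'' is a genuine gap, and in fact the paper organizes the argument so that this step never arises.

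The problem is the bound $\hat\mu_*(\Xi_{R_{i_0}}^{\tilde R}) \ge c_2(\delta_2) e^{-jh_*}$. You propose to extract it from \eqref{eq:lower ball}, but the constant $C(x,\epsilon,\eta,p,\delta)$ there depends on the point $x$ through the factor measure $\hat\mu_*(\Xi_{R_x})$ of a rectangle $R_x$ containing $x$ (see the proof of Proposition~\ref{prop:max}). There is no a priori reason this is uniform over the points $x \in R_{i_0}\cap T^{-j}U$ as $A$ and $j$ vary; your $\tilde R$ moves with $j$ and is exponentially thin. Moreover, the set $M^{reg}(\eta,p,\delta)$ constrains $d(T^{-n}x,\cS_{-1})$ for \emph{all} $n\ge 0$, not just $0\le n\le j$, so the smoothness of $T^{-j}U$ alone does not put $x$ in $M^{reg}(0,p,\delta)$. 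Finally, \eqref{eq:lower ball} bounds $\mu_*$ of a Bowen ball, not $\hat\mu_*$ of a thin unstable arc; passing from one to the other is additional work.

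The paper avoids all of this by placing the $e^{-jh_*}$ factor in the leafwise measure rather than the factor measure. One fixes a \emph{finite} family $\cR_{\delta_2}=\{R_1,\ldots,R_k\}$ of Cantor rectangles (via \cite[Lemma~7.87]{chernov book}) such that every stable or unstable curve of length $\ge \delta_2$ properly crosses one of them. Then $A$ crosses some $R_i$ in the unstable direction and $T^{-j}A$ crosses some $R_{i'}$ in the stable direction. For each stable manifold $W_\xi$ of $R_i$, the eigenvector relation gives
\[
\int_{W_\xi \cap A} \nu \;=\; e^{-jh_*}\int_{T^{-j}(W_\xi\cap A)}\nu \;\ge\; e^{-jh_*}\,\bar C\,\ell_{\delta_2}^{h_*\bar C_2},
\]
since $T^{-j}(W_\xi\cap A)$ contains a stable manifold of $R_{i'}$ of length at least $\ell_{\delta_2}$ (the minimum over the \emph{finite} family). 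Disintegrating $\mu_*$ over $\Xi_i$ and using the equivalence of $\mu_*^W$ with $\nu$ then yields $\mu_*(A)\ge C'_{\delta_2}e^{-jh_*}\hat\mu_*(\Xi_i)$, and $\min_i \hat\mu_*(\Xi_i)>0$ because the family is finite. The key point you are missing is that the $e^{-jh_*}$ comes cleanly from the eigenvalue equation $\cL\nu=e^{h_*}\nu$ applied leafwise, with a \emph{fixed} rectangle (independent of $j$) supplying the factor-measure lower bound.
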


\begin{proof}
As in the proof of Proposition~\ref{prop:good growth}, by \cite[Lemma~7.87]{chernov}, we may choose finitely many
(maximal) Cantor rectangles, $R_1, R_2, \ldots R_k$, with $\mu_*(R_i) > 0$, and having the property that every unstable 
curve of length at least $\delta_2$ properly crosses at least one of them in the unstable direction, and every stable 
curve of length at least $\delta_2$ properly crosses at least one of them in the stable direction.
Let $\cR_{\delta_2} = \{ R_1, \ldots R_k \}$.

Now let $j \in \mathbb{N}$, and $A \in \cM_{-j}^0$ with $\diam^u(A) \ge \delta_2$ and $\diam^s(T^{-j}A) \ge \delta_2$.  
Notice that $T^{-j}A \in \cM_0^j$.
By construction, $A$ properly crosses one rectangle $R_i \in \cR_\delta$, and $T^{-j}A$ properly crosses another
rectangle $R_{i'} \in \cR_\delta$.  Let $\Xi_i$ denote the index set for the family of stable manifolds comprising $R_i$.
For $\xi \in \Xi_i$, let $W_{\xi, A} = W_\xi \cap A$.  Since $T^{-j}A$ properly crosses $R_{i'}$ in the stable direction and $T^{-j}$
is smooth on $A$, it follows that $T^{-j}(W_{\xi, A})$ is a single curve that contains a stable manifold in the family comprising $R_{i'}$.

Let $\ell_{\delta_2}$ denote the length of the shortest stable manifold in the finite set of rectangles comprising $\cR_{\delta_2}$.
Then using \eqref{eq:unwrap} and \eqref{eq:lower nu}, we have for all $\xi \in \Xi_i$,
\[
\int_{W_{\xi, A}} \nu = e^{-j h_*} \int_{T^{-j}(W_{\xi,A})} \nu \ge  e^{-j h_*}  \bar C \ell_{\delta_2}^{h_* \bar C_2} \, ,
\]
where $\bar C, \bar C_2>0$ are independent of $\delta$ and $j$.

Lastly, denoting by $D(R_i)$ the smallest solid rectangle containing $R_i$ (as in Definition~\ref{MCR}) and
using the fact from the proof of Corollary~\ref{cor:abs cont} that $\mu_*^W$ is equivalent to $\nu$ on $\mu_*$-a.e. 
$W \in \cW^s$, we estimate,
\[
\begin{split}
\mu_*(A) & \ge \mu_*(A \cap D(R_i)) \ge \int_{\Xi_i} \mu_*^{W_\xi}(A) \, d\hat \mu_*(\xi) \\
& \ge C \int_{\Xi_i} \nu(A \cap W_\xi) \, d\hat\mu_*(\xi) \ge C'_{\delta_2} e^{-j h_*} \hat \mu_*(\Xi_i) \, ,
\end{split}
\]
which proves the lemma since the family $\cR_{\delta_2}$ is finite.
\end{proof}

We may finally prove Proposition~\ref{prop:unique}:

\begin{proof} This follows from the previous two lemmas, adapting Bowen's proof  of uniqueness
of equilibrium states
(see the use of \cite[Lemma 20.3.4]{KH} in \cite[Thm 20.3.7]{KH}, as observed
in the proof of  \cite[Thm 6.4]{GL2}, noting that there is no need to check that
boundaries have zero measure).

Since $\mu_*$ is ergodic, it suffices by a standard argument (see e.g. the  beginning
of the proof of \cite[Thm 20.1.3]{KH}) to check that if $\mu$ is a
$T$-invariant probability measure so that there exists a Borel set $F \subset M$
with $T^{-1}(F)= F$ and $\mu_*(F)=0$ but $\mu(F)=1$ (that is, $\mu$ is singular with respect
to $\mu_*$) then
$h_\mu(T)< h_{\mu_*}(T)$.

Observe first that the billiard map $T$ (as well as its inverse
$T^{-1}$) is expansive, that is, there exists $\ve_0>0$ so that
if $d(T^j(x), T^j(y))<\ve_0$ for some $x, y \in M$ and all $j\in \integer$, then $x=y$.
(Indeed, if $x\ne y$ then there is $n\ge 1$ and an
element of either $\cS_n$ or $\cS_{-n}$ that  separates them.  So
$x$ and $y$ get mapped to different sides of a singularity line and by 
\eqref{eq:epsilon 0}
are separated
by a minimum distance $\ve_0$, depending on the table.)

For each $n \in \mathbb{N}$, we consider the partition $\cQ_n$ of maximal connected components of $M$ 
on which $T^{-n}$ is continuous.  By Lemmas~\ref{lem:card} and~\ref{prop:equiv}, $\cQ_n$ is $\cM_{-n}^0$ plus isolated points whose
cardinality grows
at most linearly with $n$.  Thus $G_n \subset \cQ_n$ for each $n$.  Define
$\tilde B_n = \cQ_n \setminus G_n$. The set $\tilde B_n$ contains $B_n$ plus isolated points, and so its cardinality is bounded
by the expression in Lemma~\ref{lem:bad}, by possibly adjusting the constant $C$.

By the uniform hyperbolicity of $T$, the diameters of the elements of $T^{-\lfloor n/2 \rfloor}(\cQ_n)$ 
tend to zero
as $n\to \infty$.   This implies the following fact.

\begin{sublem}
\label{sub:claim}
For each $n \ge n_1$ there exists a finite union $\cC_n$
of elements of $\cQ_n$ so that
\begin{equation*}
\lim_{n\to \infty} (\mu+\mu_*)((T^{- \lfloor n/2 \rfloor}\cC_n)\triangle F)=0\, .
\end{equation*}
\end{sublem}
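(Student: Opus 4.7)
The sublemma is pure measure-theoretic approximation, the substantive input already being supplied in the sentence immediately preceding it: by the uniform hyperbolicity \eqref{eq:hyp} (and the stable/unstable diameter estimates used already in the proof of Lemma~\ref{lem:hsep}), the maximal diameter of elements of $T^{-\lfloor n/2 \rfloor}\cQ_n = \cM_{-\lceil n/2\rceil}^{\lfloor n/2\rfloor}$ (modulo finitely many isolated points from Lemma~\ref{lem:card}) is bounded by $C\Lambda^{-\lfloor n/2\rfloor}$, hence tends to $0$. Set $\nu := \mu + \mu_*$: this is a finite Borel measure on the compact metric space $M$, and in particular is regular.

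The plan is the following. Fix $\varepsilon>0$. Using inner/outer regularity of $\nu$, choose a compact $K_\varepsilon\subset F$ and an open $U_\varepsilon\supset F$ with $\nu(U_\varepsilon\setminus K_\varepsilon)<\varepsilon$; since $K_\varepsilon$ and $M\setminus U_\varepsilon$ are disjoint closed subsets of the compact space $M$, one has $\delta_\varepsilon := \operatorname{dist}(K_\varepsilon, M\setminus U_\varepsilon)>0$ (taking $\delta_\varepsilon=+\infty$ if either set is empty). Pick $N_\varepsilon\in\mathbb{N}$ so that for every $n\ge N_\varepsilon$ each atom of the finite partition $T^{-\lfloor n/2\rfloor}\cQ_n$ has diameter strictly less than $\delta_\varepsilon$. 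For such $n$ define $\cC_n^{(\varepsilon)}$ to be the (finite) union of those $A\in\cQ_n$ whose image $T^{-\lfloor n/2\rfloor}A$ meets $K_\varepsilon$. The small-diameter condition then forces the sandwich
\[
K_\varepsilon \ \subset\ T^{-\lfloor n/2\rfloor}\cC_n^{(\varepsilon)}\ \subset\ U_\varepsilon,
\]
so $T^{-\lfloor n/2\rfloor}\cC_n^{(\varepsilon)}\triangle F \subset U_\varepsilon\setminus K_\varepsilon$ and hence $\nu\bigl(T^{-\lfloor n/2\rfloor}\cC_n^{(\varepsilon)}\triangle F\bigr)<\varepsilon$. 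A standard diagonal procedure---fix a sequence $\varepsilon_k\searrow 0$ with thresholds $N_{\varepsilon_k}$, and set $\cC_n := \cC_n^{(\varepsilon_k)}$ for $N_{\varepsilon_k}\le n<N_{\varepsilon_{k+1}}$---then produces the single sequence $(\cC_n)$ with $\nu\bigl(T^{-\lfloor n/2\rfloor}\cC_n\triangle F\bigr)\to 0$, which is the claim.

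I do not anticipate a real obstacle, since the argument is just Borel regularity combined with the hyperbolicity input already pointed out in the excerpt. The one minor bookkeeping item is to handle the isolated points that $\cQ_n$ may contain beyond $\cM_{-n}^0$ (Lemma~\ref{lem:card} gives at most $C(n+1)$ of them per $n$): since these are finitely many points, one can decide individually whether to include each in $\cC_n$ (e.g.\ include it iff it lies in $F$), altering the symmetric difference by at most finitely many points per $n$ and therefore not interfering with the limit.
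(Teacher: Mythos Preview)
Your proof is correct and follows essentially the same approach as the paper's own argument (which cites Bowen's \cite{Bow}): both use inner regularity of the finite measure $\mu+\mu_*$ to approximate $F$ from inside by a compact set, use the positive distance to the complementary compact set (the paper takes $K_2\subset M\setminus F$ compact, you equivalently take $M\setminus U_\varepsilon$), and then select those atoms of $T^{-\lfloor n/2\rfloor}\cQ_n$ meeting the inner compact; the shrinking diameters force the required sandwich. Your explicit diagonal step and your handling of the isolated points are minor bookkeeping additions not spelled out in the paper, but they do no harm.
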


\begin{proof}
See \cite[Lemma 2]{Bow}:
Let $\bar \mu=\mu + \mu_*$ and $\tilde \cQ_n = T^{- \lfloor n/2 \rfloor}(\cQ_n)$. For $\delta>0$ pick compact sets $K_1\subset F$ and $K_2 \subset M\setminus F$ so
that $\max\{ \bar \mu(F \setminus K_1),
\bar \mu((M \setminus F)\setminus K_2) \} <\delta$. We have $\eta=\eta_\delta:=d(K_1,K_2)>0$.
If $\diam(\tilde Q)<\eta/2$ then either $\tilde Q \cap K_1=\emptyset$ or $\tilde Q\cap K_2=\emptyset$.
Let $n=n_\delta$ be so that the diameter of $\tilde \cQ_n$ is $<\eta_\delta/2$.
Set $\tilde \cC_n=\cup \{ \tilde Q \in \tilde \cQ_n : Q \cap K_1 \ne \emptyset\}$. Then $K_1 \subset \tilde \cC_n$ and
$\tilde \cC_n \cap K_2=\emptyset$. Hence, $\bar \mu(\tilde \cC_n \triangle F)\le \delta + \bar\mu(\tilde \cC_n \triangle K_1)\le \delta +\bar \mu(M \setminus(K_1 \cup K_2))\le 3 \delta$. 
Defining $\cC_n = T^{\lfloor n/2 \rfloor} \tilde \cC_n$ completes the proof.
\end{proof}

Remark that, since
$T^{-1}(F)=F$, it  follows that 
$$(\mu+\mu_*)(\cC_n \triangle F)=(\mu+\mu_*)((T^{ \lfloor n/2 \rfloor}\cC_n)\triangle F)$$
also tends to zero as $n\to \infty$.

Since $\cQ_{2n}$ is generating for $T^{2n}$, we have
\begin{equation*}
h_\mu(T^{2n})=h_\mu(T^{2n}, \cQ_{2n})\le H_\mu(\cQ_{2n})=-
\sum_{Q \in \cQ_{2n}} \mu(Q) \log \mu(Q) \, .
\end{equation*}

By the proof of Sublemma~\ref{sub:claim}, for each $n$, there exists a compact set $K_1(n)$ that
defines $\tilde \cC_n=T^{-\lfloor n/2 \rfloor} \cC_n$, and satisfying $K_1(n) \nearrow F$ as $n \to \infty$.
Next, we group elements $Q \in \cQ_{2n}$ according to whether
$T^{-n}Q \subset \tilde \cC_n$ or $T^{-n} Q \cap \tilde \cC_n = \emptyset$.  Note that
if $Q$ is not an isolated point, and if $T^{-n}Q \cap \tilde \cC_n \neq \emptyset$, then 
$T^{-n}Q \in \cM_{-n}^n$ is contained in an element of $\cM_{- \lfloor n/2 \rfloor}^{\lfloor n/2 \rfloor}$
that intersects $K_1(n)$.  Thus $Q \subset T^n \tilde \cC_n = T^{\lceil n/2 \rceil} \cC_n$.
Therefore,
\begin{align*}
&2n h_\mu(T)= h_\mu(T^{2n})\le -
\sum_{Q \in \cQ_{2n}} \mu(Q) \log \mu(Q)\\
&\qquad \quad\le - \sum_{Q \subset  T^n \tilde \cC_n} \mu(Q) \log \mu(Q)
-  \sum_{Q \in \cQ_{2n} \setminus (T^n \tilde \cC_n)} \mu(Q) \log \mu(Q)\\
&\qquad \quad \le \frac 2 e + \mu(T^n \tilde \cC_n) \log \# (\cQ_{2n} \cap T^n \tilde \cC_n)
+ \mu(M \setminus (T^n \tilde \cC_n)) \log \# (\cQ_{2n} \setminus ( T^n \tilde \cC_n))  \, ,
\end{align*}
where we used in the last line that the convexity of $x \log x$ implies that, for
all $p_j>0$ with $\sum_{j=1}^N p_j \le 1$, we have (see e.g. \cite[(20.3.5)]{KH})
$$
-\sum_{j =1}^N p_j \log p_j \le  \frac 1 e + (\log N) \sum_{j=1}^N  p_j \, .
$$

Then, since 
$- h_{\mu_*}(T)=\big( \mu(T^n\tilde\cC_n)+\mu(M \setminus (T^n \tilde \cC_n)) \big) 
\log e^{-h_*}$,
for $n \ge n_1$, we write
\begin{equation}
\begin{split}
\label{eq:manipulate}
2n &(h_\mu(T)- h_{\mu_*}(T))- \frac 2 e\\
&\le \mu(T^n \tilde \cC_n)\log \sum_{Q \in \cQ_{2n}: Q \subset T^n \tilde\cC_n}  e^{-2nh_*}
+\mu(M \setminus (T^n \tilde \cC_n)) \log \sum_{Q \in \cQ_{2n} \setminus (T^n \tilde \cC_n)} 
e^{-2nh_*} \\
& \le  \mu(\cC_n)\log \left( \sum_{Q \in G_{2n}: Q \subset T^n \tilde \cC_n}  e^{-2nh_*}
+ \sum_{Q \in \tilde B_{2n}: Q \subset T^n \tilde \cC_n}  e^{-2nh_*} \right) \\
& \qquad + \mu(M \setminus \cC_n) \log \left( \sum_{Q \in G_{2n} \setminus (T^n \tilde\cC_n) } e^{-2nh_*}
+ \sum_{Q \in \tilde B_{2n} \setminus (T^n \tilde \cC_n) } e^{-2nh_*} \right) \, ,
\end{split}
\end{equation}
where we have used the invariance of $\mu$ in the last inequality.
By Lemma~\ref{lem:bad}, both sums over elements in $\tilde B_{2n}$ are bounded by $Ce^{-nh_*/4}$.   It remains to estimate
the sum over elements of $G_{2n}$.

First we provide the following characterization of elements of $G_{2n}$.  Let $Q \in G_{2n} \subset \cM_{-2n}^0$.  
Since $Q \notin B_{-2n}^0$, there exists 
$0 \le j \le \lfloor n/2 \rfloor$ such that $T^{-j}Q \subset E_j \in \cM_{-2n +j}^0$
and $\diam^u(E_j) \ge \delta_2$.  We claim that there exists $k \le \lfloor n/2 \rfloor$
and $\bar E \in \cM_{-2n+j+k}^0$ such that $E_j \subset \bar E$ and 
$\diam^s(T^{-2n+j+k} \bar E) \ge \delta_2$.

The claim follows from the fact that $T^{-2n}Q \notin B_0^{2n}$.  Thus there exists $k \le \lfloor n/2 \rfloor$ such that
$T^{-2n+k}Q \subset \tilde E_k \in \cM^{2n-k}_0$ with $\diam^s(\tilde E_k) \ge \delta_2$.  But notice that
$T^{-2n+j+k}E_j \in \cM_{-k}^{2n-j-k}$ contains $T^{-2n+k}Q$.  Thus letting $\tilde E$ denote the unique
element of $\cM^{2n-j-k}_0$ containing both $T^{-2n+j+k}E_j$ and $\tilde E_k$, we define
$\bar E = T^{2n-j-k}\tilde E \in \cM_{-2n+j+k}^0$, and $\bar E$ has the required property since
$T^{-2n+j+k}\bar E \supset \tilde E_k$.

By construction, $\bar E$ satisfies the assumptions of Lemma~\ref{lem:lower bound} since $\bar E \in \cM_{-2n+j+k}^0$ with
$\diam^u(\bar E) \ge \delta_2$, and $\diam^s(T^{-2n + j+k}\bar E) \ge \delta_2$.
Thus,
\begin{equation}
\label{goodb}
\mu_*(\bar E) \ge C_{\delta_2} e^{(-2n + j + k)h_*} \, .
\end{equation}

We call
$(\bar E, j ,k)$ an {\em admissible triple} for $Q \in G_{2n}$ if
$0 \le j, k \le \lfloor n/2 \rfloor$ and $\bar E \in \cM_{-2n + j + k}^0$, with
$T^{-j}Q \subset \bar E$ and $\min \{ \diam^u(\bar E), \diam^s(T^{-2n +j +k}\bar E) \} \ge \delta_2$.
Obviously, there may be many admissible triples associated to a given $Q \in G_{2n}$; however,
we define the unique {\em maximal triple} for $Q$ by taking first the maximum 
$j$, and then the maximum $k$ over all admissible triples for $Q$.  

Let $\cE_{2n}$  be the set of
maximal triples obtained in this way from elements of $G_{2n}$.  For $(\bar E, j, k) \in \cE_{2n}$,
let $\cA_M(\bar E, j,k)$ denote the set of $Q \in G_{2n}$ for which $(\bar E, j, k)$ is the
maximal triple.
The importance of the set $\cE_{2n}$
lies in the following property.

\begin{sublem}
\label{sub:disjoint}  
Suppose that $(\bar E_1, j_1, k_1)$, $(\bar E_2, j_2, k_2) \in \cE_{2n}$ with $j_2 \ge j_1$
and $(\bar E_1, j_1, k_1) \neq (\bar E_2, j_2, k_2)$.  
Then $T^{-(j_2-j_1)}\bar E_1 \cap \bar E_2 = \emptyset$.
\end{sublem}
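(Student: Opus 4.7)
The plan is to argue by contradiction: assume $T^{-(j_2-j_1)}\bar E_1 \cap \bar E_2 \ne \emptyset$, and derive a violation of maximality of one of the two triples. The argument will split on whether $j_1 = j_2$ or $j_2 > j_1$. Fix $Q_1 \in \cA_M(\bar E_1, j_1, k_1)$, so $T^{-j_2}Q_1 = T^{-(j_2-j_1)}T^{-j_1}Q_1 \subset T^{-(j_2-j_1)}\bar E_1$. Because $\bar E_1$ is a connected component of $M \setminus \cS_{-(2n-j_1-k_1)}$ and $T^{-(j_2-j_1)}\cS_{-(2n-j_1-k_1)} \subset \cS_{-(2n-j_2-k_1)} \cup \cS_{j_2-j_1}$, the image $T^{-(j_2-j_1)}\bar E_1$ lies in a unique element $\bar E_1' \in \cM_{-(2n-j_2-k_1)}^0$. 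The strategy is then to promote $\bar E_1'$ (or $\bar E_2$) into the second entry of an admissible triple with $j = j_2$ for $Q_1$.

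If $j_1 = j_2$, the hypothesis becomes $\bar E_1 \cap \bar E_2 \ne \emptyset$. The partitions $\cM_{-(2n-j_1-k)}^0$ are nested (finer for smaller $k$), so their elements are either disjoint or one contains the other. Assuming $k_1 \ge k_2$ without loss of generality gives $\bar E_2 \subset \bar E_1$. Then, for any $Q_2 \in \cA_M(\bar E_2, j_2, k_2)$, the triple $(\bar E_1, j_2, k_1)$ is admissible: the containment $T^{-j_2}Q_2 \subset \bar E_2 \subset \bar E_1$ is automatic, and the two diameter conditions are those of the admissible triple $(\bar E_1, j_1, k_1)$. If $k_1 > k_2$, this contradicts the maximality of $k_2$ for $Q_2$; if $k_1 = k_2$, then $\bar E_1 = \bar E_2$ and the triples coincide, contradicting distinctness.

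If $j_2 > j_1$, the plan is to exhibit an admissible triple for $Q_1$ with $j = j_2$, violating the maximality of $j_1$. This is done by comparing $\bar E_1'$ with $\bar E_2$ via the nesting of $\cM_{-(2n-j_2-k_1)}^0$ and $\cM_{-(2n-j_2-k_2)}^0$. In the subcase $k_1 < k_2$, the partition containing $\bar E_1'$ is the finer one, so $\bar E_1'$ lies in an element of $\cM_{-(2n-j_2-k_2)}^0$; since this element meets $\bar E_2$, it must equal $\bar E_2$, hence $\bar E_1' \subset \bar E_2$. The triple $(\bar E_2, j_2, k_2)$ is then admissible for $Q_1$: containment follows from $T^{-j_2}Q_1 \subset \bar E_1' \subset \bar E_2$, and the two diameter conditions are inherited from the admissibility of that triple for $Q_2$. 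In the subcase $k_1 \ge k_2$, the opposite nesting yields $\bar E_2 \subset \bar E_1'$, and I take the triple $(\bar E_1', j_2, k_1)$ for $Q_1$: the containment is immediate, and the stable diameter condition follows from the inclusion $T^{-(2n-j_2-k_1)}\bar E_1' \supset T^{-(2n-j_1-k_1)}\bar E_1$, whose right-hand side has $\diam^s \ge \delta_2$ by admissibility of $(\bar E_1, j_1, k_1)$.

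The main obstacle is the verification of the unstable diameter condition in the last subcase above: because $T^{-1}$ contracts the unstable cone, the iterate $T^{-(j_2-j_1)}\bar E_1$ generically has unstable diameter much smaller than $\delta_2$, so one cannot read the bound directly from $\bar E_1$. The resolution is that $\bar E_1'$, the coarser cell containing $T^{-(j_2-j_1)}\bar E_1$, may be strictly larger than this iterate; its unstable diameter is then bounded below through the chain $\diam^u(\bar E_1') \ge \diam^u(\bar E_2) \ge \delta_2$, the first inequality coming from $\bar E_2 \subset \bar E_1'$ and the second from admissibility of $(\bar E_2, j_2, k_2)$.
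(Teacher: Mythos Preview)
Your proof is correct and follows essentially the same approach as the paper: argue by contradiction using the nesting of the partitions $\cM_{-(2n-j_2-k)}^0$ to produce, from the assumed intersection, an admissible triple that dominates one of the two maximal triples. The only difference is organizational---you split first on whether $j_1=j_2$ and then on $k$, while the paper splits directly on $k_1\le k_2$ versus $k_1>k_2$; your auxiliary set $\bar E_1'$ coincides with the paper's $\bar E'$ in the relevant case.
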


\begin{proof}
Suppose, to the contrary, that there exist $(\bar E_1, j_1, k_1)$, $(\bar E_2, j_2, k_2) \in \cE_{2n}$ with 
$j_2 \ge j_1$ and $T^{-(j_2-j_1)}\bar E_1 \cap \bar E_2 \neq \emptyset$.  
Note that $T^{-(j_2-j_1)}\bar E_1 \in \cM_{-2n + j_2 + k_1}^{j_2-j_1}$ while
$\bar E_2 \in \cM_{-2n + j_2 + k_2}^0$.  

Thus if $k_1 \le k_2$, then $T^{-(j_2-j_1)}\bar E_1 \subset \bar E_2$, and so $(\bar E_1, j_1, k_1)$
is not a maximal triple for all $Q \in \cA_M(\bar E_1, j_1, k_1)$, a contradiction.

If, on the other hand, $k_1 > k_2$, then both $T^{-(j_2-j_1)}\bar E_1$ and $\bar E_2$ are contained
in a larger element $\bar E' \in \cM_{-2n + j_2 + k_1}^0$.  Since $\bar E' \supset \bar E_2$,
we have $\diam^u(\bar E') \ge \delta_2$, and since 
$T^{-2n+j_2+k_1}\bar E' \supset T^{-2n+j_1+k_1}\bar E_1$, we have
$\diam^s(T^{-2n+j_2+k_1}\bar E') \ge \delta_2$.  Thus neither $(\bar E_1, j_1, k_1)$ nor
$(\bar E_2, j_2, k_2)$ is a maximal triple, also a contradiction.
\end{proof}

Note that by definition, if $Q \in T^n\tilde \cC_n \cap \cA_M(\bar E, j, k)$, then 
$T^{-n+j} \bar E \in \cM_{-n+k}^{n-j}$ contains $T^{-n}Q$.  Also, since $j, k \le \lfloor n/2 \rfloor$,
$T^{-n+j} \bar E$ is contained in the same element of $\cM_{-\lfloor n/2 \rfloor}^{\lfloor n/2 \rfloor}$
that contains $T^{-n}Q$ and intersects $K_1(n)$.  Thus $T^{-n+j}\bar E \subset \tilde \cC_n$
whenever $T^n \tilde \cC_n \cap \cA_M(\bar E, j ,k) \neq \emptyset$.  This
also implies that $\cA_M(\bar E, j, k) \subset T^n\tilde \cC_n$ whenever
$T^n \tilde \cC_n \cap \cA_M(\bar E, j ,k) \neq \emptyset$.

Next, for a fixed $(\bar E, j, k) \in \cE_{2n}$, 
by submultiplicativity, since $\bar E \in \cM_{-2n + j +k}^0$ and $G_{2n} \subset \cM_{-2n}^0$, we
have $\# \cA_M(\bar E, j, k) \le  \# \cM_0^{j+k}$. 
Now using Proposition~\ref{cor:exp} and \eqref{goodb}, we estimate
\begin{align*}
&\sum_{Q \in G_{2n}: Q \subset T^n \tilde\cC_n}  e^{-2nh_*}
 \le \sum_{(\bar E, j, k) \in \cE_{2n} : \bar E \subset T^{n-j}\tilde\cC_n} \sum_{Q \in \cA_M(\bar E, j, k)} e^{-2nh_*} \\
&\qquad  \le \sum_{(\bar E, j, k) \in \cE_{2n} : \bar E \subset T^{n-j} \tilde\cC_n}  C e^{(-2n+j+k)h_*} 
\le \sum_{(\bar E, j, k) \in \cE_{2n} : \bar E \subset T^{n-j}\tilde \cC_n} C' \mu_*(\bar E) \\
& \qquad \le \sum_{(\bar E, j,k) \in \cE_{2n} : \bar E \subset T^{n-j}\tilde \cC_n} C' \mu_*(T^{-n+j}\bar E)
\le C' \mu_*(\tilde \cC_n) = C' \mu_*(\cC_n) \, ,
\end{align*}
where the constant $C'$ depends on $\delta_2$, but not on $n$.  We have also
used that $T^{-n+j_1}\bar E_1 \cap T^{-n+j_2} \bar E_2 = \emptyset$ for all distinct triples
$(\bar E_1, j_1, k_1), (\bar E_2, j_2, k_2) \in \cE_{2n}$, by Sublemma~\ref{sub:disjoint},
in order to sum over the elements of $\cE_{2n}$.
A similar bound holds for the sum over $Q \in G_{2n} \setminus (T^n\tilde \cC_n)$
since $T^{-n+j} \bar E \subset M \setminus \tilde \cC_n$ whenever
$T^n \tilde \cC_n \cap \cA(\bar E, j, k) = \emptyset$.  
Putting these bounds together allows us to complete our estimate of 
\eqref{eq:manipulate},
\[
\begin{split}
2n(h_\mu(T) - h_{\mu_*}(T)) - \frac 2e
& \le \mu(\cC_n) \log \left( C' \mu_*(\cC_n)  + C e^{-nh_*/4} \right) \\
& \qquad + \mu(M \setminus \cC_n) \log \left( C' \mu_*(M \setminus \cC_n)  + C e^{-nh_*/4}  \right) \, .
\end{split}
\]
Since $\mu(\cC_n)$ tends to $1$ as $n\to \infty$ while $\mu_*(\cC_n)$ tends to 
0 as $n\to \infty$ the limit of the right-hand side tends to $- \infty$.  
This yields a contradiction unless $h_\mu(T)< h_{\mu_*}(T)$.
\end{proof}


\bibliographystyle{amsplain}

\end{document}